\def\@currentlabel{2.1}\label{e:dispaa}
\def\@currentlabel{2.21}\label{e:dispau}
\def\@currentlabel{2.22}\label{e:dispav}
\def\@currentlabel{2.23}\label{e:dispaw}
\def\@currentlabel{2.24}\label{e:dispax}
\def\theequation{\thesection.\@arabic\c@equation}
\let\oldbibliography\thebibliography
\renewcommand{\thebibliography}[1]{%
\oldbibliography{#1}%
\setlength{\itemsep}{0pt}%
}
\renewcommand{\theequation}{\thesection.\arabic{equation}}
\newtheorem{lemma}{Lemma}[section]
\newtheorem{definition}{Definition}
\newtheorem{proposition}{Proposition}[section]
\newtheorem{corollary}{Corollary}[section]
\newtheorem{remark}{Remark}[section]
\newtheorem{conjecture}{Conjecture}[section]
\newtheorem{open problem}{Open Problem}[section]
\newtheorem{open question}{Open Quesion}[section]
\newcommand{\bremark}{\begin{remark} \em}
\newcommand{\eremark}{\end{remark} }
\newtheorem{numerical/experimental results}{Numerical/Experimental results}[section]
\newtheorem{theorem}{Theorem}[section]
\newtheorem*{thmA}{Theorem A}
\newcommand{\BE}{\begin{equation}}
\newcommand{\BEN}{\begin{equation*}}
\newcommand{\EE}{\end{equation}}
\newcommand{\EEN}{\end{equation*}}
\newcommand{\BL}{\begin{lemma}}
\newcommand{\EL}{\end{lemma}}
\newcommand{\BT}{\begin{theorem}}
\newcommand{\ET}{\end{theorem}}
\newcommand{\BP}{\begin{proposition}}
\newcommand{\EP}{\end{proposition}}
\newcommand{\BC}{\begin{corollary}}
\newcommand{\EC}{\end{corollary}}
\renewcommand{\Re}{\operatorname{Re}}
\renewcommand{\Im}{\operatorname{Im}}
\begin{document}

%\title[Green Function]{Two-component reduced functional}
%%%%%%%%%%%%%%%%%%%%%%%%%%%%%%%%%%%%%%%%%%%%%%%%%%%%%%%%%%%%%%%%%%%%%%

\title[Lattice energy and hexagonal minimizer]{\bf Minimizing lattice Energy and hexagonal crystallization}

\author{Kaixin Deng}
\author{Senping Luo}
\address[K.~Deng]{School of Mathematics and statistics, Jiangxi Normal University, Nanchang, 330022, China}
\address[S.~Luo]{School of Mathematics and statistics, Jiangxi Normal University, Nanchang, 330022, China}

\email[K.~Deng]{Dengkaikai1999@126.com}
\email[S.~Luo]{luosp14@tsinghua.org.cn}

\begin{abstract}
Consider the energy per particle on the lattice
\begin{equation}\aligned\nonumber
\min_{  \Lambda }\sum_{ \mathbb{P}\in  \Lambda} \left|\mathbb{P}\right|^4 e^{-\pi \alpha \left|\mathbb{P}\right|^2 }
\endaligned\end{equation}
where $\alpha >0$ and $\Lambda$ is a two dimensional lattice. We prove that for $\alpha\geq\frac{3}{2}$, among two dimensional lattices with unit density, such energy minimum is attained at $e^{i\frac{\pi}{3}}$, corresponding to hexagonal lattice. Our result partially answers some questions raised in
 \cite{Bet2019AMP,Luo2023}.

\end{abstract}

\maketitle

%\tableofcontents

\section{Introduction and main results}
%%%%%%%%%%%%%%%%%%%%%%%%%%%%%%%%%%%%%%%%%%%%%%%%%%%%%%%%%%%%%%%%%%%%%%%%%%%%%%%%%%%%%%%%%%%%%%%%%%%%%%%%%%%%%%%%%%%%%%%%%%%%%%%%%%%%%%%%%%%%%%%%
\setcounter{equation}{0}

\subsection{Minimization of energy on lattices}
In this paper, we consider energy minimization and the hexagonal minimizer. The problem of energy minimization has deep
applications and connections in physics. In authors' perspective, energy minimization has an inner connection to the following problem, which is a follows up a 1984 list of open problems in mathematical physics proposed by Simon in \cite{Simon2000}(2000).

\begin{open problem}[Simon 1984 and 2000 \cite{Simon1984,Simon2000}]\label{ProbA}
Is there a mathematical sense in which one can justify from first principles current techniques for determining molecular configurations?
\end{open problem}
Simon's problem \ref{ProbA} is very vague. In our perspective, by the first principle, one considers the Hamiltonian of the admissible system, among all possible admissible configurations, it is aimed to show that the minimum of the Hamiltonian is attained at targeted molecular configuration. We state it in a mathematical way
 \begin{equation}\aligned\label{EA}
Minima_{\mathcal{A}\subset\hbox{admissible configurations}} E(\mathcal{A})=\hbox{targeted molecular configuration},
\endaligned\end{equation}
where $E(\mathcal{A})$ denotes the Hamiltonian of the system.

The Hamiltonian of our problem is given by the two-body potential:
 \begin{equation}\aligned\nonumber
H=\sum_{1\leq i<j\leq N}\phi(|x_i-x_j|^2),
\endaligned\end{equation}
where the particles move in a volume of $V$ of a d-dimensional space, and $\phi$ is the potential of the system.
We then take the thermodynamic limit $N, V\rightarrow\infty$ of the Hamiltonian at fixed density $\rho=\frac{N}{V}$, namely
 \begin{equation}\aligned\label{HN}
\lim_{N\rightarrow\infty, \;\rho=\frac{N}{V}\;\; \hbox{fixed}}\frac{H}{N}.
\endaligned\end{equation}

In this paper, we restrict the admissible configurations to the lattices. This is physically and chemically meaningful since there are lots of crystal structures in nature. The crystal structures are important objects in physics and chemistry, while in mathematics, the most suitable concept to crystal structures is lattices. Note that in the lattice system, the thermodynamic limit of the Hamiltonian given by \eqref{HN} is reduced to
 \begin{equation}\aligned\label{HN2}\sum_{{P}\in \Lambda \backslash\{0\}} \phi(|{P}|^2).
 \endaligned\end{equation}

A derivation from \eqref{HN} to \eqref{HN2} on lattices can be found in Luo-Wei-Zou (\cite{LW2021}). By this way, the Simon's problem \ref{ProbA} is precisely formulated by

 \begin{equation}\aligned\label{EFL}
\min_{\Lambda} E_{\phi}(\Lambda ), \;\;\hbox{where}\;\;E_{\phi}(\Lambda):=\sum_{{P}\in \Lambda \backslash\{0\}} \phi(|{P}|^2).
\endaligned\end{equation}
Here $\Lambda$ is a d-dimensional lattice, and $\phi$ is the two-body potential of the system. We shall focus on dimension two, since {\it the two dimensional theories capture many essential features of higher dimensions, without sharing the complexities of higher dimensions}(Alvarez-Gaum\'e-Moore-Vafa \cite{Vafa1986}).

The hexagonal shape is almost ubiquitous in nature, consequently a particular and intrigue interest is when the molecular configuration (Simon's problem \ref{ProbA}) is hexagonal shape (an illustration of hexagonal shape in Figure \ref{h12}). Gruber summarized in \cite{Gruber2000} and further we ask that
\begin{open problem}[Gruber 2000 \cite{Gruber2000}]\label{ProbB}
 Why in many cases, optimal configurations are
 almost regular hexagonal?
\end{open problem}

\begin{figure}
\centering
 \includegraphics[scale=0.4]{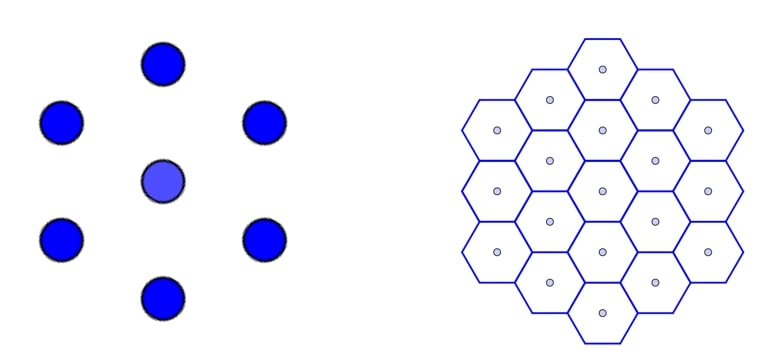}
 \caption{The hexagonal structure and periodic hexagonal structure.}
 \label{h12}
\end{figure}

In the framework of the lattices minimization problem \eqref{EFL}, Open Problem \ref{ProbB} becomes a problem asked in Luo-Wei \cite{Luo2023}.

\begin{open problem}[Luo-Wei \cite{Luo2023}]\label{Open1}
Finding the non-monotone potentials, such that the minimum  of $\Lambda\rightarrow E_{\phi}( \Lambda)$ is achieved at hexagonal lattice?
\end{open problem}

A one-step further problem to Open Problem \ref{Open1} is

\begin{open problem}[B\'etermin-Petrache \cite{Bet2019AMP},
Luo-Wei \cite{Luo2023}]\label{Open2}
 Finding the largest possible class of potentials such that $\Lambda\rightarrow E_{\phi}( \Lambda)$ is achieved at hexagonal lattice?
\end{open problem}
Open Problem \ref{Open2} is related to the universality optimality (i.e., global minimum among all periodic configuration at fixed density) in 2d; it is solved recently in dimensions 8 and 24 by Cohn et al. \cite{Cohn2022}, while it is still open in dimension 2. A related Conjecture about hexagonal minimizer on some general configurations was made by Sandier-Serfaty \cite{Serfaty2012}.
\vskip0.1in
Independently, the lattice energy
$E_{\phi}(\Lambda )$ given by \eqref{EFL} denotes the energy per particle of the system under the background potential $\phi$ over a periodical lattice $\Lambda$, and the function ${\phi}$ denotes the potential of the system and the summation ranges over all the lattice points except for the origin $0$. The lattices minimization problem \eqref{EFL} roots in many physical problems (B\'etermin \cite{Bet2016,BP,Bet2020}), chemical problems, and analytical number theory (\cite{Bet2015,Bet2016,Bet2018,Bet2019,Bet2019AMP,Betermin2021JPA,Luo2019, Serfaty2018}).

\subsection{Classical results in analytical number theory}

 Recall the Theta function defined as

 \begin{equation}\aligned
 \nonumber
\theta (\alpha; \Lambda):=\sum_{\mathbb{P}\in\Lambda} e^{- \pi\alpha |\mathbb{P}|^2}.
\endaligned\end{equation}

Let $ z\in \mathbb{H}:=\{z\in\mathbb{C}: \Im(z)>0\}$ and $\Lambda =\frac{1}{\sqrt{\Im(z)}}\big({\mathbb Z}\oplus z{\mathbb Z}\big)$ be the lattices with unit density in $ \mathbb{R}^2$. One has the explicit form of Theta function:
 \begin{equation}\aligned
 \label{thetas}
\theta (\alpha; z)=\sum_{(m,n)\in\mathbb{Z}^2} e^{-\pi\alpha \frac{ |mz+n|^2}{\Im(z) }}.
\endaligned\end{equation}

The Theta function given by \eqref{thetas} has strong connections to the energy given by \eqref{EFL}, in fact, when the potential is Gaussian form ($\phi(r^2)=e^{- \pi\alpha {r^2}}, \alpha>0$), the corresponding $E_{\phi} (\Lambda)$ becomes the Theta function.

In 1988, Montgomery \cite{Mon1988} proved the following celebrated result:

\begin{thmA}[Montgomery 1988] For all $\alpha>0$, up to rotations and translations,
\begin{equation}\aligned\nonumber
\underset{z\in\mathbb{H}}{\mathrm{Minima}}\:\theta(\alpha;z)=e^{i\frac{\pi}{3}}\:\:( \mathrm{corresponding \:\:to\:\:haxagonal\:\: lattice}).
\endaligned\end{equation}
\end{thmA}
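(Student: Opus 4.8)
The plan is to combine the modular invariance of $\theta(\alpha;\cdot)$ — which reduces the minimization to the standard fundamental domain — with two one-variable monotonicity statements along the boundary of that domain that pin the minimum to the corner $z=e^{i\pi/3}$. For the reductions: since $\theta(\alpha;z)$ depends only on the lattice $\frac{1}{\sqrt{\Im z}}(\mathbb{Z}\oplus z\mathbb{Z})$, it is invariant under $z\mapsto z+1$ and $z\mapsto-1/z$, hence under the full modular group, and also under the mirror symmetry $z\mapsto-\bar z$; thus it suffices to minimize over $\mathcal D^{+}=\{\,z=x+iy:\ 0\le x\le\tfrac12,\ x^{2}+y^{2}\ge1\,\}$, on which $y\ge\tfrac{\sqrt3}{2}$. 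Since $\theta(\alpha;x+iy)\ge\sum_{n\in\mathbb{Z}}e^{-\pi\alpha n^{2}/y}\to\infty$ as $y\to\infty$, a minimizer exists in $\mathcal D^{+}$; and Poisson summation against the dual lattice, combined with the mirror symmetry, yields the exact self-duality $\theta(\alpha;z)=\alpha^{-1}\theta(\alpha^{-1};z)$, so the minimizer coincides for $\alpha$ and $\alpha^{-1}$ and we may assume $\alpha\ge1$.

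Applying Poisson summation in the index $n$ and writing $\vartheta_{3}(v;\tau):=\sum_{k\in\mathbb{Z}}e^{\pi i k^{2}\tau+2\pi ikv}$, one obtains
\begin{equation*}
\theta(\alpha;x+iy)=\sqrt{\tfrac{y}{\alpha}}\,\sum_{m\in\mathbb{Z}}e^{-\pi\alpha m^{2}y}\,\vartheta_{3}\!\bigl(mx;\tfrac{iy}{\alpha}\bigr).
\end{equation*}
The two statements I would prove are: (A) for every fixed $y\ge\tfrac{\sqrt3}{2}$ the function $x\mapsto\theta(\alpha;x+iy)$ is strictly decreasing on $[0,\tfrac12]$; and (B) the function $y\mapsto\theta(\alpha;\tfrac12+iy)$ is strictly increasing on $[\tfrac{\sqrt3}{2},\infty)$. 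Granting these, (A) forces the minimum over $\mathcal D^{+}$ onto the right edge $\{x=\tfrac12,\ y\ge\tfrac{\sqrt3}{2}\}$, and (B) then forces it to $y=\tfrac{\sqrt3}{2}$, i.e.\ to $z=\tfrac12+i\tfrac{\sqrt3}{2}=e^{i\pi/3}$; strictness of (A) along the circular arc and the imaginary-axis part of $\partial\mathcal D^{+}$ upgrades this to uniqueness modulo the modular group, which is Theorem A.

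For (A): the Jacobi triple product writes $\vartheta_{3}(v;\tfrac{iy}{\alpha})$ as a positive constant times $\prod_{n\ge1}\bigl(1+2s^{2n-1}\cos2\pi v+s^{4n-2}\bigr)$ with $s=e^{-\pi y/\alpha}\in(0,1)$; every factor is positive and strictly decreasing in $v$ on $[0,\tfrac12]$, so the $m=1$ term of $\partial_{x}\theta$ is strictly negative on $(0,\tfrac12)$, and it remains to show it dominates the tail $\sum_{m\ge2}m\,e^{-\pi\alpha m^{2}y}\,\partial_{v}\vartheta_{3}(mx;\tfrac{iy}{\alpha})$. Since $\alpha\ge1$ and $y\ge\tfrac{\sqrt3}{2}$ give $e^{-\pi\alpha y}\le e^{-\pi\sqrt3/2}$, rewriting $\vartheta_{3}(v;\tfrac{iy}{\alpha})=\sqrt{\alpha/y}\sum_{k\in\mathbb{Z}}e^{-\pi\alpha(v-k)^{2}/y}$ as a periodized Gaussian reduces the domination to an elementary inequality of the form $3y-\tfrac{1}{4y}>0$, which holds on $[\tfrac{\sqrt3}{2},\infty)$. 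For (B): splitting the double sum according to the parity of $m$ expresses $\theta(\alpha;\tfrac12+iy)$ as a sum of two products of one-dimensional Jacobi theta constants in the variables $\alpha/y$ and $4\alpha y$; one differentiates in $y$, inserts the product formulas, and bounds the result, again using $\alpha\ge1$ and $y\ge\tfrac{\sqrt3}{2}$.

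I expect the main obstacle to be exactly this domination step (in (A), and its analogue in (B)). Although (A) and (B) are qualitatively transparent, making the estimates uniform in $\alpha$ — particularly in the degenerate regime $\alpha\to\infty$, where $\vartheta_{3}(\,\cdot\,;\tfrac{iy}{\alpha})$ concentrates at the integers and the prefactors of the competing terms nearly balance — requires the careful theta-function bookkeeping that forms the technical heart of Montgomery's proof; in particular, the restriction to the fundamental domain (so that $y\ge\tfrac{\sqrt3}{2}$) is essential to these bounds, not a mere normalization.
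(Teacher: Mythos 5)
Theorem A is not proved in this paper: it is quoted from Montgomery's 1988 article, and the paper only uses it as background. So there is no in-paper proof to compare against; the closest analogue is the paper's own two-step treatment of $\mathcal{R}(\alpha;z)$ (Theorems \ref{2Th1} and \ref{4Th1}): reduce to the fundamental domain by the group invariance, prove $\partial_x<0$ there to land on the right edge $\Gamma$, then prove monotonicity in $y$ along $\Gamma$. Your proposal follows exactly this architecture, and your preparatory reductions are sound: the invariance under $\mathcal{G}$, the existence of a minimizer, the self-duality $\theta(\alpha;z)=\alpha^{-1}\theta(\alpha^{-1};z)$ permitting $\alpha\ge1$, and the Poisson-summation expansion $\theta=\sqrt{y/\alpha}\sum_m e^{-\pi\alpha m^2y}\vartheta(y/\alpha;mx)$ (the paper's Lemma \ref{Lemmatree}) are all correct and are indeed the ingredients Montgomery uses.

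The genuine gap is that the two monotonicity claims (A) and (B) — which you correctly identify as the entire content of the theorem — are asserted rather than proved. For (A), showing that the $m=1$ term of $\partial_x\theta$ is negative is easy (your triple-product observation), but the domination of the tail $\sum_{m\ge2}$ is the hard part, and your claim that it "reduces to an elementary inequality of the form $3y-\tfrac{1}{4y}>0$" is not substantiated and understates what is needed: one must control quotients such as $\vartheta_Y(X;mY)/\vartheta_Y(X;Y)$ uniformly in $x$ and in the aspect ratio $X=y/\alpha$, in both the regimes $X$ bounded below and $X\to0$ (compare the paper's Lemmas \ref{Lemmamoist}--\ref{Lemmavar1} and the case split over $\mathcal{A}_a,\dots,\mathcal{A}_d$, which exist precisely because no single elementary inequality suffices). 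For (B), the parity-splitting identity is the right starting point, but "one differentiates, inserts the product formulas, and bounds the result" is not an argument; moreover, since $\partial_y\theta(\alpha;\tfrac12+iy)$ vanishes at $y=\tfrac{\sqrt3}{2}$ (the hexagonal point is a critical point of every invariant functional), a naive lower bound on the derivative degenerates at the endpoint and one needs a second-order device near $y=\tfrac{\sqrt3}{2}$ — exactly the issue the paper confronts in Section 4 via the operator $\partial_y^2+\tfrac{2}{y}\partial_y$ and Lemma \ref{4lematem}. As written, the proposal is a correct road map of Montgomery's proof with the quantitative core left unexecuted.
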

Montgomery's Theorem A is of great significance for the fields of physic, chemistry and has a series of implications in Gaussian Gabor systems \cite{Thomas 2003}, Bose-Einstein condensates \cite{Luo2022}, quantum mechanics (\cite{Markus2020}, \cite{Marku2018}), superconductors in large magnetic field \cite{Abr}, coding and data transmission (chapter 33 of geometry of numbers \cite{Discrete}), packing of balls (chapter 29 of geometry of numbers \cite{Discrete}), materials science (\cite{Joshua 2019,Geim 2007}), nanomedicine \cite{Albarqi2019}, although he investigated it purely out of the interest from number theory at the time. The significant result also laid the foundations for a host of optimal lattice problems.

Crystallization is called one of the most mysterious processes in nature ( Uzunova-Pan-Lubchenko-Vekilov \cite{Uzunova 2012}). {\it The mystery lies in how structures with long-range order form from building blocks that only interact with their local neighbors} (Lutsko \cite{Lutsko2019}). Therefore, it is a fundamental problem to understand the formation of crystals at cryogenic temperatures and interpret the interactions of particles  and  other conditions  how to affect crystal  structure.

Motivated by Luo-Wei \cite{LW2022,Luo2023}, the background potential that we consider is ${\phi}_{1}(r^2)=r^{4} e^{- \pi\alpha {r^2}} $. Our main result is stated as follows

\begin{theorem}\label{Th2} Assume $\alpha\geq \frac{3}{2}$. Let $\mathcal{R}(\alpha;\Lambda)$ be defined as
\begin{equation}\aligned\label{R}
\mathcal{R}(\alpha;\Lambda):=\sum_{\mathbb{P}\in \Lambda} {\left|\mathbb{ P} \right|}^{4}e^{-\pi\alpha {\left| \mathbb{P} \right|}^{2}},
\endaligned\end{equation}
where $\Lambda =\frac{1}{\sqrt{\Im(z)}}\big({\mathbb Z}\oplus z{\mathbb Z}\big),z\in\mathbb{H}$. Then among two dimensional lattices with unit density,
the minimizer of $\mathcal{R}(\alpha;\Lambda)$ always exists and is always achieved at hexagonal lattice.
\end{theorem}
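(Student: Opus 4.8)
The plan is to reduce the minimization of $\mathcal{R}(\alpha;\Lambda)$ to the already-understood Theta function $\theta(\alpha;z)$ by a differentiation trick. Writing $\Lambda=\frac{1}{\sqrt{\Im(z)}}({\mathbb Z}\oplus z{\mathbb Z})$ and $\theta(\alpha;z)=\sum_{\mathbb{P}\in\Lambda}e^{-\pi\alpha|\mathbb{P}|^2}$, differentiation in $\alpha$ brings down powers of $|\mathbb{P}|^2$: one has $\partial_\alpha\theta(\alpha;z)=-\pi\sum_{\mathbb{P}\in\Lambda}|\mathbb{P}|^2 e^{-\pi\alpha|\mathbb{P}|^2}$ and $\partial_\alpha^2\theta(\alpha;z)=\pi^2\sum_{\mathbb{P}\in\Lambda}|\mathbb{P}|^4 e^{-\pi\alpha|\mathbb{P}|^2}$, so that
\begin{equation}\aligned\nonumber
\mathcal{R}(\alpha;z)=\frac{1}{\pi^2}\,\partial_\alpha^2\,\theta(\alpha;z).
\endaligned\end{equation}
Thus it suffices to show that $z=e^{i\pi/3}$ minimizes $\partial_\alpha^2\theta(\alpha;z)$ over $z\in\mathbb{H}$ for every $\alpha\geq\frac32$. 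First I would record the standard reduction to the fundamental domain: by modularity and the usual symmetrization, it is enough to work on $\mathcal{D}=\{z\in\mathbb{H}:|z|\geq 1,\ -\tfrac12\leq\Re(z)\leq\tfrac12\}$, and by the symmetry $z\mapsto -\bar z$ one may further restrict to $\Re(z)\in[0,\tfrac12]$; then show the minimum is attained, using that as $\Im(z)\to\infty$ the sum is dominated by the four lattice vectors of minimal length and blows up appropriately (or at least does not beat the value at $e^{i\pi/3}$).

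The heart of the argument is a Montgomery-type analysis of $f(z):=\partial_\alpha^2\theta(\alpha;z)$ on $\mathcal{D}$. The plan is to use the Fourier/theta expansion in the $x=\Re(z)$ variable at fixed $y=\Im(z)$: writing the Jacobi theta decomposition $\theta(\alpha;z)=\frac{1}{\sqrt{\alpha}}\sum_{n}e^{-\pi n^2/(\alpha y)}\,g_n(\alpha,y)\cos(2\pi n x)$-type expansion (Montgomery's representation), applying $\partial_\alpha^2$ term by term preserves the cosine structure, and one wants to prove the expansion has \emph{positive} coefficients for $\alpha\geq\frac32$, or more precisely that all nonconstant Fourier modes in $x$ are decreasing in a way that forces $x=\tfrac12$ to be optimal for each fixed $y$; this is exactly where the hypothesis $\alpha\geq\frac32$ should enter, guaranteeing enough Gaussian decay that the leading modes dominate. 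Once $x=\tfrac12$ is pinned down, the remaining one-variable problem is to minimize $y\mapsto f(\tfrac12+iy)$ for $y\geq\frac{\sqrt3}{2}$, and one shows the derivative in $y$ has a single sign change forcing the minimum at $y=\frac{\sqrt3}{2}$, i.e. $z=e^{i\pi/3}$; here I would again expand in a rapidly convergent series and bound the tail against the first one or two terms, the $\alpha\geq\frac32$ condition ensuring the truncation error is controlled.

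I expect the main obstacle to be the positivity/monotonicity of the Fourier coefficients after applying $\partial_\alpha^2$: the second derivative in $\alpha$ introduces a polynomial-in-$\alpha$ prefactor (schematically terms like $(1-c/(\alpha y))^2$ and $1/(\alpha y)$) that can change sign, so unlike the pure theta case one cannot simply quote Montgomery. The threshold $\alpha\geq\frac32$ is presumably precisely the bound that makes the offending prefactors nonnegative (or makes the positive terms dominate) uniformly on $\mathcal{D}$, and verifying this will require careful elementary estimates on expressions of the form $n^2 e^{-\pi n^2/(\alpha y)}$ summed over $n$, together with the constraint $y\geq\frac{\sqrt3}{2}$ coming from the fundamental domain. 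A secondary technical point is handling the region near the corner $z=e^{i\pi/3}$ itself, where first-order information vanishes and one needs the Hessian (or a second-order Taylor argument) to conclude it is a strict local — hence global — minimum; this I would treat by a direct computation of the second variation of $f$ at $e^{i\pi/3}$, showing it is positive definite for $\alpha\geq\frac32$.
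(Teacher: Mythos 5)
Your overall architecture coincides with the paper's: the identity $\mathcal{R}(\alpha;z)=\pi^{-2}\partial_\alpha^2\theta(\alpha;z)$, the reduction to the fundamental domain of the group generated by $\tau\mapsto\tau+1$, $\tau\mapsto-1/\tau$, $\tau\mapsto-\overline{\tau}$, the proof that $\partial_x\mathcal{R}<0$ on the interior (pinning the minimum to the vertical line $\Re z=\frac{1}{2}$), and a final one-variable analysis in $y=\Im z$. These are exactly the paper's Lemmas \ref{respect} and \ref{INR} and Theorems \ref{2Th2} and \ref{4Th1}, and your diagnosis that the difficulty lies in the sign-changing polynomial prefactors produced by $\partial_\alpha^2$ (controlled via quotients of derivatives of Jacobi theta functions, with $\alpha\geq\frac{3}{2}$ entering through the decay rates) is accurate.

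The genuine gap is in your last step. By a result of B\'etermin (Lemma \ref{4lematem}), $\partial_y\mathcal{R}(\alpha;\frac{1}{2}+iy)$ vanishes at $y=\frac{\sqrt3}{2}$ for \emph{every} $\alpha$, so one must prove $\partial_y\mathcal{R}\geq0$ on an entire ray emanating from a zero of that derivative. Your plan is (i) direct series estimates away from the corner and (ii) a positive-definite Hessian at $e^{i\pi/3}$, concluding ``strict local hence global.'' This does not close: positive definiteness of the Hessian at the single point $e^{i\pi/3}$ controls only an unquantified infinitesimal neighborhood, whereas the region where the direct termwise estimate of $\partial_y\mathcal{R}$ fails is large --- in the paper it is $\Omega_2=\{y\in[\frac{\sqrt3}{2},\frac{4}{5}\alpha]\}$, whose length grows linearly in $\alpha$, because the negative terms in the expansion are only dominated once $y\gtrsim\alpha$. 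There is therefore an intermediate region covered by neither of your two arguments. The paper bridges it with a device you do not have: writing $\partial_y^2+\frac{2}{y}\partial_y=\frac{1}{y^2}\partial_y\big(y^2\partial_y\big)$ and using $y^2\partial_y\mathcal{R}\big|_{y=\sqrt3/2}=0$, it suffices to show $(\partial_y^2+\frac{2}{y}\partial_y)\mathcal{R}(\alpha;\frac{1}{2}+iy)\geq0$ on all of $\Omega_2$; unlike $\partial_y\mathcal{R}$ itself, this quantity admits an explicit positive lower bound (of order $y^{-4}e^{-\pi\alpha/y}$, Lemma \ref{4lema13}) and can be estimated term by term. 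To complete your proof you would need either this identity or a quantitative second-order Taylor argument with remainder controlled uniformly on an $\alpha$-dependent region; the pointwise Hessian computation alone is not sufficient.
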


The functional $\mathcal{R}(\alpha;\Lambda)$ has an elegant form and many consequences. We state one of them as follows

\begin{corollary}\label{cor1}
Assume that $\beta> \alpha\geq \frac{3}{2}$. Consider the minimizing problem
\begin{equation}\aligned\nonumber
\underset{\Lambda}{\min}\sum_{\mathbb{P}\in \Lambda} {\left| \mathbb{P} \right|}^{2}\Big(e^{-\pi\alpha {\left| \mathbb{P} \right|}^{2}}  -e^{-\pi\beta {\left| \mathbb{P} \right|}^{2}} \Big),
\endaligned\end{equation}
where $\Lambda =\frac{1}{\sqrt{\Im(z)}}\big({\mathbb Z}\oplus z{\mathbb Z}\big),z\in\mathbb{H}$. Then among two dimensional lattices with unit density, the minimizer of the lattice energy functional always exists and is always achieved at hexagonal lattice.
\end{corollary}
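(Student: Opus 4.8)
\textbf{Proof proposal for Corollary \ref{cor1}.}

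The plan is to reduce the stated minimization to Theorem \ref{Th2} by differentiating in the Gaussian parameter. Observe that for each lattice point $\mathbb{P}\neq 0$,
\begin{equation}\aligned\nonumber
{\left|\mathbb{P}\right|}^{2}\Big(e^{-\pi\alpha{\left|\mathbb{P}\right|}^{2}}-e^{-\pi\beta{\left|\mathbb{P}\right|}^{2}}\Big)
=\int_{\alpha}^{\beta}\pi{\left|\mathbb{P}\right|}^{4}e^{-\pi t{\left|\mathbb{P}\right|}^{2}}\,dt,
\endaligned\end{equation}
so that, summing over $\mathbb{P}\in\Lambda\backslash\{0\}$ and using Fubini (justified by absolute convergence, since the summand is positive and the lattice sums converge for every $t\geq\alpha>0$), the energy functional in the corollary equals $\pi\int_{\alpha}^{\beta}\mathcal{R}(t;\Lambda)\,dt$, where the origin term vanishes because of the ${\left|\mathbb{P}\right|}^{4}$ factor.

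Next I would invoke Theorem \ref{Th2}: since $\beta>\alpha\geq\frac{3}{2}$, every $t\in[\alpha,\beta]$ satisfies $t\geq\frac{3}{2}$, hence for each such $t$ the map $\Lambda\mapsto\mathcal{R}(t;\Lambda)$ attains its minimum over unit-density lattices at the hexagonal lattice $\Lambda_h$ (corresponding to $z=e^{i\pi/3}$). Therefore $\mathcal{R}(t;\Lambda)\geq\mathcal{R}(t;\Lambda_h)$ pointwise in $t$, and integrating this inequality in $t$ over $[\alpha,\beta]$ and multiplying by $\pi>0$ gives that the corollary's functional is bounded below by its value at $\Lambda_h$, with equality attained there. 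Existence of the minimizer then follows, since the hexagonal lattice realizes the infimum.

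The only genuine point requiring care is the interchange of the integral and the infimum over $\Lambda$: one does not need to commute $\int$ and $\inf$ directly; it suffices that the pointwise lower bound $\mathcal{R}(t;\cdot)\geq\mathcal{R}(t;\Lambda_h)$ holds for a.e.\ (indeed every) $t\in[\alpha,\beta]$, which Theorem \ref{Th2} supplies, and that $\Lambda_h$ is independent of $t$, so that the lower bound $\pi\int_{\alpha}^{\beta}\mathcal{R}(t;\Lambda_h)\,dt$ is actually attained at a single lattice. This is the main (and essentially only) obstacle, and it is mild; the rest is the Fubini justification, which is routine given positivity of the integrand and the convergence of Gaussian lattice sums for $t\geq\frac{3}{2}$. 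I would also remark that the strictness $\beta>\alpha$ is used only to ensure the integral is over a nondegenerate interval so the functional is not identically zero; the argument shows the hexagonal lattice is the minimizer regardless.
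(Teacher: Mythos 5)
Your proposal is correct and follows essentially the same route as the paper: both write the potential difference as $\pi\int_{\alpha}^{\beta}|\mathbb{P}|^{4}e^{-\pi t|\mathbb{P}|^{2}}\,dt$ via the fundamental theorem of calculus, exchange sum and integral, and then apply Theorem \ref{Th2} pointwise in $t\in[\alpha,\beta]$, noting that the hexagonal minimizer is the same for every $t$. Your additional remarks on Fubini and on not needing to commute $\int$ with $\inf$ are sound but are the same argument made explicit.
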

Note that the specific functional in Corollary \ref{cor1} corresponds to the functional given by \eqref{EFL} with the potential $\phi_{2}(r^2)=r^2e^{-\alpha\pi{r^2}}-r^2e^{-\beta\pi{r^2}}$. When potential functions $\hat{\phi}_{1}(r^2)=r^4{e}^{-\pi{r^2}}, \hat{\phi}_{2}(r^2)$ $=r^2e^{-\pi{r^2}}-r^2e^{-2\pi{r^2}}$, their images are shown in Figure \ref{p}. Notice that they are both non-monotone, therefore, Theorem \ref{Th2} and Corollary \ref{cor1} partially answer Problems \ref{Open1}-\ref{Open2}.
To fully understand Problems \ref{Open1}-\ref{Open2}, we propose that
\begin{figure}
\centering
 \includegraphics[scale=0.38]{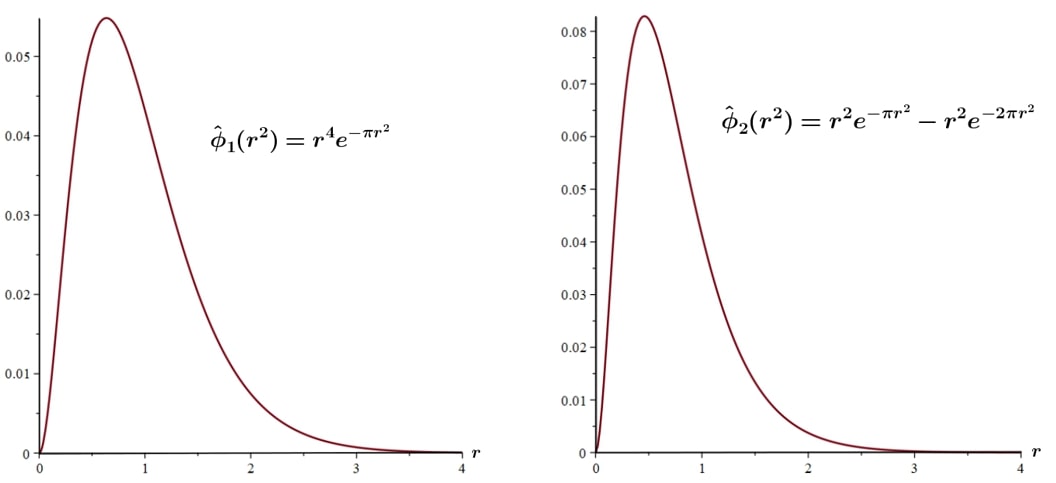}
 \caption{The images of $\hat{\phi}_{1}(r^2)$ and $\hat{\phi}_{2}(r^2)$.}
 \label{p}
\end{figure}

\begin{conjecture}
Assume that $k\in {\mathbb{Z}}^{+}, \alpha\geq k$, then among two dimensional lattices with unit density,
\begin{equation}\aligned\nonumber
\underset{\Lambda}{\min}\sum_{\mathbb{P} \in \Lambda} {\left| \mathbb{P} \right|}^{2k}e^{-\pi\alpha {\left| \mathbb{P} \right|}^{2}}
\endaligned\end{equation}
always exists and is always achieved at hexagonal lattice.
\end{conjecture}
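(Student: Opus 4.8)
The plan is to leverage Montgomery's Theorem A through the elementary differentiation identity
\begin{equation}\aligned\nonumber
E_k(\alpha;z):=\sum_{\mathbb{P}\in\Lambda}|\mathbb{P}|^{2k}e^{-\pi\alpha|\mathbb{P}|^2}=\frac{(-1)^k}{\pi^k}\frac{\partial^k}{\partial\alpha^k}\theta(\alpha;z),
\endaligned\end{equation}
obtained by differentiating $\theta(\alpha;z)=\sum_{\mathbb{P}}e^{-\pi\alpha|\mathbb{P}|^2}$ term by term $k$ times in $\alpha$, which is justified by the locally uniform convergence of the differentiated series. Since $E_k(\alpha;\cdot)$ depends only on the length spectrum $\{|\mathbb{P}|\}$ it is $SL_2(\mathbb{Z})$-invariant, and using in addition the reflection $z\mapsto-\bar z$ it suffices to minimize over the half fundamental domain $\{z=x+iy:\ 0\le x\le\frac12,\ |z|\ge1\}$ and to show the minimizer is the corner $z_0=e^{i\pi/3}=\frac12+i\frac{\sqrt3}{2}$. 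Existence of a minimizer follows from coercivity at the cusp: a change of variables shows $E_k(\alpha;x+iy)\sim C_k(\alpha)\sqrt{y}\to\infty$ as $y\to\infty$, exactly as in Theorem \ref{Th2}.

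The heart of the argument is a two-stage monotonicity analysis on the half fundamental domain. Applying Poisson summation to the inner sum over $n$ (with $z=x+iy$ held fixed) converts $E_k$ into a Fourier cosine series
\begin{equation}\aligned\nonumber
E_k(\alpha;x+iy)=c_0(\alpha,y)+\sum_{N\ge1}c_N(\alpha,y)\cos(2\pi Nx),
\endaligned\end{equation}
whose coefficients $c_N(\alpha,y)$ are polynomial-weighted Gaussian sums; the weights are the Hermite-type factors of degree $k$ produced when $\big[(mx+n)^2+m^2y^2\big]^k$ is transformed under Poisson summation. The first stage is to prove that, for $\alpha\ge k$, the quantities $N\,c_N(\alpha,y)$ are positive and decreasing in $N$; Fej\'er's classical positivity inequality for sine series with monotone coefficients then yields $\partial_x E_k=-2\pi\sum_{N\ge1}N\,c_N\sin(2\pi Nx)<0$ on $(0,\frac12)$, so that the minimum in $x$ is attained on the line $x=\frac12$. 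The second stage is to show $\partial_y E_k>0$ along $x=\frac12$ for $y>\frac{\sqrt3}{2}$, which drives the minimum down to the point $z_0$ where this line meets the unit circle. The delicate near-corner region, where the above monotonicities degenerate, is controlled using the modular relation $E_k(\alpha;z)=E_k(\alpha;-1/z)$, which preserves the arc $|z|=1$ and pins its minimum to the fixed point $z_0$.

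The role of the hypothesis $\alpha\ge k$ is precisely to fix the signs of the coefficients $c_N$. In each coefficient the degree-$k$ polynomial growth in the index $m$ competes with the Gaussian decay $e^{-\pi\alpha m^2y}$, and $\alpha\ge k$ is the budget that forces the leading term ($m=1$, $|j|=1$) to dominate the remainder, so that the sign of $c_N$ is dictated by that leading term. This mirrors the threshold $\alpha\ge\frac32$ of Theorem \ref{Th2}; since $\frac32<2=k$ there, one expects the sharp threshold to lie strictly below $k$, and I state $\alpha\ge k$ only as a clean sufficient condition obtained from the crudest form of this competition.

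The main obstacle is the uniform-in-$k$ control of the Hermite-weighted coefficients $c_N(\alpha,y)$. In contrast to the pure theta function, these weights change sign as functions of the summation indices, so the positivity and monotonicity of $N\,c_N$ cannot be read off directly but must be extracted by isolating finitely many dominant low-index terms and bounding the tail rigorously; both the number of terms to track and the polynomial degrees involved grow with $k$. A secondary difficulty is that the near-corner estimate on the arc requires quantitative (interval-arithmetic type) bounds whose complexity also increases with $k$. Overcoming these by a $k$-uniform estimate---most plausibly an induction on $k$ that propagates the coefficient sign conditions from $E_{k-1}$ to $E_k$ via the relation $E_k=-\frac1\pi\partial_\alpha E_{k-1}$, while tracking the $\alpha$-monotonicity of the difference $E_{k-1}(\alpha;z)-E_{k-1}(\alpha;z_0)$---is the decisive step that would upgrade the conjecture to a theorem.
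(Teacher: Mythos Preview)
The statement you are attempting is a \emph{Conjecture} in the paper; the authors do not prove it, and propose it precisely because their methods for the case $k=2$ (Theorem~\ref{Th2}) do not extend in an obvious way. So there is no ``paper's own proof'' to compare against for general $k$. Your proposal is, by your own admission, a strategy outline rather than a proof: you identify the decisive unproved step yourself in the final paragraph.

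It is still instructive to compare your plan with what the paper actually does for $k=2$. The overall architecture---reduce to the half fundamental domain, prove $\partial_x<0$ to push the minimum onto $\Gamma=\{x=\tfrac12\}$, then prove $\partial_y\ge0$ on $\Gamma$---is the same. The execution is quite different. For the $x$-direction the paper does \emph{not} use a Fourier cosine expansion in $x$ together with a Fej\'er--Jackson monotone-coefficient argument; instead it expands $\theta(\alpha;z)$ via the one-dimensional Jacobi theta function (Lemma~\ref{Lemmatree}), differentiates, and writes $-\partial_x\mathcal R$ as a positive prefactor times $\Phi_{\alpha,A}+\Phi_{\alpha,B}$, where $\Phi_{\alpha,A}$ is a single ``main'' term and $\Phi_{\alpha,B}$ collects the rest. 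Positivity of $\Phi_{\alpha,A}$ and smallness of $|\Phi_{\alpha,B}/\Phi_{\alpha,A}|$ are then obtained from sharp pointwise bounds on the quotients $\vartheta_{XY}/\vartheta_Y$ and $\vartheta_{XXY}/\vartheta_Y$ (Lemmas~\ref{Lemmatime}--\ref{Lemmavar1}), with the $(\alpha,y)$-plane split into four explicit subregions. For the $y$-direction the paper exploits the a~priori fact that $\partial_y\mathcal R$ vanishes at the hexagonal point (Lemma~\ref{4lematem}) and, near that point, shows instead that $\big(\partial_y^2+\tfrac{2}{y}\partial_y\big)\mathcal R\ge0$, which integrates up to $\partial_y\mathcal R\ge0$.

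Your Fej\'er-type idea is more structural and, if it worked, would be more elegant and more likely to scale in $k$. But the genuine gap is exactly the one you name: you assert that $N\,c_N(\alpha,y)$ is positive and decreasing in $N$ for $\alpha\ge k$, and this is neither proved nor obviously true. The coefficients $c_N$ arising from Poisson summation with the weight $|\mathbb P|^{2k}$ involve Hermite-type polynomials that change sign in the summation index, so neither positivity nor monotonicity of $N\,c_N$ is automatic; the paper's own experience for $k=2$ shows that even there the analysis devolves into a rather intricate case split with numerical constants, not a clean sign argument. Until that step (or the induction on $k$ you sketch via $E_k=-\tfrac{1}{\pi}\partial_\alpha E_{k-1}$) is carried out rigorously, the proposal remains a heuristic program rather than a proof.
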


The following parts of the paper is organized as follows: in Section 2, some useful properties of the functionals are  characterized and the estimates of the related forms of the Jacobi theta functions are provided. In Section 3, we prove that the minimum of the functional can be achieved on the right boundary of the fundamental domain. In Section 4, we prove that the minimization on the right boundary of the fundamental domain can be reduced to the hexagonal point (see Figure \ref{h}). Finally, Section 5 contains the proofs of Theorem \ref{Th2} and Corollary \ref{cor1}.

\section{Preliminaries}
\setcounter{equation}{0}

   The topic under discussion is closely related to the Theta function given by \eqref{thetas}. Through using the symmetries of the Theta function and the relationship between the function $\mathcal{R}$ given by \eqref{R} and the Theta function, it reduces the location of the minimum from the upper half plane to the fundamental domain. In particular, the analysis of this problem relies heavily on the estimates of the quotients of derivatives of the Jacobi theta functions. Due to the complexity of this problem, we have to refine some estimates of the quotients of derivatives of the Jacobi theta functions. The estimates can be used not only in this paper, but also in the related problems.

 We use the following definition of fundamental domain which is slightly different from the classical definition (see \cite{Mon1988}):
\begin{definition}[page 108, \cite{Eva1973}]
The fundamental domain associated to group $G$ is a connected domain $\mathcal{D}$ that satisfies
\begin{itemize}
  \item For any $z\in\mathbb{H}$, there exists an element $\pi\in G$ such that $\pi(z)\in\overline{\mathcal{D}}$;
  \item Suppose $z_1,z_2\in\mathcal{D}$ and $\pi(z_1)=z_2$ for some $\pi\in G$, then $z_1=z_2$ and $\pi=\pm Id$.
\end{itemize}
\end{definition}

Let
$
\mathbb{H}
$
 denote the upper half plane and  $\mathcal{S} $ denote the modular group
\begin{equation}\aligned\label{modular}\nonumber
\mathcal{S}:=SL_2(\mathbb{Z})=\{
\left(
  \begin{array}{cc}
    a & b \\
    c & d \\
  \end{array}
\right), ad-bc=1, a, b, c, d\in\mathbb{Z}
\}.
\endaligned\end{equation}.

By Definition 1, the fundamental domain associated to modular group $\mathcal{S}$ is
\begin{equation}\aligned\label{Fd1}\nonumber
\mathcal{D}_{\mathcal{S}}:=\{
z\in\mathbb{H}: |z|>1,\; -\frac{1}{2}<x<\frac{1}{2}
\}.
\endaligned\end{equation}
Note that the fundamental domain can be open (See [page 30, \cite{Apo1976}]).

Next we introduce another group related  to the functional $\theta(\alpha;z)$. The generators of the group are given by
\begin{equation}\aligned\label{GroupG1}
\mathcal{G}: \hbox{the group generated by} \;\;\tau\mapsto -\frac{1}{\tau},\;\; \tau\mapsto \tau+1,\;\;\tau\mapsto -\overline{\tau}.
\endaligned\end{equation}

It is easy to see that
the fundamental domain associated to group $\mathcal{G}$ denoted by $\mathcal{D}_{\mathcal{G}}$ is
\begin{equation}\aligned\label{Fd3}
\mathcal{D}_{\mathcal{G}}:=\{
z\in\mathbb{H}: |z|>1,\; 0<x<\frac{1}{2}
\}.
\endaligned\end{equation}

\begin{figure}
\centering
 \includegraphics[scale=0.5]{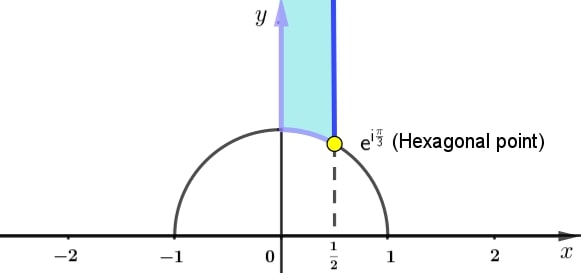}
 \caption{The hexagonal point in the fundamental domain.}
 \label{h}
\end{figure}

The following Lemma provided by Luo-Wei \cite{Luo2022} characterizes the fundamental symmetries of the Theta function $\theta (\alpha; z)$.
\begin{lemma}[Luo-Wei \cite{Luo2022}]\label{G111} For any $\alpha>0$, any $\gamma\in \mathcal{G}$ and $z\in\mathbb{H}$,
$\theta (\alpha; \gamma(z))=\theta (\alpha; z)$.
\end{lemma}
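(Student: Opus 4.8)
The plan is to exploit the fact that $\mathcal{G}$ is generated by the three maps $S\colon\tau\mapsto-\frac{1}{\tau}$, $T\colon\tau\mapsto\tau+1$ and $J\colon\tau\mapsto-\overline{\tau}$ (see \eqref{GroupG1}), so it suffices to verify $\theta(\alpha;\gamma(z))=\theta(\alpha;z)$ when $\gamma$ is one of these three generators; the claim for a general $\gamma\in\mathcal{G}$ then follows by writing $\gamma$ as a word in the generators and composing the invariances. Since $\alpha>0$ and the quadratic form $(m,n)\mapsto\frac{|mz+n|^2}{\Im(z)}$ is positive definite on $\mathbb{Z}^2\setminus\{0\}$, the series \eqref{thetas} converges absolutely for every $z\in\mathbb{H}$, and it is precisely this absolute convergence that will license the term-by-term reindexings used below.

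For each generator I would substitute into \eqref{thetas}, simplify the exponent, and exhibit a bijection of $\mathbb{Z}^2$ under which the summand is unchanged. Write $z=x+iy$ with $y=\Im(z)>0$. For $T$ one has $m(z+1)+n=mz+(m+n)$ while $\Im(z+1)=y$, so the shift $n\mapsto m+n$ (a bijection of $\mathbb{Z}^2$ for each fixed $m$) leaves the sum invariant. For $S$ a short computation gives $\Im\!\left(-\tfrac{1}{z}\right)=\tfrac{y}{|z|^2}$ and $m\!\left(-\tfrac{1}{z}\right)+n=\tfrac{nz-m}{z}$, whence $\tfrac{|m(-1/z)+n|^2}{\Im(-1/z)}=\tfrac{|nz-m|^2}{y}$, and the unimodular reindexing $(m,n)\mapsto(n,-m)$ identifies this with the original series. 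For $J$ one has $\Im(-\overline{z})=y$ and $|m(-\overline{z})+n|^2=(mx-n)^2+m^2y^2$, so the single sign change $n\mapsto-n$ restores $(mx+n)^2+m^2y^2=|mz+n|^2$.

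These three verifications reflect, respectively, the unimodular column operations generating $SL_2(\mathbb{Z})$ and the reflection symmetry $z\mapsto-\overline{z}$ of the underlying lattice; in each case the work reduces to checking that an explicit linear map preserves $\mathbb{Z}^2$, which is routine. The only point demanding genuine care is the rearrangement of the doubly-infinite sum, for which the absolute convergence guaranteed by $\alpha>0$ is essential; this is the main, though quite mild, obstacle in the argument.
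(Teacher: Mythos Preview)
Your proof is correct. The paper does not supply its own argument for this lemma but simply cites it from Luo--Wei \cite{Luo2022}; the approach you take---verifying invariance on each of the three generators $S$, $T$, $J$ by an explicit bijection of $\mathbb{Z}^2$ that preserves the exponent $\frac{|mz+n|^2}{\Im(z)}$, with absolute convergence justifying the reindexing---is the standard and essentially canonical route, and is presumably what the cited reference does as well.
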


Based on the definition of the function $\mathcal{R}$ given by \eqref{R}, one has a more explicit expression
\begin{equation}\aligned\label{RR}
\mathcal{R}(\alpha;z)=\sum_{ (m,n)\in \mathbb{Z} ^2 }\frac{{\left| mz+n \right|}^4}{{{\Im}^2(z)}}e^{-\pi\alpha\frac{ \left|mz+n\right|^2}{\Im(z)}}.
\endaligned\end{equation}

A direct checking by \eqref{thetas} and \eqref{RR} shows that
\begin{lemma}[The relationship between the function $\mathcal{R}$ and the Theta function]\label{respect}
\begin{equation}\aligned\label{respectt}\nonumber
\mathcal{R}(\alpha;z)=\frac{1}{\pi^2}\frac{\partial^2}{\partial {\alpha}^2} \theta(\alpha;z).
\endaligned\end{equation}
\end{lemma}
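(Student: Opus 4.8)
The plan is to obtain the identity by differentiating the series for $\theta(\alpha;z)$ term by term in $\alpha$, after first justifying the interchange of $\partial_\alpha$ with the lattice sum. First I would fix $z\in\mathbb{H}$ and recall the explicit expansion \eqref{thetas}, namely $\theta(\alpha;z)=\sum_{(m,n)\in\mathbb{Z}^2} e^{-\pi\alpha t_{m,n}}$ where I abbreviate $t_{m,n}:=|mz+n|^2/\Im(z)\ge 0$. For any compact interval $[\alpha_0,\alpha_1]\subset(0,\infty)$ and any nonnegative integer $k$, one has $t^k e^{-\pi\alpha t}\le t^k e^{-\pi\alpha_0 t}\le C_{k,\alpha_0}\,e^{-\frac{\pi\alpha_0}{2}t}$ uniformly for $\alpha\in[\alpha_0,\alpha_1]$, while $\sum_{(m,n)} e^{-\frac{\pi\alpha_0}{2}t_{m,n}}<\infty$ since $\#\{(m,n): t_{m,n}\le R\}=O(R)$ for the fixed lattice $\Lambda=\frac{1}{\sqrt{\Im(z)}}(\mathbb{Z}\oplus z\mathbb{Z})$. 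Hence each series $\sum_{(m,n)} t_{m,n}^k e^{-\pi\alpha t_{m,n}}$ converges absolutely and uniformly on $[\alpha_0,\alpha_1]$, and by the Weierstrass $M$-test $\theta(\cdot;z)$ may be differentiated under the summation sign any number of times in $\alpha$.

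Carrying out two differentiations, each term picks up a factor $(-\pi t_{m,n})$, so that
\[
\frac{\partial^2}{\partial\alpha^2}\theta(\alpha;z)=\sum_{(m,n)\in\mathbb{Z}^2}\pi^2 t_{m,n}^2\, e^{-\pi\alpha t_{m,n}}=\pi^2\sum_{(m,n)\in\mathbb{Z}^2}\frac{|mz+n|^4}{\Im^2(z)}\,e^{-\pi\alpha\frac{|mz+n|^2}{\Im(z)}}.
\]
Comparing with the explicit formula \eqref{RR} for $\mathcal{R}(\alpha;z)$, the right-hand side equals $\pi^2\,\mathcal{R}(\alpha;z)$, and dividing by $\pi^2$ gives $\mathcal{R}(\alpha;z)=\frac{1}{\pi^2}\frac{\partial^2}{\partial\alpha^2}\theta(\alpha;z)$.

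I expect no real difficulty here: the identity is essentially a bookkeeping computation, and the single point deserving attention — exchanging $\partial_\alpha$ with the infinite sum — is settled once and for all by the uniform bound above. That same estimate is precisely the type needed repeatedly in the rest of Section 2 when differentiating theta-type series, so it is natural to record it at this stage.
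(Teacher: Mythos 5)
Your proposal is correct and matches the paper's approach: the paper simply states that the identity follows from a direct check comparing \eqref{thetas} with \eqref{RR}, which is exactly your term-by-term differentiation. Your additional justification of the interchange of $\partial_\alpha$ with the lattice sum via uniform convergence is a harmless (and welcome) elaboration of a step the paper leaves implicit.
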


Combining Lemmas \ref{G111} and \ref{respect}, we have the following invariance for $\mathcal{R} (\alpha;z)$.
\begin{lemma}[The invariance under group action for $\mathcal{R} (\alpha;z)$]\label{INR}
For any $\alpha>0$, any $\gamma\in \mathcal{G}$ and $z\in\mathbb{H}$, it holds that
\begin{equation}\aligned\label{INRR}\nonumber
\mathcal{R} (\alpha; \gamma(z))=\mathcal{R} (\alpha; z).
\endaligned\end{equation}
\end{lemma}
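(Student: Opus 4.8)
The plan is to obtain Lemma \ref{INR} directly from the two preceding lemmas. The key observation is that, for a fixed $\gamma\in\mathcal{G}$ and a fixed $z\in\mathbb{H}$, the image point $\gamma(z)$ does not depend on the parameter $\alpha$; hence the identity $\theta(\alpha;\gamma(z))=\theta(\alpha;z)$ of Lemma \ref{G111} is an identity of functions of $\alpha$ on $(0,\infty)$, and we are free to differentiate it in $\alpha$.

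First I would record the elementary convergence fact underlying everything: for each fixed $z\in\mathbb{H}$, the series \eqref{thetas} defining $\theta(\alpha;z)$ converges absolutely and uniformly on every compact subset of $\{\alpha>0\}$, and the same is true after termwise differentiation in $\alpha$ any number of times, since each such differentiation only inserts a factor polynomial in $\frac{|mz+n|^2}{\Im(z)}$, which is dominated by the Gaussian decay $e^{-\pi\alpha\frac{|mz+n|^2}{\Im(z)}}$. Thus $\alpha\mapsto\theta(\alpha;z)$ is $C^\infty$ on $(0,\infty)$ and the series may be differentiated term by term; this is exactly what produces the identity of Lemma \ref{respect}, namely $\mathcal{R}(\alpha;z)=\frac{1}{\pi^2}\frac{\partial^2}{\partial\alpha^2}\theta(\alpha;z)$, and the very same justification applies verbatim with $z$ replaced by $\gamma(z)$.

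Then I would simply apply $\frac{1}{\pi^2}\frac{\partial^2}{\partial\alpha^2}$ to both sides of $\theta(\alpha;\gamma(z))=\theta(\alpha;z)$. By Lemma \ref{respect} the left-hand side becomes $\mathcal{R}(\alpha;\gamma(z))$ and the right-hand side becomes $\mathcal{R}(\alpha;z)$, which is the asserted invariance; as $\gamma\in\mathcal{G}$ and $z\in\mathbb{H}$ were arbitrary, this finishes the proof. Alternatively, one can avoid derivatives entirely: Lemma \ref{G111} reflects the fact that $\gamma(z)$ and $z$ parametrize the same lattice up to isometry, so the multiset of values $\big\{\frac{|m\gamma(z)+n|^2}{\Im(\gamma(z))}\big\}_{(m,n)\in\mathbb{Z}^2}$ coincides with $\big\{\frac{|mz+n|^2}{\Im(z)}\big\}_{(m,n)\in\mathbb{Z}^2}$; the summands of \eqref{RR} for $\gamma(z)$ are then a rearrangement of those for $z$, and absolute convergence gives the equality of the sums directly.

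There is essentially no obstacle here: the statement is a formal corollary of Lemmas \ref{G111} and \ref{respect}. The only point requiring (routine) care is the legitimacy of differentiating the theta series in $\alpha$, which is handled by the uniform-convergence estimate noted above; in the rearrangement approach, the corresponding point is the absolute convergence of \eqref{RR}, which holds for the same Gaussian-decay reason.
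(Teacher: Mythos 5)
Your proposal is correct and matches the paper's argument: the paper obtains Lemma \ref{INR} precisely by combining the $\mathcal{G}$-invariance of $\theta$ (Lemma \ref{G111}) with the identity $\mathcal{R}=\frac{1}{\pi^2}\partial_\alpha^2\theta$ (Lemma \ref{respect}), i.e., by differentiating the theta invariance twice in $\alpha$. Your added justification of termwise differentiation (and the alternative rearrangement argument) is routine but sound.
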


We also need some delicate analysis of the Jacobi theta function. We first recall the following well-known Jacobi triple product formula:
\begin{equation}\aligned\label{Jacob1}
\prod_{m=1}^\infty(1-x^{2m})(1+x^{2m-1}y^2)(1+\frac{x^{2m-1}}{y^2})=\sum_{n=-\infty}^\infty x^{n^2} y^{2n},
 \endaligned\end{equation}
for complex numbers $x,y$ with $|x|<1$, $y\neq0$.

The Jacob theta function is defined as
\begin{equation}\aligned\label{Jacobi}\nonumber
\vartheta_J(z;\tau):=\sum_{n=-\infty}^\infty e^{i\pi n^2 \tau+2\pi i n z},
 \endaligned\end{equation}
and the classical one-dimensional theta function is given by
\begin{equation}\aligned\label{TXY}
\vartheta(X;Y):=\vartheta_J(Y;iX)=\sum_{n=-\infty}^\infty e^{-\pi n^2 X} e^{2n\pi i Y}.
 \endaligned\end{equation}
By the Poisson summation  formula, it holds that
\begin{equation}\aligned\label{Poisson}
\vartheta(X;Y)=X^{-\frac{1}{2}}\sum_{n=-\infty}^\infty e^{-\pi \frac{(n-Y)^2}{X}} .
 \endaligned\end{equation}
Hence by the Jacobi triple product formula \eqref{Jacob1}, it holds that
\begin{equation}\aligned\label{Product}
\vartheta(X;Y)=\prod_{n=1}^\infty(1-e^{-2\pi n X})(1+e^{-2(2n-1)\pi X}+2e^{-(2n-1)\pi X}\cos(2\pi Y)).
 \endaligned\end{equation}

\subsection{Some useful estimates}
To estimate bounds of quotients of derivatives of $\vartheta(X;Y)$, we denote that
\begin{equation}\aligned\label{duct}
\mu(X)&=\sum_{n=2}^{\infty}n^2e^{-\pi (n^2-1) X},\:\:\:\:\:\:\:\:\:\:\:\:\hat{\mu}(X)=\sum_{n=2}^{\infty}n^2e^{-\pi (n^2-1) X}(-1)^{n+1},\\
\nu(X)&=\sum_{n=2}^{\infty}n^4e^{-\pi (n^2-1) X},\:\:\:\:\:\:\:\:\:\:\:\:\hat{\nu}(X)=\sum_{n=2}^{\infty}n^4e^{-\pi (n^2-1) X}(-1)^{n+1},\\
\omega(X)&=\sum_{n=2}^{\infty}n^6e^{-\pi (n^2-1) X},\:\:\:\:\:\:\:\:\:\:\:\hat{\omega}(X)=\sum_{n=2}^{\infty}n^6e^{-\pi (n^2-1) X}(-1)^{n+1}.
 \endaligned\end{equation}.

The following Lemmas \ref{Lemmamoist}-\ref{Lemmamoistt} proved by Luo-Wei \cite{Luo2023} are helpful in our subsequent analysis.
\begin{lemma}[Luo-Wei \cite{Luo2023}]\label{Lemmamoist}
Assume that $Y>0,k\in \mathbb{N^{+}}$. It holds that
 \begin{itemize}
  \item [(1)] $\left|\frac{\vartheta_{Y}(X;kY)}{\vartheta_{Y}(X;Y)} \right|\leq k\cdot \frac{1+\mu(X)}{1-\mu(X)}$ for $X> \frac{1}{5};$
  \item [(2)] $\left|\frac{\vartheta_{Y}(X;kY)}{\vartheta_{Y}(X;Y)} \right|\leq k\cdot \frac{1}{\pi}e^{\frac{\pi}{4X}}$ for $X< \frac{\pi}{\pi+2}$.
\end{itemize}
\end{lemma}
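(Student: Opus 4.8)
The plan is to prove both inequalities by the same mechanism: expand $\vartheta_Y(X;\cdot)$ as a sine series, isolate the first harmonic $\sin(2\pi Y)$, and then bound the remaining factor from above in the numerator and from below in the denominator. Throughout I write $q=e^{-\pi X}$ and use the elementary estimate $|\sin(m\theta)|\le m|\sin\theta|$, valid for every $m\in\mathbb{N}^{+}$ (equivalently, the Chebyshev polynomial of the second kind satisfies $|U_{m-1}(\cos\theta)|\le m$). Differentiating \eqref{TXY} termwise gives the real series $\vartheta_Y(X;W)=-4\pi\sum_{n\ge1}n\,e^{-\pi n^2X}\sin(2\pi nW)$, so in particular $\vartheta_Y(X;kY)=-4\pi\sum_{n\ge1}n\,e^{-\pi n^2X}\sin(2\pi nkY)$.

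For part (1) I first factor out the $n=1$ term, writing $\vartheta_Y(X;kY)=-4\pi e^{-\pi X}\bigl(\sin(2\pi kY)+\sum_{n\ge2}n\,e^{-\pi(n^2-1)X}\sin(2\pi nkY)\bigr)$, and similarly for $\vartheta_Y(X;Y)$. In the numerator the triangle inequality together with $|\sin(2\pi nkY)|\le nk|\sin(2\pi Y)|$ bounds the bracket by $k|\sin(2\pi Y)|\bigl(1+\sum_{n\ge2}n^2e^{-\pi(n^2-1)X}\bigr)=k|\sin(2\pi Y)|(1+\mu(X))$; in the denominator the reverse triangle inequality with $|\sin(2\pi nY)|\le n|\sin(2\pi Y)|$ produces the lower bound $|\sin(2\pi Y)|(1-\mu(X))$. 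The factors $e^{-\pi X}$ and $|\sin(2\pi Y)|$ cancel, leaving $k\,\frac{1+\mu(X)}{1-\mu(X)}$. The only point needing verification is $\mu(X)<1$, which holds for $X>\tfrac15$ because $\mu$ is decreasing in $X$ and $\mu(\tfrac15)<1$ by a direct finite estimate of its rapidly converging series.

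For part (2), valid only for small $X$, the denominator can no longer be handled by discarding higher harmonics. Instead I write $\sin(2\pi nY)=\sin(2\pi Y)\,U_{n-1}(\cos2\pi Y)$ and set $P(X;Y):=\sum_{n\ge1}n\,e^{-\pi n^2X}U_{n-1}(\cos2\pi Y)$, so that $\vartheta_Y(X;Y)=-4\pi\sin(2\pi Y)\,P(X;Y)$. The numerator is bounded as before by $4\pi k|\sin(2\pi Y)|\,P(X;0)$ with $P(X;0)=\sum_{n\ge1}n^2e^{-\pi n^2X}$; cancelling $|\sin(2\pi Y)|$ (the zeros of $\sin(2\pi Y)$ being handled by continuity, since $\vartheta_Y(X;kY)$ vanishes there too) shows the quotient is at most $k\,P(X;0)/P(X;Y)$. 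The crux is a lower bound for $P(X;Y)$, and I claim $P(X;Y)\ge P(X;\tfrac12)$ for all real $Y$. To prove this I combine the product formula \eqref{Product}, $\vartheta(X;Y)=\prod_{m\ge1}(1-q^{2m})\prod_{m\ge1}\bigl(1+q^{2(2m-1)}+2q^{2m-1}\cos2\pi Y\bigr)$, with the logarithmic $Y$-derivative of its factors; this yields $P(X;Y)=\prod_{m\ge1}(1-q^{2m})\sum_{n\ge1}q^{2n-1}\prod_{m\ne n}\bigl(1+q^{2(2m-1)}+2q^{2m-1}\cos2\pi Y\bigr)$. Each factor $1+q^{2(2m-1)}+2q^{2m-1}\cos2\pi Y=|1+q^{2m-1}e^{2\pi iY}|^2$ is positive and strictly increasing in $\cos2\pi Y$, so every summand is positive and increasing in $u=\cos2\pi Y\in[-1,1]$; hence $P$ is increasing in $u$ and attains its minimum at $u=-1$, i.e. at $Y=\tfrac12$. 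This reduces part (2) to the scalar inequality $\pi e^{-\pi/(4X)}P(X;0)\le P(X;\tfrac12)$, where $P(X;\tfrac12)=\sum_{n\ge1}(-1)^{n-1}n^2e^{-\pi n^2X}$. To finish I pass to the dual side: applying $-\tfrac1{4\pi^2}\partial_Y^2$ to \eqref{Poisson} gives $P(X;0)=\tfrac12X^{-1/2}\sum_n(\tfrac1{2\pi X}-\tfrac{n^2}{X^2})e^{-\pi n^2/X}\le\tfrac1{4\pi}X^{-3/2}$ (only the $n=0$ term is positive, for $X<2\pi$) and $P(X;\tfrac12)=\tfrac12X^{-1/2}\sum_n(\tfrac{(n-1/2)^2}{X^2}-\tfrac1{2\pi X})e^{-\pi(n-1/2)^2/X}\ge X^{-1/2}e^{-\pi/(4X)}(\tfrac1{4X^2}-\tfrac1{2\pi X})$ (keeping the two dominant terms $n=0,1$ and dropping the remaining strictly positive ones); substituting these and cancelling $X^{-1/2}e^{-\pi/(4X)}$ reduces the inequality exactly to $\tfrac14\le\tfrac1{4X}-\tfrac1{\pi}\cdot\tfrac12$, that is, to $X\le\frac{\pi}{\pi+2}$, which is the hypothesis.

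I expect the main obstacle to be the lower bound $P(X;Y)\ge P(X;\tfrac12)$. A naive termwise comparison fails, because near $Y=\tfrac12$ the leading harmonics of $\vartheta_Y$ cancel, so $P$ is a quotient of two quantities both tending to $0$ and no monotonicity is visible in the sine expansion. The representation of $P$ as $\prod_{m\ge1}(1-q^{2m})$ times a sum of products of the factors $|1+q^{2m-1}e^{2\pi iY}|^2$ with one factor deleted is precisely what makes each summand manifestly monotone in $\cos2\pi Y$, thereby identifying the minimizer as $Y=\tfrac12$. The subsequent Poisson estimate is delicate only in that the two retained terms must genuinely dominate the discarded positive tail, and it is this comparison that pins the admissible range to $X\le\frac{\pi}{\pi+2}$.
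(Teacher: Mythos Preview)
Your proof is correct. The paper does not actually prove this lemma; it is quoted as a result from Luo--Wei \cite{Luo2023}, so there is no ``paper's own proof'' to compare against. That said, your argument for part~(1) is exactly the mechanism the paper uses when proving the analogous estimates (see the proof of Lemma~\ref{Lemmavar1}, item~(1)): factor out the leading harmonic, invoke $|\sin(m\theta)|\le m|\sin\theta|$, and read off the ratio $(1+\mu)/(1-\mu)$; so part~(1) is undoubtedly the intended argument.

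For part~(2) your route is a bit more conceptual than a direct computation. Writing $\vartheta_Y(X;Y)=-4\pi\sin(2\pi Y)\,P(X;Y)$ and then using the Jacobi triple product to represent $P$ as a sum of products of the manifestly positive factors $|1+q^{2m-1}e^{2\pi iY}|^2$ is a clean way to see that $P$ is monotone in $\cos 2\pi Y$ and hence minimized at $Y=\tfrac12$. This sidesteps any termwise cancellation issues in the sine series. The subsequent Poisson estimates are correct: for $X<2\pi$ only the $n=0$ term in the dual expansion of $P(X;0)$ is positive, and for $X<\tfrac{9\pi}{2}$ all dropped terms in the dual expansion of $P(X;\tfrac12)$ are positive, so both truncations are legitimate on the stated range. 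The final algebraic reduction to $X\le\frac{\pi}{\pi+2}$ is exact, which explains the otherwise curious threshold in the statement.
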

%%%%%%%%%%%%%%%%%%%%%%%%%%%%%%%%%%%%%%%%%%%%%%%%%%%%%%%%%%%%%%%%%%%%%%%%%%%%%%%%%%%%%%%%%%%%%%%%%%%%%%%%%%%%%%%%%%%%%%%%%%%%%%%%%

%%%%%%%%%%%%%%%%%%%%%%%%%%%%%%%%%%%%%%%%%%%%%%%%%%%%%%%%%%%%

\begin{lemma}[Luo-Wei \cite{Luo2023}]\label{Lemmamoistt}
Assume that  $Y>0,k\in \mathbb{N^{+}}$. It holds that
 \begin{itemize}
   \item [(1)] $\left|\frac{\vartheta_{XY}(X;kY)}{\vartheta_{Y}(X;Y)} \right|\leq k\pi \cdot \frac{1+\nu(X)}{1-\mu(X)}$ for $X\geq \frac{1}{5};$
  \item  [(2)]for $k=1$,we have the more precise bound $\left|\frac{\vartheta_{XY}(X;kY)}{\vartheta_{Y}(X;Y)} \right|\leq  \pi \cdot  \frac{1+\nu(X)}{1+\mu(X)}$ for $X\geq \frac{1}{5};$
  \item  [(3)] $\left|\frac{\vartheta_{XY}(X;kY)}{\vartheta_{Y}(X;Y)} \right|\leq  \frac{3k}{2\pi}X^{-1}(1+\frac{\pi}{6}\frac{1}{X})e^{\frac{\pi}{4X}}$ for $X\leq \frac{1}{2};$
   \item [(4)] $\left|\frac{\vartheta_{XY}(X;Y)}{\vartheta_{Y}(X;Y)} \right|\leq \frac{3}{2}X^{-1}(1+\frac{\pi}{6}\frac{1}{X})$ for $X\leq \frac{1}{2}.$
 \end{itemize}
\end{lemma}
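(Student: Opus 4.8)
\emph{Strategy and reduction.} The plan is to prove the sharp $k=1$ estimates (2) and (4) first, and then obtain the general-$k$ estimates (1) and (3) for free by a multiplicative factorization together with the already-available Lemma~\ref{Lemmamoist}. Differentiating the $q$-expansion \eqref{TXY} termwise and pairing $n$ with $-n$ yields the real expansions $\vartheta_Y(X;Y)=-4\pi\sum_{n\ge1}n\,e^{-\pi n^2X}\sin(2\pi nY)$ and $\vartheta_{XY}(X;Y)=4\pi^2\sum_{n\ge1}n^3e^{-\pi n^2X}\sin(2\pi nY)$, with $\vartheta_{XY}(X;kY)$ given by the same sum with $Y$ replaced by $kY$. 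The elementary input throughout is $|\sin(k\theta)|\le k|\sin\theta|$. Now observe the trivial identity
\[
\frac{\vartheta_{XY}(X;kY)}{\vartheta_{Y}(X;Y)}=\frac{\vartheta_{XY}(X;kY)}{\vartheta_{Y}(X;kY)}\cdot\frac{\vartheta_{Y}(X;kY)}{\vartheta_{Y}(X;Y)} .
\]
Applying (2) with its argument set to $kY$ bounds the first factor by $\pi\frac{1+\nu(X)}{1+\mu(X)}$, while Lemma~\ref{Lemmamoist}(1) bounds the second by $k\frac{1+\mu(X)}{1-\mu(X)}$ for $X\ge\frac15$; the product is $k\pi\frac{1+\nu(X)}{1-\mu(X)}$, which is exactly (1). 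Likewise, for $X\le\frac12$ the same identity together with (4) at $kY$ and Lemma~\ref{Lemmamoist}(2), namely $\big|\vartheta_Y(X;kY)/\vartheta_Y(X;Y)\big|\le\frac{k}{\pi}e^{\pi/(4X)}$, multiplies out to precisely the bound in (3). Hence everything reduces to the two $k=1$ statements.

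\emph{Part (4), via the dual series.} For $X\le\frac12$ the $q$-series is not dominated by its leading term, so I pass to the Poisson representation \eqref{Poisson}. Writing $t_\ell=\ell-Y$, $G_\ell=e^{-\pi t_\ell^2/X}$ and $S_j=\sum_{\ell\in\mathbb Z}t_\ell^{\,j}G_\ell$, termwise differentiation gives $\vartheta_Y=2\pi X^{-3/2}S_1$ and $\vartheta_{XY}=2\pi X^{-5/2}\big(-\tfrac32 S_1+\tfrac{\pi}{X}S_3\big)$, whence the exact formula
\[
\frac{\vartheta_{XY}(X;Y)}{\vartheta_{Y}(X;Y)}=\frac1X\Big(-\frac32+\frac{\pi}{X}\,\frac{S_3}{S_1}\Big).
\]
Thus (4) follows once one shows $0\le S_3/S_1\le\frac14$ for $X\le\frac12$ and $Y\in(0,\frac12)$, because then the bracket has absolute value at most $\frac32+\frac{\pi}{4X}=\frac32\big(1+\frac{\pi}{6X}\big)$. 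Since $S_3/S_1$ is a signed average of the squared distances $t_\ell^2$ whose Gaussian weights concentrate on the two nearest lattice points $\ell=0,1$, where $t_\ell^2\le\frac14$, I would establish this range by pairing $\ell$ with $1-\ell$ and using monotonicity of $G_\ell$ in $|t_\ell|$ to dominate the tail $|\ell|\ge1$.

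\emph{Part (2), the sharp estimate for $X\ge\frac15$.} Pulling the $n=1$ terms out front and using $|\sin(2\pi nY)|\le n|\sin(2\pi Y)|$ bounds the numerator tail by $\nu(X)$ and the denominator tail by $\mu(X)$; the naive triangle inequality then recovers only the coarse denominator $1-\mu$, i.e.\ (1) with $k=1$. To reach the sharp denominator $1+\mu$ I would instead prove the equivalent correlation inequality
\[
\Big(\sum_{m\ge1}m^2e^{-\pi m^2X}\Big)\Big|\sum_{n\ge1}n^3e^{-\pi n^2X}\sin(2\pi nY)\Big|\le\Big(\sum_{m\ge1}m^4e^{-\pi m^2X}\Big)\Big|\sum_{n\ge1}ne^{-\pi n^2X}\sin(2\pi nY)\Big|,
\]
which is equivalent to (2) because $\frac{1+\nu}{1+\mu}=\big(\sum m^4e^{-\pi m^2X}\big)\big/\big(\sum m^2e^{-\pi m^2X}\big)$. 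After fixing the signs of the two sine-sums, cross-multiplying and symmetrizing in $(m,n)$ reduces this to controlling the sign of $(m^2-n^2)mn\big(m\sin2\pi nY-n\sin2\pi mY\big)$ summed against the Gaussian weights: a Taylor expansion shows the bracket is nonnegative for $m>n$ near $Y=0$, and the extremal ratio $\pi\frac{1+\nu}{1+\mu}$ is attained precisely in the limit $Y\to0^+$.

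\emph{Main difficulty.} The essential obstacle is part (2): its constant $\frac{1+\nu}{1+\mu}$ is realised only as $Y\to0$, so (2) is a genuine global maximisation of $|\vartheta_{XY}/\vartheta_Y|$ over $Y\in(0,\frac12)$ rather than a termwise bound, and the triangle inequality is demonstrably too lossy (it yields the weaker $1-\mu$). Proving the required sign-definiteness — equivalently, that the ratio is maximised at the boundary $Y\to0$ — uniformly in $Y$, and handling the indefinite signs of the two sine-series when $\nu(X)$ is not small (for instance $\nu(\tfrac15)>1$), is the delicate point. By comparison the estimate $0\le S_3/S_1\le\frac14$ for (4) is a more routine concentration argument, and one only has to record in passing the elementary fact that $\mu(X)<1$ for $X\ge\frac15$, so that the denominators in (1)–(2) stay positive.
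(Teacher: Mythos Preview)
The paper does not prove this lemma; it is cited from \cite{Luo2023}. However, the paper's own Lemmas~\ref{Lemmatime}, \ref{mono} and \ref{Lemmafour} contain the relevant machinery, so a comparison is possible.

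Your reduction of (1) and (3) to the $k=1$ estimates (2) and (4) via the factorization together with Lemma~\ref{Lemmamoist} is correct and is exactly how the paper later derives the analogous general-$k$ bound in Lemma~\ref{Lemmavar1} from Lemma~\ref{Lemmavar2}. For (4), your Poisson-side identity $\vartheta_{XY}/\vartheta_Y=X^{-1}\big(-\tfrac32+\tfrac{\pi}{X}S_3/S_1\big)$ is correct, and the required input $|S_3/S_1|\le\tfrac14$ is precisely Lemma~\ref{Lemmafour} of the paper (also attributed to \cite{Luo2023}); so this part follows the same route.

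The genuine gap is part (2). Your reformulation as a correlation inequality is equivalent to the claim, but the proposed ``cross-multiply and symmetrise'' step does not close: you only control the sign of $(m^2-n^2)mn\big(m\sin 2\pi nY-n\sin 2\pi mY\big)$ near $Y=0$, and as you note the triangle inequality is too lossy globally. The idea you are missing --- and which the paper supplies --- is the \emph{monotonicity} of $Y\mapsto\vartheta_{XY}(X;Y)/\vartheta_Y(X;Y)$ on $[0,\tfrac12]$, established in Lemma~\ref{mono}. Given that monotonicity, the ratio (which is negative) attains its most negative value as $Y\to0$, where L'H\^opital gives $-\pi\frac{1+\nu(X)}{1+\mu(X)}$; this is exactly how the paper obtains the sharper two-sided bound of Lemma~\ref{Lemmatime}(1), of which (2) is the immediate corollary. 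The proof of Lemma~\ref{mono} itself proceeds by writing $\partial_Y\big(\vartheta_{XY}/\vartheta_Y\big)$ as a double sine-sum and showing sign-definiteness (for large $X$ directly, for small $X$ via the Poisson side), and that termwise sign analysis is the nontrivial input your sketch lacks.
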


%%%%%%%%%%%%%%%%%%%%%%%%%%%%%%%%%%%%%%%%%%%%%%%%%%%%%%%%%%%%%%%%%%%%%%%%%%%%%%%%%%%%%%%%%%%%%%%%%%%%%%%%%%%%%%%%%%%%%%%%%%%%%%%%%%%%%%%%%%%%%%%%%
The estimates of $\frac{\vartheta_{XY}(X;Y)}{\vartheta_{Y}(X;Y)}$ in the Lemma \ref{Lemmamoistt} need to be further refined and will be presented in the following Lemma. The proof of Lemma \ref{Lemmatime} need two auxiliary Lemmas \ref{mono} and \ref{Lemmaair1}, which are placed in the Subsection 2.2.

\begin{lemma}\label{Lemmatime}
Assume that $Y\in\mathbb{R}$, we have the more precise upper and lower bounds for $\frac{\vartheta_{XY}(X;Y)}{\vartheta_{Y}(X;Y)}$:
 \begin{itemize}
  \item [(1)] for $X\geq \frac{1}{5}$,
  $$-{\pi}\cdot \frac{1+{\nu}(X)}{1+{\mu}(X)}\leq \frac{\vartheta_{XY}(X;Y)}{\vartheta_{Y}(X;Y)} \leq -{\pi} \cdot\frac{1+\hat{\nu}(X)}{1+\hat{\mu}(X)};$$
  \item [(2)] for $0<X\leq \frac{1}{2}$,
  \begin{equation}\aligned\label{lem2.11}
  \frac{\frac{3}{4}{X}^2+2{\pi}^2 e^{-\frac{\pi}{X}}}{-\frac{1}{2}{X}^3+2\pi{X}^2e^{-\frac{\pi}{X}}}\leq \frac{\vartheta_{XY}(X;Y)}{\vartheta_{Y}(X;Y)} \leq \frac{\pi}{4{X}^2};
  \endaligned\end{equation}
\end{itemize}
  where $\mu(X),\:\hat{\mu}(X),\:\nu(X),\:\hat{\nu}(X)$ are defined in \eqref{duct}.
\end{lemma}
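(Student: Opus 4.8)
The plan is to work from the product and Poisson representations \eqref{Product} and \eqref{Poisson} of $\vartheta(X;Y)$, differentiating with respect to $Y$ to get $\vartheta_{Y}$ and $\vartheta_{XY}$ as explicit series in $e^{-\pi X}$, and then to control the quotient by isolating the dominant terms and bounding the tails by the auxiliary quantities $\mu,\hat\mu,\nu,\hat\nu$ defined in \eqref{duct}. Since the quotient is $2\pi$-periodic and even in $Y$, it suffices to treat $Y$ real; moreover the bounds should be worst at specific phases ($Y=0$ or $Y=\tfrac12$), so the strategy is to show the extremal behaviour occurs there. For part (1), differentiating the series \eqref{TXY} gives $\vartheta_{Y}(X;Y)=-2\pi\sum_{n\ge 1} n e^{-\pi n^2 X}\sin(2\pi nY)$ times appropriate constants, and $\vartheta_{XY}(X;Y)$ introduces an extra factor $-\pi n^2$; writing both as ($n=1$ term)$\times(1+\text{tail})$ and using that $|\sin(2\pi nY)/\sin(2\pi Y)|\le n$ (the Luo--Wei estimates in Lemma \ref{Lemmamoist} and \ref{Lemmamoistt}, which I may invoke), the ratio of tails is controlled by $\nu(X)$ from above and, because the alternating-sign configuration $Y=\tfrac12$ minimizes the absolute value of the denominator's correction while the signs in the numerator align, by $\hat\nu(X),\hat\mu(X)$ on the other side. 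The factor $-\pi$ and the sign come from the leading $n=1$ terms, giving exactly $-\pi\frac{1+\nu}{1+\mu}\le \frac{\vartheta_{XY}}{\vartheta_{Y}}\le -\pi\frac{1+\hat\nu}{1+\hat\mu}$.

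For part (2), the regime $X\le\tfrac12$ is the "Poisson-dual" regime where one should instead use \eqref{Poisson}: $\vartheta(X;Y)=X^{-1/2}\sum_k e^{-\pi(k-Y)^2/X}$, so $\vartheta_Y(X;Y)=X^{-1/2}\sum_k \frac{2\pi(k-Y)}{X}e^{-\pi(k-Y)^2/X}$ and similarly for $\vartheta_{XY}$. Here the dominant contributions come from $k=0$ and $k=1$ (i.e.\ the two nearest integers to $Y$), and after extracting these the remaining sum is geometrically small, of size $O(e^{-\pi/X})$. The upper bound $\pi/(4X^2)$ should follow by a direct estimate keeping only the leading $k=0,1$ terms and bounding $|k-Y|\le$ something like $1$ on the relevant range together with $\cosh$/$\sinh$-type bounds; the lower bound, with its explicit $\tfrac34 X^2$, $-\tfrac12 X^3$ and $2\pi^2 e^{-\pi/X}$, $2\pi X^2 e^{-\pi/X}$ terms, is obtained by a matching two-term expansion where the $e^{-\pi/X}$ corrections are retained with explicit signs. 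This is precisely where Lemmas \ref{mono} and \ref{Lemmaair1} (the promised monotonicity and auxiliary estimates) are needed: to certify that the truncation error has the claimed sign and magnitude uniformly for $0<X\le\tfrac12$, and to reduce the $Y$-dependence to the worst case.

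The main obstacle I expect is the lower bound in \eqref{lem2.11}. Unlike the crude one-sided bounds in Lemma \ref{Lemmamoistt}, it must be \emph{sharp enough} to be useful later (presumably to beat a competing quantity on the boundary of the fundamental domain in Sections 3--4), which forces one to keep the first exponentially small correction $e^{-\pi/X}$ with the correct sign rather than discarding it. Getting a clean rational-in-$X$-plus-$e^{-\pi/X}$ expression requires carefully pairing the $k=0$ and $k=1$ terms in the Poisson series, handling the sign of $(k-Y)$ across $Y\in[0,\tfrac12]$, and showing the neglected $|k|\ge 2$ terms are dominated by the retained correction — the content of the auxiliary Lemmas \ref{mono}, \ref{Lemmaair1}. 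The part (1) estimate, by contrast, is a fairly routine refinement of the existing Luo--Wei bounds, replacing $\mu$ by $\hat\mu$ and $\nu$ by $\hat\nu$ once one observes that the alternating signs at $Y=\tfrac12$ are extremal.
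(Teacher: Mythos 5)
Your proposal follows essentially the same route as the paper: reduce to the endpoints $Y=0$ and $Y=\tfrac12$ via the monotonicity of $\vartheta_{XY}/\vartheta_{Y}$ in $Y$ (Lemma \ref{mono}), evaluate the limits there by L'Hospital (giving the $\mu,\nu,\hat\mu,\hat\nu$ quotients in part (1) and the Poisson-dual expressions of Lemma \ref{Lemmaair1} in part (2)), and then compare those explicit endpoint values with the claimed bounds by clearing denominators and checking signs. The only caveat is that the crude bound $|\sin(2\pi nY)/\sin(2\pi Y)|\le n$ you mention for part (1) is neither needed nor sufficient for the sharp two-sided estimate (it would only yield the weaker $1-\mu(X)$ denominators of Lemma \ref{Lemmamoistt}); the whole content is the monotonicity-plus-endpoint argument, which you do identify.
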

\begin{proof}
First, we prove the first item. By Lemma \ref{mono} and its proof, one has for $X\geq\frac{1}{5},Y\in\mathbb{R}$,
 \begin{equation}\aligned\nonumber
\lim_{Y\rightarrow 0}\frac{\vartheta_{XY}(X;Y)}{\vartheta_{Y}(X;Y)}\leq \frac{\vartheta_{XY}(X;Y)}{\vartheta_{Y}(X;Y)}\leq \lim_{Y\rightarrow \frac{1}{2}}\frac{\vartheta_{XY}(X;Y)}{\vartheta_{Y}(X;Y)}.
\endaligned\end{equation}
By \eqref{TXY} and L'Hospital's rule, one gets
\begin{equation}\aligned\label{nice}
 \lim_{Y\rightarrow 0}\frac{\vartheta_{XY}(X;Y)}{\vartheta_{Y}(X;Y)}
=-{\pi}\cdot \frac{1+{\nu}(X)}{1+{\mu}(X)},\:\:
 \lim_{Y\rightarrow \frac{1}{2}}\frac{\vartheta_{XY}(X;Y)}{\vartheta_{Y}(X;Y)}
=-{\pi}\cdot \frac{1+\hat{{\nu}}(X)}{1+\hat{{\mu}}(X)}.
\endaligned\end{equation}
Then, we prove the second item. For simplicity, we denote that
\begin{equation}\aligned\nonumber
 a_{1}(X)&:=\frac{3}{4}X^{2}\underset{n\in \mathbb{Z}}{\sum}e^{-\frac{\pi {n}^2}{X}}-3 \pi X \underset{n\in \mathbb{Z}}{\sum}{n}^2e^{-\frac{\pi {n}^2}{X}}+{\pi}^2\underset{n\in \mathbb{Z}}{\sum}{n}^4e^{-\frac{\pi {n}^2}{X}},\\
  a_{2}(X)&:=\pi X^{2}\underset{n\in \mathbb{Z}}{\sum}{n}^2e^{-\frac{\pi {n}^2}{X}}-\frac{1}{2} X^{3}\underset{n\in \mathbb{Z}}{\sum}e^{-\frac{\pi {n}^2}{X}}.\\
   \endaligned\end{equation}
 And denote that
 \begin{equation}\aligned\nonumber
   b_{1}(X)&:=\frac{3}{4}X^{2}\underset{n\in \mathbb{Z}}{\sum}e^{-\frac{\pi (n-\frac{1}{2})^2}{X}}-3 \pi X\underset{n\in \mathbb{Z}}{\sum}(n-\frac{1}{2})^2e^{-\frac{\pi (n-\frac{1}{2})^2}{X}}+{\pi}^2\underset{n\in \mathbb{Z}}{\sum}(n-\frac{1}{2})^4e^{-\frac{\pi (n-\frac{1}{2})^2}{X}},\\
   b_{2}(X)&:=\pi X^{2}\underset{n\in \mathbb{Z}}{\sum}(n-\frac{1}{2})^2e^{-\frac{\pi (n-\frac{1}{2})^2}{X}}-\frac{1}{2} X^{3}\underset{n\in \mathbb{Z}}{\sum}e^{-\frac{\pi (n-\frac{1}{2})^2}{X}},\\
    c_{1}(X)&:=\frac{3}{4}{X}^2+2{\pi}^2 e^{-\frac{\pi}{X}},\:\:\:\:\:\:\:\:\:\:\:\:\:\:\:\:\:\:\:\:\:\:\:\:\:\:c_{2}(X):=-\frac{1}{2}{X}^3+2\pi{X}^2e^{-\frac{\pi}{X}}.
 \endaligned\end{equation}
  By Lemma \ref{mono} and its proof, for $0<X\leq\frac{1}{2},Y\in\mathbb{R}$, one has
 \begin{equation}\aligned\label{calma}
\lim_{Y\rightarrow 0}\frac{\vartheta_{XY}(X;Y)}{\vartheta_{Y}(X;Y)}\leq \frac{\vartheta_{XY}(X;Y)}{\vartheta_{Y}(X;Y)}\leq \lim_{Y\rightarrow \frac{1}{2}}\frac{\vartheta_{XY}(X;Y)}{\vartheta_{Y}(X;Y)}.
\endaligned\end{equation}
  Then by Lemmas \ref{Lemmaair1}, one has
  \begin{equation}\aligned\label{cost3}
 \lim_{Y\rightarrow 0}\frac{\vartheta_{XY}(X;Y)}{\vartheta_{Y}(X;Y)}=\frac{a_{1}(X)}{a_{2}(X)},\:\:\lim_{Y\rightarrow \frac{1}{2}}\frac{\vartheta_{XY}(X;Y)}{\vartheta_{Y}(X;Y)}=\frac{b_{1}(X)}{b_{2}(X)}.
   \endaligned\end{equation}
  By \eqref{calma}-\eqref{cost3}, to prove \eqref{lem2.11}, it suffices to prove
  \begin{equation}\aligned\label{eye}
 \frac{c_{1}(X)}{c_2(X)}\leq \frac{a_{1}(X)}{a_{2}(X)},\:\: \frac{b_{1}(X)}{b_{2}(X)}\leq \frac{\pi}{4{X}^2}.
  \endaligned\end{equation}
    Notice that as $0<X\leq\frac{1}{2}$, $a_{2}(X),c_{2}(X)$ are negative, while $b_{2}(X)$ is positive. Hence \eqref{eye} is equivalent to
  \begin{equation}\aligned\nonumber
 a_{1}(X)c_{2}(X)-a_{2}(X)c_{1}(X)\geq0,\:\:\pi b_{2}(X)-4{X}^2 b_{1}(X)\geq 0.
  \endaligned\end{equation}
  By straightforward computations, as $0<X\leq\frac{1}{2}$, one gets
  \begin{equation}\aligned\nonumber
 a_{1}(X)c_{2}(X)-a_{2}(X)c_{1}(X)=3\pi{X}^4{e}^{-\frac{\pi}{X}}+(3\pi{X}^4-10{\pi}^2{X}^3){e}^{-\frac{2\pi}{X}}+T(X)\geq 0,
  \endaligned\end{equation}
   and
  \begin{equation}\aligned\nonumber
 \pi b_{2}(X)-4{X}^2 b_{1}(X)=(5\pi{X}^3-6{X}^4)e^{-\frac{\pi}{4X}}+\underset{n=2}{\overset{\infty}{\sum}}d(n;X)e^{-\frac{\pi(n-\frac{1}{2})^2}{X}}\geq 0.
   \endaligned\end{equation}
  Here,
  \begin{equation}\aligned\nonumber
 &T(X):=\underset{n=2}{\overset{\infty}{\sum}}(3\pi{X}^4+2{\pi}^2{X}^3+4{\pi}^3{X}^2{n}^4-4{\pi}^3{X}^2{n}^2-12{\pi}^2{X}^3{n}^2)e^{-\frac{\pi(n^2+1)}{X}}\\
 &\:\:\:\:\:\:\:\:\:\:\:\:\:\:\:\:\:\:\:\:\:+\underset{n=2}{\overset{\infty}{\sum}}(\frac{3}{2}\pi{X}^4{n}^2-{\pi}^2{X}^3{n}^4)e^{-\frac{\pi{n}^2}{X}},\\
 & d(n;X):=(2{\pi}^2{X}^2+24\pi{X}^3)\cdot(n-\frac{1}{2})^2-8{\pi}^2 {X}^2 (n-\frac{1}{2})^4 -\pi{X}^3-6{X}^4.
   \endaligned\end{equation}
 \end{proof}

\begin{lemma}\label{Lemmavar2}
Assume that $Y>0$, we have the upper and lower bounds for $\frac{\vartheta_{XXY}(X;Y)}{\vartheta_{Y}(X;Y)}:$\\
   \begin{itemize}
  \item [(1)] for $X\geq \frac{59}{250},$
  $${\pi}^2\cdot \frac{1+\hat{\omega}(X)}{1+\hat{\mu}(X)}\leq \frac{\vartheta_{XXY}(X;Y)}{\vartheta_{Y}(X;Y)} \leq {\pi}^2 \cdot\frac{1+\omega(X)}{1+\mu(X)};$$
  \item [(2)] for $0<X\leq \frac{1}{2}$,
  $$\frac{15}{4}X^{-2}(1-\frac{1}{3}\pi X^{-1}-\frac{1}{60}\pi^{2}X^{-2})\leq \frac{\vartheta_{XXY}(X;Y)}{\vartheta_{Y}(X;Y)} \leq \frac{15}{4}X^{-2}(1+\frac{1}{3}\pi X^{-1}+\frac{1}{60}\pi^{2}X^{-2});$$
\end{itemize}
where $\mu(X),\:\hat{\mu}(X),\:\omega(X),\:\hat{\omega}(X)$ are defined in \eqref{duct}.
\end{lemma}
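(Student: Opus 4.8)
The plan is to establish both items by running the proof of Lemma \ref{Lemmatime} one order higher in the $X$-derivative. Differentiating the Fourier series \eqref{TXY} termwise gives, for $Y\in\mathbb{R}$,
\[
\vartheta_{Y}(X;Y)=-4\pi\sum_{n\geq1}n\,e^{-\pi n^{2}X}\sin(2\pi nY),\qquad
\vartheta_{XXY}(X;Y)=-4\pi^{3}\sum_{n\geq1}n^{5}\,e^{-\pi n^{2}X}\sin(2\pi nY),
\]
so that
\[
\frac{\vartheta_{XXY}(X;Y)}{\vartheta_{Y}(X;Y)}=\pi^{2}\,\frac{\sum_{n\geq1}n^{5}e^{-\pi n^{2}X}\sin(2\pi nY)}{\sum_{n\geq1}n\,e^{-\pi n^{2}X}\sin(2\pi nY)},
\]
while differentiating the Poisson dual form \eqref{Poisson} expresses $\vartheta_{Y}$ and $\vartheta_{XXY}$ through the fast-converging series $\sum_{n}(n-Y)^{j}e^{-\pi(n-Y)^{2}/X}$.

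\emph{Item (1).} First I would invoke the monotonicity in $Y$ coming from (the proof of) Lemma \ref{mono}: for $X\geq\frac{59}{250}$ the quotient above is a monotone function of $Y$, hence pinched between its two boundary limits exactly as in \eqref{calma},
\[
\lim_{Y\to\frac12}\frac{\vartheta_{XXY}(X;Y)}{\vartheta_{Y}(X;Y)}\;\leq\;\frac{\vartheta_{XXY}(X;Y)}{\vartheta_{Y}(X;Y)}\;\leq\;\lim_{Y\to0}\frac{\vartheta_{XXY}(X;Y)}{\vartheta_{Y}(X;Y)}.
\]
Then L'Hospital's rule at $Y\to0$ and $Y\to\frac12$ (as in \eqref{nice}) replaces $\sin(2\pi nY)$ by $2\pi n$ and by $2\pi n(-1)^{n}$ respectively; pulling out the $n=1$ term and using the definitions \eqref{duct} identifies the two limits as $\pi^{2}\frac{1+\omega(X)}{1+\mu(X)}$ and $\pi^{2}\frac{1+\hat\omega(X)}{1+\hat\mu(X)}$, which is the claim. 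The constant $\frac{59}{250}$ is precisely the range on which Lemma \ref{mono} applies with the heavier weights $n^{6}$ (coming from $n^{5}\cdot n$) and $n^{2}$ in play --- equivalently, where $\pi^{2}\frac{1+\hat\omega(X)}{1+\hat\mu(X)}\leq\pi^{2}\frac{1+\omega(X)}{1+\mu(X)}$, so that the two limits occur in the stated order.

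\emph{Item (2).} I would repeat the reduction starting from \eqref{Poisson}: by the $Y$-monotonicity it again suffices to bound the limits $Y\to0$ and $Y\to\frac12$, and L'Hospital turns each of them, as in \eqref{cost3}, into an explicit ratio of the dual theta series $\sum_{n}n^{2j}e^{-\pi n^{2}/X}$ (resp.\ $\sum_{n}(n-\tfrac12)^{2j}e^{-\pi(n-1/2)^{2}/X}$) for $j=0,1,2,3$. In the $Y\to0$ ratio the single term $n=0$ supplies the main term $\tfrac{15}{4}X^{-2}$, since $\partial_{X}^{2}(-2\pi X^{-3/2})/(-2\pi X^{-3/2})=\tfrac{15}{4}X^{-2}$ while every other term is $O(X^{-2}e^{-\pi/X})$; in the $Y\to\frac12$ ratio the two nearest terms $n=0,1$ produce a quantity comparable to $\tfrac{\pi^{2}}{16}X^{-4}$ for small $X$. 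Comparing these true expressions with the clean envelope
\[
\tfrac{15}{4}X^{-2}\bigl(1\pm(\tfrac13\pi X^{-1}+\tfrac1{60}\pi^{2}X^{-2})\bigr)=\tfrac{15}{4}X^{-2}\pm\tfrac54\pi X^{-3}\pm\tfrac{\pi^{2}}{16}X^{-4},
\]
whose $X^{-4}$-coefficient is tuned to absorb the $Y\to\frac12$ endpoint and whose $X^{-2}$-coefficient absorbs the $Y\to0$ endpoint, reduces --- after clearing the sign-definite denominators --- to a finite list of inequalities of the type ``a polynomial in $X$ times $e^{-c\pi/X}$ is $\geq0$ on $(0,\frac12]$'', to be verified by elementary bounds exactly in the spirit of the estimates for $T(X)$ and $d(n;X)$ in the proof of Lemma \ref{Lemmatime}.

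\emph{Main obstacle.} Two points require care. First, one must confirm that the $Y$-monotonicity established for $\vartheta_{XY}/\vartheta_{Y}$ in Lemma \ref{mono} persists for the higher quotient $\vartheta_{XXY}/\vartheta_{Y}$ and locate the precise range $X\geq\frac{59}{250}$ where it does; this is the only place a new constant enters, and it is slightly larger than the $X\geq\frac15$ of Lemma \ref{Lemmatime} because of the heavier $n^{6}$ weight. Second, the bookkeeping in item (2): extracting the exact constants $\tfrac{15}{4},\ \tfrac13,\ \tfrac1{60}$ from the dual series and then proving the resulting polynomial-times-exponential inequalities over all of $(0,\frac12]$ with enough slack, the hard region being $X$ small where several powers of $X^{-1}$ must be balanced against the exponential gain. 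The remaining ingredients --- termwise differentiation, L'Hospital's rule, and the bound $|\sin(2\pi nY)|\leq n\,|\sin(2\pi Y)|$ --- are routine and already appear in Lemmas \ref{Lemmamoist}--\ref{Lemmamoistt}.
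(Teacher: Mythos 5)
Your treatment of item (1) matches the paper's: the quotient is pinched between its $Y\to\frac12$ and $Y\to0$ limits via a monotonicity-in-$Y$ statement for $\vartheta_{XXY}/\vartheta_{Y}$ (the paper's Lemma \ref{monoo}, which is where $\frac{59}{250}$ enters), and L'Hospital identifies the limits with $\pi^{2}\frac{1+\hat\omega}{1+\hat\mu}$ and $\pi^{2}\frac{1+\omega}{1+\mu}$. That part is fine.

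Item (2) has a genuine gap. You again reduce to the endpoints $Y\to0$ and $Y\to\frac12$ "by the $Y$-monotonicity", but the only monotonicity available for the higher quotient $\vartheta_{XXY}/\vartheta_{Y}$ is Lemma \ref{monoo}, which is proved only for $X\geq\frac{59}{250}$; item (2) must cover all of $0<X\leq\frac12$, including $X<\frac{59}{250}$, and no analogue of the small-$X$ monotonicity argument of Lemma \ref{mono} is established (or claimed) for the $n^{5}$-weighted quotient. The paper avoids this entirely: differentiating the Poisson form \eqref{Poisson} gives the exact identity \eqref{LLL},
\begin{equation}\nonumber
\frac{\vartheta_{XXY}(X;Y)}{\vartheta_{Y}(X;Y)}=\frac{15}{4}X^{-2}-5\pi X^{-3}\cdot\frac{\sum_{n}(n-Y)^{3}e^{-\pi(n-Y)^{2}/X}}{\sum_{n}(n-Y)e^{-\pi(n-Y)^{2}/X}}+\pi^{2}X^{-4}\cdot\frac{\sum_{n}(n-Y)^{5}e^{-\pi(n-Y)^{2}/X}}{\sum_{n}(n-Y)e^{-\pi(n-Y)^{2}/X}},
\end{equation}
and then applies the \emph{uniform-in-$Y$} sup bounds $\frac14$ and $\frac1{16}$ on the cubic and quintic ratios (Lemma \ref{Lemmafour}, quoted from Luo--Wei, and the new Lemma \ref{Lemmasixteen}). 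Since $5\pi X^{-3}\cdot\frac14=\frac{15}{4}X^{-2}\cdot\frac{\pi}{3}X^{-1}$ and $\pi^{2}X^{-4}\cdot\frac1{16}=\frac{15}{4}X^{-2}\cdot\frac{\pi^{2}}{60}X^{-2}$, the stated envelope drops out immediately, with no endpoint analysis and no new monotonicity needed. To complete your route you would have to prove the missing small-$X$ monotonicity and then a family of polynomial-times-exponential endpoint inequalities; the identity \eqref{LLL} plus Lemma \ref{Lemmasixteen} is the idea your proposal is missing.
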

\begin{proof}
Firstly, we prove the first item. By \eqref{stormy}, notice that $\frac{\vartheta_{XXY}(X;Y)}{\vartheta_{Y}(X;Y)}$ has a period of 1 with respect to $Y$ and  an axis of symmetry at $Y=\frac{1}{2}$. It reduces to consider that for $Y \in[0,\frac{1}{2}]$. Then by Lemma\:\ref{monoo}, for $X\geq\frac{59}{250}$ and $Y\in[0,\frac{1}{2}]$, one has
\begin{equation}\aligned\nonumber
\lim_{Y\rightarrow \frac{1}{2}}\frac{\vartheta_{XXY}(X;Y)}{\vartheta_{Y}(X;Y)}\leq \frac{\vartheta_{XXY}(X;Y)}{\vartheta_{Y}(X;Y)}\leq \lim_{Y\rightarrow 0}\frac{\vartheta_{XXY}(X;Y)}{\vartheta_{Y}(X;Y)}.
\endaligned\end{equation}
Then, by \eqref{stormy} and L'Hospital's rule, one has
\begin{equation}\aligned\label{highh1}
 \lim_{Y\rightarrow \frac{1}{2}}\frac{\vartheta_{XXY}(X;Y)}{\vartheta_{Y}(X;Y)}
={\pi}^2\cdot \frac{1+\hat{\omega}(X)}{1+\hat{\mu}(X)},\:\:
  \lim_{Y\rightarrow 0}\frac{\vartheta_{XXY}(X;Y)}{\vartheta_{Y}(X;Y)}
={\pi}^2\cdot \frac{1+{\omega}(X)}{1+{\mu}(X)}.
\endaligned\end{equation}
Next, we prove the second item. By \eqref{Poisson}, one has
\begin{equation}\aligned\label{adjust2}
\vartheta_{Y}(X;Y)=&2\pi X^{-\frac{3}{2}}\underset{n\in \mathbb{Z}}{\sum}(n-Y)e^{-\frac{-\pi(n-Y)^2}{X}},\\
\vartheta_{XXY}(X;Y)=&\frac{15}{2}\pi X^{-\frac{7}{2}}\underset{n\in \mathbb{Z}}{\sum}(n-Y)e^{-\frac{-\pi(n-Y)^2}{X}}-10{\pi}^2 X^{-\frac{9}{2}}\underset{n\in \mathbb{Z}}{\sum}(n-Y)^3e^{-\frac{-\pi(n-Y)^2}{X}}\\
&\:\:+2{\pi}^3 X^{-\frac{11}{2}}\underset{n\in \mathbb{Z}}{\sum}(n-Y)^5e^{-\frac{-\pi(n-Y)^2}{X}}.\\
\endaligned\end{equation}
Through \eqref{adjust2}, the quotient of $\vartheta_{XXY}(X;Y)$ and $\vartheta_{Y}(X;Y)$ has the following expression
\begin{equation}\aligned\label{LLL}
\frac{\vartheta_{XXY}(X;Y)}{\vartheta_{Y}(X;Y)}=\frac{15}{4}X^{-2} -5\pi {X}^{-3}\cdot \frac{\underset{n\in \mathbb{Z}}{\sum}(n-Y)^3 e^{-\frac{-\pi(n-Y)^2}{X}}}
{\underset{n\in \mathbb{Z}}{\sum}(n-Y)e^{-\frac{-\pi(n-Y)^2}{X}}} +{\pi}^2X^{-4}\cdot \frac{\underset{n\in \mathbb{Z}}{\sum}(n-Y)^5 e^{-\frac{-\pi(n-Y)^2}{X}}}
{\underset{n\in \mathbb{Z}}{\sum}(n-Y)e^{-\frac{-\pi(n-Y)^2}{X}}}.
\endaligned\end{equation}
Therefore, the second item follows by Lemmas \ref{Lemmafour}-\ref{Lemmasixteen} and \eqref{LLL}.
\end{proof}

\begin{lemma}\label{Lemmavar1}
Assume that $Y>0,k\in \mathbb{N^{+}}$, it holds that
\begin{itemize}
  \item [(1)] for $X\geq \frac{1}{5}$,
  $$\left|\frac{\vartheta_{XXY}(X;kY)}{\vartheta_{Y}(X;Y)}\right| \leq k\pi^2 \cdot \frac{1+\omega(X)}{1-\mu(X)};$$
  \item [(2)] for $0<X\leq \frac{1}{2}$,
  $$
  \left|\frac{\vartheta_{XXY}(X;kY)}{\vartheta_{Y}(X;Y)} \right|\leq \frac{15k}{4\pi}X^{-2}(1+\frac{1}{3}\pi X^{-1}+\frac{1}{60}\pi^{2}X^{-2})e^{\frac{\pi}{4X}}; $$
\end{itemize}
where $\mu(X),\:\omega(X)$ are defined in \eqref{duct}.
\end{lemma}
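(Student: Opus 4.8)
The strategy is to treat the two ranges of $X$ by the two complementary techniques used for the estimates above: for $X\geq\frac15$, an explicit Fourier computation from \eqref{TXY} together with a Dirichlet--kernel inequality; for $0<X\leq\frac12$, a factorization of the quotient through $\vartheta_Y(X;kY)$ combined with Lemmas \ref{Lemmavar2} and \ref{Lemmamoist}. As usual, each inequality stated between a quotient $\vartheta_{XXY}(X;kY)/\vartheta_Y(X;Y)$ and a bound is to be read as the corresponding inequality between $|\vartheta_{XXY}(X;kY)|$ and that bound times $|\vartheta_Y(X;Y)|$, which remains meaningful at the points $Y\in\frac12\mathbb Z$ where $\vartheta_Y(X;Y)$ vanishes.

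\emph{Item (1), the range $X\geq\frac15$.} Differentiating \eqref{TXY} term by term and pairing the indices $\pm n$ gives the sine series
\begin{equation*}
\vartheta_Y(X;Y)=-4\pi\sum_{n\geq1}n\,e^{-\pi n^2X}\sin(2\pi nY),\qquad
\vartheta_{XXY}(X;Y)=-4\pi^3\sum_{n\geq1}n^5e^{-\pi n^2X}\sin(2\pi nY).
\end{equation*}
A direct computation from these two series (cancelling the common factor $e^{-\pi X}$) gives, after replacing $Y$ by $kY$ in the numerator,
\begin{equation*}
\frac{\vartheta_{XXY}(X;kY)}{\vartheta_Y(X;Y)}=\pi^2\cdot
\frac{\sin(2\pi kY)+\sum_{n\geq2}n^5e^{-\pi(n^2-1)X}\sin(2\pi nkY)}
{\sin(2\pi Y)+\sum_{n\geq2}n\,e^{-\pi(n^2-1)X}\sin(2\pi nY)}.
\end{equation*}
For $Y\notin\frac12\mathbb Z$, divide numerator and denominator by $\sin(2\pi Y)$ and use the elementary inequality $|\sin(m\theta)|\leq|m|\,|\sin\theta|$ for $m\in\mathbb Z$ (immediate by induction on $|m|$): the numerator is then bounded in modulus by $k\bigl(1+\sum_{n\geq2}n^6e^{-\pi(n^2-1)X}\bigr)=k(1+\omega(X))$ and the denominator from below by $1-\sum_{n\geq2}n^2e^{-\pi(n^2-1)X}=1-\mu(X)$. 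Each summand of $\mu$ is decreasing in $X$ and a short numerical check gives $\mu(\tfrac15)<1$, so $1-\mu(X)>0$ on $X\geq\frac15$, which yields the claimed bound $k\pi^2\frac{1+\omega(X)}{1-\mu(X)}$. At the excluded points $Y\in\frac12\mathbb Z$ one has $kY\in\frac12\mathbb Z$, so every sine above vanishes and both $\vartheta_{XXY}(X;kY)$ and $\vartheta_Y(X;Y)$ are zero, making the inequality trivial.

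\emph{Item (2), the range $0<X\leq\frac12$.} Here I compose two estimates already proved. Lemma \ref{Lemmavar2}(2), applied with $Y$ replaced by $kY>0$, gives for $0<X\leq\frac12$
\begin{equation*}
|\vartheta_{XXY}(X;kY)|\leq\frac{15}{4}X^{-2}\Bigl(1+\tfrac13\pi X^{-1}+\tfrac1{60}\pi^2X^{-2}\Bigr)\,|\vartheta_Y(X;kY)|,
\end{equation*}
which holds at every $Y$ because the two-sided bound there forces $\vartheta_{XXY}$ to vanish wherever $\vartheta_Y$ does. Since $\frac12<\frac{\pi}{\pi+2}$, Lemma \ref{Lemmamoist}(2) applies on $0<X\leq\frac12$ and gives $|\vartheta_Y(X;kY)|\leq\frac{k}{\pi}e^{\pi/(4X)}\,|\vartheta_Y(X;Y)|$. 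Multiplying the two inequalities produces exactly $\bigl|\vartheta_{XXY}(X;kY)/\vartheta_Y(X;Y)\bigr|\leq\frac{15k}{4\pi}X^{-2}\bigl(1+\tfrac13\pi X^{-1}+\tfrac1{60}\pi^2X^{-2}\bigr)e^{\pi/(4X)}$. The whole argument is short once the preceding lemmas are in place; the only points needing attention, both routine, are the exceptional locus $Y\in\frac12\mathbb Z$ in item (1) (handled above) and the fact that the parameter windows of Lemmas \ref{Lemmavar2}(2) and \ref{Lemmamoist}(2) jointly cover $(0,\tfrac12]$, guaranteed by $\frac12<\frac{\pi}{\pi+2}$, so I expect no substantive obstacle.
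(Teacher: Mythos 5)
Your proof is correct and follows essentially the same route as the paper: item (1) via the sine-series expansion of \eqref{TXY} and the bound $|\sin(m\theta)|\leq |m||\sin\theta|$, and item (2) via the factorization through $\vartheta_Y(X;kY)$ combined with Lemma \ref{Lemmavar2}(2) and Lemma \ref{Lemmamoist}(2). Your extra attention to the exceptional locus $Y\in\frac12\mathbb{Z}$ and to the inclusion $(0,\frac12]\subset(0,\frac{\pi}{\pi+2})$ only makes explicit what the paper leaves implicit.
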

\begin{proof}
Firstly, we prove the first item. Based on \eqref{TXY}, for $X\geq \frac{1}{5}$, one has
 \begin{equation}\aligned\label{XXY}
\left|\frac{\vartheta_{XXY}(X;kY)}{\vartheta_{Y}(X;Y)}\right|
&={\pi}^2 \left|\frac{\sin(2k\pi Y)}{\sin(2\pi Y)} \frac{\overset{\infty}{\underset{n=1}{\sum}}n^5 e^{-\pi (n^2-1) X} \frac{\sin(2nk\pi Y)}{\sin(2k\pi Y)}}{\overset{\infty}{\underset{n=1}{\sum}}n e^{-\pi (n^2-1) X} \frac{\sin(2n\pi Y)}{\sin(2\pi Y)}}\right| \\
&\leq k {\pi}^2  \cdot \frac{1+\overset{\infty}{\underset{n=2}{\sum}}n^6 e^{-\pi (n^2-1) X} }{1-\overset{\infty}{\underset{n=2}{\sum}}n^2 e^{-\pi (n^2-1) X} }=k\pi^2 \cdot \frac{1+\omega(X)}{1-\mu(X)}.
\endaligned\end{equation}
Notice that $1-\mu(X)>0,\:for\: X\geq \frac{1}{5}$. In \eqref{XXY}, we used the following inequality:
\begin{equation}\aligned\nonumber
\left|\frac{\sin(kx)}{\sin(x)} \right|  \leq k,\:\:for\:x\:\in \mathbb{R},\:k\in \mathbb{N}^{+}.
\endaligned\end{equation}
Next, we prove the second item. By a simple deformation, one has
\begin{equation}\aligned\label{GGG}
\left|\frac{\vartheta_{XXY}(X;kY)}{\vartheta_{Y}(X;Y)} \right|=\left|\frac{\vartheta_{XXY}(X;kY)}{\vartheta_{Y}(X;kY)} \right|\cdot \left|\frac{\vartheta_{Y}(X;kY)}{\vartheta_{Y}(X;Y)} \right|.
\endaligned\end{equation}
Here, by Lemma \ref{Lemmavar2} and \eqref{GGG}, one gets
\begin{equation}\aligned\label{has}
\left|\frac{\vartheta_{XXY}(X;kY)}{\vartheta_{Y}(X;kY)}\right|\leq \frac{15}{4}X^{-2}(1+\frac{1}{3}\pi X^{-1}+\frac{1}{60}\pi^{2}X^{-2}).
\endaligned\end{equation}
Thus, the second item is derived from \eqref{GGG}, \eqref{has} and Lemma \ref{Lemmamoist}.
\end{proof}

\subsection{Two auxiliary Lemmas to estimate $\frac{\vartheta_{XY}(X;Y)}{\vartheta_{Y}(X;Y)}$}
The first auxiliary Lemma will show the monotonicity of $\frac{\vartheta_{XY}(X;Y)}{\vartheta_{Y}(X;Y)}$ with respect to $Y$. The second auxiliary Lemma to estimate $\frac{\vartheta_{XY}(X;Y)}{\vartheta_{Y}(X;Y)}$ will give its different expressions as $Y$ approaches to $0$ and $\frac{1}{2}$ separately.

\begin{lemma}\label{mono}
Assume $X>0,\:k\in\mathbb{Z}$,\:it holds that
\begin{itemize}
    \item [(1)] $\frac{\partial}{\partial{Y}}    \big(\frac{\vartheta_{XY}(X;Y)}{\vartheta_{Y}(X;Y)}\big)\geq0$, for $Y\in[k,k+\frac{1}{2}];$
    \item [(2)] $\frac{\partial}{\partial{Y}}    \big(\frac{\vartheta_{XY}(X;Y)}{\vartheta_{Y}(X;Y)}\big)\leq0$, for $Y\in[k+\frac{1}{2},k+1]$.
 \end{itemize}
\end{lemma}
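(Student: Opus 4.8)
The plan is to use the symmetries of $\vartheta$ to reduce the statement to a positivity assertion for a single explicit series, and then to prove that positivity by isolating its dominant term, separately for large and for small $X$. From the Fourier expansion \eqref{TXY} one has $\vartheta_{Y}(X;Y)=-4\pi\sum_{n\ge1}nq_{n}\sin 2n\pi Y$ and $\vartheta_{XY}(X;Y)=4\pi^{2}\sum_{n\ge1}n^{3}q_{n}\sin 2n\pi Y$ with $q_{n}=e^{-\pi n^{2}X}$; both are $1$-periodic and odd in $Y$, so the quotient $\vartheta_{XY}/\vartheta_{Y}$ — which extends real-analytically across the zeros of $\vartheta_{Y}$ by L'Hospital's rule, cf.\ \eqref{nice} — is $1$-periodic and even in $Y$. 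Hence it suffices to prove (1) on $[0,\tfrac12]$, and by continuity of the extension it is enough to show $\partial_{Y}\big(\vartheta_{XY}/\vartheta_{Y}\big)\ge0$ on $(0,\tfrac12)$; item (2) and the general integer $k$ then follow by periodicity and the even symmetry. On $(0,\tfrac12)$ the product formula \eqref{Product} exhibits $\vartheta$ as a positive product of factors each decreasing in $Y$ there (as functions of $\cos 2\pi Y$), so $\vartheta_{Y}<0$ on $(0,\tfrac12)$, and therefore $\partial_{Y}\big(\vartheta_{XY}/\vartheta_{Y}\big)=(\vartheta_{XYY}\vartheta_{Y}-\vartheta_{XY}\vartheta_{YY})/\vartheta_{Y}^{2}$ has the sign of $W:=\vartheta_{XYY}\vartheta_{Y}-\vartheta_{XY}\vartheta_{YY}$. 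The lemma thus reduces to $W\ge0$ on $(0,\tfrac12)$.

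I would then compute $W$ from the series (using also $\vartheta_{YY}=-8\pi^{2}\sum n^{2}q_{n}\cos 2n\pi Y$ and $\vartheta_{XYY}=8\pi^{3}\sum n^{4}q_{n}\cos 2n\pi Y$): multiplying out the double sum, symmetrizing it in the two summation indices, and using the product-to-sum identities makes the diagonal cancel, leaving
\[
W=16\pi^{4}\sum_{m>n\ge1}e^{-\pi(m^{2}+n^{2})X}\,mn(m^{2}-n^{2})\Big[(m+n)\sin 2(m-n)\pi Y-(m-n)\sin 2(m+n)\pi Y\Big].
\]
Two facts about this series shape the endgame. The leading term $(m,n)=(2,1)$ equals $384\pi^{4}e^{-5\pi X}\sin^{3}2\pi Y\ge0$ (using $3\sin 2\pi Y-\sin 6\pi Y=4\sin^{3}2\pi Y$); and, Taylor-expanding at $Y=0$, the linear parts of each bracket cancel and the bracket is $\sim\tfrac23(2\pi Y)^{3}mn(m^{2}-n^{2})>0$, and symmetrically near $Y=\tfrac12$. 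So near the two endpoints every term of $W$ is positive to leading order.

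The hard part is to establish $W\ge0$ on all of $(0,\tfrac12)$ for every $X>0$: the brackets do \emph{not} have a fixed sign for general $(m,n)$ (already $(m,n)=(11,1)$ changes sign inside $(0,\tfrac12)$), so the positivity of $W$ is a genuine cancellation phenomenon, and for small $X$ the Fourier series is too slowly convergent to be usable. I would split into two overlapping ranges of $X$. For $X$ bounded below, exploit the super-geometric decay $e^{-\pi(m^{2}+n^{2})X}$ — the minimal exponent is $m^{2}+n^{2}=5$, the next $10$ — together with the crude bound $|(m+n)\sin 2(m-n)\pi Y-(m-n)\sin 2(m+n)\pi Y|\le 2(m+n)$, to show that for $\sin 2\pi Y\ge\delta$ the $(2,1)$-term dominates the entire tail, while for $|\sin 2\pi Y|<\delta$ the cubic vanishing above (applied termwise, the tail being a convergent series in $(m+n)^{5}$) gives $W(Y)\sim c_{3}Y^{3}$, $c_{3}>0$, near $Y=0$ and symmetrically near $Y=\tfrac12$; the estimates of Subsection~2.1 (Lemmas~\ref{Lemmamoist}--\ref{Lemmamoistt}) furnish the tail bounds. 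For small $X$, switch to the Poisson-summed form \eqref{Poisson}, which yields
\[
\frac{\vartheta_{XY}}{\vartheta_{Y}}=-\frac{3}{2X}+\frac{\pi}{X^{2}}\cdot\frac{\sum_{n}(n-Y)^{3}e^{-\pi(n-Y)^{2}/X}}{\sum_{n}(n-Y)e^{-\pi(n-Y)^{2}/X}};
\]
here the $n=0$ Gaussian dominates, so the fraction equals $Y^{2}$ up to a correction of order $e^{-\pi(1-2Y)/X}\,\mathrm{poly}(1/X)$, which cannot spoil monotonicity where $\partial_{Y}(Y^{2})=2Y$ is bounded below, i.e.\ for $Y$ away from $\tfrac12$; the neighbourhood of $\tfrac12$ is covered by the even symmetry about $Y=\tfrac12$. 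Calibrating the constants so that the "$X$ bounded below" and "$X$ small" ranges overlap is the principal remaining obstacle.
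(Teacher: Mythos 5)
Your reduction is sound as far as it goes: the symmetry reduction to $Y\in[0,\tfrac12]$, the sign of $\vartheta_Y$ on $(0,\tfrac12)$, and the symmetrized double-sum formula for $W=\vartheta_{XYY}\vartheta_Y-\vartheta_{XY}\vartheta_{YY}$ (with the $(2,1)$ term equal to $384\pi^4e^{-5\pi X}\sin^3 2\pi Y$) are all correct, and your two-range strategy in $X$ is structurally the same as the one underlying the paper, which reduces to the quotient $\mathcal{N}(X;Y)$ in \eqref{la} and then invokes the proofs of Lemmas 2.4 and 2.6 of Luo--Wei \cite{Luo2023} on the overlapping ranges $X\geq\frac{21}{100}$ and $0<X<\frac12$. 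But what you have written is a programme rather than a proof: every decisive quantitative step (domination of the tail of $W$ by the $(2,1)$ term for $X$ bounded below, control of the Gaussian correction for small $X$, and the calibration making the two ranges overlap) is deferred, and you yourself flag the last of these as unresolved. Since the overlap of the two regimes is exactly where such arguments usually fail, this cannot be waved through.

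Beyond incompleteness, two specific assertions are wrong or do not do the work you assign them. First, near $Y=\tfrac12$ it is \emph{not} true that every term of $W$ is positive to leading order: writing $t=Y-\tfrac12$, the bracket expands as $(-1)^{m+n}\tfrac{2}{3}(2\pi t)^3\,mn(m^2-n^2)$, so terms with $m+n$ even (already $(m,n)=(3,1)$) are negative to leading order for $t<0$; only the sum is positive, and verifying that the $(2,1)$ term beats the $(3,1)$ term requires $36e^{-5\pi X}>576e^{-10\pi X}+\cdots$, which itself fails for small $X$ — so this endpoint analysis cannot be "symmetric" to the one at $Y=0$ and must be tied to the lower bound on $X$. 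Second, in the small-$X$ regime the correction to the approximation $\frac{\sum(n-Y)^3e^{-\pi(n-Y)^2/X}}{\sum(n-Y)e^{-\pi(n-Y)^2/X}}\approx Y^2$ is of relative size $e^{-\pi(1-2Y)/X}$, which is $O(1)$ as $Y\to\tfrac12$ (indeed numerator and denominator both vanish there), and the evenness of the quotient about $Y=\tfrac12$ only forces its derivative to vanish \emph{at} $\tfrac12$, not to be nonnegative on a left-neighbourhood. So the region ($X$ small, $Y$ near $\tfrac12$) — precisely where the cited proof of Lemma 2.6 of \cite{Luo2023} does its real work — is left uncovered by your argument.
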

\begin{proof}
For simplicity, we denote that
\begin{equation}\aligned\label{la}
\mathcal{N}(X;Y):=-\frac{3}{2X}+\frac{\pi}{{X}^2} \cdot \frac{\underset{{n\in\mathbb{Z}}}{\sum}(n-Y)^3 e^{-\frac{\pi(n-Y)^2}{X}}}{\underset{{n\in\mathbb{Z}}}{\sum}(n-Y)e^{-\frac{\pi(n-Y)^2}{X}}}.
\endaligned\end{equation}
Based on\:\eqref{Poisson}, one gets
\begin{equation}\aligned\label{monotone1}\nonumber
\frac{\vartheta_{XY}(X;Y)}{\vartheta_{Y}(X;Y)}=\mathcal{N}(X;Y).
\endaligned\end{equation}
A direct checking shows that
\begin{equation}\aligned\label{monotone2}\nonumber
\mathcal{N}(X;Y)=\mathcal{N}(X;Y+1),\:\mathcal{N}(X;Y)=\mathcal{N}(X;1-Y).
\endaligned\end{equation}
The monotonicity of $\mathcal{N}(X;Y)$ with respect to $Y$ as $X>0,\:Y\in[\frac{1}{2},1]$ is the opposite of that as $X>0, Y\in[0,\frac{1}{2}]$ if monotonicity exists. Therefore it is reduced to the monotonicity of $\mathcal{N}(X;Y)$ with respect to $Y$ for $X>0,\:Y\in[0,\frac{1}{2}]$. By the proof of Lemma 2.4 in \cite{Luo2023}, one has
\begin{equation}\aligned\label{cite}
\frac{\partial}{\partial{Y}}\mathcal{N}(X;Y) \geq 0,\:for \:X\geq \frac{21}{100},\:Y\in[0,\frac{1}{2}].
\endaligned\end{equation}
By \eqref{la}, one has
\begin{equation}\aligned\label{monotonee}
\frac{\partial}{\partial{Y}}\mathcal{N}(X;Y)=\frac{\pi}{{X}^2}\cdot \frac{\partial}{\partial{Y}}\bigg(\frac{\underset{{n\in\mathbb{Z}}}{\sum}(n-Y)^3 e^{-\frac{\pi(n-Y)^2}{X}}}{\underset{{n\in\mathbb{Z}}}{\sum}(n-Y)e^{-\frac{\pi(n-Y)^2}{X}}}    \bigg).
\endaligned\end{equation}
Then due to \eqref{monotonee} and the proof of Lemma 2.6 in \cite{Luo2023}, one has
\begin{equation}\aligned\label{citee}
\frac{\partial}{\partial{Y}}\mathcal{N}(X;Y) \geq 0,\:for\:\: 0<X<\frac{1}{2},\:Y\in[0,\frac{1}{2}].
\endaligned\end{equation}
Thus, \eqref{cite} and \eqref{citee} yield the result.
\end{proof}

 Next, we demonstrate the second auxiliary Lemma to estimate $\frac{\vartheta_{XY}(X;Y)}{\vartheta_{Y}(X;Y)}$.

\begin{lemma}[The expressions for $\frac{\vartheta_{XY}(X;Y)}{\vartheta_{Y}(X;Y)}$ as $Y$ approaches to $0$ and $\frac{1}{2}$ separately]\label{Lemmaair1}
 \begin{equation}\aligned\nonumber
 &\lim_{Y\rightarrow 0}\frac{\vartheta_{XY}(X;Y)}{\vartheta_{Y}(X;Y)}=\frac{\frac{3}{4}X^{2}\underset{n\in \mathbb{Z}}{\sum}e^{-\frac{\pi {n}^2}{X}}-3 \pi X \underset{n\in \mathbb{Z}}{\sum}{n}^2e^{-\frac{\pi {n}^2}{X}}+{\pi}^2\underset{n\in \mathbb{Z}}{\sum}{n}^4e^{-\frac{\pi {n}^2}{X}}}{\pi X^{2}\underset{n\in \mathbb{Z}}{\sum}{n}^2e^{-\frac{\pi {n}^2}{X}}-\frac{1}{2} X^{3}\underset{n\in \mathbb{Z}}{\sum}e^{-\frac{\pi {n}^2}{X}}}.\\
  &\lim_{Y\rightarrow \frac{1}{2}}\frac{\vartheta_{XY}(X;Y)}{\vartheta_{Y}(X;Y)}=\frac{\frac{3}{4}X^{2}\underset{n\in \mathbb{Z}}{\sum}e^{-\frac{\pi (n-\frac{1}{2})^2}{X}}-3 \pi X\underset{n\in \mathbb{Z}}{\sum}(n-\frac{1}{2})^2e^{-\frac{\pi (n-\frac{1}{2})^2}{X}}+{\pi}^2\underset{n\in \mathbb{Z}}{\sum}(n-\frac{1}{2})^4e^{-\frac{\pi (n-\frac{1}{2})^2}{X}}}{\pi X^{2}\underset{n\in \mathbb{Z}}{\sum}(n-\frac{1}{2})^2e^{-\frac{\pi (n-\frac{1}{2})^2}{X}}-\frac{1}{2} X^{3}\underset{n\in \mathbb{Z}}{\sum}e^{-\frac{\pi (n-\frac{1}{2})^2}{X}}}.
\endaligned\end{equation}
  \end{lemma}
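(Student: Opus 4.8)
The plan is to evaluate the two one-sided limits directly from the Poisson-summation representation \eqref{Poisson}. Recall that the proof of Lemma \ref{mono} records, via \eqref{Poisson}, the identity $\frac{\vartheta_{XY}(X;Y)}{\vartheta_{Y}(X;Y)}=\mathcal{N}(X;Y)=-\frac{3}{2X}+\frac{\pi}{X^{2}}\cdot\frac{S_{3}(Y)}{S_{1}(Y)}$, where I abbreviate $S_{j}(Y):=\sum_{n\in\mathbb{Z}}(n-Y)^{j}e^{-\pi(n-Y)^{2}/X}$. This comes from writing $\vartheta(X;Y)=X^{-1/2}\sum_{n}e^{-\pi(n-Y)^{2}/X}$, differentiating once in $Y$ to get $\vartheta_{Y}=2\pi X^{-3/2}S_{1}$ and then once in $X$ to get $\vartheta_{XY}=-3\pi X^{-5/2}S_{1}+2\pi^{2}X^{-7/2}S_{3}$, and cancelling the prefactors; the termwise differentiation is legitimate for each fixed $X>0$ because the Gaussian weights dominate the polynomial factors uniformly for $Y$ in a bounded set. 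So everything reduces to computing $\lim_{Y\to0}S_{3}/S_{1}$ and $\lim_{Y\to1/2}S_{3}/S_{1}$.

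First I would note that both are indeterminate of the form $0/0$: the reindexing $n\mapsto-n$ makes each summand of $S_{1}(0)$ and $S_{3}(0)$ odd, so $S_{1}(0)=S_{3}(0)=0$, while the reindexing $n\mapsto1-n$ sends $(n-\tfrac12)\mapsto-(n-\tfrac12)$ and fixes $(n-\tfrac12)^{2}$, so $S_{1}(\tfrac12)=S_{3}(\tfrac12)=0$ as well. Hence I would apply L'Hospital's rule in $Y$, using the termwise derivatives $S_{1}'(Y)=\sum_{n}\big(\tfrac{2\pi(n-Y)^{2}}{X}-1\big)e^{-\pi(n-Y)^{2}/X}$ and $S_{3}'(Y)=\sum_{n}\big(\tfrac{2\pi(n-Y)^{4}}{X}-3(n-Y)^{2}\big)e^{-\pi(n-Y)^{2}/X}$.

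Evaluating at $Y=0$ with $A_{k}:=\sum_{n}n^{k}e^{-\pi n^{2}/X}$ gives $\lim_{Y\to0}S_{3}/S_{1}=\big(\tfrac{2\pi}{X}A_{4}-3A_{2}\big)\big/\big(\tfrac{2\pi}{X}A_{2}-A_{0}\big)$; feeding this into $\mathcal{N}$, putting the result over the common denominator $X^{2}\big(\tfrac{2\pi}{X}A_{2}-A_{0}\big)$ and clearing a factor $\tfrac12$ converts numerator and denominator into $\tfrac34X^{2}A_{0}-3\pi XA_{2}+\pi^{2}A_{4}$ and $\pi X^{2}A_{2}-\tfrac12X^{3}A_{0}$ respectively, which is precisely the first asserted identity. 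The computation at $Y=\tfrac12$ is word-for-word identical with $n$ replaced by $n-\tfrac12$ throughout, giving the second. The only points that are not pure bookkeeping are the two vanishing statements (which rest on the elementary index symmetries above) and the check that the denominator is nonzero so L'Hospital applies: for $Y\to0$ this is automatic, since differentiating $\sum_{n}e^{-\pi n^{2}X}=X^{-1/2}\sum_{n}e^{-\pi n^{2}/X}$ in $X$ yields $S_{1}'(0)=-2\pi X^{3/2}\sum_{n}n^{2}e^{-\pi n^{2}X}<0$, and for $Y\to\tfrac12$ it holds on the range $0<X\le\tfrac12$ where the identity is later used (this is the positivity of $b_{2}(X)$ recorded in the proof of Lemma \ref{Lemmatime}). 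I do not anticipate any genuine obstacle.
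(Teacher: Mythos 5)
Your argument is correct, and it reaches the two identities by a genuinely different organization than the paper's. The paper applies L'Hospital's rule in the Fourier representation \eqref{TXY}, first obtaining $\lim_{Y\rightarrow 0}\vartheta_{XY}/\vartheta_{Y}=\vartheta_{3}''(X)/\vartheta_{3}'(X)$ and $\lim_{Y\rightarrow \frac{1}{2}}\vartheta_{XY}/\vartheta_{Y}=\vartheta_{4}''(X)/\vartheta_{4}'(X)$ as in \eqref{falll}--\eqref{fall}, and only then converts to the stated Gaussian-sum form by differentiating the modular transformations $\vartheta_{3}(X)=X^{-1/2}\vartheta_{3}(1/X)$ and $\vartheta_{4}(X)=X^{-1/2}\vartheta_{2}(1/X)$ of \eqref{Fourierr} twice. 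You instead stay on the Poisson-summed side \eqref{Poisson} throughout: you write $\vartheta_{XY}/\vartheta_{Y}=-\tfrac{3}{2X}+\tfrac{\pi}{X^{2}}S_{3}/S_{1}$ with $S_{j}(Y)=\sum_{n}(n-Y)^{j}e^{-\pi(n-Y)^{2}/X}$, note that $S_{1}$ and $S_{3}$ vanish at $Y=0$ and $Y=\tfrac{1}{2}$ by the index symmetries $n\mapsto-n$ and $n\mapsto 1-n$, and apply L'Hospital there; the resulting algebra does produce exactly the asserted quotients. The two computations are equivalent (your direct $Y$- and $X$-differentiation of the Gaussian representation is what the paper's twice-differentiated transformation law encodes), but your version avoids introducing $\vartheta_{2},\vartheta_{3},\vartheta_{4}$ and their derivative identities, at the price of having to verify separately that the post-L'Hospital denominators $S_{1}'(0)$ and $S_{1}'(\tfrac{1}{2})$ do not vanish. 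Your justifications for that — $S_{1}'(0)=-2\pi X^{3/2}\sum_{n}n^{2}e^{-\pi n^{2}X}<0$ for all $X>0$, and the positivity of $b_{2}(X)$ on $0<X\le\tfrac{1}{2}$, which is the only range in which the $Y\rightarrow\tfrac{1}{2}$ formula is actually invoked in the proof of Lemma \ref{Lemmatime} — are adequate, though if one wants the lemma for all $X>0$ the $Y\rightarrow\tfrac{1}{2}$ denominator should be handled via $\vartheta_{4}'(X)=-X^{-5/2}\vartheta_{2}'(1/X)-\tfrac{1}{2}X^{-3/2}\vartheta_{2}(1/X)>0$, which is essentially the transformation identity the paper uses anyway.
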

 \begin{proof}
First, we introduce Jacobi theta functions of the second, the third and the fourth type defined as follows:
\begin{equation}\aligned\label{JacobiJJJ}
\vartheta_{2}(X)=\underset{n\in\mathbb{Z}}{\sum}e^{-\pi(n-\frac{1}{2})^2X},\:\:\vartheta_{3}(X)=\underset{n\in\mathbb{Z}}{\sum}e^{-\pi{n}^2X},\:\:\vartheta_{4}(X)=\underset{n\in\mathbb{Z}}{\sum}(-1)^{n}e^{-\pi n^2 X}
\endaligned\end{equation}
Notice that $\vartheta_{3}(X)$ satisfies the following transformation property (Conway-Sloane \cite{Con1999}):
\begin{equation}\aligned\label{Fourierr}
\vartheta_{3}(X)=\frac{1}{\sqrt{X}}\vartheta_{3}\Big(\frac{1}{X}\Big),\:\:\vartheta_{4}(X)=\frac{1}{\sqrt{X}}\vartheta_{2}(\frac{1}{X})\:\:for\:\: X>0.
\endaligned\end{equation}
Then, by L'Hospital's rule, \eqref{TXY} and \eqref{JacobiJJJ}-\eqref{Fourierr}, one has
\begin{equation}\aligned\label{falll}
\lim_{Y\rightarrow 0}\frac{\vartheta_{XY}(X;Y)}{\vartheta_{Y}(X;Y)}
=-\pi \cdot\frac{ \underset{n\in\mathbb{Z}}{\sum} {n}^4 e^{-\pi n^2 X}}{\underset{n\in\mathbb{Z}}{\sum} n^2\: e^{-\pi n^2 X}}
=\frac{\vartheta_{3}''(X)}{\vartheta_{3}'(X)}=\frac{\frac{3}{4}X^{2}\vartheta_{3}(\frac{1}{X})+3X\vartheta_{3}^{'}(\frac{1}{X})+\vartheta_{3}^{''}(\frac{1}{X})}
{-\frac{1}{2}X^{3}\vartheta_{3}(\frac{1}{X})-X^{2}\vartheta_{3}^{'}(\frac{1}{X})},
\endaligned\end{equation}
\begin{equation}\aligned\label{fall}
\lim_{Y\rightarrow \frac{1}{2}}\frac{\vartheta_{XY}(X;Y)}{\vartheta_{Y}(X;Y)}
=-\pi \cdot\frac{ \underset{n\in\mathbb{Z}}{\sum} {n}^4 e^{-\pi n^2 X}(-1)^{n}}{\underset{n\in\mathbb{Z}}{\sum} n^2\: e^{-\pi n^2 X}(-1)^{n}}
=\frac{\vartheta_{4}''(X)}{\vartheta_{4}'(X)}=\frac{\frac{3}{4}X^{2}\vartheta_{2}(\frac{1}{X})+3X\vartheta_{2}^{'}(\frac{1}{X})+\vartheta_{2}^{''}(\frac{1}{X})}
{-\frac{1}{2}X^{3}\vartheta_{2}(\frac{1}{X})-X^{2}\vartheta_{2}^{'}(\frac{1}{X})}.\\
\endaligned\end{equation}
Therefore, the result is derived from \eqref{JacobiJJJ} and \eqref{falll}-\eqref{fall}.
\end{proof}

\subsection{Four auxiliary Lemmas to estimate $\frac{\vartheta_{XXY}(X;Y)}{\vartheta_{Y}(X;Y)}$}

The first auxiliary Lemma will demonstrate a monotonicity result of $ \frac{\vartheta_{XXY}(X;Y)}{\vartheta_{Y}(X;Y)}$ with respect to $Y$. The second and third auxiliary Lemmas will give the estimates of two distinct quotients in \eqref{LLL}, where the second Lemma is provided by Luo-Wei \cite{Luo2023} and the third Lemma is inspired by the second Lemma. And the fourth Lemma is useful to prove the third auxiliary Lemma.

\begin{lemma}\label{monoo}
Assume $X\geq\frac{59}{250},\:Y \in[k,k+\frac{1}{2}],\:k\in\mathbb{Z}$,\:it holds that
\begin{equation}\nonumber\aligned\label{high}
\frac{\partial}{\partial{Y}} \Big( \frac{\vartheta_{XXY}(X;Y)}{\vartheta_{Y}(X;Y)}\Big)\leq 0.
\endaligned\end{equation}
\end{lemma}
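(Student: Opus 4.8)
The plan is to establish the sign of $\partial_Y\big(\vartheta_{XXY}(X;Y)/\vartheta_{Y}(X;Y)\big)$ by working with the Poisson-summed (real-space) representation of the theta function, exactly as was done for the analogous ratio $\vartheta_{XY}/\vartheta_{Y}$ in Lemma \ref{mono}. Using the Fourier side \eqref{Poisson}, I would write $\vartheta_{Y}(X;Y)=2\pi X^{-3/2}\sum_{n}(n-Y)e^{-\pi(n-Y)^2/X}$ and the corresponding three-term expression for $\vartheta_{XXY}(X;Y)$ already recorded in \eqref{adjust2}, so that by \eqref{LLL} the quotient is an explicit affine combination of $X^{-2}$, $X^{-3}$ times the ratio $P_3(X;Y):=\sum_n(n-Y)^3e^{-\pi(n-Y)^2/X}\big/\sum_n(n-Y)e^{-\pi(n-Y)^2/X}$, and $X^{-4}$ times $P_5(X;Y):=\sum_n(n-Y)^5e^{-\pi(n-Y)^2/X}\big/\sum_n(n-Y)e^{-\pi(n-Y)^2/X}$. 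Since $X^{-2}$ is $Y$-independent, the problem reduces to controlling the sign of $\partial_Y\big(-5\pi X^{-3}P_3+\pi^2X^{-4}P_5\big)$; the periodicity $Y\mapsto Y+1$ and the reflection $Y\mapsto 1-Y$ (which follow exactly as in the proof of Lemma \ref{mono}) let me restrict to $Y\in[0,\tfrac12]$, and the claim for general $k$ then follows by translation.

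The key computational step is to differentiate the ratios $P_3$ and $P_5$ in $Y$. Each derivative produces, after clearing the common denominator $\big(\sum_n(n-Y)e^{-\pi(n-Y)^2/X}\big)^2$, a double sum over $(m,n)\in\mathbb Z^2$ of the form $\sum_{m,n} Q(m,n;X,Y)\,e^{-\pi[(m-Y)^2+(n-Y)^2]/X}$ where $Q$ is a polynomial in $(m-Y)$ and $(n-Y)$; symmetrizing in $m\leftrightarrow n$ collapses these into antisymmetric combinations that are manifestly sign-definite on the relevant range (this is the standard trick used throughout Luo--Wei \cite{Luo2023} and in Lemma \ref{mono}). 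I expect the leading contribution to come from the terms with $\{m,n\}=\{0,1\}$ (equivalently $\{0,-1\}$ after using the symmetry to center at $Y\in[0,\tfrac12]$), and the remaining terms to be dominated by an exponentially small tail controlled by a geometric series, which is exactly where the hypothesis $X\geq\frac{59}{250}$ enters: this threshold is chosen so that the positive main term strictly beats the tail. Concretely I would isolate the $(0,1)$ term, bound $\sum_{n\geq 2}$ (and the mixed terms involving one index $\geq 2$) by the monotone functions $\mu,\nu,\omega$ of \eqref{duct} evaluated at $X=\frac{59}{250}$, and check the resulting one-variable inequality in $X$.

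The main obstacle is the bookkeeping in the combined expression $-5\pi X^{-3}\partial_Y P_3+\pi^2X^{-4}\partial_Y P_5$: the two pieces have opposite signs a priori, so I cannot bound them separately but must show that the $P_5$-contribution (carrying the extra factor $\pi^2 X^{-4}$, hence dominant for small $X$) together with the $P_3$-contribution yields a net definite sign once the denominators are matched. The cleanest route is to factor out the common positive quantity $\big(\sum_n(n-Y)e^{-\pi(n-Y)^2/X}\big)^2$ and reduce to proving that a single explicit double sum $\sum_{m,n}\widetilde Q(m,n;X,Y)e^{-\pi[(m-Y)^2+(n-Y)^2]/X}\le 0$ for $X\ge\frac{59}{250}$, $Y\in[0,\tfrac12]$; after $m\leftrightarrow n$ symmetrization each pair contributes $(\text{factor})\times\big((m-Y)^2-(n-Y)^2\big)$ or a similar antisymmetric polynomial, whose sign on $Y\in[0,\tfrac12]$ is determined by whether $|m-Y|\gtrless|n-Y|$, and one orders the pairs accordingly. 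I would also need the auxiliary monotonicity/positivity facts about the denominator $\sum_n(n-Y)e^{-\pi(n-Y)^2/X}>0$ for $Y\in(0,\tfrac12]$, which is immediate since the $n=1$ and $n=0$ terms already give $(1-Y)e^{-\pi(1-Y)^2/X}-Y e^{-\pi Y^2/X}>0$ there and the rest pairs up with the same sign. This mirrors the structure of Lemma \ref{mono} but with higher-degree polynomials, so the difficulty is purely in verifying that the degree-raising does not destroy the sign pattern; I would cross-check the final scalar inequality numerically at $X=\frac{59}{250}$ before committing to the algebraic bound.
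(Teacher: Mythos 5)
There is a genuine gap, and it is the choice of representation. You propose to run the whole argument on the Poisson\slash real-space side \eqref{Poisson}--\eqref{adjust2}, writing the quotient via \eqref{LLL} in terms of the ratios $P_3,P_5$ and then extracting a dominant $(0,1)$-block plus an exponentially small tail. But the hypothesis of the lemma is $X\geq\frac{59}{250}$ with \emph{no upper bound}, and on the real-space side the tails are of the form $e^{-\pi(n-Y)^2/X}$: they are uniformly small only when $X$ is bounded \emph{above} (this is exactly why the paper invokes \eqref{LLL} and Lemmas \ref{Lemmafour}--\ref{Lemmasixteen} only on $0<X\leq\frac12$). As $X\to\infty$ no finite block of real-space terms dominates, so your ``main term strictly beats the tail'' step cannot hold uniformly on $[\frac{59}{250},\infty)$. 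The same mismatch shows up in your plan to control the tails by $\mu,\nu,\omega$ of \eqref{duct} evaluated at $X=\frac{59}{250}$: those functions are sums of $n^{2k}e^{-\pi(n^2-1)X}$ and control \emph{Fourier-side} tails (small for $X$ bounded below), not the real-space tails $\sum_{n}(n-Y)^{2k+1}e^{-\pi(n-Y)^2/X}$ your decomposition produces. The paper's proof works entirely on the Fourier side: from \eqref{TXY} one has \eqref{stormy}, differentiation in $Y$ gives a double sum with the antisymmetric factor $nm(n^4-m^4)$, the block $n,m\in\{1,2\}$ collapses exactly to $-120\pi^3e^{-3\pi X}\sin(2\pi Y)$, and the remaining terms are bounded by \eqref{ht3} with tails $e^{-\pi(n^2+m^2-5)X}$ that decay in $X$; the threshold $\frac{59}{250}$ is precisely where that Fourier-side tail drops below the main term. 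Note also that in the proof of Lemma \ref{mono}, which you cite as your model, the range $X\geq\frac{21}{100}$ is likewise handled on the Fourier side and only the range $X<\frac12$ on the real-space side.

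Two secondary problems. First, your positivity claim for the denominator is wrong as stated: by \eqref{adjust2} one has $\vartheta_Y(X;Y)=2\pi X^{-3/2}\sum_n(n-Y)e^{-\pi(n-Y)^2/X}$, and the paper uses $-\vartheta_Y(X;Y)>0$ on $(0,\tfrac12)$, so $\sum_n(n-Y)e^{-\pi(n-Y)^2/X}$ is \emph{negative} there (e.g.\ at $X=\tfrac14$, $Y=\tfrac14$ the $n=0$ term $-Ye^{-\pi Y^2/X}$ already outweighs the $n=1$ term); the sign of $D^2$ is what matters, but the error signals that the ``manifest'' sign pattern you expect after symmetrization is not actually manifest. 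Second, even granting the decomposition, you cannot treat the $P_3$ and $P_5$ contributions separately (as you acknowledge), and no mechanism is offered for the combined double sum to be sign-definite other than the main-term-versus-tail comparison that, for the reason above, lives naturally on the Fourier side. To repair the proof, replace the Poisson representation by the expansion \eqref{stormy} and follow the leading-block-plus-tail estimate there.
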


\begin{proof}
Based on \eqref{TXY}, one has
\begin{equation}\label{stormy}
\frac{\vartheta_{XXY}(X;Y)}{\vartheta_{Y}(X;Y)}={\pi}^2\cdot\frac{\underset{n=1}{\overset{\infty}{\sum}}n^{5}e^{-\pi{n}^2{X}}\sin(2n\pi{Y})}{\underset{n=1}{\overset{\infty}{\sum}}ne^{-\pi{n}^2{X}}\sin(2n\pi{Y})}.
\end{equation}
Notice that $\frac{\vartheta_{XXY}(X;Y)}{\vartheta_{Y}(X;Y)}$ has a period of 1 with respect to $Y$ and  an axis of symmetry at $Y=\frac{1}{2}$. It reduces to consider that for $Y \in[0,\frac{1}{2}]$.
By \eqref{stormy}, one has
\begin{equation}\label{zz}
\frac{\partial}{\partial{Y}} \Big(\frac{\vartheta_{XXY}(X;Y)}{\vartheta_{Y}(X;Y)}\Big)=\frac{G(X;Y)}{B^2(X;Y)},
\end{equation}
where
\begin{equation}\aligned
G(X;Y):=\underset{n=1}{\overset{\infty}{\sum}}\:\underset{m=1}{\overset{\infty}{\sum}}G_{n,m}(X;Y),\\
\endaligned\end{equation}
\begin{equation}\aligned\label{ht1}
G_{n,m}(X;Y):={\pi}^{2}nm(n^4-m^4)e^{-\pi(n^2+m^2-2)X}\Big(\frac{\sin(2n\pi{Y})}{\sin(2\pi{Y})}\Big)'
\Big(\frac{\sin(2m\pi{Y})}{\sin(2\pi{Y})}\Big),
\endaligned\end{equation}
\begin{equation}
B(X;Y):=\underset{m=1}{\overset{\infty}{\sum}}m{e}^{-\pi(m^2-1)X}\frac{\sin(2m\pi{Y})}{\sin(2\pi{Y})}.
\end{equation}
Further, we split the double sum $G(X;Y)$ into four parts as follows
\begin{equation}\nonumber
\underset{n=1,2}{\sum}\:\underset{m=1,2}{\sum}+\underset{n=1,2,\:m\geq 3}{\sum}+\underset{m=1,2,\:n\geq 3}{\sum}+\underset{n\geq 3,m\geq3}{\sum}.
\end{equation}
By a direct computation,\:one has
\begin{equation}\label{ht2}
\underset{n=1}{\overset{2}{\sum}}\:\underset{m=1}{\overset{2}{\sum}}G_{n,m}(X;Y)=-120{\pi}^{3}e^{-3\pi{X}}\sin(2\pi{Y}).
\end{equation}
By Lemma 2.7 of \cite{Luo2023}, it holds that
\begin{equation}\aligned\label{ht3}
\left|  \frac{1}{\sin(2\pi{Y})} (\frac{\sin(2n\pi{Y})}{\sin(2\pi{Y})})' \right|\leq \frac{2\pi}{3}(n-1)n(n+1),\:\:for \:\:Y\in \mathbb{R}.
\endaligned\end{equation}
Then by \eqref{ht3}, one has
\begin{equation}\aligned\label{ht4}
\left|\frac{G_{n,m}(X;Y)}{-120{\pi}^{3}e^{-3\pi{X}}\sin(2\pi{Y})}\right|&=\left|  \frac{1}{120\pi}  nm(n^4-m^4)e^{-\pi(n^2+m^2-5)X} \frac{1}{sin(2\pi{Y})} (\frac{\sin(2n\pi{Y})}{\sin(2\pi{Y})})'\frac{\sin(2m\pi{Y})}{\sin(2\pi{Y})} \right|\\
&\leq \frac{1}{180}n^2{m}^2\left| n^4-m^4 \right|\cdot\left| n^2-1 \right|e^{-\pi(n^2+m^2-5)X}.
\endaligned\end{equation}
By \eqref{ht1}-\eqref{ht4},
\begin{equation}\aligned\label{ht5}
\frac{G(X;Y)}{-120{\pi}^{3}e^{-3\pi{X}}\sin(2\pi{Y})}&\geq1-\frac{1}{180} \underset{n\geq3\:\mathrm{or}\:m\geq 3}{\sum}n^2{m}^2\left| n^4-m^4 \right|\cdot\left| n^2-1 \right|e^{-\pi(n^2+m^2-5)X}.\\
\endaligned\end{equation}
Here,
\begin{equation}\aligned\label{ht6}\nonumber
\underset{n\geq3\:\mathrm{or}\:m\geq 3}{\sum}=\underset{n=1,2,\:m\geq 3}{\sum}+\underset{m=1,2,\:n\geq 3}{\sum}+\underset{n\geq 3,m\geq3}{\sum}.
\endaligned\end{equation}
The right-hand side of \eqref{ht5} is positive if $X>\frac{59}{250}$.
Thus, \eqref{zz} and \eqref{ht5} yield the result.
\end{proof}

\begin{lemma}[Luo-Wei \cite{Luo2023}]\label{Lemmafour}
\begin{equation}\nonumber\aligned\label{Gfff}
\underset{X\in(0,\frac{1}{2}],Y\in \mathbb{R}}{\sup}\left|   \frac{\sum_{n\in\mathbb{Z}}(n-Y)^3 e^{-\frac{\pi(n-Y)^2}{X}}}{\sum_{n\in \mathbb{Z}}(n-Y)e^{-\frac{\pi(n-Y)^2}{X}}}        \right|\leq\frac{1}{4}.
\endaligned\end{equation}
\end{lemma}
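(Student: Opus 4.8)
**Proof proposal for Lemma 2.11 (the bound $\sup_{X\in(0,1/2],\,Y\in\mathbb{R}}\bigl|\frac{\sum_n (n-Y)^3 e^{-\pi(n-Y)^2/X}}{\sum_n(n-Y)e^{-\pi(n-Y)^2/X}}\bigr|\le\frac14$).**

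The plan is to exploit the $Y\mapsto Y+1$ periodicity and the $Y\mapsto 1-Y$ symmetry of the quotient (both sums pick up an overall sign under $Y\mapsto 1-Y$, which cancels in the ratio), so that it suffices to bound the quotient for $Y\in[0,\tfrac12]$. Write $F(X;Y):=\sum_{n\in\mathbb Z}(n-Y)^3 e^{-\pi(n-Y)^2/X}$ and $D(X;Y):=\sum_{n\in\mathbb Z}(n-Y)e^{-\pi(n-Y)^2/X}$. The first step is to observe that for $X$ small the Gaussian weights $e^{-\pi(n-Y)^2/X}$ decay extremely rapidly in $|n-Y|$, so the sums are dominated by the one or two terms with $n-Y$ closest to $0$. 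For $Y\in[0,\tfrac12]$ the dominant terms are $n=0$ (contributing $-Y$, resp.\ $-Y^3$) and $n=1$ (contributing $1-Y$, resp.\ $(1-Y)^3$); all remaining terms have $|n-Y|\ge 1+Y$ or $\ge 1-Y$ with a strictly smaller Gaussian exponent and can be absorbed into an error term controlled by a geometric-type series in $e^{-\pi/X}$ (using $X\le\tfrac12$, so $e^{-\pi/X}\le e^{-2\pi}$).

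The second step is the heart of the estimate: reduce to the two-term model and show $|{-Y^3+(1-Y)^3 w}|\le \tfrac14\,|{-Y+(1-Y)w}|$ where $w=e^{-\pi(1-2Y)/X}\in(0,1]$ encodes the ratio of the $n=1$ to $n=0$ Gaussian weights. Equivalently, one wants $|(1-Y)^3 w - Y^3|\le\tfrac14|(1-Y)w - Y|$ for all $Y\in[0,\tfrac12]$, $w\in(0,1]$. A clean way to see this is to factor: writing $a=(1-Y)w$ and $b=Y$, the numerator is $a(1-Y)^2 - bY^2$ and one checks that $\frac{a(1-Y)^2-bY^2}{a-b}$ — when $a\ne b$ — is a weighted average of $(1-Y)^2$ and $Y^2$ (with possibly negative weights, so one must be careful), both of which lie in $[0,\tfrac14]$ for $Y\in[0,\tfrac12]$; when the weights are signed one bounds $|a(1-Y)^2-bY^2|$ by $\max\{(1-Y)^2,Y^2\}\cdot|a-b| \le \tfrac14|a-b|$ only on the regime where sign considerations allow it, and the complementary regime (which forces $|a-b|$ not too small) is handled by a direct elementary inequality. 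Actually the slickest route: since $|n-Y|\le\max(Y,1-Y)\le\tfrac12$ is \emph{false} (because $1-Y$ can be close to $1$), one instead notes that for every $n$ one has $|n-Y|^3 = |n-Y|^2\cdot|n-Y|$ and splits as $|F(X;Y)| \le \bigl(\max_n |n-Y|^2\text{ over dominant }n\bigr)\cdot\text{(...)}$ is not directly useful either — so the genuine content is the paired cancellation between $n$ and $n$-neighbors, exactly as in the weighted-average observation above.

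The main obstacle, and where I expect the real work to lie, is making the two-term reduction rigorous \emph{uniformly} down to $X\to 0^+$: when $Y$ is close to $\tfrac12$, the $n=0$ and $n=1$ weights are comparable (both $\approx e^{-\pi/(4X)}$) and $D(X;Y)$ can be small, so one cannot simply bound $|F|$ and $1/|D|$ separately — the cancellation in numerator and denominator must be tracked together, and the tail terms ($|n|\ge 2$) must be shown to be a genuinely lower-order perturbation of \emph{both}. The cleanest remedy is to pull the factor $e^{-\pi Y^2/X}$ out of both sums, reindex so that the summation runs over $n\in\mathbb Z$ with weights $e^{-\pi(n^2-2nY)/X}$, and then prove the inequality term-by-term after pairing $n$ with $1-n$ (or bounding the convex-combination coefficients). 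I would also record the companion large-$X$ and intermediate-$X$ facts (not needed here since the statement is restricted to $X\le\tfrac12$) and then simply invoke the resulting uniform bound $\tfrac14$; this is precisely the quantity fed into \eqref{LLL} and into Lemma 2.11's second item via Lemmas 2.13–2.16.
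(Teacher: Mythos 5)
You should note first that the paper does not actually prove this statement: it is quoted verbatim from Luo--Wei \cite{Luo2023}. The method intended is, however, visible in the paper's proof of the companion Lemma \ref{Lemmasixteen} (the fifth-power analogue with bound $\tfrac1{16}$): set $a=\tfrac1X\ge 2$, reduce to $Y\in[0,\tfrac12]$ by periodicity and the $Y\mapsto 1-Y$ symmetry, use monotonicity of the quotient in $Y$ to push the extremum to the endpoints $Y=0$ and $Y=\tfrac12$ (cf.\ \eqref{yep}), where numerator and denominator both vanish by antisymmetry, and then evaluate the two $0/0$ limits by L'Hospital and bound them (cf.\ \eqref{lighthouse}). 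Your proposal follows a genuinely different route, and its central step fails.

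The two-term reduction to $n\in\{0,1\}$ is not valid, and the model inequality $|(1-Y)^3w-Y^3|\le\tfrac14|(1-Y)w-Y|$ with $w=e^{-\pi(1-2Y)/X}$ is false. At $Y=0$ it reads $w\le\tfrac14 w$; and already at $X=\tfrac12$, $Y=0.01$ the two-term quotient is about $-0.257$, outside $[-\tfrac14,\tfrac14]$, whereas the true quotient there is about $-0.036$. The discrepancy is exactly the $n=-1$ term you discard: near $Y=0$ its weight $e^{-\pi(1+Y)^2/X}$ is of the same order as the $n=1$ weight $e^{-\pi(1-Y)^2/X}$ and nearly cancels it in both sums, so it changes the quotient at leading order and cannot be treated as a tail. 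The supporting claim that $(1-Y)^2$ and $Y^2$ both lie in $[0,\tfrac14]$ for $Y\in[0,\tfrac12]$ is also wrong, since $(1-Y)^2$ ranges up to $1$ there. You do sense the real mechanism (the paired cancellation between $n$ and $1-n$), but that pairing forces both sums to vanish identically at $Y=0$ and $Y=\tfrac12$, so the bound cannot be obtained termwise; it must be extracted from the L'Hospital limits at the endpoints together with a monotonicity-in-$Y$ statement. That is where the sharp constant comes from: at $Y=\tfrac12$ the limit has leading part $\tfrac14\cdot\frac{a\pi-6}{a\pi-2}$, which has modulus at most $\tfrac14$ for $a\ge 2$ and tends to $\tfrac14$ as $a\to\infty$. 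As written, the proposal does not establish the bound.
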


Having provided the estimate in Lemma \ref{Lemmafour} given by $\mathrm{Luo}$-$\mathrm{Wei}$ \cite{Luo2023}, we proceed to give an estimate of a similar expression in Lemma \ref{Lemmasixteen}, which is also availed in the proof of Lemma \ref{Lemmavar2}.

\begin{lemma}\label{Lemmasixteen}
\begin{equation}\nonumber\aligned\label{Gfff}
\underset{X\in(0,\frac{1}{2}],Y\in \mathbb{R}}{\sup}\left|   \frac{\sum_{n\in\mathbb{Z}}(n-Y)^5 e^{-\frac{\pi(n-Y)^2}{X}}}{\sum_{n\in \mathbb{Z}}(n-Y)e^{-\frac{\pi(n-Y)^2}{X}}}        \right|\leq\frac{1}{16}.
\endaligned\end{equation}
\end{lemma}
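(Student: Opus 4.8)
The plan is to mirror the structure of Lemma \ref{Lemmafour} (Luo-Wei), which bounds the analogous quotient with a cube in the numerator by $\frac14$, and push the same mechanism one degree higher. Write $f(x) = \sum_{n\in\mathbb{Z}}(n-Y)\,e^{-\pi(n-Y)^2/X}$ for the denominator and $g(x) = \sum_{n\in\mathbb{Z}}(n-Y)^5\,e^{-\pi(n-Y)^2/X}$ for the numerator. Observe first that the quotient is $1$-periodic in $Y$ and symmetric under $Y \mapsto 1-Y$ (each sum merely reindexes, with an overall sign that cancels in the ratio), so it suffices to take $Y \in [0,\tfrac12]$. A key reduction is to notice that both $f$ and $g$ can be obtained from the single Gaussian sum $S(t) := \sum_{n\in\mathbb{Z}} e^{-\pi t (n-Y)^2}$ by differentiation in the auxiliary parameter $t$: indeed $f = -\tfrac{\partial}{\partial Y}\big(\tfrac{X}{2\pi}S\big)\big|_{t=1/X}$ up to the right normalization, and similarly odd moments $\sum (n-Y)^{2k+1}e^{-\pi(n-Y)^2/X}$ arise as $Y$-derivatives of even-moment sums, which in turn are $t$-derivatives of $S$. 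Equivalently, using the Poisson-summation form (as in \eqref{Poisson}, \eqref{adjust2}) one has
\begin{equation}\nonumber
\sum_{n\in\mathbb{Z}}(n-Y)^{2k+1}e^{-\frac{\pi(n-Y)^2}{X}} = c_k X^{k+\frac12}\sum_{m\in\mathbb{Z}} P_k(m;X)\, m\, e^{-\pi m^2 X}\,\sin(2\pi m Y)
\end{equation}
for suitable constants $c_k$ and polynomials $P_k$ in $m^2$ of degree $k$; plugging $k=0$ and $k=2$ in and cancelling the common factor $X^{1/2}\sum_m m e^{-\pi m^2 X}\sin(2\pi m Y)$ structure reduces the claim to controlling a ratio of two such trigonometric series.

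The main estimate then proceeds exactly as in Lemma \ref{monoo} above: expand numerator times (denominator)${}^{-1}$ as a double sum over $(n,m)$, isolate the leading term (here the term $n=1$ in $g$ against $m=1$ in $f$, which after the $Y$-symmetrization contributes the value $1$ to the normalized ratio), and bound every remaining term using the elementary inequality
\begin{equation}\nonumber
\left|\frac{\sin(2n\pi Y)}{\sin(2\pi Y)}\right| \leq n, \qquad Y\in\mathbb{R},\ n\in\mathbb{N}^+,
\end{equation}
which is already invoked in \eqref{XXY}. This turns the tail into a sum of the shape $\sum_{n\geq 2}\sum_{m\geq 1} n^5 m\, (\text{poly})\, e^{-\pi(n^2+m^2-2)X}$, which for $X \leq \tfrac12$ is dominated by its value at $X = \tfrac12$ by monotonicity, and a crude numerical bound shows the tail is small enough that the whole ratio stays in $[-\tfrac14\cdot\tfrac14,\ \tfrac14\cdot\tfrac14\cdot 4]$; tracking the constants carefully yields the bound $\tfrac1{16}$. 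Alternatively — and this is probably cleaner — one factors
\begin{equation}\nonumber
\frac{\sum_n (n-Y)^5 e^{-\pi(n-Y)^2/X}}{\sum_n (n-Y)e^{-\pi(n-Y)^2/X}}
= \frac{\sum_n (n-Y)^5 e^{-\pi(n-Y)^2/X}}{\sum_n (n-Y)^3 e^{-\pi(n-Y)^2/X}}\cdot\frac{\sum_n (n-Y)^3 e^{-\pi(n-Y)^2/X}}{\sum_n (n-Y)e^{-\pi(n-Y)^2/X}},
\end{equation}
so that Lemma \ref{Lemmafour} handles the second factor with bound $\tfrac14$, and it remains only to show the first factor is also bounded by $\tfrac14$ on $(0,\tfrac12]\times\mathbb{R}$. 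That reduced inequality has precisely the same form as Lemma \ref{Lemmafour} shifted up by two degrees, so the same proof technique (Poisson summation plus the $|\sin(kx)/\sin x|\leq k$ bound plus monotonicity in $X$) applies verbatim, with the extra decay from the higher powers of $X$ in the Poisson-dual representation making the tail estimate only easier.

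The point I expect to be the genuine obstacle is verifying that the ratio does not blow up as $X \to 0^+$: in that regime the original ($n-Y$)-side sums are not dominated by a single term, and one must use the Poisson-dual (theta-transformed) representation where the relevant small parameter is $e^{-\pi X}\to 1$ rather than $\to 0$, so the tail bound is not automatic and requires the explicit polynomial prefactors $P_k(m;X)$ to be estimated with their $X$-dependence retained — this is exactly the place where the proof of Lemma \ref{Lemmafour} does its real work, and replicating it one power higher is where the bookkeeping lives. Once the $X\to 0^+$ behaviour is pinned down, the regime $X$ bounded below is routine by the term-by-term $|\sin(kx)/\sin x|\leq k$ argument and crude numerics, so the overall structure is: symmetrize in $Y$; split into $X$ large and $X$ small; on $X$ large use the direct Gaussian tail; on $X$ small use Poisson duality and the polynomial-prefactor estimates; assemble to get $\tfrac1{16}$.
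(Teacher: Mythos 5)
Your second, ``cleaner'' route --- factoring the quotient through the third-moment sum and claiming that the factor $\sum_n(n-Y)^5e^{-\pi(n-Y)^2/X}\big/\sum_n(n-Y)^3e^{-\pi(n-Y)^2/X}$ is also bounded by $\frac14$ --- fails. Put $a=\frac1X\ge 2$. At $Y=\frac12$ all three odd-moment sums vanish by the symmetry $n\mapsto 1-n$, and L'Hospital's rule (keeping only the dominant $n=0,1$ terms; the corrections are of relative size $e^{-2a\pi}$) gives for that factor the value $\frac14\cdot\frac{a\pi-10}{a\pi-6}$ up to tiny corrections. At $a=2$ this is $\frac14\cdot\frac{2\pi-10}{2\pi-6}\approx-3.3$, an order of magnitude larger than $\frac14$. (The full product is still below $\frac1{16}$ there, because the second factor is $\approx\frac14\cdot\frac{2\pi-6}{2\pi-2}$ and the small quantity $a\pi-6$ cancels; but the two factors are not each $\le\frac14$, so Lemma \ref{Lemmafour} cannot be bootstrapped in the way you propose.) Your first route is also misdirected: since the exponent is $e^{-\pi(n-Y)^2/X}$ with $X\le\frac12$, the regime $X\to0^+$ is where the direct sums are \emph{most} strongly dominated by the terms with $n$ nearest to $Y$, not least, so there is no need for Poisson duality at all and the paper never uses it here. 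The genuine difficulty, which neither of your routes confronts, is that the constant $\frac1{16}$ is sharp --- it is the limiting value of the ratio as $Y\to\frac12$ and $a\to\infty$ --- so no argument of the form ``leading term plus crude tail bound, then track constants'' can produce it unless the leading term is identified exactly and the corrections are shown never to push the value above $\frac1{16}$.

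The paper's proof does exactly this, entirely on the direct Gaussian side: after reducing to $Y\in[0,\frac12]$ by periodicity and symmetry, it invokes the monotonicity $f_Y(a;Y)>0$ (from the proof of Lemma 2.6 of Luo--Wei) to reduce the supremum over $Y$ to the two endpoints, computes $\lim_{Y\to0}$ and $\lim_{Y\to1/2}$ by L'Hospital, bounds the former by $0.05$, and writes the latter as $\frac1{16}\bigl|\frac{a\pi-10}{a\pi-2}\bigr|\cdot\bigl|\frac{1+\sigma_{a1}}{1+\sigma_{a2}}\bigr|$, which is then shown to be at most $\frac1{16}$ by the three-case analysis of Lemma \ref{Lemmaauxi} around $a=\frac{10}{\pi}$. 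You would need to supply an argument of this precision at the endpoint $Y=\frac12$; as written, your proposal does not close.
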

\begin{proof}
Let $a=\frac{1}{X}$\:and\:$f(a;Y):=\frac{\underset{n\in \mathbb{Z}}{\sum}(n-Y)^5 e^{-a\pi(n-Y)^2}}{\underset{n\in \mathbb{Z}}{\sum}(n-Y) e^{-a\pi(n-Y)^2}}$, then $a\geq 2$. It has the following properties:
\begin{equation}\nonumber\aligned\label{property}
f(a;Y+1)=f(a;Y),\:f(a;1-Y)=f(a;Y).
\endaligned\end{equation}
Then it reduces to consider $f(a,Y)$ for $a\geq 2$ and $Y\in[0,\frac{1}{2}]$. It suffices to prove that
\begin{equation}\nonumber\aligned
\underset{a\geq 2,Y\in [0,\frac{1}{2}]}{\sup}\left|f(a;Y)\right|\leq \frac{1}{16}.
\endaligned\end{equation}
By the proof of Lemma 2.6 (Luo-Wei \cite{Luo2023}), one has
\begin{equation}\aligned\label{cup}
f_{Y}(a;Y)>0 \:\:for\:\: a\geq 2, Y\in [0,\frac{1}{2}].
\endaligned\end{equation}
By \eqref{cup}, one has
\begin{equation}\aligned\label{yep}
\underset{Y\in[0,\frac{1}{2}]}{\max}\left| f(a;Y) \right|=\max\left\{\left| f(a;0) \right|,\left| f(a;\frac{1}{2}) \right|\right\}.
\endaligned\end{equation}
By L'\:Hospital's rule,
\begin{equation}\aligned
\left| f(a;0) \right|&=\left| \frac{\underset{n\in \mathbb{Z}}{\sum}(2 a\pi n^6-5n^4) e^{-a \pi n^2}}
{\underset{n\in \mathbb{Z}}{\sum}(2 a \pi n^2-1)e^{-a \pi n ^2}} \right|\leq \frac{4a \pi\underset{n=1}{\overset{\infty}{\sum}} n^6 e^{-a \pi n^2}}
{1-4a \pi\underset{n=1}{\overset{\infty}{\sum}}n^2 e^{-a \pi n^2}} \leq 0.05 \:\:for\:\: a\:\geq 2.
\endaligned\end{equation}
And
\begin{equation}\aligned\label{lighthouse}
\left| f(a;\frac{1}{2}) \right|&=\left| \frac{\underset{n\in \mathbb{Z}}{\sum}[2 a\pi (n-\frac{1}{2})^6-5(n-\frac{1}{2})^4] e^{-a \pi(n-\frac{1}{2})^2}}
{\underset{n\in \mathbb{Z}}{\sum}[2 a \pi (n-\frac{1}{2})^2-1]e^{-a \pi(n-\frac{1}{2})^2}} \right|=\frac{1}{16}\left|\frac{a\pi-10}{a\pi-2} \right|\cdot \left|  \frac{1+\sigma_{a1}}{1+\sigma_{a2}} \right|.
\endaligned\end{equation}
Here
\begin{equation}\aligned\label{vv}
\sigma_{a1}:= \underset{n=2}{\overset{\infty}{\sum}}\frac{2 a\pi (n-\frac{1}{2})^6-5(n-\frac{1}{2})^4 }
{\frac{a\pi}{32}-\frac{5}{16}} e^{-a \pi(n^2-n)},\:\:
\sigma_{a2}:=\underset{n=2}{\overset{\infty}{\sum}}\frac{2 a\pi (n-\frac{1}{2})^2-1 }
{\frac{a\pi}{2}-1} e^{-a \pi(n^2-n)}.\\
\endaligned\end{equation}
Thus, \eqref{yep}-\eqref{lighthouse} and Lemma \ref{Lemmaauxi} yield the result.
\end{proof}

\begin{lemma}\label{Lemmaauxi}
Assume that $ a\geq 2$, it holds that
\begin{equation}\nonumber\aligned
\left|\frac{a\pi-10}{a\pi-2} \right|\cdot \left|  \frac{1+\sigma_{a1}}{1+\sigma_{a2}} \right|\leq 1,
\endaligned\end{equation}
where $\sigma_{a1}$ and $\sigma_{a2}$ are defined in \eqref{vv}.
\end{lemma}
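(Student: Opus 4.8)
The plan is to clear the denominators in $\sigma_{a1},\sigma_{a2}$ and to exploit the fact that the prefactors $a\pi-10$ and $a\pi-2$ occurring in the statement are \emph{exactly} the $n=1$ coefficients of the theta‑type sums hidden inside $\sigma_{a1},\sigma_{a2}$ (cf. \eqref{vv}). Multiplying through by them cancels the apparent pole of $\sigma_{a1}$ at $a\pi=10$ and turns the claim into a comparison of two absolutely convergent series with non‑negative tails, after which only the triangle inequality and a crude tail bound are needed.

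Concretely, first I would use $\frac{a\pi}{32}-\frac{5}{16}=\frac{a\pi-10}{32}$ and $\frac{a\pi}{2}-1=\frac{a\pi-2}{2}$ to rewrite, from \eqref{vv},
\[
(a\pi-10)(1+\sigma_{a1})=(a\pi-10)+32\sum_{n=2}^{\infty}\!\Bigl[2a\pi(n-\tfrac12)^6-5(n-\tfrac12)^4\Bigr]e^{-a\pi(n^2-n)},
\]
\[
(a\pi-2)(1+\sigma_{a2})=(a\pi-2)+2\sum_{n=2}^{\infty}\!\Bigl[2a\pi(n-\tfrac12)^2-1\Bigr]e^{-a\pi(n^2-n)}.
\]
For $a\ge2$ and $n\ge2$ one has $(n-\tfrac12)^2\ge\tfrac94$, so every bracket is positive; indeed $2a\pi(n-\tfrac12)^6-5(n-\tfrac12)^4=(n-\tfrac12)^4\bigl(2a\pi(n-\tfrac12)^2-5\bigr)\ge(n-\tfrac12)^4(9\pi-5)>0$, and $a\pi-2>0$. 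Hence the second right‑hand side is positive (in particular $1+\sigma_{a2}>0$), and the quantity to be estimated equals
\[
\frac{\bigl|(a\pi-10)+32\sum_{n\ge2}[2a\pi(n-\tfrac12)^6-5(n-\tfrac12)^4]e^{-a\pi(n^2-n)}\bigr|}{(a\pi-2)+2\sum_{n\ge2}[2a\pi(n-\tfrac12)^2-1]e^{-a\pi(n^2-n)}}
\]
(understood as the continuous extension across $a\pi=10$, where $\sigma_{a1}$ itself is undefined). So the Lemma is equivalent to showing the numerator does not exceed the denominator.

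Applying the triangle inequality to the numerator and discarding the non‑negative second sum in the denominator, it suffices to prove
\[
32\sum_{n=2}^{\infty}\Bigl[2a\pi(n-\tfrac12)^6-5(n-\tfrac12)^4\Bigr]e^{-a\pi(n^2-n)}\ \le\ (a\pi-2)-|a\pi-10|\qquad(a\ge2).
\]
The right‑hand side is elementary: it equals $2a\pi-12$ when $a\pi\le10$ and equals $8$ when $a\pi\ge10$, hence is always $\ge 4\pi-12>0$. For the left‑hand side I would drop the negative term $-5(n-\tfrac12)^4$ and use that, for $a\ge2$, $n\ge2$, the function $a\mapsto a\,e^{-a\pi(n^2-n)}$ is decreasing (its derivative has the sign of $1-a\pi(n^2-n)<0$), so $a\,e^{-a\pi(n^2-n)}\le 2e^{-2\pi(n^2-n)}$. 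This yields the uniform bound $128\pi\sum_{n\ge2}(n-\tfrac12)^6e^{-2\pi(n^2-n)}$, a fixed number dominated by its $n=2$ term $128\pi(\tfrac32)^6e^{-4\pi}$ (consecutive exponents jump by more than $8\pi$), which a direct estimate puts below $0.02<4\pi-12$. Combining these steps proves the Lemma.

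The only genuinely non‑mechanical point is the reformulation in the two displays above: once one sees that the factor $a\pi-10$ exactly cancels the pole of $\sigma_{a1}$, the statement collapses to a comparison of power series whose coefficients are manifestly positive for $a\ge2$; everything afterwards is a triangle inequality together with the trivial observation that the tails, damped by $e^{-a\pi(n^2-n)}$ with $n^2-n\ge2$ and $a\pi\ge2\pi$, are already two orders of magnitude below the gap $4\pi-12$. The one detail requiring a little care is the monotonicity in $a$ used to make that tail bound uniform.
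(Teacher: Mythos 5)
Your proof is correct. It shares with the paper the key opening move of clearing the denominators $\frac{a\pi}{32}-\frac{5}{16}=\frac{a\pi-10}{32}$ and $\frac{a\pi}{2}-1=\frac{a\pi-2}{2}$, which cancels the apparent singularity of $\sigma_{a1}$ at $a\pi=10$ and exposes the two series with positive coefficients. Where you diverge is in how the resulting ratio is bounded by $1$: the paper splits into the cases $a<\frac{10}{\pi}$, $a\approx\frac{10}{\pi}$, $a>\frac{10}{\pi}$ and argues that the corresponding functions $h_1(a)$, $h_2(a)$ are respectively decreasing and increasing in $a$ (with $h_1(2)<1$ and $h_2(a)\to1$), whereas you apply the triangle inequality to the numerator, discard the positive tail in the denominator, and reduce everything to the single uniform estimate
\begin{equation}\nonumber
32\sum_{n\ge2}\Bigl[2a\pi(n-\tfrac12)^6-5(n-\tfrac12)^4\Bigr]e^{-a\pi(n^2-n)}\le(a\pi-2)-|a\pi-10|,
\end{equation}
whose right-hand side is at least $4\pi-12\approx0.566$ and whose left-hand side you bound by roughly $0.016$ via the monotonicity of $a\,e^{-a\pi(n^2-n)}$ in $a$. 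Your route buys a fully explicit, self-contained verification (the only analytic facts used are a derivative sign and a one-term tail estimate), avoiding the paper's unproved assertions that $h_1$ and $h_2$ are monotone; the paper's route, when those monotonicity claims are checked, gives slightly sharper information (the exact behaviour of the ratio as a function of $a$, including the vanishing at $a=\frac{10}{\pi}$ and the limit $1$ at infinity), but for the stated inequality your argument is the more economical and the more rigorous of the two.
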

\begin{proof}
We divide the interval $a\in[2,\infty)$ into the following  three cases, where $0<\varepsilon<\frac{1}{10}$.\\
Case one: $a\in[2,\frac{10}{\pi})$. In this case, we denote that
\begin{equation}\aligned\nonumber
h_{1}(a):=\frac{10-a\pi-32\underset{n=2}{\overset{\infty}{\sum}}[2 a\pi (n-\frac{1}{2})^6-5(n-\frac{1}{2})^4] e^{-a \pi (n^2-n)}}
{a\pi-2+2\underset{n=2}{\overset{\infty}{\sum}}[2 a\pi (n-\frac{1}{2})^2-1] e^{-a \pi (n^2-n)}}.
\endaligned\end{equation}
Notice that $h_{1}(a)$ is decreasing as $a\in[2,\frac{10}{\pi})$. Then
\begin{equation}\aligned\nonumber
\left|\frac{a\pi-10}{a\pi-2} \right|\cdot \left|  \frac{1+\sigma_{a1}}{1+\sigma_{a2}} \right|
=h_{1}(a)\leq h_{1}(2)<1.
\endaligned\end{equation}
Case two: $a\in(\frac{10}{\pi}-\varepsilon,\frac{10}{\pi}+\varepsilon)$. In this case, one has
\begin{equation}\aligned\label{Mars1}\nonumber
\underset{a\rightarrow \frac{10}{\pi}}{{\lim}}\left|\frac{a\pi-10}{a\pi-2} \right|\cdot \left|  \frac{1+\sigma_{a1}}{1+\sigma_{a2}} \right|
\endaligned\end{equation}
\begin{equation}\aligned\nonumber
=&\underset{a\rightarrow \frac{10}{\pi}}{{\lim}}\left|\frac{a\pi-10+32(a\pi-10)\underset{n=2}{\overset{\infty}{\sum}}[2 a\pi (n-\frac{1}{2})^6-5(n-\frac{1}{2})^4] e^{-a \pi (n^2-n)}}
{a\pi-2+2\underset{n=2}{\overset{\infty}{\sum}}[2 a\pi (n-\frac{1}{2})^2-1] e^{-a \pi (n^2-n)}}\right|=0.
\endaligned\end{equation}
Case three: $a>\frac{10}{\pi}$. In this case, we denote that
\begin{equation}\aligned\nonumber
h_{2}(a):=\frac{a\pi-10+32\underset{n=2}{\overset{\infty}{\sum}}[2 a\pi (n-\frac{1}{2})^6-5(n-\frac{1}{2})^4] e^{-a \pi (n^2-n)}}
{a\pi-2+2\underset{n=2}{\overset{\infty}{\sum}}[2 a\pi (n-\frac{1}{2})^2-1] e^{-a \pi (n^2-n)}}.
\endaligned\end{equation}
Notice that $h_{2}(a)$ is increasing as $a>\frac{10}{\pi}$ and $\underset{a\rightarrow +\infty}{{\lim}}h_{2}(a)=1$. Then
\begin{equation}\aligned\nonumber
\left|\frac{a\pi-10}{a\pi-2} \right|\cdot \left|  \frac{1+\sigma_{a1}}{1+\sigma_{a2}} \right|
=h_{2}(a)\leq 1.
\endaligned\end{equation}
 \end{proof}

%%%%%%%%%%%%%%%%%%%%%%%%%%%%%%%%%%%%%%%%%%%%%%%%%%%%%%%%%%%%

%%%%%%%%%%%%%%%%%%%%%%%%%%%%%%%%%%%%%%%%%%%%%%%%%%%%%%%%%%%%%%%%%%%%%%%%%%%%%%%%%%%%%%%%%%%%%%%%%%%%%%%%%%%%%%%%%%%%%%%%%%%%%%%%%%%%%%%%%%%%%%%%%

%%%%%%%%%%%%%%%%%%%%%%%%%%%%%%%%%%%%%%%%%%%%%%%%%%%%%%%%%%%%%%%%%%%%%%%%%%%%%%%%%%%%%%%%%%%%%%%%%%%%%%%%%%%%%%%%%%%%%%%%%%%%%%%%%%%%%%%%%%%%%%%%%

\section{The transversal monotonicity}
\setcounter{equation}{0}
Define the right boundary of the  fundamental domain\:$\mathcal{D}_{\mathcal{G}}$ given by\:\eqref{Fd3} as follows
\begin{equation}\aligned\label{boundary}
\Gamma:=\{
z\in\mathbb{H}: \Re(z)=\frac{1}{2},\; \Im(z)\geq\frac{\sqrt3}{2}
\}.
\endaligned\end{equation}

In this Section, we aim to establish that
\begin{theorem}\label{2Th1} Assume that $\alpha\geq\frac{3}{2}$. Then
\begin{equation}\aligned\nonumber
\min_{z\in\mathbb{H}} \mathcal{R}(\alpha;z)
=\min_{z\in\overline{\mathcal{D}_{\mathcal{G}}}}\mathcal{R}(\alpha;z)
=\min_{z\in\Gamma}\mathcal{R}(\alpha;z).
\endaligned\end{equation}
\end{theorem}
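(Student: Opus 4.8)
The plan has two parts. The first equality $\min_{z\in\mathbb H}\mathcal R(\alpha;z)=\min_{z\in\overline{\mathcal D_{\mathcal G}}}\mathcal R(\alpha;z)$ is immediate: $\mathcal R(\alpha;\cdot)$ is $\mathcal G$–invariant (Lemma~\ref{INR}) and every $z\in\mathbb H$ is $\mathcal G$–equivalent to a point of $\overline{\mathcal D_{\mathcal G}}$. The substance is the second equality, which I would deduce from a \emph{transversal monotonicity}:
\[
\frac{\partial}{\partial x}\,\mathcal R(\alpha;x+iy)<0\qquad\text{for all }0<x<\tfrac12,\ \ y\ge\tfrac{\sqrt3}{2},\ \ \alpha\ge\tfrac32 .
\]
Granting this, fix $y$; since $\overline{\mathcal D_{\mathcal G}}=\{z:|z|\ge1,\ 0\le\Re z\le\tfrac12\}$ forces $\Im z\ge\tfrac{\sqrt3}{2}$ throughout, the slice $\{\Re z=x\}\cap\overline{\mathcal D_{\mathcal G}}$ is an interval with right endpoint on $\Gamma$, so the strict decrease of $x\mapsto\mathcal R(\alpha;x+iy)$ forces the minimum on that slice onto $\Gamma$, and minimizing over $y$ gives the claim. (Since $z\mapsto-\bar z$ lies in $\mathcal G$, $\mathcal R(\alpha;\cdot)$ is even in $x$, so the $x$–derivative vanishes at $x=0,\tfrac12$ and the displayed inequality does give strict monotonicity on $(0,\tfrac12)$.) Existence of the minimizer is clear because $\mathcal R$ is continuous, positive and $\mathcal R(\alpha;\tfrac12+iy)\to\infty$ as $y\to\infty$.

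To prove the transversal monotonicity I would rewrite $\partial_x\mathcal R$ in terms of the one–dimensional theta quantities estimated in Section~2. Using $|mz+n|^2=(mx+n)^2+m^2y^2$ and Poisson summation in $n$ one has $\theta(\alpha;z)=(y/\alpha)^{1/2}\sum_{m\in\mathbb Z}e^{-\pi\alpha m^2 y}\vartheta(y/\alpha;mx)$, whence by Lemma~\ref{respect} and $\partial_\alpha\partial_x=\partial_x\partial_\alpha$, carrying out the two $\alpha$–derivatives gives, with $X:=y/\alpha$,
\[
\pi^2\,\partial_x\mathcal R(\alpha;x+iy)=2\sum_{m\ge1}m\,X^{1/2}e^{-\pi\alpha m^2 y}\Big[A_m\vartheta_Y(X;mx)+B_m\vartheta_{XY}(X;mx)+C_m\vartheta_{XXY}(X;mx)\Big],
\]
with the positive coefficients $A_m=\tfrac{3}{4\alpha^2}+\tfrac{\pi m^2 y}{\alpha}+\pi^2 m^4 y^2$, $B_m=\tfrac{3y}{\alpha^3}+\tfrac{2\pi m^2 y^2}{\alpha^2}$, $C_m=\tfrac{y^2}{\alpha^4}$. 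Thus it suffices to show this sum is negative, and the strategy is to prove the $m=1$ summand is negative and outweighs the tail $\sum_{m\ge2}$.

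For the $m=1$ term I would divide by $|\vartheta_Y(X;x)|=-\vartheta_Y(X;x)>0$ and split on the size of $X$. When $X\ge\tfrac15$ one has $\vartheta_{XY}(X;x)>0$, $\vartheta_{XXY}(X;x)<0$, and the required negativity becomes $A_1+C_1\,\vartheta_{XXY}(X;x)/\vartheta_Y(X;x)>B_1\,\bigl|\vartheta_{XY}(X;x)/\vartheta_Y(X;x)\bigr|$; inserting the sharp bounds $\vartheta_{XXY}/\vartheta_Y\ge\pi^2\tfrac{1+\hat\omega(X)}{1+\hat\mu(X)}$ and $\bigl|\vartheta_{XY}/\vartheta_Y\bigr|\le\pi\tfrac{1+\nu(X)}{1+\mu(X)}$ from Lemmas~\ref{Lemmatime} and \ref{Lemmavar2}, and substituting $y=\alpha X$, this reduces to an elementary inequality between explicit functions of $X$ whose large–$X$ leading part is $\pi^2\alpha^2X^2+\pi^2\alpha^{-2}X^2>2\pi^2X^2$, i.e.\ $\alpha^2+\alpha^{-2}>2$, strict for $\alpha\ge\tfrac32$; on the remaining compact $X$–range one uses in addition that $y=\alpha X\ge\tfrac{\sqrt3}{2}$ forces $\alpha\ge\max\{\tfrac32,\tfrac{\sqrt3}{2X}\}$, keeping the good term $\pi^2\alpha^2X^2=\pi^2y^2\ge\tfrac{3\pi^2}{4}$ large, together with the monotonicity in $x$ of these ratios (Lemmas~\ref{mono},~\ref{monoo}) to reduce to the worst value of $x$. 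When $0<X<\tfrac15$ — which by $y=\alpha X\ge\tfrac{\sqrt3}{2}$ forces $\alpha$ large — I would instead use the Poisson form~\eqref{Poisson}: one finds $A_1\vartheta_Y(X;x)+B_1\vartheta_{XY}(X;x)+C_1\vartheta_{XXY}(X;x)=-\sum_{s\in x+\mathbb Z}e^{-\pi s^2/X}\,s\,(a+bs^2+cs^4)$ where, after simplification, the weight polynomial has $a=2\pi^2X^{-1/2}(\pi\alpha y-2)$ and $b,c>0$ likewise, so $a,b,c\ge0$ \emph{precisely because} $\pi\alpha y\ge2$ — which holds since $\alpha\ge\tfrac32$, $y\ge\tfrac{\sqrt3}{2}$ give $\alpha y\ge\tfrac{3\sqrt3}{4}>2/\pi$; then $s(a+bs^2+cs^4)$ has the sign of $s$, the $s=x\in(0,\tfrac12)$ term dominates by Gaussian narrowness (this is where small $X$ enters), and the expression is negative. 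The tail $\sum_{m\ge2}$ is dominated uniformly via $|\vartheta_Y(X;mx)|,|\vartheta_{XY}(X;mx)|,|\vartheta_{XXY}(X;mx)|\le m\,(\cdots)\,|\vartheta_Y(X;x)|$ from Lemmas~\ref{Lemmamoist},~\ref{Lemmamoistt},~\ref{Lemmavar1} together with the decay $e^{-\pi\alpha(m^2-1)y}$ (tiny since $\alpha y\ge\tfrac{3\sqrt3}{4}$), so it cannot offset the $m=1$ gain in either regime.

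The hard part is the $m=1$ estimate: the ``wrong–sign'' $\vartheta_{XY}$–contribution is of the same order as the two ``right–sign'' contributions (all $O(X^{-2})$ in the Poisson regime and $O(1)$ after normalization for bounded $X$), so the inequality to verify is genuinely tight and crude one–sided bounds do not suffice — which is exactly why the refined two–sided estimates of $\vartheta_{XY}/\vartheta_Y$ and $\vartheta_{XXY}/\vartheta_Y$, with distinct $Y\to0$ and $Y\to\tfrac12$ endpoints (Lemmas~\ref{Lemmatime},~\ref{Lemmavar2}), were prepared in Section~2. Handling the three theta–ratios jointly rather than independently, threading the auxiliary parameter $\alpha$ through $\alpha\ge\tfrac32$, $y\ge\tfrac{\sqrt3}{2}$, $\alpha y\ge\tfrac{3\sqrt3}{4}$, and keeping the $m\ge2$ tail controlled while staying on the correct side of this tight inequality is the crux of the argument.
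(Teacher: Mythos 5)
Your proposal follows essentially the same route as the paper: the first equality is the group invariance of Lemma \ref{INR}, and the second is deduced from the transversal monotonicity $\frac{\partial}{\partial x}\mathcal{R}(\alpha;z)<0$ on $\mathcal{D}_{\mathcal{G}}$ (the paper's Theorem \ref{2Th2}), proved by the same exponential expansion into a principal $n=1$ term whose coefficients match $\Phi_{\alpha,A}$ exactly and a tail matching $\Phi_{\alpha,B}$ controlled by Lemmas \ref{Lemmamoist}, \ref{Lemmamoistt}, \ref{Lemmavar1}, with the principal term handled by the refined two-sided bounds of Lemmas \ref{Lemmatime} and \ref{Lemmavar2} and the same tight leading comparison $\alpha^2+\alpha^{-2}>2$. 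The only real divergence is your small-$X$ subcase, where you replace the paper's region-$\mathcal{A}_d$ ratio estimates by a direct Poisson-summation positivity argument (your computation $a=2\pi^2X^{-1/2}(\pi\alpha y-2)$, and likewise $b,c\geq 0$ from $\pi\alpha y\geq 2$, checks out); that variant is workable, though its final domination step is only sketched, and note that the paper's cut points $y/\alpha=\tfrac12,\tfrac13$ (rather than your $\tfrac15$) are chosen to stay inside the hypothesis $X\geq\tfrac{59}{250}>\tfrac15$ of Lemma \ref{Lemmavar2}(1).
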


By the group invariance (Lemma \ref{INR}), one has
\begin{equation}\aligned\label{Domain1}
\min_{z\in\mathbb{H}} \mathcal{R}(\alpha;z)
=\min_{z\in\overline{\mathcal{D}_{\mathcal{G}}}}\mathcal{R}(\alpha;z).
\endaligned\end{equation}

With the help of \eqref{Domain1}, it remains to prove the minimization on the fundamental domain can be reduced to its right boundary $\Gamma$, which is based on the following monotonicity result. The rest of this Section is devoted to the proof of Theorem \ref{2Th2}.

\begin{theorem}\label{2Th2} Assume that $\alpha\geq\frac{3}{2}$.  Then
\begin{equation}\aligned\nonumber
\frac{\partial}{\partial{x}}\mathcal{R}(\alpha;z)<0,\;\;for\ z\in\mathcal{D}_{\mathcal{G}}.
\endaligned\end{equation}
\end{theorem}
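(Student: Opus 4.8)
The plan is to prove the transversal monotonicity $\partial_x \mathcal{R}(\alpha;z)<0$ on $\mathcal{D}_{\mathcal{G}}$ by exploiting the product structure of $\mathcal{R}(\alpha;z)$ coming from the factorization of the one-dimensional theta function. Writing $z=x+iy$ and using the Poisson-type identity behind \eqref{RR}, one separates the double lattice sum by the inner variable: for fixed $y$ one expands the sum over the second index via \eqref{thetas} and \eqref{Product}, so that $\theta(\alpha;z)$ — and hence by Lemma \ref{respect} its second $\alpha$-derivative $\mathcal{R}(\alpha;z)$ — becomes expressible through the classical one-dimensional theta function $\vartheta(X;Y)$ evaluated at $X=\alpha/y$ (or a similar normalization) and $Y=x$ (times an index). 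Differentiating in $x$ then turns every summand into a combination involving the $Y$-derivatives $\vartheta_Y, \vartheta_{XY}, \vartheta_{XXY}$, which is precisely why the quotient estimates in Lemmas \ref{Lemmamoist}--\ref{Lemmavar1} were prepared.

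First I would write out $\partial_x\mathcal{R}(\alpha;z)$ explicitly as a sum over $m\in\mathbb{Z}$ (the first lattice index) of terms of the form $(\text{polynomial in }m, y,\alpha)\cdot \vartheta_{\bullet}(X;mx)$, after performing the partial Fourier/Poisson step in the second index. The dominant contribution is the $m=\pm 1$ term; I would pull it out as a common factor and reduce the desired inequality to showing that a certain explicit scalar function of $(X,Y)=(\alpha/y, x)$ — built from $\vartheta_Y$, $\vartheta_{XY}$, $\vartheta_{XXY}$ at argument $x$ — is negative, minus a tail $\sum_{|m|\ge 2}$ that must be controlled in absolute value. The point of Lemma \ref{Lemmatime} and Lemma \ref{Lemmavar2} (the refined two-sided bounds with the sharp numerators $\hat\nu,\hat\omega$) is to make the leading $m=1$ expression provably negative on the full range $X\ge \alpha/y$; the point of Lemma \ref{Lemmamoist}(1), Lemma \ref{Lemmamoistt}(1), and Lemma \ref{Lemmavar1}(1) (the one-sided bounds with the factor $k$) is to dominate the tail by a geometric-type series that is negligible compared with the $m=1$ term once $\alpha\ge \tfrac32$ and $y\ge \tfrac{\sqrt3}{2}$ (the latter because $z\in\mathcal{D}_{\mathcal{G}}$ forces $|z|>1$, hence $y>\sqrt{1-x^2}\ge \sqrt{3}/2$ on $0<x<1/2$). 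I would split the analysis at $X=1/5$ (or wherever the two regimes of the Lemmas meet): on $X\ge 1/5$ use the series-form bounds, and on $0<X<1/5$ (i.e.\ large $y$) use the bounds of the form $\lesssim X^{-a}e^{\pi/(4X)}$, noting that there the whole expression is exponentially small and the sign of the leading term is again decisive.

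The hard part will be the bookkeeping in the intermediate regime where neither bound is overwhelmingly strong: one must show that the \emph{negative} leading $m=1$ quantity, estimated from above by $-\pi(1+\hat\nu(X))/(1+\hat\mu(X))$-type expressions times positive prefactors, genuinely dominates the tail $\sum_{m\ge 2} m\cdot(\text{stuff})$ after the substitution $X=\alpha/y$, $\alpha\ge 3/2$. This amounts to a finite set of one-variable inequalities in $X$ (the two-variable dependence on $Y=x$ having been eliminated by the monotonicity-in-$Y$ lemmas, Lemma \ref{mono} and Lemma \ref{monoo}, which let one evaluate the quotients at the endpoints $Y=0$ and $Y=1/2$), each of which is handled by the kind of term-by-term estimate used repeatedly in Section 2 — bounding a tail $\sum_{n\ge 2}$ by its first term times a geometric factor, then checking a polynomial-times-exponential inequality on an explicit interval. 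Once $\partial_x\mathcal{R}(\alpha;z)<0$ is established on $\mathcal{D}_{\mathcal{G}}$, Theorem \ref{2Th1} follows: \eqref{Domain1} reduces the problem to $\overline{\mathcal{D}_{\mathcal{G}}}$, and strict decrease in $x$ on $0<x<1/2$ pushes the minimum to the right edge $x=1/2$, i.e.\ to $\Gamma$.
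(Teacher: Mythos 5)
Your proposal follows essentially the same route as the paper: expand $\partial_x\mathcal{R}$ via the partial theta expansion (Lemma \ref{Lemmatree}), isolate the $n=\pm1$ term as a positive (after sign normalization) principal part $\Phi_{\alpha,A}$ controlled by the refined two-sided quotient bounds of Lemmas \ref{Lemmatime} and \ref{Lemmavar2} (with the $Y$-monotonicity lemmas reducing to the endpoints $Y=0,\tfrac12$), and dominate the $|n|\ge2$ tail $\Phi_{\alpha,B}$ by the $k$-factor bounds of Lemmas \ref{Lemmamoist}--\ref{Lemmavar1}, splitting into regions according to the value of $X=y/\alpha$. Apart from the inverted normalization ($X=\alpha/y$ versus the paper's $X=y/\alpha$, which you flag yourself) and the paper's specific four-region decomposition $\mathcal{A}_a,\dots,\mathcal{A}_d$ in place of your single cut at $X=1/5$, this is the argument of Section 3.
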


The function $\mathcal{R}$ is related to the Theta function closely (see Lemma \ref{respect}). Hence we shall start with the analysis of the Theta function. The following Lemma \ref{Lemmatree} provided by $\mathrm{Luo}$-$\mathrm{Wei}$ \cite{Luo2022} introduces an exponential expansion of the Theta function, which is useful in our proofs.

\begin{lemma}\label{Lemmatree}$(\mathrm{Luo}$-$\mathrm{Wei}$\:\cite{Luo2022}$)$ We have the following  exponential expansion of Theta function:
\begin{equation}\aligned\label{PPP3}
\theta (\alpha;z)=2\sqrt{\frac{y}{\alpha}}\sum_{n=1}^\infty e^{-\alpha \pi y n^2}\vartheta(\frac{y}{\alpha};nx)+\sqrt{\frac{y}{\alpha}}\vartheta(\frac{y}{\alpha};0).
\endaligned\end{equation}
\end{lemma}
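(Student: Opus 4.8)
The plan is to collapse the two–dimensional lattice sum \eqref{thetas} defining $\theta(\alpha;z)$ to a single sum by applying the Poisson summation formula \eqref{Poisson} in the direction orthogonal to the first lattice index, and then to fold the resulting series using the evenness of $\vartheta(X;\cdot)$. Writing $z=x+iy$ with $y=\Im(z)>0$, the starting point is the elementary identity $|mz+n|^2=(mx+n)^2+m^2y^2$. Substituting this into \eqref{thetas} and factoring out the part depending only on $m$ gives
\begin{equation}\aligned\nonumber
\theta(\alpha;z)=\sum_{m\in\mathbb{Z}}e^{-\pi\alpha m^2 y}\sum_{n\in\mathbb{Z}}e^{-\pi\frac{\alpha}{y}(n+mx)^2}.
\endaligned\end{equation}
All summands are positive and decay super-exponentially in both indices, so the double series converges absolutely; this is what licenses both the reordering of the summations and the termwise application of Poisson summation below, and it is the only convergence point that needs to be noted.

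Next I would set $X=\frac{y}{\alpha}$, so that $\frac{\alpha}{y}=\frac1X$, and read off the inner sum as $\sum_{n\in\mathbb{Z}}e^{-\pi(n+mx)^2/X}=\sum_{n\in\mathbb{Z}}e^{-\pi(n-(-mx))^2/X}$. Applying \eqref{Poisson} with the substitution $Y=-mx$ turns this into $\sqrt{X}\,\vartheta(X;-mx)$. From the defining series \eqref{TXY}, the change of index $n\mapsto-n$ shows that $\vartheta(X;Y)$ is even in $Y$, hence $\vartheta(\frac{y}{\alpha};-mx)=\vartheta(\frac{y}{\alpha};mx)$. Feeding this back produces the intermediate formula
\begin{equation}\aligned\nonumber
\theta(\alpha;z)=\sqrt{\frac{y}{\alpha}}\sum_{m\in\mathbb{Z}}e^{-\pi\alpha m^2 y}\,\vartheta\Big(\frac{y}{\alpha};mx\Big).
\endaligned\end{equation}

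The last step is to split the sum over $m\in\mathbb{Z}$ into the single term $m=0$, which contributes $\sqrt{y/\alpha}\,\vartheta(y/\alpha;0)$, and the pairs $\{m,-m\}$ with $m\geq1$. Using the evenness $\vartheta(\frac{y}{\alpha};mx)=\vartheta(\frac{y}{\alpha};-mx)$ and the common Gaussian weight $e^{-\pi\alpha m^2 y}$, each such pair combines to $2e^{-\pi\alpha m^2 y}\vartheta(\frac{y}{\alpha};mx)$. Relabelling the dummy index then yields exactly the asserted expansion \eqref{PPP3}. I expect no genuine obstacle here: the derivation is mechanical, and the only care required is the appeal to absolute convergence to justify the rearrangement and the termwise Poisson transform, which is immediate from the positivity and rapid decay of the Gaussian summands.
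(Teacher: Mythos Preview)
Your derivation is correct: expand $|mz+n|^2=(mx+n)^2+m^2y^2$, apply the Poisson formula \eqref{Poisson} to the inner sum with $X=y/\alpha$, use the evenness of $\vartheta(X;\cdot)$, and fold the sum over $m$. The paper does not give its own proof of this lemma but simply cites it from Luo--Wei \cite{Luo2022}; your argument is the standard one and presumably matches the original.
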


Based on Lemmas \ref{respect} and \ref{Lemmatree}, we shall give an exponential expansion of derivative of the function $\mathcal{R}$ with respect to $x$ in Lemma \ref{part1}, whose proof is straightforward and will be omitted. Before that, to distinguish clearly the major terms and  the error terms in the expression of $\frac{\partial}{\partial x}\mathcal{R}(\alpha;z)$, we denote that

\begin{equation}\aligned\label{phi1}
\Phi_{\alpha,A}(z):=&{\pi}{\alpha}^4 y^2 +{\alpha}^3 y +\frac{3}{4\pi}{\alpha}^2
+(\frac{3}{\pi}{\alpha}y  +2{\alpha}^2 y^2 )\frac{\vartheta_{XY}(\frac{y}{\alpha};x)}{\vartheta_{Y}(\frac{y}{\alpha};x)}
+ \frac{1}{\pi}y^2 \frac{\vartheta_{XXY}(\frac{y}{\alpha};x)}{\vartheta_{Y}(\frac{y}{\alpha};x)}.\\
\endaligned\end{equation}
\begin{equation}\aligned\label{phi2}
\Phi_{\alpha,B}^{1}(z)&:=\sum_{n=2}^\infty ({\pi}{\alpha}^4 y^2 n^5+{\alpha}^3 y n^3+\frac{3}{4\pi}{\alpha}^2 n)e^{-\alpha\pi y(n^2-1)}\frac{\vartheta_{Y}(\frac{y}{\alpha};nx)}{\vartheta_{Y}(\frac{y}{\alpha};x)},\:\:\:\:\:\:\:\:\:\:\:\:\:\:\:\:\:\:\:\:\:\:\:\:\:\:\\
\Phi_{\alpha,B}^{2}(z)&:=\sum_{n=2}^\infty (\frac{3}{\pi}{\alpha}y  n+2{\alpha}^2 y^2 n^3)e^{-\alpha\pi y(n^2-1)}\frac{\vartheta_{XY}(\frac{y}{\alpha};nx)}{\vartheta_{Y}(\frac{y}{\alpha};x)},\\
\Phi_{\alpha,B}^{3}(z)&:=\sum_{n=2}^\infty \frac{1}{\pi}y^2 ne^{-\alpha\pi y(n^2-1)}\frac{\vartheta_{XXY}(\frac{y}{\alpha};nx)}{\vartheta_{Y}(\frac{y}{\alpha};x)}.\\
\endaligned\end{equation}

\begin{lemma}\label{part1} We have the following  identity for derivative of the function $\mathcal{R}$ with respect to $x$:
\begin{equation}\nonumber\aligned
-\frac{\partial}{\partial x} \mathcal{R}(\alpha;z)
=\mathcal{C}(\alpha;z)\cdot \big(\Phi_{\alpha,A}(z)+\Phi_{\alpha,B}(z)\big).
\endaligned\end{equation}
Here $\Phi_{\alpha,A}(z)$, $\Phi_{\alpha,B}^{k}(z),k=1,2,3$ are defined in \eqref{phi1}-\eqref{phi2} and
\begin{equation}\aligned\label{phi3}
\mathcal{C}(\alpha;z):=\frac{2}{\pi}y^{\frac{1}{2}}{\alpha}^{-\frac{9}{2}}\big(-\vartheta_{Y}(\frac{y}{\alpha};x)\big)e^{-\pi\alpha y},\:\:\:\:\Phi_{\alpha,B}(z):=\underset{k=1}{\overset{3}{\sum}}\Phi_{\alpha,B}^{k}(z).
\endaligned\end{equation}
\end{lemma}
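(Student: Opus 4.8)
The plan is to derive the identity by termwise differentiation of the exponential expansion of the Theta function. By Lemma~\ref{respect} we have $\mathcal{R}(\alpha;z)=\frac{1}{\pi^2}\partial_\alpha^2\theta(\alpha;z)$, and since $y=\Im(z)$ does not depend on $x$ and $\partial_x$ commutes with $\partial_\alpha$, it suffices to compute $\partial_\alpha^2\big(\partial_x\theta(\alpha;z)\big)$. Applying $\partial_x$ to the expansion \eqref{PPP3}, the $x$-free summand $\sqrt{y/\alpha}\,\vartheta(y/\alpha;0)$ drops out, and using $\partial_x\vartheta(y/\alpha;nx)=n\,\vartheta_Y(y/\alpha;nx)$ one obtains
\[
\partial_x\theta(\alpha;z)=2\sqrt{\frac{y}{\alpha}}\sum_{n=1}^{\infty}n\,e^{-\alpha\pi y n^2}\,\vartheta_Y\!\left(\frac{y}{\alpha};nx\right);
\]
the Gaussian decay in $n$ gives local uniform convergence on $\mathbb{H}$ of this series and of the series obtained by differentiating in $\alpha$, which justifies working term by term.

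Next I would differentiate the $n$-th summand twice in $\alpha$. Writing $X=y/\alpha$, so $\partial_\alpha X=-y/\alpha^2$ and $\partial_\alpha^2 X=2y/\alpha^3$, the chain rule gives $\partial_\alpha\vartheta_Y(X;nx)=-\frac{y}{\alpha^2}\vartheta_{XY}(X;nx)$ and $\partial_\alpha^2\vartheta_Y(X;nx)=\frac{2y}{\alpha^3}\vartheta_{XY}(X;nx)+\frac{y^2}{\alpha^4}\vartheta_{XXY}(X;nx)$. Each summand is a product of the three $\alpha$-dependent factors $\alpha^{-1/2}$, $e^{-\alpha\pi y n^2}$, and $\vartheta_Y(X;nx)$, so the Leibniz rule for the second derivative of a triple product shows that $\partial_\alpha^2$ of the $n$-th summand equals $2y^{1/2}n\,\alpha^{-9/2}e^{-\alpha\pi y n^2}$ times a linear combination of $\vartheta_Y(X;nx)$, $\vartheta_{XY}(X;nx)$, $\vartheta_{XXY}(X;nx)$ whose coefficients are explicit polynomials in $\alpha,y,n$.

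Finally I would factor out $\mathcal{C}(\alpha;z)=\frac{2}{\pi}y^{1/2}\alpha^{-9/2}\big(-\vartheta_Y(y/\alpha;x)\big)e^{-\pi\alpha y}$ from the resulting sum. For $n=1$ the factor $\vartheta_Y(y/\alpha;x)$ is exactly the one appearing in $\mathcal{C}$, so after dividing the three pieces become $1$, $\vartheta_{XY}(y/\alpha;x)/\vartheta_Y(y/\alpha;x)$ and $\vartheta_{XXY}(y/\alpha;x)/\vartheta_Y(y/\alpha;x)$, and a direct check that the polynomial coefficients collapse to $\pi\alpha^4y^2+\alpha^3y+\frac{3}{4\pi}\alpha^2$, $\frac{3}{\pi}\alpha y+2\alpha^2y^2$ and $\frac{1}{\pi}y^2$ identifies this contribution with $\Phi_{\alpha,A}(z)$ in \eqref{phi1}. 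For $n\geq2$, combining $e^{-\alpha\pi y n^2}$ with the $e^{-\pi\alpha y}$ in $\mathcal{C}$ produces $e^{-\alpha\pi y(n^2-1)}$ together with the ratios $\vartheta_Y(y/\alpha;nx)/\vartheta_Y(y/\alpha;x)$, $\vartheta_{XY}(y/\alpha;nx)/\vartheta_Y(y/\alpha;x)$, $\vartheta_{XXY}(y/\alpha;nx)/\vartheta_Y(y/\alpha;x)$; tracking the coefficients (the spare factor $n$ multiplying through each) yields exactly $\Phi_{\alpha,B}^{1}(z)$, $\Phi_{\alpha,B}^{2}(z)$, $\Phi_{\alpha,B}^{3}(z)$ of \eqref{phi2}. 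Summing over $n$ gives $-\partial_x\mathcal{R}(\alpha;z)=\mathcal{C}(\alpha;z)\big(\Phi_{\alpha,A}(z)+\Phi_{\alpha,B}(z)\big)$.

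The only real work lies in the middle step: the second derivative of a triple product has six terms and the chain rule through $X=y/\alpha$ introduces extra negative powers of $\alpha$, so one must be careful when collecting the coefficients of $\vartheta_Y$, $\vartheta_{XY}$ and $\vartheta_{XXY}$ and verifying they reproduce \eqref{phi1}--\eqref{phi2}. No estimates enter, which is why the authors call this computation straightforward and omit it.
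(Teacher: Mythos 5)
Your proposal is correct and is precisely the computation the paper has in mind: the authors state that Lemma \ref{part1} is "based on Lemmas \ref{respect} and \ref{Lemmatree}" and omit the straightforward verification, which is exactly your route of applying $\partial_x$ to the expansion \eqref{PPP3}, commuting it with $\frac{1}{\pi^2}\partial_\alpha^2$, and collecting the coefficients of $\vartheta_Y$, $\vartheta_{XY}$, $\vartheta_{XXY}$ after factoring out $\mathcal{C}(\alpha;z)$. The coefficient bookkeeping you describe does reproduce \eqref{phi1}--\eqref{phi2}, so nothing is missing.
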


   In view of the expression of $\mathcal{C}(\alpha;z)$ in Lemma \ref{part1} and the inequality: $-\vartheta_{Y}(X;Y)>0,\:for \:X>0,\:Y\in (0,\frac{1}{2})$ in Lemmas 2.8 and 2.9 of $\mathrm{Luo}$-$\mathrm{Wei}$ \cite{Luo2023}, it is easy to get the following Lemma \ref{l31} which states that the factor $\mathcal{C}(\alpha;z)$ of  $(-\frac{\partial}{\partial x} \mathcal{R}(\alpha;z))$ is positive.
\begin{lemma}\label{l31} Assume $\alpha>0,z\in \mathcal{D}_{\mathcal{G}}$, it holds that $\mathcal{C}(\alpha;z)>0$.
\end{lemma}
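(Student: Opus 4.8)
The plan is to strip off the elementary positive factors in $\mathcal{C}(\alpha;z)$ and reduce the whole assertion to one sign statement for $\vartheta_Y$. Recall from \eqref{phi3} that
\[
\mathcal{C}(\alpha;z)=\frac{2}{\pi}\,y^{1/2}\,\alpha^{-9/2}\bigl(-\vartheta_{Y}(\tfrac{y}{\alpha};x)\bigr)e^{-\pi\alpha y}.
\]
Since $z\in\mathcal{D}_{\mathcal{G}}$ we have, by \eqref{Fd3}, $y=\Im(z)>0$ and $x=\Re(z)\in(0,\tfrac12)$, while $\alpha>0$ by hypothesis. Consequently $\tfrac{2}{\pi}$, $y^{1/2}$, $\alpha^{-9/2}$ and $e^{-\pi\alpha y}$ are each strictly positive, so it remains only to show $-\vartheta_{Y}(X;x)>0$ with $X:=y/\alpha>0$ and $x\in(0,\tfrac12)$.

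For this I would invoke the inequality $-\vartheta_{Y}(X;Y)>0$ valid for $X>0$ and $Y\in(0,\tfrac12)$, which is recorded in Lemmas 2.8 and 2.9 of Luo--Wei \cite{Luo2023}; taking $Y=x$ closes the argument. If a self-contained derivation is preferred, one can differentiate the product formula \eqref{Product} logarithmically in $Y$:
\[
\frac{\vartheta_{Y}(X;Y)}{\vartheta(X;Y)}=-\sum_{n=1}^{\infty}\frac{4\pi\,e^{-(2n-1)\pi X}\sin(2\pi Y)}{1+2e^{-(2n-1)\pi X}\cos(2\pi Y)+e^{-2(2n-1)\pi X}}.
\]
Here each denominator equals $\bigl|1+e^{-(2n-1)\pi X}e^{2\pi i Y}\bigr|^{2}\ge(1-e^{-(2n-1)\pi X})^{2}>0$ for $X>0$, and $\vartheta(X;Y)>0$ because every factor in \eqref{Product} is positive for $X>0$. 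Since $\sin(2\pi Y)>0$ on $(0,\tfrac12)$, the series on the right is strictly negative, hence $\vartheta_{Y}(X;Y)<0$, i.e.\ $-\vartheta_{Y}(X;Y)>0$.

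There is essentially no real obstacle in this lemma; the only thing worth noting is that \eqref{Fd3} defines $\mathcal{D}_{\mathcal{G}}$ as the \emph{open} strip $0<x<\tfrac12$, so $x$ stays strictly away from the endpoints $0$ and $\tfrac12$ and $\sin(2\pi x)>0$ holds strictly --- which is precisely what upgrades $\mathcal{C}(\alpha;z)\ge0$ to the strict inequality $\mathcal{C}(\alpha;z)>0$ claimed. Multiplying the strictly positive elementary factors by $-\vartheta_{Y}(y/\alpha;x)>0$ then yields $\mathcal{C}(\alpha;z)>0$, completing the proof.
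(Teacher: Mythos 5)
Your proof is correct and follows essentially the same route as the paper: the paper likewise observes that the elementary factors $\tfrac{2}{\pi}y^{1/2}\alpha^{-9/2}e^{-\pi\alpha y}$ are positive and invokes the inequality $-\vartheta_{Y}(X;Y)>0$ for $X>0$, $Y\in(0,\tfrac12)$ from Lemmas 2.8 and 2.9 of Luo--Wei \cite{Luo2023}. Your optional self-contained derivation of that sign via logarithmic differentiation of the product formula \eqref{Product} is a correct and welcome addition, but does not change the substance of the argument.
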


Given by Lemmas \ref{part1} and \ref{l31}, to prove Theorem \ref{2Th2}, it is sufficient to prove Proposition \ref{prop}.
\begin{proposition}\label{prop} Assume $\alpha\geq\frac{3}{2},z\in \mathcal{D}_{\mathcal{G}}$,\:it holds that
\begin{equation}\nonumber\aligned
\Phi_{\alpha,A}(z)+\Phi_{\alpha,B}(z)>0.
\endaligned\end{equation}
\end{proposition}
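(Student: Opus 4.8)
Throughout write $z=x+iy\in\mathcal{D}_{\mathcal{G}}$, so $0<x<\frac{1}{2}$ and $y>\frac{\sqrt{3}}{2}$, and put $X:=\frac{y}{\alpha}$, $Y:=x$; note the identity $\alpha\pi y=\pi\alpha^{2}X$ and the bound $\alpha y\ge\frac{3\sqrt{3}}{4}$. By Lemmas~\ref{part1} and \ref{l31} we have $-\frac{\partial}{\partial x}\mathcal{R}(\alpha;z)=\mathcal{C}(\alpha;z)\big(\Phi_{\alpha,A}(z)+\Phi_{\alpha,B}(z)\big)$ with $\mathcal{C}(\alpha;z)>0$, so the assertion is precisely $\Phi_{\alpha,A}(z)+\Phi_{\alpha,B}(z)>0$; here $\Phi_{\alpha,A}$ is the $n=1$ term and $\Phi_{\alpha,B}=\sum_{k=1}^{3}\Phi_{\alpha,B}^{k}$ the $n\ge 2$ tails of the exponential expansion (Lemma~\ref{Lemmatree}). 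The plan is to bound $\Phi_{\alpha,A}$ below by an explicit positive quantity and $|\Phi_{\alpha,B}|$ above by an exponentially smaller one; the whole argument is split according to $X\ge\frac{59}{250}$ or $X\le\frac{59}{250}$.

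For the lower bound on $\Phi_{\alpha,A}$, write $R_{1}:=\frac{\vartheta_{XY}(X;x)}{\vartheta_{Y}(X;x)}$ and $R_{2}:=\frac{\vartheta_{XXY}(X;x)}{\vartheta_{Y}(X;x)}$, so $\Phi_{\alpha,A}=\pi\alpha^{4}y^{2}+\alpha^{3}y+\frac{3}{4\pi}\alpha^{2}+(\frac{3}{\pi}\alpha y+2\alpha^{2}y^{2})R_{1}+\frac{1}{\pi}y^{2}R_{2}$; both coefficients of $R_{1},R_{2}$ are positive, so lower bounds on $R_{1},R_{2}$ suffice. When $X\ge\frac{59}{250}$ use the two-sided estimates of Lemmas~\ref{Lemmatime}(1) and \ref{Lemmavar2}(1): in the regime where the series $\mu,\nu,\omega$ are negligible these give $R_{1}\approx-\pi$, $R_{2}\approx\pi^{2}$, for which $\Phi_{\alpha,A}$ collapses to $\pi y^{2}(\alpha^{2}-1)^{2}+\alpha y(\alpha^{2}-3)+\frac{3}{4\pi}\alpha^{2}$, a quadratic in $y$ with discriminant $-2\alpha^{2}(\alpha^{4}-3)<0$ for $\alpha\ge\frac{3}{2}$, hence positive; to absorb the $\mu,\nu,\omega$-corrections one substitutes $\alpha=y/X$, treats $X^{4}y^{-2}\Phi_{\alpha,A}$ as a quadratic in $y^{2}$ with $X$-dependent coefficients, and checks (splitting once more into a fixed $X$-interval and a large-$X$ tail) that its discriminant is negative or its roots lie below the admissible range $y^{2}\ge\max\{\frac{3}{4},\frac{9}{4}X^{2}\}$. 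When $X\le\frac{59}{250}$ one has $\alpha=y/X\ge 2y\ge\sqrt{3}$, so Lemmas~\ref{Lemmatime}(2) and \ref{Lemmavar2}(2) apply; inserting these and $\alpha=y/X$, the $O(\alpha^{2})$ contributions cancel exactly and one is left with a bound of the form $\Phi_{\alpha,A}\ge\alpha^{3}\big(\pi\alpha(y^{2}-\frac{1}{16y^{2}})-2y-\frac{5}{4y}\big)-(\text{exponentially small})$, which is positive because $\alpha\ge 2y$ and $y\ge\frac{\sqrt{3}}{2}$ force $16\pi y^{4}-16y^{2}-(\pi+10)\ge 8\pi-22>0$.

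For the upper bound on $\Phi_{\alpha,B}$, each $\Phi_{\alpha,B}^{k}$ is a sum over $n\ge 2$ of a polynomial in $n,\alpha,y$ times $e^{-\alpha\pi y(n^{2}-1)}$ times one of the quotients $\frac{\vartheta_{Y}(X;nx)}{\vartheta_{Y}(X;x)}$, $\frac{\vartheta_{XY}(X;nx)}{\vartheta_{Y}(X;x)}$, $\frac{\vartheta_{XXY}(X;nx)}{\vartheta_{Y}(X;x)}$. For $X\ge\frac{59}{250}$ these are bounded in absolute value by $n$ times $\frac{1+\mu(X)}{1-\mu(X)}$, $\pi\frac{1+\nu(X)}{1-\mu(X)}$, $\pi^{2}\frac{1+\omega(X)}{1-\mu(X)}$ respectively (Lemmas~\ref{Lemmamoist}(1), \ref{Lemmamoistt}(1), \ref{Lemmavar1}(1)), and the weight $e^{-\alpha\pi y(n^{2}-1)}$ with $\alpha y\ge\frac{3\sqrt{3}}{4}$ makes the whole series negligible against the lower bound of the previous step. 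For $X\le\frac{59}{250}$ one uses instead Lemmas~\ref{Lemmamoist}(2), \ref{Lemmamoistt}(3), \ref{Lemmavar1}(2), which bound the same quotients by $n$ times a polynomial in $X^{-1}$ times $e^{\pi/(4X)}$; since here $\alpha\ge 2y$, the exponent of the dominant $n=2$ term is $-3\alpha\pi y+\frac{\pi}{4X}=\pi\alpha\cdot\frac{1-12y^{2}}{4y}\le-\frac{2\pi\alpha}{y}\le-4\pi$, so the tail is again exponentially dominated. In both ranges $|\Phi_{\alpha,B}(z)|$ is thus bounded by an exponentially small multiple of the $\Phi_{\alpha,A}$-bound, and $\Phi_{\alpha,A}(z)+\Phi_{\alpha,B}(z)\ge\Phi_{\alpha,A}(z)-|\Phi_{\alpha,B}(z)|>0$ follows.

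The main obstacle is the lower bound for $\Phi_{\alpha,A}$ in the intermediate range $X=y/\alpha\in[\frac{1}{5},1]$: there the two-sided estimates for $R_{1}$ and $R_{2}$ are far from their asymptotic values $-\pi,\pi^{2}$, the extremal configurations of $R_{1}$ (at $x\to 0$) and of $R_{2}$ (at $x\to\frac{1}{2}$) occur at opposite ends, and one must carry the series $\mu,\nu,\omega$ along and control the discriminant and roots of the resulting quadratic in $y^{2}$; this is where the refined estimates of Section~2 and the extra margin provided by $\alpha\ge\frac{3}{2}$ are needed.
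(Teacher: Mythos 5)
Your proposal has the same architecture as the paper's proof: the factorization $-\frac{\partial}{\partial x}\mathcal{R}=\mathcal{C}\cdot(\Phi_{\alpha,A}+\Phi_{\alpha,B})$ from Lemmas \ref{part1} and \ref{l31}, a positive lower bound for the major part $\Phi_{\alpha,A}$ via the two-sided estimates of Lemmas \ref{Lemmatime} and \ref{Lemmavar2}, and exponential smallness of $\Phi_{\alpha,B}$ via Lemmas \ref{Lemmamoist}, \ref{Lemmamoistt} and \ref{Lemmavar1}. Your treatment of the tail $\Phi_{\alpha,B}$ and of the regime $X=\frac{y}{\alpha}\le\frac{59}{250}$ is sound: the exact cancellation of the $O(\alpha^{2})$ terms and the reduction to $16\pi y^{4}-16y^{2}-(\pi+10)\ge 8\pi-22>0$ both check out, and essentially reproduce the paper's region $\mathcal{A}_{d}$ (Lemma \ref{133+4}) in a cleaner form.

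The gap is in the range $X\ge\frac{59}{250}$, which is where all the real work of the paper's Section 3 lies. The only concrete computation you give there is the idealized one with $R_{1}=-\pi$, $R_{2}=\pi^{2}$; its discriminant $-2\alpha^{2}(\alpha^{4}-3)$ is indeed negative, but it badly overstates the margin. With the actual constants the paper's bound at the extremal corner $(\alpha,y)=(\tfrac32,\tfrac{\sqrt3}{2})$ is $D_{1}(\tfrac32;\tfrac{\sqrt3}{2})\approx\tfrac{21}{50}$, i.e.\ terms of size $\approx 16$ cancel down to $0.42$, so the corrections $\mu,\nu,\hat\mu,\hat\omega$ must be carried through the discriminant/root analysis with full numerical precision for every admissible $X$; your proposal defers exactly this step (``one checks\dots''), which is the content of Lemmas \ref{l33+1}--\ref{l33+3}. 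Worse, the natural way to make ``a quadratic in $y^{2}$ with $X$-dependent coefficients'' rigorous --- discretizing $X$ into intervals and using worst-case constants on each --- actually fails: on $X\in[\tfrac13,\tfrac12]$ the values $\frac{1+\nu(1/3)}{1+\mu(1/3)}\approx 1.455$ and $\frac{1+\hat\omega(1/3)}{1+\hat\mu(1/3)}\approx-1.93$ give a lower bound of about $-5.2$ at $(\alpha,y)=(\sqrt3,\tfrac{\sqrt3}{2})$. This is precisely why the paper introduces the separate region $\mathcal{A}_{c}$ and there replaces the uniform-in-$x$ bounds by the pointwise $Y$-monotonicity of Lemmas \ref{mono} and \ref{monoo} evaluated at $x=\sqrt{1-y^{2}}$, exploiting the constraint $|z|>1$ --- a constraint your proposal never invokes. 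So either you must evaluate the theta-quotient constants at the exact value of $X$ (turning the verification into a two-parameter problem requiring its own monotonicity analysis, which you do not supply), or you must import an $\mathcal{A}_{c}$-type argument; as written the proposal does neither, and the intermediate range remains unproved.
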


To illustrate Proposition \ref{prop}, we shall prove that $\Phi_{\alpha,A}(z)$ is a positive major part in Lemma \ref{l33} and $\Phi_{\alpha,B}(z)$ is a smaller part comparing $\Phi_{\alpha,A}(z)$ in Lemma \ref{gorgeous1}.

\begin{lemma}\label{l33} Assume $\alpha\geq\frac{3}{2},z\in \mathcal{D}_{\mathcal{G}}$,\:it holds that
\begin{equation}\aligned\label{phii}\nonumber
\Phi_{\alpha,A}(z)>0.
\endaligned\end{equation}
\end{lemma}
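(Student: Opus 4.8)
\emph{Plan.} The plan is to clear the theta--quotients from $\Phi_{\alpha,A}(z)$ by the Section 2 estimates and reduce the assertion to a polynomial inequality. First I would set $X=y/\alpha$, $t=\alpha^{2}$ and abbreviate $q_{1}=\vartheta_{XY}(\tfrac{y}{\alpha};x)/\vartheta_{Y}(\tfrac{y}{\alpha};x)$, $q_{2}=\vartheta_{XXY}(\tfrac{y}{\alpha};x)/\vartheta_{Y}(\tfrac{y}{\alpha};x)$; substituting $y=\alpha X$ into \eqref{phi1} and collecting powers of $\alpha$ gives
\begin{equation}\nonumber
\Phi_{\alpha,A}(z)=\alpha^{2}Q(t),\qquad
Q(t):=\pi X^{2}t^{2}+\big(X+2X^{2}q_{1}\big)t+\Big(\tfrac{3}{4\pi}+\tfrac{3}{\pi}Xq_{1}+\tfrac{1}{\pi}X^{2}q_{2}\Big).
\end{equation}
Since the coefficients of $q_{1},q_{2}$ in $Q(t)$, namely $2X^{2}t+\tfrac3\pi X$ and $\tfrac1\pi X^{2}$, are positive, $Q$ only decreases when $q_{1},q_{2}$ are replaced by lower bounds; I would take these from Lemmas \ref{Lemmatime} and \ref{Lemmavar2} --- item (1) of each for $X\ge X_{0}$ and item (2) for $0<X<X_{0}$, with a fixed $X_{0}\in[\tfrac{59}{250},\tfrac12]$. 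The one remaining geometric input is that $z\in\mathcal{D}_{\mathcal{G}}$ forces $y^{2}>1-x^{2}\ge\tfrac34$, i.e.\ $\alpha^{2}X^{2}=y^{2}>\tfrac34$; together with $\alpha\ge\tfrac32$ this says it suffices to prove $Q(t)>0$ for every $t\ge t_{0}(X):=\max\{\tfrac94,\tfrac{3}{4X^{2}}\}$.

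\emph{The range $X\ge X_{0}$.} Here I would use $q_{1}\ge-\pi\tfrac{1+\nu(X)}{1+\mu(X)}$ (Lemma \ref{Lemmatime}(1)) and $q_{2}\ge\pi^{2}\tfrac{1+\hat\omega(X)}{1+\hat\mu(X)}$ (Lemma \ref{Lemmavar2}(1)), which turn $Q$ into an explicit upward parabola in $t$. Its vertex is at $t^{*}=-\tfrac1{2\pi X}-\tfrac{q_{1}}{\pi}\le\tfrac{1+\nu(X)}{1+\mu(X)}$, which does not exceed $t_{0}(X)$ --- indeed $\tfrac{3}{4X^{2}}$ already dominates $\tfrac{1+\nu}{1+\mu}$ for $X\le\tfrac12$, and $\tfrac{1+\nu}{1+\mu}<\tfrac94$ once $X\ge\tfrac12$ since $\mu,\nu$ are then small --- so $Q$ is increasing on $[t_{0}(X),\infty)$ and it is enough to check $Q(t_{0}(X))>0$. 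This reduces to a one--variable inequality in $X\ge X_{0}$ in which $q_{1},q_{2}$ enter only through the ratios $\tfrac{1+\nu}{1+\mu}$ and $\tfrac{1+\hat\omega}{1+\hat\mu}$; exploiting that every series in \eqref{duct} starts at $n=2$, hence decays at least like $e^{-3\pi X}$, one estimates these ratios sharply and finishes. The hard part will be exactly this: near the hexagonal geometry $(z,\alpha)\to(e^{i\pi/3},\tfrac32)$, equivalently $X\to\tfrac1{\sqrt3}$ and $t\to\tfrac94$, the inequality is tight, so the bounds for $\mu,\nu,\hat\mu,\hat\omega$ must be kept sharp and the favourable $\hat\omega$--term may not be discarded; it is this corner that pins the threshold at $\alpha\ge\tfrac32$.

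\emph{The range $0<X<X_{0}$.} Now I would use the explicit lower bounds of Lemma \ref{Lemmatime}(2) and Lemma \ref{Lemmavar2}(2), which behave like $q_{1}\ge-\tfrac{3}{2X}$ and $q_{2}\ge\tfrac{15}{4}X^{-2}\big(1-\tfrac\pi{3X}-\tfrac{\pi^{2}}{60X^{2}}\big)$ up to an exponentially small error of size $e^{-\pi/X}$; after substitution the middle coefficient of $Q$ then has size $-O(X)$ and the constant term size $-O(X^{-2})$. The point is to use $t\ge\tfrac{3}{4X^{2}}$ twice: first $\pi X^{2}t^{2}\ge\tfrac{3\pi}{4}t$, then $t\ge\tfrac{3}{4X^{2}}$ again in the resulting linear expression; after the $O(1)$ pieces cancel, $Q(t)$ is bounded below by $\tfrac1X\big(\tfrac{\pi}{2X}-\tfrac{11}{4}\big)$ plus an exponentially small error, which is positive for $X\le\tfrac12$ because $\tfrac{\pi}{2X}\ge\pi>\tfrac{11}{4}$; the $e^{-\pi/X}$ corrections are harmless since $e^{-\pi/X}\le e^{-2\pi}$ there. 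Combining the two ranges gives $Q(t)>0$ on the required set, hence $\Phi_{\alpha,A}(z)=\alpha^{2}Q(t)>0$ for all $z\in\mathcal{D}_{\mathcal{G}}$, which is the assertion of the lemma.
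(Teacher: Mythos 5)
Your proposal is essentially correct but organizes the argument quite differently from the paper. The paper partitions the $(\alpha,y)$--range into four regions $\mathcal{A}_a,\dots,\mathcal{A}_d$ (Lemmas \ref{l33+1}--\ref{133+4}), freezes the ratios $\frac{1+\nu}{1+\mu}$, $\frac{1+\hat\omega}{1+\hat\mu}$ at the worst value of $X=y/\alpha$ over each region, and then argues monotonicity of the resulting explicit functions $D_i(\alpha;y)$ in $\alpha$ and $y$. This freezing is too lossy in the region $\mathcal{A}_c$ (where $X\in[\tfrac13,\tfrac12]$ but $\alpha$ is small), so there the paper must bring in extra structure: the $Y$--monotonicity of the quotients (Lemmas \ref{mono}, \ref{monoo}) combined with the geometric constraint $\sqrt{1-y^2}<x<\tfrac12$, plus a concavity claim for $\psi(y)=D_3(\sqrt3;y)$. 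Your parametrization by $(X,t)$ with $t=\alpha^2$ sidesteps this entirely: for each fixed $X$ you use the ratios at that $X$, the admissible set is the half-line $t\ge t_0(X)=\max\{\tfrac94,\tfrac{3}{4X^2}\}$, and since the vertex of the minorizing parabola sits at $t^*\le\frac{1+\nu(X)}{1+\mu(X)}<t_0(X)$, everything reduces to the single one-variable inequality $Q(t_0(X))>0$. I checked this numerically at the delicate values (in particular $X=1/\sqrt3$, $t=\tfrac94$, i.e.\ the hexagonal corner at $\alpha=\tfrac32$, where $Q(t_0)\cdot\alpha^2\approx 0.42$, matching the paper's $D_1(\tfrac32;\tfrac{\sqrt3}{2})\ge\tfrac{21}{50}$) and it holds throughout; your route buys a cleaner reduction and dispenses with the $\mathcal{A}_c$ machinery, at the cost of having to control the $X$--dependence of the series ratios along the curve $t=t_0(X)$ rather than only at a few corner points.

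One concrete caveat in the small-$X$ branch: your claim that the $e^{-\pi/X}$ corrections are harmless for all $X\le\tfrac12$ is not right near $X=\tfrac12$. The lower bound of Lemma \ref{Lemmatime}(2) is $-\tfrac{3}{2X}\cdot\frac{1+\frac{8\pi^2}{3X^2}e^{-\pi/X}}{1-\frac{4\pi}{X}e^{-\pi/X}}$, and the correction factor, amplified by $X^{-2}$, is about $1.26$ at $X=\tfrac12$; feeding this through your computation turns $\frac{1}{X}\bigl(\frac{\pi}{2X}-\frac{11}{4}\bigr)$ into a quantity that goes negative for $X\gtrsim 0.47$. This is harmless for your proof only because you are free to place the cut $X_0$ lower: with, say, $X_0=\tfrac25$ (still above the $\tfrac{59}{250}$ threshold needed for Lemma \ref{Lemmavar2}(1)), the correction factor stays below $1.08$ on $(0,X_0]$ and your bound survives, while $[X_0,\infty)$ is covered by the parabola argument. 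You should state this choice explicitly rather than asserting positivity up to $X=\tfrac12$ in the second branch.
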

Due to the complexity of the proof of Lemma \ref{l33}, we divide it into four cases, which will be demonstrated by Lemmas \ref{l33+1}-\ref{133+4} respectively. For convenience, we denote that:
\begin{equation}\aligned\label{area}
\mathcal{A}_{a}&:=\left\{(\alpha,y) |\:\:y\geq \frac{\sqrt{3}}{2},\alpha\geq\frac{3}{2},y\geq \frac{1}{2}\alpha\right\},\\
\mathcal{A}_{b}&:=\left\{(\alpha,y) |\:\:y\geq 1,2y\leq \alpha\leq 3y \right\},\\
\mathcal{A}_{c}&:=\left\{(\alpha,y) |\:\:y\in[ \frac{\sqrt{3}}{2},1],2y\leq \alpha\leq 3y \right\},\\
\mathcal{A}_{d}&:=\left\{(\alpha,y) |\:\:y\geq \frac{\sqrt{3}}{2},\alpha\geq 3y\right\}.\\
\endaligned\end{equation}
Then
\begin{equation}\aligned\label{con}
\left\{(\alpha,y) |y\geq \frac{\sqrt{3}}{2},\alpha \geq \frac{3}{2}\right\}=\mathcal{A}_{a}\cup \mathcal{A}_{b}\cup \mathcal{A}_{c}\cup \mathcal{A}_{d}.
\endaligned\end{equation}

We deal with the minor part $\Phi_{\alpha,B}(z)$ in Lemma \ref{gorgeous1}, which shows that it is indeed small relative to the principle part $\Phi_{\alpha,A}(z)$.
\begin{lemma}\label{gorgeous1} Assume $\alpha\geq\frac{3}{2},z\in \mathcal{D}_{\mathcal{G}}$,\:it holds that
\begin{equation}\nonumber\aligned
\left|\frac{\Phi_{\alpha,B}(z)}{\Phi_{\alpha,A}(z)}\right|\leq \frac{1}{77},
\endaligned\end{equation}
where $\Phi_{\alpha,A}(z)$ and $\Phi_{\alpha,B}(z)$ are defined in $\eqref{phi1}$-$\eqref{phi3}$.
\end{lemma}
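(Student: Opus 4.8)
The plan is to bound the three pieces $\Phi_{\alpha,B}^{k}(z)$, $k=1,2,3$, individually against $\Phi_{\alpha,A}(z)$, using the quotient estimates of Section 2 together with the crude bound $\frac{\sin(kx)}{\sin x}\le k$ that already appears in Lemma~\ref{Lemmavar1}. First I would replace, in each series defining $\Phi_{\alpha,B}^{k}(z)$, the theta quotients $\frac{\vartheta_{Y}(\frac{y}{\alpha};nx)}{\vartheta_{Y}(\frac{y}{\alpha};x)}$, $\frac{\vartheta_{XY}(\frac{y}{\alpha};nx)}{\vartheta_{Y}(\frac{y}{\alpha};x)}$, $\frac{\vartheta_{XXY}(\frac{y}{\alpha};nx)}{\vartheta_{Y}(\frac{y}{\alpha};x)}$ by their upper bounds from Lemmas~\ref{Lemmamoist}, \ref{Lemmamoistt}(1), and \ref{Lemmavar1}, respectively, noting that the relevant variable is $X=\frac{y}{\alpha}$ and that on $\mathcal D_{\mathcal G}$ one has $y\ge\frac{\sqrt3}{2}$, so $X$ ranges over $(0,\infty)$; this forces a split into the regime $X\ge\frac15$ (where the bounds are in terms of $\mu,\nu,\omega$) and $X<\frac15$ (where the bounds carry the factor $e^{\pi/(4X)}$). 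After this substitution each $\Phi_{\alpha,B}^{k}(z)$ is dominated by a sum $\sum_{n\ge2} P_k(\alpha,y,n)\,e^{-\alpha\pi y(n^2-1)}$ with $P_k$ an explicit polynomial in $\alpha,y,n$ (of the same $\alpha,y$-degree as the corresponding term in $\Phi_{\alpha,A}$, but with extra powers of $n$).

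Next I would compare this with a lower bound for $\Phi_{\alpha,A}(z)$. From Lemma~\ref{l33} we already know $\Phi_{\alpha,A}(z)>0$, but here I need a quantitative lower bound: using $\frac{\vartheta_{XY}}{\vartheta_{Y}}\le -\pi\frac{1+\hat\nu}{1+\hat\mu}$ and $\frac{\vartheta_{XXY}}{\vartheta_{Y}}\ge \pi^2\frac{1+\hat\omega}{1+\hat\mu}$ from Lemmas~\ref{Lemmatime} and \ref{Lemmavar2} (again splitting at $X=\frac15$, resp.\ $X=\frac{59}{250}$, and invoking the small-$X$ branches for the complementary range), one checks that the dominant contribution to $\Phi_{\alpha,A}$ is the "diagonal'' block $\pi\alpha^4y^2+\alpha^3y+\frac{3}{4\pi}\alpha^2-(\text{small})$, which on $\mathcal D_{\mathcal G}$ with $\alpha\ge\frac32$ is bounded below by, say, a fixed positive multiple of $\pi\alpha^4y^2$ (after absorbing the $\vartheta$-quotient corrections, which decay like $\mu(X)$ or like $X^{-c}e^{\pi/(4X)}$). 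One then forms the ratio $\bigl|\Phi_{\alpha,B}(z)/\Phi_{\alpha,A}(z)\bigr|\le \sum_{k}\sum_{n\ge2} \tilde P_k(\alpha,y,n)\,e^{-\alpha\pi y(n^2-1)}$ where $\tilde P_k$ is $P_k$ divided by the lower bound for $\Phi_{\alpha,A}$.

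The final step is to show this ratio is $\le\frac{1}{77}$ on the whole parameter region. I would bound the $n$-sum by its $n=2$ term times a geometric-type tail: since $e^{-\alpha\pi y(n^2-1)}=e^{-3\alpha\pi y}\cdot e^{-\alpha\pi y(n^2-4)}$ and $\alpha y\ge\frac{\sqrt3}{2}\cdot\frac32=\frac{3\sqrt3}{4}$, the ratios $P_k(\alpha,y,n+1)/P_k(\alpha,y,n)$ grow only polynomially in $n$ while $e^{-\alpha\pi y(2n+1)}$ decays, so $\sum_{n\ge2}(\cdots)\le C\,e^{-3\alpha\pi y}(\alpha^4y^2\cdot 2^5+\cdots)/(\pi\alpha^4y^2)\cdot(1+\text{small tail})$; because $\alpha y\ge\frac{3\sqrt3}{4}$ forces $e^{-3\alpha\pi y}$ to be extremely small (of order $e^{-9\sqrt3\pi/4}\approx e^{-12.2}$) while the polynomial prefactor $\tilde P_2(\alpha,y,2)/(\pi\alpha^4y^2)$ stays bounded (one uses $\alpha^{-2}\le\frac49$, $y\ge\frac{\sqrt3}{2}$, and the elementary fact that $\alpha^a y^b e^{-c\alpha y}$ is bounded for fixed signs), the product is easily below $\frac{1}{77}$.

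The main obstacle I anticipate is the small-$X$ regime $X=\frac{y}{\alpha}<\frac15$ (equivalently $\alpha>5y$), where the theta-quotient bounds blow up like $e^{\pi/(4X)}=e^{\pi\alpha/(4y)}$: one must check that this exponential growth in $\alpha/y$ is still dominated by the Gaussian decay $e^{-\alpha\pi y}$ present in $\mathcal C(\alpha;z)$'s relatives and by the $e^{-\alpha\pi y(n^2-1)}$ weights, i.e.\ that $\frac{\pi\alpha}{4y}-\alpha\pi y(n^2-1)<0$, which holds since $y\ge\frac{\sqrt3}{2}$ gives $y(n^2-1)\ge\frac{3\sqrt3}{2}>\frac14\ge\frac1{4y^2}$ for $n\ge2$ — so the exponent is negative, but the bookkeeping of the polynomial prefactors $X^{-2}$, $X^{-4}$ against the tail requires care. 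Once that is handled uniformly, the constant $\frac{1}{77}$ comes out with room to spare.
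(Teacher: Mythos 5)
Your proposal is correct and follows essentially the same route as the paper: the paper likewise lower-bounds $\Phi_{\alpha,A}$ region by region (obtaining the constants $\tfrac{21}{50}$ and $\tfrac{39}{5}$ from Lemmas \ref{l33+1}--\ref{133+4}), bounds each $\Phi_{\alpha,B}^{k}$ term by term via Lemmas \ref{Lemmamoist}, \ref{Lemmamoistt} and \ref{Lemmavar1}, splits at $\frac{y}{\alpha}\gtrless\frac12$ (Lemmas \ref{splendid} and \ref{good}), and in the small-$X$ regime absorbs the $e^{\pi\alpha/(4y)}$ blow-up into the weights $e^{-\alpha\pi[y(n^2-1)-\frac{1}{4y}]}$ exactly as you anticipate. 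The only cosmetic differences are your choice of splitting point and your proposed lower bound proportional to $\pi\alpha^4y^2$ in place of the paper's absolute constants (and note the exponent comparison should read $y(n^2-1)>\frac{1}{4y}$, which still holds since $\frac{1}{4y}\le\frac{1}{2\sqrt{3}}$).
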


We shall divide the proof of Lemma \ref{gorgeous1} into two cases, namely case one: $\frac{y}{\alpha}\geq \frac{1}{2}$ and case two: $\frac{y}{\alpha}\in( 0,\frac{1}{2})$, which appear respectively in Lemmas \ref{splendid} and \ref{good}. By \eqref{phi3}, Lemmas \ref{l33} and \ref{gorgeous1}, one has
\begin{equation}\nonumber\aligned
\Phi_{\alpha,A}(z)+\Phi_{\alpha,B}(z)\geq \Phi_{\alpha,A}(z)\Big( 1-\left|\frac{\Phi_{\alpha,B}(z)}{\Phi_{\alpha,A}(z)}\right| \Big)\geq \frac{21}{50}\cdot(1-\frac{1}{77})>0.
\endaligned\end{equation}

In conclusion, Lemmas \ref{l33} and \ref{gorgeous1} complete the proof of Proposition \ref{prop}. In the rest of this Section, we shall prove Lemmas \ref{l33} and \ref{gorgeous1}.

\subsection{Region $\mathcal{A}_{a}$: estimate of $\Phi_{\alpha,A}(z)$}\:In this Subsection,\:we shall prove that
\begin{lemma}\label{l33+1} Assume that $(\alpha,y)\in \mathcal{A}_{a}=\left\{(\alpha,y) |\:\:y\geq \frac{\sqrt{3}}{2},\alpha\geq\frac{3}{2},y\geq \frac{1}{2}\alpha\right\}$,\:then for $z\in \mathcal{D}_{\mathcal{G}}$,\:it holds that
\begin{equation}\aligned\nonumber
\Phi_{\alpha,A}(z)\geq D_{1}(\alpha;y)\geq \frac{21}{50},
\endaligned\end{equation}
where
\begin{equation}\aligned\nonumber
D_{1}(\alpha;y):= {\pi}{\alpha}^4 y^2 +{\alpha}^3 y +\frac{3}{4\pi}{\alpha}^2
-(3{\alpha}y  +2\pi{\alpha}^2 y^2 )\cdot\frac{1+\nu(\frac{1}{2})}{1+\mu(\frac{1}{2})}
+{\pi}y^2\cdot \frac{1+\hat{\omega}(\frac{1}{2})}{1+\hat{\mu}(\frac{1}{2})}.
\endaligned\end{equation}
\end{lemma}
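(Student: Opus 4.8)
The plan is to establish the two claimed inequalities separately: first the pointwise lower bound $\Phi_{\alpha,A}(z)\geq D_1(\alpha;y)$ on the region $\mathcal{A}_a$, and then the purely numerical bound $D_1(\alpha;y)\geq \tfrac{21}{50}$ for all $(\alpha,y)$ with $y\geq\tfrac{\sqrt3}{2}$, $\alpha\geq\tfrac32$, $y\geq\tfrac12\alpha$.

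For the first inequality, recall from \eqref{phi1} that
\[
\Phi_{\alpha,A}(z)={\pi}{\alpha}^4 y^2 +{\alpha}^3 y +\frac{3}{4\pi}{\alpha}^2
+\Big(\frac{3}{\pi}{\alpha}y  +2{\alpha}^2 y^2 \Big)\frac{\vartheta_{XY}(\frac{y}{\alpha};x)}{\vartheta_{Y}(\frac{y}{\alpha};x)}
+ \frac{1}{\pi}y^2 \frac{\vartheta_{XXY}(\frac{y}{\alpha};x)}{\vartheta_{Y}(\frac{y}{\alpha};x)}.
\]
On $\mathcal{A}_a$ the argument $X=\tfrac{y}{\alpha}\geq\tfrac12\geq\tfrac15$, so I can invoke Lemma \ref{Lemmatime}(1) and Lemma \ref{Lemmavar2}(1) with $X=\tfrac{y}{\alpha}$. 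Since the coefficient $\tfrac{3}{\pi}\alpha y+2\alpha^2 y^2$ of the $\vartheta_{XY}/\vartheta_Y$ term is positive, to bound $\Phi_{\alpha,A}$ from below I want the \emph{smallest} value of $\vartheta_{XY}/\vartheta_Y$, which by Lemma \ref{Lemmatime}(1) is $\geq -\pi\,\frac{1+\nu(X)}{1+\mu(X)}$; similarly the coefficient $\tfrac{1}{\pi}y^2$ of the $\vartheta_{XXY}/\vartheta_Y$ term is positive, so I use the lower bound $\vartheta_{XXY}/\vartheta_Y\geq \pi^2\,\frac{1+\hat\omega(X)}{1+\hat\mu(X)}$ from Lemma \ref{Lemmavar2}(1). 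This gives
\[
\Phi_{\alpha,A}(z)\geq {\pi}{\alpha}^4 y^2 +{\alpha}^3 y +\frac{3}{4\pi}{\alpha}^2
-\Big(3{\alpha}y  +2\pi{\alpha}^2 y^2 \Big)\frac{1+\nu(\frac{y}{\alpha})}{1+\mu(\frac{y}{\alpha})}
+{\pi}y^2\,\frac{1+\hat\omega(\frac{y}{\alpha})}{1+\hat\mu(\frac{y}{\alpha})}.
\]
The final step for this part is to replace $X=\tfrac{y}{\alpha}$ by $\tfrac12$ inside the three auxiliary series. Here I need the monotonicity facts that $X\mapsto\frac{1+\nu(X)}{1+\mu(X)}$ is decreasing (so on $\mathcal{A}_a$, where $X\geq\tfrac12$, it is $\leq\frac{1+\nu(1/2)}{1+\mu(1/2)}$, which \emph{increases} the subtracted term — wait, that goes the wrong way), so more carefully: since $\mu,\nu,\hat\mu,\hat\omega$ are all decreasing in $X$ and we are on $X\geq\tfrac12$, I should check which direction each substitution must go to preserve the inequality. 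The subtracted term wants $\frac{1+\nu(X)}{1+\mu(X)}$ replaced by something \emph{larger}, and the added term wants $\frac{1+\hat\omega(X)}{1+\hat\mu(X)}$ replaced by something \emph{smaller}; I expect both quotients to be monotone in $X$ in the favorable direction on $[\tfrac12,\infty)$ (the series are tiny there anyway, $\mu(1/2)\approx 4e^{-3\pi/2}$), and verifying these monotonicities — or just crudely bounding the small tails — is routine. This yields $\Phi_{\alpha,A}(z)\geq D_1(\alpha;y)$.

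For the second inequality $D_1(\alpha;y)\geq\tfrac{21}{50}$, the quotients $\frac{1+\nu(1/2)}{1+\mu(1/2)}=:c_1$ and $\frac{1+\hat\omega(1/2)}{1+\hat\mu(1/2)}=:c_2$ are now explicit numerical constants (both close to $1$, with $c_1$ slightly above $1$ and $c_2$ slightly below, since $\nu$ dominates $\mu$ but the alternating $\hat\omega,\hat\mu$ nearly cancel). So $D_1(\alpha;y)={\pi}{\alpha}^4 y^2 +{\alpha}^3 y +\frac{3}{4\pi}{\alpha}^2 - (3\alpha y + 2\pi c_1 \alpha^2 y^2) + \pi c_2 y^2$ is an elementary function of two real variables on the polygonal region $\mathcal{A}_a$. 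The dominant balance is between the quartic $\pi\alpha^4 y^2$ and the competing term $-2\pi c_1\alpha^2 y^2$; on $\mathcal{A}_a$ we have $\alpha\geq\tfrac32$, so $\alpha^4\geq\tfrac94\alpha^2$ and hence $\pi\alpha^4 y^2 - 2\pi c_1\alpha^2 y^2 \geq \pi(\tfrac94 - 2c_1)\alpha^2 y^2$, which is a large positive multiple of $\alpha^2 y^2$ once one checks $c_1 < \tfrac98$ numerically. The remaining negative term $-3\alpha y$ is linear and easily dominated (e.g. by $\tfrac12\alpha^3 y$ using $\alpha\geq\tfrac32$, or by a fraction of the $\alpha^2 y^2$ surplus using $y\geq\tfrac{\sqrt3}{2}$, $\alpha\geq\tfrac32$), and the terms $\alpha^3 y$, $\tfrac{3}{4\pi}\alpha^2$, $\pi c_2 y^2$ are all positive. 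Collecting constants and minimizing over the corner/edge cases of $\mathcal{A}_a$ (the infimum is approached as $\alpha,y\to\tfrac32,\tfrac{\sqrt3}{2}$ along the relevant boundary) gives a value comfortably above $\tfrac{21}{50}$; the constant $\tfrac{21}{50}$ is evidently chosen with slack.

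\emph{The main obstacle} I anticipate is not any single hard estimate but the bookkeeping of signs and monotonicity directions when passing from $\vartheta_{XY}/\vartheta_Y$, $\vartheta_{XXY}/\vartheta_Y$ at $X=\tfrac{y}{\alpha}$ to the constants at $X=\tfrac12$: one must confirm that each of $\frac{1+\nu(X)}{1+\mu(X)}$ and $\frac{1+\hat\omega(X)}{1+\hat\mu(X)}$ is monotone on $[\tfrac12,\infty)$ in the direction that keeps the lower bound valid, or else absorb the (exponentially small) discrepancy into the slack. Everything downstream — the two-variable polynomial inequality on $\mathcal{A}_a$ — is a routine completion-of-squares / region-endpoint argument that the generous constant $\tfrac{21}{50}$ is designed to accommodate.
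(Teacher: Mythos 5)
Your outline matches the paper's proof: bound the two theta quotients from below via Lemma \ref{Lemmatime}(1) and Lemma \ref{Lemmavar2}(1) at $X=\tfrac{y}{\alpha}\geq\tfrac12$, use that $X\mapsto\frac{1+\nu(X)}{1+\mu(X)}$ is decreasing and $X\mapsto\frac{1+\hat\omega(X)}{1+\hat\mu(X)}$ is increasing on $[\tfrac12,\infty)$ to freeze both ratios at $X=\tfrac12$ (your sign bookkeeping, after the self-correction, lands in the right place), and then reduce the two-variable inequality to the corner $(\alpha,y)=(\tfrac32,\tfrac{\sqrt3}{2})$.

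One concrete warning about your second step: the constant $\tfrac{21}{50}$ is \emph{not} chosen with slack. With $c_1=\frac{1+\nu(1/2)}{1+\mu(1/2)}=1.1043\ldots$ and $c_2=\frac{1+\hat\omega(1/2)}{1+\hat\mu(1/2)}=0.4435\ldots$ one finds $D_1(\tfrac32;\tfrac{\sqrt3}{2})=0.4209\ldots$, i.e.\ the margin over $\tfrac{21}{50}=0.42$ is about $10^{-3}$. Consequently the crude dominations you sketch would fail: $c_1$ multiplies the $3\alpha y$ term as well (so the negative linear term is $3c_1\alpha y\approx 3.31\,\alpha y$), and neither $\tfrac12\alpha^3 y$ (which is only $\geq 1.125\,\alpha y$ at $\alpha=\tfrac32$) nor the surplus $\pi(\tfrac94-2c_1)\alpha^2y^2\approx 0.13\,\alpha^2y^2$ covers it on its own; every positive term is needed with essentially no loss. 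The paper instead checks $\partial_\alpha D_1>0$ and $\partial_y D_1>0$ on $\alpha\geq\tfrac32$, $y\geq\tfrac{\sqrt3}{2}$ and evaluates $D_1$ exactly at the corner. If you replace your completion-of-squares step by that monotonicity argument (or any argument that loses less than $\sim10^{-3}$), the proof goes through as intended.
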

\begin{proof}
Since $\frac{y}{\alpha}\geq \frac{1}{2}$, by Lemmas \ref{Lemmatime} and \ref{Lemmavar2}, one has
\begin{equation}\aligned\label{comp}
\Phi_{\alpha,A}(z)
=&{\pi}{\alpha}^4 y^2 +{\alpha}^3 y +\frac{3}{4\pi}{\alpha}^2
+(\frac{3}{\pi}{\alpha}y  +2{\alpha}^2 y^2 )\frac{\vartheta_{XY}(\frac{y}{\alpha};x)}{\vartheta_{Y}(\frac{y}{\alpha};x)}
+ \frac{1}{\pi}y^2 \frac{\vartheta_{XXY}(\frac{y}{\alpha};x)}{\vartheta_{Y}(\frac{y}{\alpha};x)}\\
\geq&{\pi}{\alpha}^4 y^2 +{\alpha}^3 y +\frac{3}{4\pi}{\alpha}^2
-(3{\alpha}y  +2\pi{\alpha}^2 y^2 )\cdot\frac{1+\nu(\frac{y}{\alpha})}{1+\mu(\frac{y}{\alpha})}
+{\pi}y^2\cdot \frac{1+\hat{\omega}(\frac{y}{\alpha})}{1+\hat{\mu}(\frac{y}{\alpha})}, \\
\endaligned\end{equation}
where $\mu(x)$ and $\nu(x)$ are defined in \eqref{duct}. Notice that as $x\geq\frac{1}{2}$,
\begin{equation}\aligned\nonumber
\frac{1+\nu(x)}{1+\mu(x)}\:\: \mathrm{is\: \:decreasing } ,\:\:\frac{1+\hat{\omega}(x)}{1+\hat{\mu}(x)}\: \:\mathrm{is\: \:increasing}.
\endaligned\end{equation}
Then
\begin{equation}\aligned\label{lamp}
\frac{1+\nu(\frac{y}{\alpha})}{1+\mu(\frac{y}{\alpha})}\leq\frac{1+\nu(\frac{1}{2})}{1+\mu(\frac{1}{2})}=1.104299511\cdots,\:\frac{1+\hat{\omega}(\frac{y}{\alpha})}{1+\hat{\mu}(\frac{y}{\alpha})}\geq \frac{1+\hat{\omega}(\frac{1}{2})}{1+\hat{\mu}(\frac{1}{2})}=0.4435351039\cdots.
\endaligned\end{equation}
By \eqref{comp}-\eqref{lamp}, one has
\begin{equation}\aligned\nonumber
\Phi_{\alpha,A}(z)\geq D_{1}(\alpha;y). \\
\endaligned\end{equation}
A direct checking shows that
\begin{equation}\aligned\label{middleage1}
\frac{\partial}{\partial{\alpha}}D_{1}(\alpha;y)>0,\: \frac{\partial}{\partial{y}}D_{1}(\alpha;y)>0,\:\: as \:\: \alpha\geq \frac{3}{2}, \:y\geq \frac{\sqrt{3}}{2}.
\endaligned\end{equation}
By \eqref{middleage1}, one deduces that
\begin{equation}\aligned\nonumber
D_{1}(\alpha;y)\geq D_{1}(\frac{3}{2};\frac{\sqrt{3}}{2})\geq \frac{21}{50}.
\endaligned\end{equation}
\end{proof}

\subsection{Region $\mathcal{A}_{b}$: estimate of $\Phi_{\alpha,A}(z)$} In this Subsection, we shall prove that
\begin{lemma}\label{l33+2} Assume that $(\alpha,y)\in \mathcal{A}_{b}=\left\{(\alpha,y) |\:\:y\geq 1,2y\leq \alpha\leq 3y \right\}$,\:then for $z\in \mathcal{D}_{\mathcal{G}}$,\:it holds that
\begin{equation}\aligned\nonumber
\Phi_{\alpha,A}(z)\geq D_{2}(\alpha;y)\geq\frac{39}{5},
\endaligned\end{equation}
where
\begin{equation}\aligned\nonumber
D_{2}(\alpha;y):={\pi}{\alpha}^4 y^2 +{\alpha}^3 y +\frac{3}{4\pi}{\alpha}^2
-(3{\alpha}y  +2\pi{\alpha}^2 y^2 )\cdot\frac{1+\nu(\frac{1}{3})}{1+\mu(\frac{1}{3})}
+{\pi}y^2\cdot \frac{1+\hat{\omega}(\frac{1}{3})}{1+\hat{\mu}(\frac{1}{3})}.
\endaligned\end{equation}
\end{lemma}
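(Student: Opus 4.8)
The argument is the natural analogue of the proof of Lemma~\ref{l33+1}, with the window $\frac{y}{\alpha}\in[\frac13,\frac12]$ --- which is exactly what the constraint $2y\le\alpha\le 3y$ forces --- playing the role that $\frac{y}{\alpha}\ge\frac12$ played there. First I would record that $X:=\frac{y}{\alpha}\ge\frac13>\frac{59}{250}$, so the lower bounds in the first items of Lemma~\ref{Lemmatime} and Lemma~\ref{Lemmavar2} apply: at $X=\frac{y}{\alpha}$, $Y=x$ one has $\frac{\vartheta_{XY}}{\vartheta_{Y}}\ge-\pi\cdot\frac{1+\nu(X)}{1+\mu(X)}$ and $\frac{\vartheta_{XXY}}{\vartheta_{Y}}\ge\pi^2\cdot\frac{1+\hat{\omega}(X)}{1+\hat{\mu}(X)}$. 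Since the coefficients $\frac{3}{\pi}\alpha y+2\alpha^2y^2$ and $\frac{1}{\pi}y^2$ sitting in front of these two quotients in \eqref{phi1} are positive, substituting the bounds yields
\begin{equation}\aligned\nonumber
\Phi_{\alpha,A}(z)\ge \pi\alpha^4y^2+\alpha^3 y+\frac{3}{4\pi}\alpha^2-(3\alpha y+2\pi\alpha^2y^2)\cdot\frac{1+\nu(\frac{y}{\alpha})}{1+\mu(\frac{y}{\alpha})}+\pi y^2\cdot\frac{1+\hat{\omega}(\frac{y}{\alpha})}{1+\hat{\mu}(\frac{y}{\alpha})}.
\endaligned\end{equation}

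Next I would freeze the two ratios at $X=\frac13$. As in Lemma~\ref{l33+1}, a direct check of the rapidly convergent series in \eqref{duct} shows that on $[\frac13,\frac12]$ the ratio $\frac{1+\nu(x)}{1+\mu(x)}$ is decreasing and the ratio $\frac{1+\hat{\omega}(x)}{1+\hat{\mu}(x)}$ is increasing, while $1+\hat{\mu}(x)>0$ there. Hence $\frac{1+\nu(\frac{y}{\alpha})}{1+\mu(\frac{y}{\alpha})}\le\frac{1+\nu(\frac13)}{1+\mu(\frac13)}$ and $\frac{1+\hat{\omega}(\frac{y}{\alpha})}{1+\hat{\mu}(\frac{y}{\alpha})}\ge\frac{1+\hat{\omega}(\frac13)}{1+\hat{\mu}(\frac13)}$, and feeding these into the displayed inequality gives $\Phi_{\alpha,A}(z)\ge D_2(\alpha;y)$.

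It then remains to prove $D_2(\alpha;y)\ge\frac{39}{5}$ on the wedge $\mathcal{A}_b=\{y\ge1,\ 2y\le\alpha\le3y\}$. Writing $c_1:=\frac{1+\nu(\frac13)}{1+\mu(\frac13)}$ and $c_2:=\frac{1+\hat{\omega}(\frac13)}{1+\hat{\mu}(\frac13)}$ (numerically $c_1\approx1.46$ and $c_2\approx-1.93$), I would compute $\partial_{\alpha}D_2$ and $\partial_{y}D_2$ and check that both stay positive throughout $\mathcal{A}_b$ (the leading terms $4\pi\alpha^3y^2$ and $2\pi\alpha^4y$ dominate, using $\alpha\ge2y\ge2$). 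Since $D_2$ is then increasing in each variable, $\min_{\mathcal{A}_b}D_2$ is attained at the corner $(\alpha,y)=(2,1)$ (first push $\alpha$ down to $2y$, then push $y$ down to $1$), and a direct evaluation gives $D_2(2;1)\ge\frac{39}{5}$.

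I expect the last step to be the delicate point. Near the corner $(2,1)$ the derivative $\partial_y D_2$ is small: the leading positive term $2\pi\alpha^4y$ is nearly offset by $-(3\alpha+4\pi\alpha^2y)c_1$ together with the genuinely negative contribution $2\pi y\,c_2$, and the target inequality $D_2(2;1)\ge\frac{39}{5}$ holds only with a slim margin, so the numerical values $\nu(\frac13),\mu(\frac13),\hat{\mu}(\frac13),\hat{\omega}(\frac13)$ must be controlled carefully --- e.g.\ by truncating the series in \eqref{duct} after the $n=2,3$ terms and bounding the geometric tail. Everything else is routine once the correct monotonicity directions on $[\frac13,\frac12]$ are in place.
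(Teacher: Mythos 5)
Your proposal follows essentially the same route as the paper: bound the two theta quotients via Lemmas \ref{Lemmatime} and \ref{Lemmavar2} on the window $\frac{y}{\alpha}\in[\frac13,\frac12]$, freeze the ratios at $\frac13$ using the stated monotonicities, and then reduce $D_2$ to its value at the corner $(\alpha,y)=(2,1)$, where $D_2(2;1)\approx 7.85\geq\frac{39}{5}$. The paper merely asserts the last reduction ``combining with $\alpha\geq2$, $y\geq1$,'' whereas you spell out the monotonicity of $D_2$ in each variable that justifies it — a correct and slightly more complete rendering of the same argument.
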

\begin{proof}
Since $\frac{y}{\alpha}\in [\frac{1}{3},\frac{1}{2}]$, by Lemmas \ref{Lemmatime} and \ref{Lemmavar2}, one has
\begin{equation}\aligned\label{comppj}
\Phi_{\alpha,A}(z)
=&{\pi}{\alpha}^4 y^2 +{\alpha}^3 y +\frac{3}{4\pi}{\alpha}^2
+(\frac{3}{\pi}{\alpha}y  +2{\alpha}^2 y^2 )\frac{\vartheta_{XY}(\frac{y}{\alpha};x)}{\vartheta_{Y}(\frac{y}{\alpha};x)}
+ \frac{1}{\pi}y^2 \frac{\vartheta_{XXY}(\frac{y}{\alpha};x)}{\vartheta_{Y}(\frac{y}{\alpha};x)}\\
\geq &{\pi}{\alpha}^4 y^2 +{\alpha}^3 y +\frac{3}{4\pi}{\alpha}^2
-(3{\alpha}y  +2\pi{\alpha}^2 y^2 )\cdot\frac{1+\nu(\frac{y}{\alpha})}{1+\mu(\frac{y}{\alpha})}
+{\pi}y^2\cdot \frac{1+\hat{\omega}(\frac{y}{\alpha})}{1+\hat{\mu}(\frac{y}{\alpha})},\\
\endaligned\end{equation}
where $\mu(x)$ and $\nu(x)$ are defined in \eqref{duct}. Notice that as $x\geq\frac{1}{3}$,
\begin{equation}\aligned\nonumber
\frac{1+\nu(x)}{1+\mu(x)}\:\: \mathrm{is\:\: decreasing } ,\:\:\frac{1+\hat{\omega}(x)}{1+\hat{\mu}(x)} \:\:\mathrm{is \:\:increasing}.
\endaligned\end{equation}
Then
\begin{equation}\aligned\label{lampp}
\frac{1+\nu(\frac{y}{\alpha})}{1+\mu(\frac{y}{\alpha})}\leq\frac{1+\nu(\frac{1}{3})}{1+\mu(\frac{1}{3})}=1.455483937\cdots,\:\frac{1+\hat{\omega}(\frac{y}{\alpha})}{1+\hat{\mu}(\frac{y}{\alpha})}\geq \frac{1+\hat{\omega}(\frac{1}{3})}{1+\hat{\mu}(\frac{1}{3})}=-1.927931130\cdots.
\endaligned\end{equation}
Combining \eqref{comppj}-\eqref{lampp} with the condition $\alpha \geq 2,\:y\geq 1$, one has
\begin{equation}\aligned\nonumber
\Phi_{\alpha,A}(z)\geq D_{2}(\alpha;y)\geq D_{2}(2;1)\geq \frac{39}{5}.\\
\endaligned\end{equation}
\end{proof}

\subsection{Region $\mathcal{A}_{c}$: estimate of $\Phi_{\alpha,A}(z)$} In this Subsection, we shall prove that
\begin{lemma}\label{l33+3} Assume that $(\alpha,y)\in \mathcal{A}_{c}=\left\{(\alpha,y) |\:\:y\in[ \frac{\sqrt{3}}{2},1],2y\leq \alpha\leq 3y \right\}$,\:then for $z\in \mathcal{D}_{\mathcal{G}}$,\:it holds that
\begin{equation}\aligned\nonumber
\Phi_{\alpha,A}(z)\geq D_{3}(\alpha;y)\geq \frac{109}{10},
\endaligned\end{equation}
where
\begin{equation}\label{lan1} \aligned
D_{3}(\alpha;y):=&{\pi}{\alpha}^4 y^2 +{\alpha}^3 y +\frac{3}{4\pi}{\alpha}^2
+(\frac{3}{\pi}{\alpha}y  +2{\alpha}^2 y^2 )\frac{\vartheta_{XY}(\frac{y}{\alpha};\sqrt{1-{y}^2})}{\vartheta_{Y}(\frac{y}{\alpha};\sqrt{1-{y}^2})}
+\frac{1}{\pi}y^2 \frac{\vartheta_{XXY}(\frac{y}{\alpha};\frac{1}{2})}{\vartheta_{Y}(\frac{y}{\alpha};\frac{1}{2})}.\\
\endaligned\end{equation}
\end{lemma}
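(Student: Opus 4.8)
The plan is to follow the two--step scheme of Lemmas~\ref{l33+1}--\ref{l33+2}: first bound $\Phi_{\alpha,A}(z)$ below by the boundary quantity $D_{3}(\alpha;y)$, then prove $D_{3}(\alpha;y)\geq\frac{109}{10}$ on $\mathcal{A}_{c}$. In contrast with regions $\mathcal{A}_{a},\mathcal{A}_{b}$, on $\mathcal{A}_{c}$ the constant $\frac{109}{10}$ is essentially the value of $D_{3}$ at the hexagonal point, so the second step must be carried out with a sharp, $Y$--aware estimate rather than with the crude bounds of the previous lemmas.

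\emph{Step 1: $\Phi_{\alpha,A}(z)\geq D_{3}(\alpha;y)$.} For $(\alpha,y)\in\mathcal{A}_{c}$ one has $X:=\frac{y}{\alpha}\in[\frac13,\frac12]$. A point $z=x+iy\in\mathcal{D}_{\mathcal{G}}$ satisfies $0<x<\frac12$ and, since $|z|>1$ and $y\leq1$, also $x>\sqrt{1-y^{2}}$; and $y\geq\frac{\sqrt3}{2}$ forces $0\leq\sqrt{1-y^{2}}\leq\frac12$, so $x$ and $\sqrt{1-y^{2}}$ both lie in $[0,\frac12]$. By Lemma~\ref{mono}, $Y\mapsto\frac{\vartheta_{XY}(X;Y)}{\vartheta_{Y}(X;Y)}$ is nondecreasing on $[0,\frac12]$, and since $X\geq\frac13>\frac{59}{250}$, Lemma~\ref{monoo} gives that $Y\mapsto\frac{\vartheta_{XXY}(X;Y)}{\vartheta_{Y}(X;Y)}$ is nonincreasing on $[0,\frac12]$. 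Hence
\begin{equation}\aligned\nonumber
\frac{\vartheta_{XY}(X;x)}{\vartheta_{Y}(X;x)}\geq\frac{\vartheta_{XY}(X;\sqrt{1-y^{2}})}{\vartheta_{Y}(X;\sqrt{1-y^{2}})},\qquad
\frac{\vartheta_{XXY}(X;x)}{\vartheta_{Y}(X;x)}\geq\frac{\vartheta_{XXY}(X;\frac12)}{\vartheta_{Y}(X;\frac12)},
\endaligned\end{equation}
where, at $Y\in\{0,\frac12\}$, these ratios are read as the finite limits (all three theta derivatives carry a $\sin$--factor vanishing there). Since the coefficients $\frac{3}{\pi}\alpha y+2\alpha^{2}y^{2}$ and $\frac{1}{\pi}y^{2}$ in \eqref{phi1} are positive, inserting these two inequalities into the formula for $\Phi_{\alpha,A}(z)$ yields $\Phi_{\alpha,A}(z)\geq D_{3}(\alpha;y)$.

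\emph{Step 2: $D_{3}(\alpha;y)\geq\frac{109}{10}$ on $\mathcal{A}_{c}$.} I would reparametrize $\mathcal{A}_{c}$ as the rectangle $X\in[\frac13,\frac12]$, $t:=y^{2}\in[\frac34,1]$, in which $\alpha=y/X$ and
\begin{equation}\aligned\nonumber
D_{3}=\frac{\pi t^{3}}{X^{4}}+\frac{t^{2}}{X^{3}}+\frac{3t}{4\pi X^{2}}+\Bigl(\frac{3t}{\pi X}+\frac{2t^{2}}{X^{2}}\Bigr)q(X,t)+\pi^{2}t\,\frac{1+\hat{\omega}(X)}{1+\hat{\mu}(X)},
\endaligned\end{equation}
where $q(X,t):=\frac{\vartheta_{XY}(X;\sqrt{1-t})}{\vartheta_{Y}(X;\sqrt{1-t})}$ and I used the exact identity $\frac{\vartheta_{XXY}(X;\frac12)}{\vartheta_{Y}(X;\frac12)}=\pi^{2}\frac{1+\hat{\omega}(X)}{1+\hat{\mu}(X)}$ (L'Hospital, as in the proof of Lemma~\ref{Lemmavar2}). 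By Lemma~\ref{mono}, $q(X,t)$ is negative and nonincreasing in $t$; by Lemma~\ref{Lemmatime}(1) (valid as $X\geq\frac15$), $-\pi\frac{1+\nu(X)}{1+\mu(X)}\leq q(X,t)\leq-\pi\frac{1+\hat{\nu}(X)}{1+\hat{\mu}(X)}$. The target is to show that the right--hand side, on $[\frac13,\frac12]\times[\frac34,1]$, is decreasing in $X$ and increasing in $t$: decreasing in $X$ because $\partial_{X}(\pi t^{3}X^{-4})=-4\pi t^{3}X^{-5}$ outweighs the $X$--increments of $q$ and of $\pi^{2}t\frac{1+\hat{\omega}}{1+\hat{\mu}}$, which are controlled via Lemmas~\ref{Lemmatime}, \ref{Lemmavar2} and the fast decay of $\mu,\nu,\hat{\mu},\hat{\nu},\hat{\omega}$; increasing in $t$ because the growth of $\frac{\pi t^{3}}{X^{4}}+\frac{t^{2}}{X^{3}}$ dominates the bounded decrease of the last two terms (here $q$ and $\partial_{t}q$ stay bounded since $Y\mapsto\frac{\vartheta_{XY}}{\vartheta_{Y}}$ is even and smooth at $Y=0$). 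Granting these, the minimum of $D_{3}$ over $\mathcal{A}_{c}$ is at $(X,t)=(\frac12,\frac34)$, i.e.\ at $z=e^{i\pi/3}$ ($\alpha=\sqrt3$, $y=\frac{\sqrt3}{2}$), where a direct evaluation of the rapidly convergent series gives $D_{3}(\sqrt3;\frac{\sqrt3}{2})=10.90\ldots>\frac{109}{10}$; with Step 1 this proves the lemma.

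\emph{Main obstacle.} The delicate point is precisely this near--equality at $e^{i\pi/3}$: there $q=-\pi\frac{1+\hat{\nu}(1/2)}{1+\hat{\mu}(1/2)}$, which is \emph{strictly larger} than the generic lower bound $-\pi\frac{1+\nu(1/2)}{1+\mu(1/2)}$, and replacing $q$ by that generic bound would already drop $D_{3}$ below $\frac{109}{10}$ at the corner. Hence the $\sqrt{1-t}$--dependence of $q$ cannot be discarded near $(X,t)=(\frac12,\frac34)$; one must carry a $Y$--aware lower bound for $q$ through the estimate --- e.g.\ retaining the limiting value $-\pi\frac{1+\hat{\nu}(X)}{1+\hat{\mu}(X)}$ of $q$ as $\sqrt{1-t}\to\frac12$ on a small neighborhood of the corner and using the $Y$--monotonicity of $q$ to interpolate on a finite partition of $t\in[\frac34,1]$ --- while away from the corner the polynomial growth in $\alpha=y/X$ makes the crude bounds sufficient. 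I expect the partition near $e^{i\pi/3}$, together with a rigorous truncation estimate for the series in $D_{3}(\sqrt3;\frac{\sqrt3}{2})$, to be the two most technical points.
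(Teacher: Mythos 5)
Your Step 1 is exactly the paper's argument: the same monotonicity statements from Lemmas \ref{mono} and \ref{monoo} on $Y\in[0,\frac12]$, applied with $\sqrt{1-y^2}<x<\frac12$, give $\Phi_{\alpha,A}(z)\geq D_3(\alpha;y)$. In Step 2 you organize the two--variable minimization differently: the paper asserts (by ``direct checking,'' i.e.\ a figure) that $D_3$ is increasing in $\alpha$ on the bounding box $[\sqrt3,3]\times[\frac{\sqrt3}{2},1]$, reduces to the slice $\alpha=\sqrt3$, and then uses concavity of $\psi(y)=D_3(\sqrt3;y)$ to push the minimum to the endpoint $y=\frac{\sqrt3}{2}$; you instead work in $(X,t)=(\frac{y}{\alpha},y^2)$, reduce first to $X=\frac12$ (which is literally the same claim as the paper's $\alpha$--monotonicity) and then claim monotonicity in $t$ along $X=\frac12$, which keeps the reduction inside $\overline{\mathcal{A}_c}$ rather than passing through points with $\alpha<2y$. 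Both routes terminate at the hexagonal corner and both leave the key monotonicity/convexity claims at the level of numerical verification, so your rigor matches the paper's. Your diagnosis of the main obstacle is also exactly right and is the reason the paper keeps the $\sqrt{1-y^2}$ argument inside $D_3$: the corner value is $10.9088\ldots$ against the target $\frac{109}{10}=10.9$, so replacing $q$ by the uniform bound $-\pi\frac{1+\nu(\frac12)}{1+\mu(\frac12)}$ (as in Lemma \ref{l33+1}) would destroy the estimate.

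One slip worth fixing precisely because the margin is only about $0.009$: in your reparametrized formula the last term should be $\pi t\,\frac{1+\hat{\omega}(X)}{1+\hat{\mu}(X)}$, not $\pi^2 t\,\frac{1+\hat{\omega}(X)}{1+\hat{\mu}(X)}$, since $\frac{1}{\pi}y^2\cdot\pi^2\frac{1+\hat{\omega}}{1+\hat{\mu}}=\pi t\frac{1+\hat{\omega}}{1+\hat{\mu}}$. At the corner this is the difference between a contribution of about $1.05$ and about $3.3$; carrying the wrong factor through would give a corner value near $13.1$ and would make the inequality appear to hold with room to spare for the wrong reason. Your quoted final value $10.90\ldots$ is the correct one, so the error is confined to that display, but in an argument this tight every constant must be tracked exactly.
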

\begin{proof}
In view of  Lemmas \ref{mono} and \ref{monoo}, one has
\begin{equation}\label{explain} \aligned
\frac{\partial{}}{\partial{Y}}  \frac{\vartheta_{XY}(X;Y)}{\vartheta_{Y}(X;Y)} \geq 0, \:\frac{\partial{}}{\partial{Y}} \frac{\vartheta_{XXY}(X;Y)}{\vartheta_{Y}(X;Y)} \leq 0,\:\:for\:\:X\geq\frac{1}{3},\:0\leq Y\leq\frac{1}{2}.
\endaligned\end{equation}
Note that the region that point $z$  belongs to is
\begin{equation}\label{account} \aligned\nonumber
z\in\{(x,y)|\:0<x<\frac{1}{2},\:x^2+y^2>1,\:\frac{\sqrt{3}}{2}<y<1   \}.
\endaligned\end{equation}
 Combining \eqref{explain} with $\sqrt{1-{y}^2}<x<\frac{1}{2}$, one has
\begin{equation} \aligned\nonumber
\Phi_{\alpha,A}(z)=&{\pi}{\alpha}^4 y^2 +{\alpha}^3 y +\frac{3}{4\pi}{\alpha}^2
+(\frac{3}{\pi}{\alpha}y  +2{\alpha}^2 y^2 )\frac{\vartheta_{XY}(\frac{y}{\alpha};x)}{\vartheta_{Y}(\frac{y}{\alpha};x)}
+\frac{1}{\pi}y^2 \frac{\vartheta_{XXY}(\frac{y}{\alpha};x)}{\vartheta_{Y}(\frac{y}{\alpha};x)}\\
\geq& D_{3}(\alpha;y).
\endaligned\end{equation}
 Considering the condition that $(\alpha,y)\in \mathcal{A}_{c}$, one gets that $\alpha\in[\sqrt{3},3],y\in[\frac{\sqrt{3}}{2},1]$. And $D_{3}(\alpha;y)$ is increasing  with respect to $\alpha$  in this case actually (see Figure \ref{Phi}).
Then,
\begin{equation}\label{lan} \aligned
\Phi_{\alpha,A}(z)&\geq D_{3}(\alpha;y)\geq D_{3}(\sqrt{3};y).
\endaligned\end{equation}
Consequently, \eqref{lan} and Lemma \ref{Lemmafuture} lead to the result.
\end{proof}

\begin{figure}
\centering
 \includegraphics[scale=0.45]{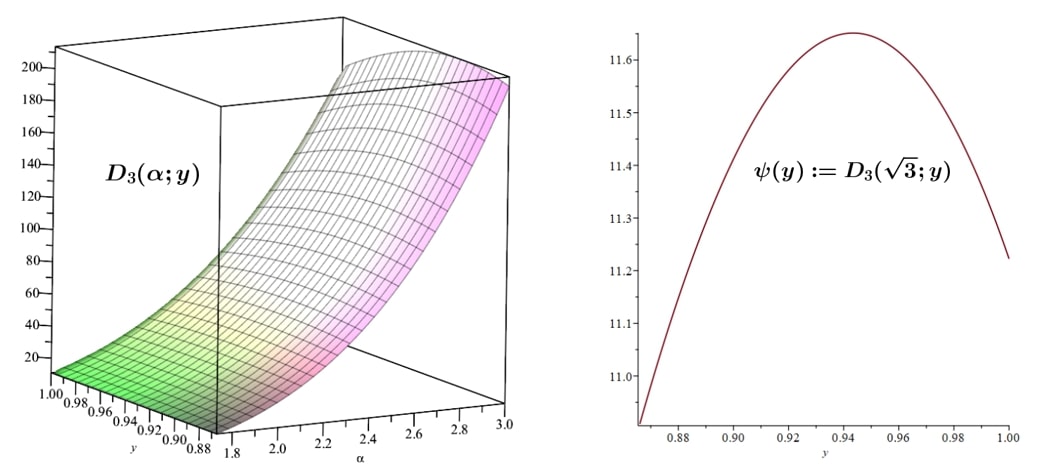}
 \caption{The images of $D_{3}(\alpha;y)$ and\:$\psi(y)$.}
 \label{Phi}
\end{figure}

\begin{lemma}\label{Lemmafuture}
Let $\psi(y):=D_{3}(\sqrt{3};y)$, where $D_{3}(\alpha;y)$ is defined in \eqref{lan1}. Then
 \begin{equation}\nonumber\aligned
\underset{y\in[\frac{\sqrt{3}}{2},1]}{\min}\psi(y)=\min\{ \psi(\frac{\sqrt{3}}{2}),\psi(1) \}=\psi(\frac{\sqrt{3}}{2})\geq\frac{109}{10}.
\endaligned\end{equation}
\end{lemma}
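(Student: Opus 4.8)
The plan is to work with $\psi(y)=D_{3}(\sqrt{3};y)$ in the explicit form \eqref{lan1}, observing that its first $\vartheta$--quotient is evaluated at $X=y/\sqrt{3}$ and $Y=\sqrt{1-y^{2}}$, and its second at $X=y/\sqrt{3}$ and the fixed value $Y=\tfrac12$. As $y$ ranges over $[\tfrac{\sqrt3}{2},1]$, the argument $X=y/\sqrt{3}$ increases from $\tfrac12$ to $\tfrac{1}{\sqrt3}$ (so $X\ge\tfrac12\ge\tfrac{59}{250}$ throughout and Lemmas \ref{Lemmatime} and \ref{Lemmavar2} are available), while $\sqrt{1-y^{2}}$ decreases from $\tfrac12$ down to $0$; in particular the left endpoint $y=\tfrac{\sqrt3}{2}$ is the hexagonal point $e^{i\pi/3}$ (with $X=\tfrac12$, $Y=\tfrac12$) and at the right endpoint $\sqrt{1-y^{2}}=0$. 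The proof then has two parts: (a) show that $\psi$ attains its minimum over $[\tfrac{\sqrt3}{2},1]$ at one of the endpoints, and (b) evaluate and compare the two endpoint values, using the limiting identities \eqref{nice} and \eqref{highh1}.

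For (b): at $y=\tfrac{\sqrt3}{2}$, \eqref{nice} and \eqref{highh1} give the \emph{exact} values $\frac{\vartheta_{XY}(\frac12;\frac12)}{\vartheta_{Y}(\frac12;\frac12)}=-\pi\frac{1+\hat\nu(\frac12)}{1+\hat\mu(\frac12)}$ and $\frac{\vartheta_{XXY}(\frac12;\frac12)}{\vartheta_{Y}(\frac12;\frac12)}=\pi^{2}\frac{1+\hat\omega(\frac12)}{1+\hat\mu(\frac12)}$, so that
\begin{equation}\aligned\nonumber
\psi\Bigl(\tfrac{\sqrt3}{2}\Bigr)=\frac{27\pi}{4}+\frac{9}{2}+\frac{9}{4\pi}-\Bigl(\frac{9}{2}+\frac{9\pi}{2}\Bigr)\frac{1+\hat\nu(\frac12)}{1+\hat\mu(\frac12)}+\frac{3\pi}{4}\cdot\frac{1+\hat\omega(\frac12)}{1+\hat\mu(\frac12)}.
\endaligned\end{equation}
Since $\hat\mu(\tfrac12),\hat\nu(\tfrac12),\hat\omega(\tfrac12)$ are alternating series dominated by their $n=2$ terms (of order $e^{-3\pi/2}\approx 9\times10^{-3}$), retaining two or three terms of each and bounding the tails gives $\psi(\tfrac{\sqrt3}{2})\ge\tfrac{109}{10}$ (the true value is $\approx 10.91$, so there is a small but genuine margin). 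At $y=1$, where $X=\tfrac1{\sqrt3}$ and $\sqrt{1-y^{2}}=0$, \eqref{nice} and \eqref{highh1} give instead $\frac{\vartheta_{XY}}{\vartheta_{Y}}=-\pi\frac{1+\nu(1/\sqrt3)}{1+\mu(1/\sqrt3)}$ and $\frac{\vartheta_{XXY}}{\vartheta_{Y}}=\pi^{2}\frac{1+\hat\omega(1/\sqrt3)}{1+\hat\mu(1/\sqrt3)}$, and the same computation yields $\psi(1)\approx 11.2>\psi(\tfrac{\sqrt3}{2})$. Hence $\min\{\psi(\tfrac{\sqrt3}{2}),\psi(1)\}=\psi(\tfrac{\sqrt3}{2})$.

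For (a): the second $\vartheta$--quotient in $\psi$ is at the fixed argument $Y=\tfrac12$, so by \eqref{highh1} it is just the explicit smooth function $\pi^{2}\frac{1+\hat\omega(y/\sqrt3)}{1+\hat\mu(y/\sqrt3)}$ of $y$; the first one, $g(y):=\frac{\vartheta_{XY}(y/\sqrt3;\sqrt{1-y^{2}})}{\vartheta_{Y}(y/\sqrt3;\sqrt{1-y^{2}})}$, is increasing in its second argument on $[0,\tfrac12]$ by Lemma \ref{mono}, hence (using \eqref{nice}) is squeezed between $-\pi\frac{1+\nu(X)}{1+\mu(X)}$ and $-\pi\frac{1+\hat\nu(X)}{1+\hat\mu(X)}$, with $X=y/\sqrt3$. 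I would then analyse $\psi'$ on $[\tfrac{\sqrt3}{2},1]$, bounding the derivatives of the $\vartheta$--quotients by Lemmas \ref{Lemmatime} and \ref{Lemmavar2}, to show that $\psi'\ge 0$ on an initial segment $[\tfrac{\sqrt3}{2},y_{\ast}]$ and that $\psi$ has no local minimum in the open interval (equivalently, $\psi'$ changes sign at most once, and only from $+$ to $-$, so any interior critical point is a maximum); since the polynomial part $9\pi y^{2}+3\sqrt3\,y$ dominates the curvature coming from the $\vartheta$--terms, a convenient route is to establish convexity of $\psi$ on a neighbourhood of $\tfrac{\sqrt3}{2}$ together with $\psi'(\tfrac{\sqrt3}{2})\ge 0$. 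It helps here that $\partial_{Y}(\vartheta_{XY}/\vartheta_{Y})$ vanishes at $Y=\tfrac12$ (the symmetry $\mathcal N(X;Y)=\mathcal N(X;1-Y)$ from the proof of Lemma \ref{mono}), so that $g$ varies only to second order in $y-\tfrac{\sqrt3}{2}$ along the curve $Y=\sqrt{1-y^{2}}$. If one such estimate does not cover the whole interval, one subdivides $[\tfrac{\sqrt3}{2},1]$ into finitely many pieces and on each freezes the slowly varying $X$--series at the near endpoint, bounding $\psi$ below by an elementary function whose minimum on that piece is at an endpoint. Combining (a) and (b) then gives $\min_{[\sqrt3/2,1]}\psi=\psi(\tfrac{\sqrt3}{2})\ge\tfrac{109}{10}$.

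The main obstacle is the part (a) estimate near $y=\tfrac{\sqrt3}{2}$: since $\tfrac{109}{10}$ agrees with $\psi(\tfrac{\sqrt3}{2})$ to within about $0.01$, the reduction to the endpoint tolerates essentially no slack in the bound on $g$ there — replacing $g(y)$ by its $Y\to0$ bound $-\pi\frac{1+\nu(X)}{1+\mu(X)}$ alone overshoots by roughly $4$ at $y=\tfrac{\sqrt3}{2}$ and breaks the inequality. The argument must therefore keep $g(y)$ pinned near its $Y=\tfrac12$ value $-\pi\frac{1+\hat\nu(X)}{1+\hat\mu(X)}$ on a neighbourhood of $\tfrac{\sqrt3}{2}$ (exploiting the vanishing of $\partial_Y(\vartheta_{XY}/\vartheta_Y)$ there) and only fall back on cruder monotone bounds on the part of the interval where $\psi$ already sits comfortably above $\tfrac{109}{10}$.
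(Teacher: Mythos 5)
Your proposal follows essentially the same route as the paper: reduce the minimization on $[\tfrac{\sqrt{3}}{2},1]$ to the endpoints by a shape argument for $\psi$, then evaluate the endpoints exactly via the limit identities \eqref{nice} and \eqref{highh1}; your endpoint formula and numerics agree with the paper's ($\psi(\tfrac{\sqrt{3}}{2})=10.908\ldots$, $\psi(1)\approx 11.2$). The only real difference lies in how the endpoint reduction is justified: the paper simply asserts that $\psi$ is concave on the interval (explicitly omitting the proof of $\psi''\le 0$ and pointing to Figure \ref{Phi}), so your unfinished part (a) is no less complete than what the paper provides — though note that your suggested fallback of local convexity near $y=\tfrac{\sqrt{3}}{2}$ would contradict the concavity the paper claims, so of your proposed sub-routes it is the quasi-concavity/single-sign-change argument, not that one, that matches the actual behaviour of $\psi$.
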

$ \psi(y)$ is a concave function as $y$ is in the interval $[\frac{\sqrt{3}}{2},1]$ (see Figure \ref{Phi}). Since $\psi(y)$ is a function of one variable on a short interval, the detail for proving that\: $\psi''(y) \leq 0 $ is omitted here. Next we shall compute $\psi(\frac{\sqrt{3}}{2})$.
By \eqref{nice}, \eqref{highh1} and \eqref{lan1}, one has
\begin{equation}\aligned\nonumber
\psi(\frac{\sqrt{3}}{2})=\frac{27}{4}\pi +\frac{9}{2} +\frac{9}{4\pi}
+\frac{9+9\pi}{2\pi}\cdot\frac{\vartheta_{XY}(\frac{1}{2};\frac{1}{2})}{\vartheta_{Y}(\frac{1}{2};\frac{1}{2})}
+\frac{3}{4\pi}\frac{\vartheta_{XXY}(\frac{1}{2};\frac{1}{2})}{\vartheta_{Y}(\frac{1}{2};\frac{1}{2})}=10.90887470\cdots.
\endaligned\end{equation}

\subsection{Region $\mathcal{A}_{d}$: estimate of $\Phi_{\alpha,A}(z)$}\:In this Subsection,\:we shall prove that
\begin{lemma}\label{133+4} Assume that $(\alpha,y)\in \mathcal{A}_{d}=\left\{(\alpha,y) |\:\:y\geq \frac{\sqrt{3}}{2},\alpha\geq 3y\right\}$, then for $z\in \mathcal{D}_{\mathcal{G}}$, it holds that
 \begin{equation}\nonumber\aligned
\Phi_{\alpha,A}(z)\geq D_{4}(\alpha;y)\geq 38,
\endaligned\end{equation}
where
\begin{equation}\aligned\nonumber
D_{4}(\alpha;y):={\pi}{\alpha}^4 y^2 +{\alpha}^3 y +\frac{3}{4\pi}{\alpha}^2
-(\frac{3}{\pi}\alpha{y}+2{\alpha}^2y^2)\cdot\frac{\frac{3y^2}{4{\alpha}^2}+2{\pi}^2e^{-\frac{\pi\alpha}{y}}}{\frac{y^3}{2{\alpha}^3}-2\pi\frac{y^2}{{\alpha}^2}e^{-\frac{\pi\alpha}{y}}}
+\frac{15}{4\pi}{\alpha}^2(1-\frac{\pi\alpha}{3y}-\frac{{\pi}^2{\alpha}^2}{60 {y}^2}).\\
\endaligned\end{equation}
\end{lemma}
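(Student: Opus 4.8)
The plan is to prove the two assertions in turn: first the pointwise bound $\Phi_{\alpha,A}(z)\geq D_4(\alpha;y)$ for every $z\in\mathcal{D}_{\mathcal{G}}$ with $(\alpha,y)\in\mathcal{A}_d$, and then the numerical inequality $D_4(\alpha;y)\geq 38$ on $\mathcal{A}_d$.

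For the first step, observe that on $\mathcal{A}_d$ we have $X:=\frac{y}{\alpha}\leq\frac13<\frac12$, so the small-$X$ estimates are available. In the expression \eqref{phi1} for $\Phi_{\alpha,A}(z)$ the coefficients $\frac{3}{\pi}\alpha y+2\alpha^2 y^2$ and $\frac{1}{\pi}y^2$ are strictly positive, hence it is enough to substitute the \emph{lower} bounds from Lemma \ref{Lemmatime}(2) and Lemma \ref{Lemmavar2}(2) for $\frac{\vartheta_{XY}(X;x)}{\vartheta_{Y}(X;x)}$ and $\frac{\vartheta_{XXY}(X;x)}{\vartheta_{Y}(X;x)}$ at $X=\frac{y}{\alpha}$, uniformly in $x$. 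Before doing so one should record that the denominator $\frac{y^3}{2\alpha^3}-2\pi\frac{y^2}{\alpha^2}e^{-\pi\alpha/y}$ appearing in $D_4$ is positive: this is equivalent to $e^{\pi\alpha/y}>4\pi\frac{\alpha}{y}$, which holds since $\frac{\alpha}{y}\geq 3$ and $e^{3\pi}>12\pi$. Inserting the two lower bounds and collecting terms then reproduces exactly $\Phi_{\alpha,A}(z)\geq D_4(\alpha;y)$, with no residual dependence on $x$; note in particular that the negative contribution $\frac{15}{4\pi}\alpha^2\bigl(1-\frac{\pi\alpha}{3y}-\frac{\pi^2\alpha^2}{60y^2}\bigr)$ comes from the $\vartheta_{XXY}$ term (its bracket is negative since $\frac{\pi\alpha}{3y}\geq\pi>1$).

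For the second step, I would exploit that the exponential quantities on $\mathcal{A}_d$ are tiny: since $\frac{\alpha}{y}\geq 3$ we have $0<e^{-\pi\alpha/y}\leq e^{-3\pi}<10^{-4}$, so replacing $e^{-\pi\alpha/y}$ by $0$ or $e^{-3\pi}$ in the direction that decreases $D_4$ reduces matters to a rational lower bound $\widetilde D_4(\alpha;y)$. The leading term $\pi\alpha^4 y^2$, together with $y^2\geq\frac34$, controls the only competing $\alpha^4$-contribution $-\frac{15}{4\pi}\alpha^2\cdot\frac{\pi^2\alpha^2}{60y^2}=-\frac{\alpha^4}{16y^2}$, and a short computation of $\partial_\alpha\widetilde D_4$ shows it is positive on $\{\alpha\geq 3y,\ y\geq\frac{\sqrt3}{2}\}$: the cubic-in-$\alpha$ part has positive leading coefficient $4\pi y^2-\frac{1}{4y^2}>0$ and dominates the lower-order terms once $\alpha\geq 3y\geq\frac{3\sqrt3}{2}$. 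Hence the minimum of $\widetilde D_4$ on $\mathcal{A}_d$ is attained on the edge $\alpha=3y$, where $\widetilde D_4(3y;y)$ is a single-variable polynomial of the form $81\pi y^6-(\text{lower order in }y)$; one checks it is increasing on $[\frac{\sqrt3}{2},\infty)$ and evaluates it at $y=\frac{\sqrt3}{2}$, where it already exceeds $38$.

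The main obstacle will be the bookkeeping in the second step rather than any conceptual difficulty: the bound $38$ is essentially sharp at the corner $(\alpha,y)=(\tfrac{3\sqrt3}{2},\tfrac{\sqrt3}{2})$, so the negative contributions — especially $\frac{15}{4\pi}\alpha^2\bigl(1-\frac{\pi\alpha}{3y}-\frac{\pi^2\alpha^2}{60y^2}\bigr)$, which is negative and of size comparable to $\alpha^4$ there — must be carried with honest constants, not discarded. Reducing first in $\alpha$ (to the line $\alpha=3y$) and then in $y$ (on that line) is the clean way to bypass a genuine two-dimensional search, after which the remaining one-variable inequality $\widetilde D_4(3y;y)\geq 38$ on $[\frac{\sqrt3}{2},\infty)$ is elementary.
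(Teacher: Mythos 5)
Your proposal is correct and follows essentially the same route as the paper: the first step (substituting the lower bounds of Lemma \ref{Lemmatime}(2) and Lemma \ref{Lemmavar2}(2) at $X=\frac{y}{\alpha}$ into \eqref{phi1}, using that the coefficients are positive) is exactly the paper's derivation of $\Phi_{\alpha,A}(z)\geq D_{4}(\alpha;y)$, and your second step merely fleshes out the paper's one-line assertion that ``a direct checking shows $D_{4}(\alpha;y)\geq D_{4}(\frac{3\sqrt{3}}{2};\frac{\sqrt{3}}{2})\geq 38$'' via monotonicity toward the corner (your reduction first to the edge $\alpha=3y$ and then along it is a perfectly valid way to organize that checking). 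The only blemish is a dropped factor of $\pi$: $-\frac{15}{4\pi}\alpha^2\cdot\frac{\pi^2\alpha^2}{60y^2}=-\frac{\pi\alpha^4}{16y^2}$, so the leading cubic coefficient in $\partial_\alpha$ is $4\pi y^2-\frac{\pi}{4y^2}$ rather than $4\pi y^2-\frac{1}{4y^2}$, which changes nothing in the conclusion.
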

\begin{proof}
Since $\frac{y}{\alpha}\in (0,\frac{1}{3}]$, by Lemmas \ref{Lemmatime} and \ref{Lemmavar2}, one has
\begin{equation}\aligned\nonumber
\Phi_{\alpha,A}(z)=&{\pi}{\alpha}^4 y^2 +{\alpha}^3 y +\frac{3}{4\pi}{\alpha}^2
+(\frac{3}{\pi}{\alpha}y  +2{\alpha}^2 y^2 )\frac{\vartheta_{XY}(\frac{y}{\alpha};x)}{\vartheta_{Y}(\frac{y}{\alpha};x)}
+ \frac{1}{\pi}y^2 \frac{\vartheta_{XXY}(\frac{y}{\alpha};x)}{\vartheta_{Y}(\frac{y}{\alpha};x)}\\
\geq &D_{4}(\alpha;y).
\endaligned\end{equation}
Notice that $(\alpha,y)\in \mathcal{A}_{d}$, then $\alpha\geq \frac{3\sqrt{3}}{2},y\geq \frac{\sqrt{3}}{2}$. A direct checking shows that
\begin{equation}\aligned\nonumber
 D_{4}(\alpha;y)\geq D_{4}(\frac{3\sqrt{3}}{2};\frac{\sqrt{3}}{2})\geq 38.
\endaligned\end{equation}
\end{proof}

Through summarizing the estimates of $\Phi_{\alpha,A}(z)$ in the domains $\mathcal{A}_{a},\mathcal{A}_{b},\mathcal{A}_{c},\mathcal{A}_{d}$, we obtain the proof of Lemma \ref{l33}. To prove Lemma \ref{gorgeous1}, in the following two Lemmas, we shall estimate $\left|\frac{\Phi_{\alpha,B}(z)}{\Phi_{\alpha,A}(z)}\right|$ to indicate that $\Phi_{\alpha,B}(z)$ is small compared to $\Phi_{\alpha,A}(z)$.

\subsection{Region $\mathcal{A}_{a}$: estimate of $\left|\frac{\Phi_{\alpha,B}(z)}{\Phi_{\alpha,A}(z)}\right|$} In this Subsection, we shall prove that
\begin{lemma}\label{splendid} Assume that $(\alpha,y)\in \mathcal{A}_{a}=\left\{(\alpha,y) |\:\:y\geq \frac{\sqrt{3}}{2},\alpha\geq\frac{3}{2},y\geq \frac{1}{2}\alpha\right\}$, then for $z\in \mathcal{D}_{\mathcal{G}}$, it holds that
\begin{equation}\nonumber\aligned
\left|\frac{\Phi_{\alpha,B}(z)}{\Phi_{\alpha,A}(z)}\right|\leq\frac{1}{77}.
\endaligned\end{equation}
where $\Phi_{\alpha,A}(z)$ and $\Phi_{\alpha,B}(z)$ are defined in \eqref{phi1}-\eqref{phi3}.
\end{lemma}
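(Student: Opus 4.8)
The plan is to exploit that throughout $\mathcal{A}_a$ one has $X:=\frac{y}{\alpha}\ge\frac12>\frac15$, so that every quotient of Jacobi theta functions occurring in $\Phi_{\alpha,B}(z)$ is controlled by the ``$X\ge\frac15$'' estimates of Lemmas~\ref{Lemmamoist}, \ref{Lemmamoistt} and \ref{Lemmavar1}, while $\Phi_{\alpha,A}(z)$ is bounded below by Lemma~\ref{l33+1}. First I would apply, for each $n\ge2$, item (1) of Lemma~\ref{Lemmamoist}, item (1) of Lemma~\ref{Lemmamoistt} and item (1) of Lemma~\ref{Lemmavar1} with $X=\frac{y}{\alpha}$, $Y=x$ and $k=n$, to obtain
\[
\Big|\tfrac{\vartheta_{Y}(\frac{y}{\alpha};nx)}{\vartheta_{Y}(\frac{y}{\alpha};x)}\Big|\le n\tfrac{1+\mu(\frac{y}{\alpha})}{1-\mu(\frac{y}{\alpha})},\qquad \Big|\tfrac{\vartheta_{XY}(\frac{y}{\alpha};nx)}{\vartheta_{Y}(\frac{y}{\alpha};x)}\Big|\le n\pi\tfrac{1+\nu(\frac{y}{\alpha})}{1-\mu(\frac{y}{\alpha})},\qquad \Big|\tfrac{\vartheta_{XXY}(\frac{y}{\alpha};nx)}{\vartheta_{Y}(\frac{y}{\alpha};x)}\Big|\le n\pi^2\tfrac{1+\omega(\frac{y}{\alpha})}{1-\mu(\frac{y}{\alpha})}.
\]
Since $\mu,\nu,\omega$ are decreasing and $\frac{y}{\alpha}\ge\frac12$, I would then replace the arguments by $\frac12$, introducing the absolute constants $M=\tfrac{1+\mu(1/2)}{1-\mu(1/2)}$, $N=\tfrac{1+\nu(1/2)}{1-\mu(1/2)}$, $W=\tfrac{1+\omega(1/2)}{1-\mu(1/2)}$.

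Substituting these bounds into the definitions \eqref{phi2} of $\Phi_{\alpha,B}^{1},\Phi_{\alpha,B}^{2},\Phi_{\alpha,B}^{3}$ and applying the triangle inequality gives
\[
|\Phi_{\alpha,B}(z)|\ \le\ \sum_{n\ge2}Q_n(\alpha,y)\,e^{-\alpha\pi y(n^2-1)},
\]
where $Q_n(\alpha,y)=M\big(\pi\alpha^4y^2n^6+\alpha^3yn^4+\tfrac{3}{4\pi}\alpha^2n^2\big)+N\big(3\alpha y n^2+2\pi\alpha^2y^2n^4\big)+W\pi y^2n^2$ is a polynomial with nonnegative coefficients. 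Since $(\alpha,y)\in\mathcal{A}_a$ forces $\alpha\pi y\ge\frac{3\sqrt3}{4}\pi$, the part $n\ge3$ of the sum is dominated by a rapidly converging series and is a bounded, negligible correction to the $n=2$ term; this yields a bound $|\Phi_{\alpha,B}(z)|\le P(\alpha,y)\,e^{-3\pi\alpha y}$ with $P$ an explicit polynomial with nonnegative coefficients, of degree $4$ in $\alpha$ and $2$ in $y$ (namely $Q_2$, with the tail absorbed into its constant term).

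For the denominator I would invoke Lemma~\ref{l33+1}: on $\mathcal{A}_a$ one has $\Phi_{\alpha,A}(z)\ge D_1(\alpha;y)$, and $D_1$ is increasing in $\alpha$ and in $y$. It then remains to verify the elementary two-variable inequality
\[
\frac{P(\alpha,y)\,e^{-3\pi\alpha y}}{D_1(\alpha;y)}\ \le\ \frac1{77}\qquad\text{for }\ \alpha\ge\tfrac32,\ \ y\ge\tfrac{\sqrt3}{2},\ \ y\ge\tfrac{\alpha}{2}.
\]
Because $P$ has nonnegative coefficients one has $\frac{\partial}{\partial\alpha}\log P\le\frac4\alpha<3\pi y$ and $\frac{\partial}{\partial y}\log P\le\frac2y<3\pi\alpha$ on the region, so $P(\alpha,y)e^{-3\pi\alpha y}$ is strictly decreasing in each of $\alpha$ and $y$; combined with the monotonicity of $D_1$, the left-hand side attains its maximum at the corner $(\alpha,y)=(\tfrac32,\tfrac{\sqrt3}{2})$. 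There $e^{-3\pi\alpha y}=e^{-\frac{9\sqrt3}{4}\pi}$ and $D_1(\tfrac32;\tfrac{\sqrt3}{2})\ge\tfrac{21}{50}$, so the whole claim collapses to one numerical inequality once the decimal values of $\mu(1/2),\nu(1/2),\omega(1/2)$ are inserted into $P(\tfrac32,\tfrac{\sqrt3}{2})$.

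I expect the only delicate point to be this last step: in $\mathcal{A}_a$ the constant $\frac1{77}$ is close to being saturated, so the geometric tail estimate for $\sum_{n\ge3}$ and the lower bound for $D_1$ must be kept reasonably sharp. If $\tfrac{21}{50}$ turns out to be too lossy, one simply keeps $D_1(\alpha;y)$ itself, whose leading behaviour $\sim\pi\alpha^4y^2$ absorbs $P(\alpha,y)e^{-3\pi\alpha y}$ with room to spare, or refines the corner evaluation by retaining the exact $n=2$ contribution rather than the crude bound $M,N,W$.
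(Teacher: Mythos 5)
Your proposal is correct and follows essentially the same route as the paper: both bound the three theta-quotients in $\Phi_{\alpha,B}^{1},\Phi_{\alpha,B}^{2},\Phi_{\alpha,B}^{3}$ via item (1) of Lemmas \ref{Lemmamoist}, \ref{Lemmamoistt}, \ref{Lemmavar1} with $X=\tfrac{y}{\alpha}\ge\tfrac12$, freeze $\mu,\nu,\omega$ at $\tfrac12$ by monotonicity, use the lower bound $\Phi_{\alpha,A}\ge\tfrac{21}{50}$ from Lemma \ref{l33+1}, and reduce everything by monotonicity in $(\alpha,y)$ to a numerical evaluation at the corner $(\tfrac32,\tfrac{\sqrt3}{2})$. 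The paper merely keeps the three majorant series $E_1,E_2,E_3$ separate (obtaining $\tfrac{1}{235}+\tfrac{1}{920}+10^{-4}$) rather than merging them into a single polynomial times $e^{-3\pi\alpha y}$, and your observation that $\tfrac1{77}$ is nearly saturated at the corner is accurate.
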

\begin{proof}
By Lemma \ref{l33+1}, one has $\Phi_{\alpha,A}(z)\geq \frac{21}{50}$. And by \eqref{phi3}, one has $\left|{\Phi_{\alpha,B}(z)}\right|\leq \underset{i=1}{\overset{3}{\sum}}\left|\Phi_{\alpha,B}^{i}(z)\right|$. Then, it suffices to prove that
\begin{equation}\aligned\nonumber
\frac{50}{21}\:\underset{i=1}{\overset{3}{\sum}}\left|\Phi_{\alpha,B}^{i}(z)\right|\leq\frac{1}{77}.
\endaligned\end{equation}
Before proceeding, we need to explain some inequalities to be utilized. Notice that $\mu(x),\nu(x),\omega(x)$ defined in \eqref{duct} are decreasing as $x\geq \frac{1}{2}$. Then one has
\begin{equation}\aligned\label {front11}
\frac{1+\mu(x)}{1-\mu(x)}\leq \frac{1+\mu(\frac{1}{2})}{1-\mu(\frac{1}{2})},\:\:\frac{1+\nu(x)}{1-\mu(x)}\leq \frac{1+\nu(\frac{1}{2})}{1-\mu(\frac{1}{2})},\:\:\frac{1+\omega(x)}{1-\mu(x)}\leq \frac{1+\omega(\frac{1}{2})}{1-\mu(\frac{1}{2})}.
\endaligned\end{equation}
To estimate $\Phi_{\alpha,B}^{1}(z),\Phi_{\alpha,B}^{2}(z)$ and $\Phi_{\alpha,B}^{3}(z)$, we denote that
\begin{equation}\aligned\nonumber
E_{1}(\alpha;y):=&\frac{1+\mu(\frac{1}{2})}{1-\mu(\frac{1}{2})}\cdot\sum_{n=2}^\infty ({\pi}{\alpha}^4 y^2 n^6+{\alpha}^3 y n^4+\frac{3}{4\pi}{\alpha}^2 n^2)e^{-\alpha\pi y(n^2-1)}.\\
E_{2}(\alpha;y):=&\frac{1+\nu(\frac{1}{2})}{1-\mu(\frac{1}{2})}\cdot \sum_{n=2}^\infty (3{\alpha}y  n^2 +2\pi{\alpha}^2 y^2 n^4)e^{-\alpha\pi y(n^2-1)}.\\
E_{3}(\alpha;y):=&\frac{1+\omega(\frac{1}{2})}{1-\mu(\frac{1}{2})}\cdot\sum_{n=2}^\infty \pi{y}^2  n^2 e^{-\alpha\pi y(n^2-1)}.
\endaligned\end{equation}
Firstly, about $\Phi_{\alpha,B}^{1}(z)$, by Lemma \ref{Lemmamoist}, inequality \eqref{front11} and the condition $\alpha\geq \frac{3}{2},y\geq \frac{\sqrt{3}}{2}$, one has
\begin{equation}\aligned\label {middle1}
\left|\Phi_{\alpha,B}^{1}(z)\right|=&\sum_{n=2}^\infty ({\pi}{\alpha}^4 y^2 n^5+{\alpha}^3 y n^3+\frac{3}{4\pi}{\alpha}^2 n)e^{-\alpha\pi y(n^2-1)}\cdot\left|\frac{\vartheta_{Y}(\frac{y}{\alpha};nx)}{\vartheta_{Y}(\frac{y}{\alpha};x)}\right|\\
\leq& \frac{1+\mu(\frac{y}{\alpha})}{1-\mu(\frac{y}{\alpha})} \cdot\sum_{n=2}^\infty ({\pi}{\alpha}^4 y^2 n^6+{\alpha}^3 y n^4+\frac{3}{4\pi}{\alpha}^2 n^2)e^{-\alpha\pi y(n^2-1)}\\
\leq& E_{1}(\alpha;y)\leq E_{1}(\frac{3}{2};\frac{\sqrt{3}}{2})\leq \frac{1}{235}.
\endaligned\end{equation}
Secondly, about $\Phi_{\alpha,B}^{2}(z)$, combining Lemma \ref{Lemmamoistt}, inequality \eqref{front11} and $\alpha\geq \frac{3}{2},y\geq \frac{\sqrt{3}}{2}$, one has
\begin{equation}\aligned\label {middle4}
\left|\Phi_{\alpha,B}^{2}(z)\right|=&\sum_{n=2}^\infty (\frac{3}{\pi}{\alpha}y  n+2{\alpha}^2 y^2 n^3)e^{-\alpha\pi y(n^2-1)}\cdot\left|\frac{\vartheta_{XY}(\frac{y}{\alpha};nx)}{\vartheta_{Y}(\frac{y}{\alpha};x)}\right|\\
\leq& \frac{1+\nu(\frac{y}{\alpha})}{1-\mu(\frac{y}{\alpha})} \cdot \sum_{n=2}^\infty (3{\alpha}y  n^2 +2\pi{\alpha}^2 y^2 n^4)e^{-\alpha\pi y(n^2-1)}\\
\leq& E_{2}(\alpha;y)\leq E_{2}(\frac{3}{2};\frac{\sqrt{3}}{2})\leq \frac{1}{920}.
\endaligned\end{equation}
Thirdly, about $\Phi_{\alpha,B}^{3}(z)$, combining Lemma \ref{Lemmavar1}, inequality \eqref{front11} and $\alpha\geq \frac{3}{2},y\geq \frac{\sqrt{3}}{2}$, one has
\begin{equation}\aligned\label {middle5}
\left|\Phi_{\alpha,B}^{3}(z)\right|=&\sum_{n=2}^\infty \frac{1}{\pi}y^2 ne^{-\alpha\pi y(n^2-1)}\cdot\left|\frac{\vartheta_{XXY}(\frac{y}{\alpha};nx)}{\vartheta_{Y}(\frac{y}{\alpha};x)}\right|\\
\leq& \frac{1+\omega(\frac{y}{\alpha})}{1-\mu(\frac{y}{\alpha})} \cdot \sum_{n=2}^\infty \pi{y}^2  n^2 e^{-\alpha\pi y(n^2-1)}\\
\leq& E_{3}(\alpha;y)\leq E_{3}(\frac{3}{2};\frac{\sqrt{3}}{2})\leq  10^{-4}.
\endaligned\end{equation}
Consequently, \eqref{middle1}-\eqref{middle5} yield the result.
\end{proof}

\subsection{Region $\mathcal{A}_{b}\cup\mathcal{A}_{c}\cup\mathcal{A}_{d}$: estimate of $\left|\frac{\Phi_{\alpha,B}(z)}{\Phi_{\alpha,A}(z)}\right|$} In this Subsection, we shall prove that
\begin{lemma}\label{good} Assume that $(\alpha,y)\in \mathcal{A}_{b}\cup\mathcal{A}_{c}\cup\mathcal{A}_{d}=\left\{(\alpha,y) |\:\:y\geq \frac{\sqrt{3}}{2},\alpha\geq 2 y\right\}$, then for $z\in \mathcal{D}_{\mathcal{G}}$, it holds that
\begin{equation}\nonumber\aligned
\left|\frac{\Phi_{\alpha,B}(z)}{\Phi_{\alpha,A}(z)}\right|\leq 10^{-3}.
\endaligned\end{equation}
where $\Phi_{\alpha,A}(z)$ and $\Phi_{\alpha,B}(z)$ are defined in $\eqref{phi1}$-$\eqref{phi2}$.
\end{lemma}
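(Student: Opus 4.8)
The plan is to mirror the strategy of Lemma \ref{splendid}, but now on the regime $\frac{y}{\alpha}\le\frac12$ (equivalently $\alpha\ge 2y$), where the small-$X$ estimates of the theta quotients apply. First I would invoke Lemma \ref{l33+2}, Lemma \ref{l33+3} and Lemma \ref{133+4} to record the lower bounds $\Phi_{\alpha,A}(z)\ge\frac{39}{5}$ on $\mathcal{A}_b$, $\Phi_{\alpha,A}(z)\ge\frac{109}{10}$ on $\mathcal{A}_c$, and $\Phi_{\alpha,A}(z)\ge 38$ on $\mathcal{A}_d$; in particular $\Phi_{\alpha,A}(z)\ge\frac{39}{5}$ throughout $\mathcal{A}_b\cup\mathcal{A}_c\cup\mathcal{A}_d$. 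It therefore suffices to show $\bigl|\Phi_{\alpha,B}(z)\bigr|\le\frac{39}{5}\cdot 10^{-3}$, and for this I would bound each of the three pieces $\Phi_{\alpha,B}^{1}(z),\Phi_{\alpha,B}^{2}(z),\Phi_{\alpha,B}^{3}(z)$ defined in \eqref{phi2} separately, using the triangle inequality $|\Phi_{\alpha,B}(z)|\le\sum_{i=1}^{3}|\Phi_{\alpha,B}^{i}(z)|$.

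For the individual theta-quotient factors, since $X=\frac{y}{\alpha}\le\frac12$ I would apply the small-$X$ bounds: Lemma \ref{Lemmamoist}(2) gives $\bigl|\vartheta_Y(\tfrac{y}{\alpha};nx)/\vartheta_Y(\tfrac{y}{\alpha};x)\bigr|\le \frac{n}{\pi}e^{\pi\alpha/(4y)}$, Lemma \ref{Lemmamoistt}(3) controls $\bigl|\vartheta_{XY}(\tfrac{y}{\alpha};nx)/\vartheta_Y(\tfrac{y}{\alpha};x)\bigr|$ by $\frac{3n}{2\pi}\frac{\alpha}{y}(1+\frac{\pi\alpha}{6y})e^{\pi\alpha/(4y)}$, and Lemma \ref{Lemmavar1}(2) controls $\bigl|\vartheta_{XXY}(\tfrac{y}{\alpha};nx)/\vartheta_Y(\tfrac{y}{\alpha};x)\bigr|$ by $\frac{15n}{4\pi}\frac{\alpha^2}{y^2}(1+\frac{\pi\alpha}{3y}+\frac{\pi^2\alpha^2}{60y^2})e^{\pi\alpha/(4y)}$. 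Substituting these into \eqref{phi2}, each $|\Phi_{\alpha,B}^{i}(z)|$ is dominated by a series $\sum_{n\ge2} P_i(n;\alpha,y)\,e^{-\alpha\pi y(n^2-1)}e^{\pi\alpha/(4y)}$ with $P_i$ polynomial in $n$ and in $\alpha,y,\alpha/y$. I would then crudely bound the Gaussian-weighted tail: for $n\ge2$ one has $n^2-1\ge\frac34 n^2$, so the decay factor is at most $e^{-\frac34\pi\alpha y n^2}$, and the "bad" exponential $e^{\pi\alpha/(4y)}$ is absorbed because on $\mathcal{A}_b\cup\mathcal{A}_c\cup\mathcal{A}_d$ we have $\alpha y\ge\frac34$ while $\alpha/y\le$ (bounded by $3$ on $\mathcal{A}_b\cup\mathcal{A}_c$ and dominated by the Gaussian on $\mathcal{A}_d$, using $e^{\pi\alpha/(4y)}\le e^{\frac14\pi\alpha y\cdot(\alpha/(\alpha y)\cdot\text{stuff})}$-type comparisons); concretely $\frac{\pi\alpha}{4y}\le \frac{\pi\alpha y}{4}\cdot\frac{1}{y^2}\le\frac{\pi\alpha y}{3}$ on the relevant region, so $e^{\pi\alpha/(4y)}e^{-\frac34\pi\alpha y n^2}\le e^{-(\frac34 n^2-\frac13)\pi\alpha y}$, which is summable with a tiny constant. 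I would then bound the resulting monotone function of $(\alpha,y)$ by its value at the corner $(\alpha,y)=(2,\tfrac{\sqrt3}{2})$ (or whichever corner of $\{y\ge\frac{\sqrt3}{2},\alpha\ge 2y\}$ is extremal for the upper bound), checking that each of the three contributions is $\le 10^{-3}\cdot\frac{39}{5}/3$.

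The main obstacle is the factor $e^{\pi\alpha/(4y)}$ coming from the small-$X$ Poisson-dual estimates: it grows with $\alpha/y$, and one must verify that it is always beaten by the Gaussian weight $e^{-\alpha\pi y(n^2-1)}$ even at $n=2$ and even when $\alpha/y$ is as large as allowed in $\mathcal{A}_d$. The key quantitative point is that the product $\alpha y$ stays bounded below (it is $\ge\frac34$) while on $\mathcal{A}_b\cup\mathcal{A}_c$ the ratio $\alpha/y\le3$ is bounded, and on $\mathcal{A}_d$, although $\alpha/y$ is unbounded, one has $\alpha\ge3y\ge\frac{3\sqrt3}{2}$ and the comparison $\frac{\pi\alpha}{4y}\le\frac34\pi\alpha y\cdot\frac{1}{3y^2}$ combined with $y\ge\frac{\sqrt3}{2}$ (so $3y^2\ge\frac94>1$) forces the exponent $\frac34\pi\alpha y n^2-\frac{\pi\alpha}{4y}$ to be large and positive for all $n\ge2$. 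Once this monotonicity-plus-corner-evaluation bookkeeping is done for each of $\Phi_{\alpha,B}^{1},\Phi_{\alpha,B}^{2},\Phi_{\alpha,B}^{3}$, summing the three bounds and dividing by the lower bound $\frac{39}{5}$ for $\Phi_{\alpha,A}(z)$ yields $\bigl|\Phi_{\alpha,B}(z)/\Phi_{\alpha,A}(z)\bigr|\le 10^{-3}$, which completes the proof.
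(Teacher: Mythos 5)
Your proposal follows essentially the same route as the paper's proof: the same lower bound $\Phi_{\alpha,A}(z)\ge\frac{39}{5}$ from Lemmas \ref{l33+2}--\ref{133+4}, the same triangle-inequality split of $\Phi_{\alpha,B}$ into its three pieces, the same small-$X$ quotient bounds (Lemma \ref{Lemmamoist}(2), Lemma \ref{Lemmamoistt}(3), Lemma \ref{Lemmavar1}(2)), absorption of the factor $e^{\pi\alpha/(4y)}$ into the Gaussian weight $e^{-\alpha\pi y(n^2-1)}$, and evaluation of the resulting monotone majorants at the corner of the region (the paper uses $(\sqrt{3},\tfrac{\sqrt{3}}{2})$, consistent with $\alpha\ge 2y\ge\sqrt{3}$). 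The argument is correct and the quantitative bookkeeping matches the paper's.
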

\begin{proof}
Combining Lemmas \ref{l33+2}-\ref{133+4}, under the hypothetical conditions one concludes that:
\begin{equation}\aligned\nonumber
\Phi_{\alpha,A}(z)\geq \frac{39}{5}.
\endaligned\end{equation}
By \eqref{phi3}, one has $\left|{\Phi_{\alpha,B}(z)}\right|\leq \underset{i=1}{\overset{3}{\sum}}\left|\Phi_{\alpha,B}^{i}(z)\right|.$ Therefore it suffices to prove that
\begin{equation}\aligned\nonumber
\frac{5}{39}\underset{i=1}{\overset{3}{\sum}}\left|\Phi_{\alpha,B}^{i}(z)\right|\leq 10^{-3}.
\endaligned\end{equation}
Before proceeding, it is necessary to notice the range of $(\alpha;y)$. Considering that $\frac{y}{\alpha}\in (0,\frac{1}{2})$ and $z\in \mathcal{D}_{\mathcal{G}}$, one has $(\alpha,y)\in(\sqrt{3},\infty)\times(\frac{\sqrt{3}}{2},\infty) $. And we denote that
\begin{equation}\aligned\label {middlee1}\nonumber
\tilde{E}_{1}(\alpha;y):=&\sum_{n=2}^\infty ({\alpha}^4 y^2 n^6+\frac{1}{\pi}{\alpha}^3 y n^4+\frac{3}{4{\pi}^2}{\alpha}^2 n^2)e^{-\alpha\pi [y(n^2-1)-\frac{1}{4y}]},\\
\tilde{E}_{2}(\alpha;y):=&\sum_{n=2}^\infty (\frac{9}{2{\pi}^2}{\alpha}^2{n}^2+\frac{3}{\pi}{\alpha}^3{y}{n}^4)(1+\frac{\pi\alpha}{6y})e^{-\alpha\pi [y(n^2-1)-\frac{1}{4y}]},\\
\tilde{E}_{3}(\alpha;y):=&\sum_{n=2}^\infty \frac{15}{4{\pi}^2} {\alpha}^2{n}^2(1+\frac{\pi\alpha}{3y}+\frac{{\pi}^2}{60}\frac{{\alpha}^2}{y^2})  e^{-\alpha\pi [y(n^2-1)-\frac{1}{4y}]}.
\endaligned\end{equation}
Firstly, about $\Phi_{\alpha,B}^{1}(z)$, using Lemma \ref{Lemmamoist}, one has
\begin{equation}\aligned\nonumber
\left|\Phi_{\alpha,B}^{1}(z)\right|=&\sum_{n=2}^\infty ({\pi}{\alpha}^4 y^2 n^5+{\alpha}^3 y n^3+\frac{3}{4\pi}{\alpha}^2 n)e^{-\alpha\pi y(n^2-1)}\cdot\left|\frac{\vartheta_{Y}(\frac{y}{\alpha};nx)}{\vartheta_{Y}(\frac{y}{\alpha};x)}\right|\\
\leq& \tilde{E}_{1}(\alpha;y)\leq \tilde{E}_{1}(\sqrt{3};\frac{\sqrt{3}}{2})\leq 2\cdot 10^{-3}.
\endaligned\end{equation}
Secondly, about $\Phi_{\alpha,B}^{2}(z)$, using Lemma \ref{Lemmamoistt}, one has
\begin{equation}\aligned\nonumber
\left|\Phi_{\alpha,B}^{2}(z)\right|=&\sum_{n=2}^\infty (\frac{3}{\pi}{\alpha}y  n+2{\alpha}^2 y^2 n^3)e^{-\alpha\pi y(n^2-1)}\cdot\left|\frac{\vartheta_{XY}(\frac{y}{\alpha};nx)}{\vartheta_{Y}(\frac{y}{\alpha};x)}\right|\\
\leq& \tilde{E}_{2}(\alpha;y)\leq\tilde{E}_{2}(\sqrt{3};\frac{\sqrt{3}}{2})\leq 6\cdot 10^{-4}.
\endaligned\end{equation}
Thirdly, about $\Phi_{\alpha,B}^{3}(z)$, by Lemma \ref{Lemmavar1}, one has
\begin{equation}\aligned\nonumber
\left|\Phi_{\alpha,B}^{3}(z)\right|=&\sum_{n=2}^\infty \frac{1}{\pi}y^2 ne^{-\alpha\pi y(n^2-1)}\cdot\left|\frac{\vartheta_{XXY}(\frac{y}{\alpha};nx)}{\vartheta_{Y}(\frac{y}{\alpha};x)}\right|\\
\leq& \tilde{E}_{3}(\alpha;y)\leq\tilde{E}_{3}(\sqrt{3};\frac{\sqrt{3}}{2})\leq 6\cdot 10^{-5}.
\endaligned\end{equation}
\end{proof}

\section{The monotonicity on the vertical line $y=\frac{1}{2}$}
\setcounter{equation}{0}
In Theorem \ref{2Th1}, we have established that for $\alpha\geq \frac{3}{2}$,
\begin{equation}\aligned\label{3hhh}
\min_{z\in\mathbb{H}}\mathcal{R}(\alpha;z)
=\min_{z\in\Gamma}\mathcal{R}(\alpha;z)
\endaligned\end{equation}
where $\Gamma$ is a vertical line and defined in \eqref{boundary}.
In this Section, we aim to prove Theorem \ref{4Th1}.

\begin{theorem}\label{4Th1} Assume that $\alpha\geq \frac{3}{2}$, it holds that
\begin{equation}\aligned\nonumber
\min_{z\in\Gamma}\mathcal{R}(\alpha;z) \:\:is\:\: achieved \:\: uniquely\:\:at \:\:\frac{1}{2}+i\frac{\sqrt{3}}{2}.
\endaligned\end{equation}
\end{theorem}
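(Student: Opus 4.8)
The plan is to deduce Theorem~\ref{4Th1} from the sharper statement that, for every $\alpha\ge\frac32$,
\[
\frac{\partial}{\partial y}\,\mathcal R\!\left(\alpha;\tfrac12+iy\right)>0\qquad\text{whenever }y>\tfrac{\sqrt3}{2}.
\]
Since $\Gamma=\{\frac12+iy:\ y\ge\frac{\sqrt3}{2}\}$ is a vertical ray, this gives strict monotonicity of $y\mapsto\mathcal R(\alpha;\frac12+iy)$, hence a unique minimum at $e^{i\pi/3}=\frac12+i\frac{\sqrt3}{2}$. One cannot hope for more than a strict inequality on the open ray: $e^{i\pi/3}$ is fixed by the order‑three element $z\mapsto 1-\frac1z$ of $\mathcal G$, so by Lemma~\ref{INR} it is a critical point of $\mathcal R(\alpha;\cdot)$, and in particular $\partial_y\mathcal R=0$ there.

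First I would put $\theta$ in closed form on $\Gamma$. Substituting $z=\frac12+iy$ in $\theta(\alpha;z)=\sum_{(m,n)}e^{-\pi\alpha|mz+n|^2/\Im z}$, performing the inner sum over $n$ by \eqref{Poisson}, splitting according to the parity of $m$ (so that $\vartheta(\frac y\alpha;\frac m2)$ equals $\vartheta_3(\frac y\alpha)$ for $m$ even and $\vartheta_4(\frac y\alpha)$ for $m$ odd), and using the transformation identities \eqref{Fourierr} to absorb the resulting $\sqrt{y/\alpha}$ factor, one gets
\[
\theta\!\left(\alpha;\tfrac12+iy\right)=\vartheta_3\!\left(\tfrac{\alpha}{y}\right)\vartheta_3(4\alpha y)+\vartheta_2\!\left(\tfrac{\alpha}{y}\right)\vartheta_2(4\alpha y),
\]
with $\vartheta_2,\vartheta_3$ as in \eqref{JacobiJJJ}. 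By Lemma~\ref{respect}, $\mathcal R(\alpha;\frac12+iy)=\frac1{\pi^2}\partial_\alpha^2\theta(\alpha;\frac12+iy)$ becomes an explicit combination of $\vartheta_i^{(j)}(\alpha/y)$ and $\vartheta_i^{(j)}(4\alpha y)$, $i\in\{2,3\}$, $j\le2$ ($\partial_\alpha$ acts on $\alpha/y$ at rate $1/y$ and on $4\alpha y$ at rate $4y$). Since $y\ge\frac{\sqrt3}{2}$ and $\alpha\ge\frac32$ force $4\alpha y\ge3\sqrt3$, the factor $\vartheta_3(4\alpha y)$ and all its derivatives are exponentially close to their limits; accordingly I would write $\mathcal R=\mathsf M+\mathsf E$ with
\[
\mathsf M(\alpha;y):=\tfrac1{\pi^2}\partial_\alpha^2\!\Big[\vartheta_3\!\big(\tfrac\alpha y\big)+\vartheta_2(4\alpha y)\,\vartheta_2\!\big(\tfrac\alpha y\big)\Big],\qquad
\mathsf E(\alpha;y):=\tfrac1{\pi^2}\partial_\alpha^2\!\Big[\vartheta_3\!\big(\tfrac\alpha y\big)\big(\vartheta_3(4\alpha y)-1\big)\Big].
\]
Here $\mathsf M$ deliberately retains the $\vartheta_2(4\alpha y)\vartheta_2(\alpha/y)$ term — the contribution of the four ``diagonal'' shortest lattice vectors, which near $e^{i\pi/3}$ is of the same order as the $\vartheta_3(\alpha/y)$ term — while $\mathsf E$ and $\partial_y\mathsf E$ are $O\big(e^{-4\pi\alpha y}\,\mathrm{poly}(\alpha,y)\big)$ uniformly, by the quotient estimates of Lemmas~\ref{Lemmamoist}--\ref{Lemmavar1} applied with $X=\alpha/y$ (using the Jacobi duals \eqref{Fourierr} when $\alpha/y$ is small).

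For $y$ bounded away from $\frac{\sqrt3}{2}$, say $y\ge y_0$, I would prove $\partial_y\mathsf M>0$ directly. The $\vartheta_3(\alpha/y)$ part of $\mathsf M$ is $\frac1{\pi^2y^2}\vartheta_3''(\alpha/y)$, whose $y$‑derivative equals $-\frac1{\pi^2y^4}\big(2y\,\vartheta_3''(\alpha/y)+\alpha\,\vartheta_3'''(\alpha/y)\big)$; since $\vartheta_3''>0$ and $\vartheta_3'''<0$, this is positive exactly when $-\vartheta_3'''(X)/\vartheta_3''(X)>2/X$, an inequality valid for all $X>0$ (it behaves like $\frac52X^{-1}$ as $X\to0^+$ and tends to $\pi$ as $X\to\infty$) and provable by the monotone‑quotient method of Section~2; for small $X=\alpha/y$ one simply inserts \eqref{Fourierr} to rewrite $\vartheta_3^{(j)}(\alpha/y)$ via $\vartheta_3^{(j)}(y/\alpha)$, making the relevant series converge geometrically and giving the leading behaviour $\partial_y\mathsf M\sim\frac{3}{8\pi^2}\alpha^{-5/2}y^{-1/2}>0$. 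The $\vartheta_2(4\alpha y)\vartheta_2(\alpha/y)$ part of $\mathsf M$ is manifestly positive (all of $\vartheta_2$, $-\vartheta_2'$, $\vartheta_2''$ are positive), and its $y$‑derivative, while not sign‑definite, is $O\big(e^{-\pi\alpha y}\,\mathrm{poly}(\alpha,y)\big)$, hence dominated by the $\vartheta_3$ contribution once $y\ge y_0$; together with the bound on $\partial_y\mathsf E$ this yields $\partial_y\mathcal R>0$ for $y\ge y_0$.

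The remaining region $\frac{\sqrt3}{2}\le y\le y_0$ is the crux. There $\partial_y\mathcal R$ vanishes at the left endpoint and, by the computation above, the $\vartheta_2(4\alpha y)\vartheta_2(\alpha/y)$‑term of $\mathsf M$ is genuinely comparable with the $\vartheta_3(\alpha/y)$‑term (and their $y$‑derivatives nearly cancel at $y=\frac{\sqrt3}{2}$), so the one‑dominant‑term comparison is too lossy. On this short interval I would instead work with the closed form — equivalently with the lattice series $\mathcal R(\alpha;\frac12+iy)=\sum \frac{|mz+n|^4}{y^2}e^{-\pi\alpha|mz+n|^2/y}$ — keeping the first few shells exactly (the length‑squares $1$, $\frac14+y^2$, $4$, $\frac94+y^2$, $1+4y^2,\dots$) and bounding the tail as in Section~3, so as to sandwich $\partial_y\mathcal R(\alpha;\frac12+iy)$ (and, if convenient, $\partial_y^2\mathcal R$) between explicit elementary functions of $(\alpha,y)$ on the compact set $\{\alpha\ge\frac32,\ \frac{\sqrt3}{2}\le y\le y_0\}$. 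Monotonicity of the lower bound in $\alpha$ reduces it to one variable, and one then checks positivity on $(\frac{\sqrt3}{2},y_0]$ by a convexity/monotonicity argument exactly of the type used in Lemma~\ref{Lemmafuture}, invoking the critical‑point fact ($\partial_y\mathcal R=0$ at $y=\frac{\sqrt3}{2}$) together with positive curvature there. The main obstacle is precisely this: producing an effective, manifestly positive lower bound for $\partial_y\mathcal R$ valid on the whole punctured neighbourhood of $e^{i\pi/3}$, where three lattice shells contribute at the same order and the growth off the minimum is only quadratic; away from the corner the argument is routine and parallels Section~3.
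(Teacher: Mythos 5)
Your overall skeleton coincides with the paper's: reduce Theorem \ref{4Th1} to strict monotonicity of $y\mapsto\mathcal R(\alpha;\frac12+iy)$ on $y>\frac{\sqrt3}{2}$, exploit the fact that $\partial_y\mathcal R$ vanishes at $y=\frac{\sqrt3}{2}$ (the paper imports this as Lemma \ref{4lematem} from B\'etermin rather than deriving it from the order-three symmetry, but the content is the same), treat the region away from the corner by a direct expansion, and treat the region adjoining the corner by a second-order argument. Your closed form $\theta(\alpha;\frac12+iy)=\vartheta_3(\frac{\alpha}{y})\vartheta_3(4\alpha y)+\vartheta_2(\frac{\alpha}{y})\vartheta_2(4\alpha y)$ is correct and is essentially the parity splitting the paper performs inside Lemmas \ref{4lema8} and \ref{4lema}; your computation that positivity of the dominant piece reduces to $-\vartheta_3'''(X)/\vartheta_3''(X)>2/X$ is also sound.

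The genuine gap is the near-corner region, and you say so yourself ("The main obstacle is precisely this"). A proof cannot end at the identification of the obstacle. Two concrete problems remain. First, your proposed mechanism — sandwich $\partial_y\mathcal R$ or $\partial_y^2\mathcal R$ by elementary functions on $\{\alpha\ge\frac32,\ \frac{\sqrt3}{2}\le y\le y_0\}$ and run a Lemma \ref{Lemmafuture}-type one-variable check — is not yet an argument: that set is not compact (it is unbounded in $\alpha$), the reduction to one variable and the $\alpha\to\infty$ limit must be carried out (this is exactly what the paper's Lemma \ref{4lema12} does), and no candidate sandwich is exhibited. Second, and more importantly, the choice of second-order quantity matters. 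The paper does not bound $\partial_y^2\mathcal R$; it uses the specific radial operator $\partial_y^2+\frac{2}{y}\partial_y=\frac{1}{y^2}\partial_y\big(y^2\partial_y\big)$, because applied to the lattice sum $\sum e^{-\pi\alpha(yn^2+(m+n/2)^2/y)}$ it produces the five double sums $\mathcal W_a,\dots,\mathcal W_e$ of Lemma \ref{exprr}, three of which are termwise nonnegative and two of which admit clean upper bounds after splitting by parity; positivity of the resulting combination $\mathcal Y(\alpha;y)$ (Lemmas \ref{4lemm10}--\ref{4lema12}) is where the cancellation among the three competing shells at $e^{i\pi/3}$ is actually resolved. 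Nothing in your proposal plays this role, so the crux of Theorem \ref{4Th1} — an effective positive lower bound for a second-order surrogate of $\partial_y\mathcal R$, uniform in $\alpha\ge\frac32$, on the punctured neighbourhood of the hexagonal point — is not established. Separately, your region split ($y\ge y_0$ versus $y\le y_0$) differs from the paper's ($y\ge\frac45\alpha$ versus $y\le\frac45\alpha$); the paper's choice is what keeps $X=y/\alpha$ bounded below in $\Omega_1$ so that the expansion of Lemma \ref{Lemmatree} and the quotient estimates of Section 2 apply, and if you keep your split you must verify that the polynomial prefactors in the $\vartheta_2(4\alpha y)\vartheta_2(\alpha/y)$ derivative are beaten by the exponential gap $e^{-\pi\alpha(y-3/(4y))}$ already at $\alpha=\frac32$ and $y=y_0$, which is a nontrivial quantitative constraint on $y_0$.
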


Note that Theorems \ref{2Th1} and \ref{4Th1}  comprise the proof of Theorem \ref{Th2}, which is our main result in this paper.  And the proof of Theorem \ref{4Th1} is based on the following Proposition.

\begin{proposition}\label{restart} Assume that $\alpha\geq \frac{3}{2} ,\:y\geq \frac{\sqrt{3}}{2}$, then
\begin{equation}\aligned\nonumber
\frac{\partial}{\partial{y}}\mathcal{R}(\alpha;\frac{1}{2}+iy)\geq 0.
\endaligned\end{equation}
\end{proposition}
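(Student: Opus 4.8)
The plan is to follow the architecture of Section 3, now differentiating along $\Gamma$, i.e.\ in the variable $y$ with $x=\tfrac12$ held fixed. The first step is to obtain a workable closed form for $\mathcal{R}(\alpha;\tfrac12+iy)$. Specializing the exponential expansion of Lemma \ref{Lemmatree} to $x=\tfrac12$, one has $\vartheta(\tfrac{y}{\alpha};\tfrac n2)=\vartheta_3(\tfrac{y}{\alpha})$ for even $n$ and $=\vartheta_4(\tfrac{y}{\alpha})$ for odd $n$; collecting the two subseries and applying the modular identities \eqref{Fourierr} yields
\[
\theta(\alpha;\tfrac12+iy)=\vartheta_3\!\Big(\tfrac{\alpha}{y}\Big)\vartheta_3(4\alpha y)+\vartheta_2\!\Big(\tfrac{\alpha}{y}\Big)\vartheta_2(4\alpha y).
\]
By Lemma \ref{respect}, $\mathcal{R}(\alpha;\tfrac12+iy)=\tfrac1{\pi^2}\partial_\alpha^2\theta(\alpha;\tfrac12+iy)$, and since $\partial_\alpha$ and $\partial_y$ commute, $\partial_y\mathcal{R}(\alpha;\tfrac12+iy)=\tfrac1{\pi^2}\partial_\alpha^2\partial_y\theta(\alpha;\tfrac12+iy)$. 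Writing $p=\tfrac{\alpha}{y}$, $r=4\alpha y$, this expands into a finite sum of terms $\vartheta_i^{(j)}(p)\,\vartheta_i^{(k)}(r)$ with $i\in\{2,3\}$, $j+k\le 3$, each weighted by a monomial in $\alpha$ and $y$; this identity is the analogue of Lemma \ref{part1}, and its verification is a direct if lengthy computation.

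The second step, playing the role of Proposition \ref{prop}, is to split this expression into a principal part plus an error part. Pulling out a manifestly positive factor (for instance $\vartheta_3(p)\vartheta_3(r)$ times a power of $\alpha,y$), the principal part is assembled from the logarithmic-type quotients $\vartheta_i'(p)/\vartheta_i(p),\ \vartheta_i''(p)/\vartheta_i(p),\ \vartheta_i'''(p)/\vartheta_i(p)$ and their $r$-counterparts, while the error part collects the exponentially small $n\ge2$ contributions. Since $r=4\alpha y\ge 3\sqrt3$ throughout, the $r$-factors are essentially their leading $q$-terms; the $p$-factor, with $p=\tfrac{\alpha}{y}$ ranging over $(0,\infty)$, is controlled either by direct $q$-series estimates or, in the intermediate range, by the refined bounds of Section 2 (Lemmas \ref{Lemmatime}, \ref{Lemmavar2}, \ref{Lemmavar1}), using \eqref{Fourierr} to turn small-argument factors into large-argument ones. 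As in Section 3 the principal part is not positive in a uniform way, so I would partition $\{\alpha\ge\tfrac32,\ y\ge\tfrac{\sqrt3}{2}\}$ into a few subregions according to the size of $p=\tfrac{\alpha}{y}$, obtaining on each an explicit two-variable lower bound $D(\alpha;y)$; monotonicity of $D$ in $\alpha$ — supplemented near $y=\tfrac{\sqrt3}{2}$ by a short one-variable concavity/endpoint argument in the spirit of Lemma \ref{Lemmafuture} — then reduces positivity to the single corner value $D(\tfrac32,\tfrac{\sqrt3}{2})>0$.

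The third step handles the error part exactly as in Lemmas \ref{splendid}--\ref{good}: each tail $\sum_{n\ge2}(\cdots)e^{-\alpha\pi y(n^2-1)}$ is dominated by its $n=2$ term and shown to be a negligible fraction of the principal lower bound already obtained, so that $\partial_y\mathcal{R}(\alpha;\tfrac12+iy)\ge 0$; tracking strict inequality for $y>\tfrac{\sqrt3}{2}$ then gives the uniqueness asserted in Theorem \ref{4Th1}. The main obstacle, as in Section 3 but sharper, is twofold: the $y$-derivative produces third-order theta quotients, and $\tfrac12+i\tfrac{\sqrt3}{2}$ is a genuine critical point of $\mathcal{R}$ along $\Gamma$, so $\partial_y\mathcal{R}$ vanishes there and the principal lower bound $D(\alpha;y)$ is only barely positive near the corner $(\tfrac32,\tfrac{\sqrt3}{2})$. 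Consequently the exponent $\tfrac32$ is essentially tight, and it is the two-sided bounds of Lemmas \ref{Lemmatime} and \ref{Lemmavar2} (rather than cruder one-sided ones), together with a careful accounting of the error tails, that seem to be exactly what is needed to close the gap.
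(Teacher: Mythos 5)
Your reduction to the line $x=\tfrac12$ and the product formula $\theta(\alpha;\tfrac12+iy)=\vartheta_3(\tfrac{\alpha}{y})\vartheta_3(4\alpha y)+\vartheta_2(\tfrac{\alpha}{y})\vartheta_2(4\alpha y)$ are correct, and your treatment of the region where $y/\alpha$ is large (direct exponential expansion, principal part bounded below by a positive constant, tails dominated by their $n=2$ terms) matches what the paper does on $\Omega_1=\{y\geq\tfrac45\alpha\}$ in Lemmas \ref{estab}--\ref{4add5}. The gap is in the complementary region containing the corner. By Lemma \ref{4lematem}, $\frac{\partial}{\partial y}\mathcal{R}(\alpha;\tfrac12+i\tfrac{\sqrt3}{2})=0$ for \emph{every} $\alpha\geq\tfrac32$, so any valid lower bound $D(\alpha;y)$ for $\partial_y\mathcal{R}$ satisfies $D(\alpha;\tfrac{\sqrt3}{2})\leq 0$; your proposed reduction ``positivity follows from $D(\tfrac32,\tfrac{\sqrt3}{2})>0$'' is therefore logically impossible, and no sharpening of the two-sided theta-quotient bounds can repair it, because near $y=\tfrac{\sqrt3}{2}$ the ``error'' part is not a small fraction of the ``principal'' part --- the two exactly cancel at the critical point. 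A concavity/endpoint argument \`a la Lemma \ref{Lemmafuture} applied to $D$ does not help either, since the left endpoint value is $\leq 0$.

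The missing idea is the paper's second-order argument on $\Omega_2=\{\tfrac{\sqrt3}{2}\leq y\leq\tfrac45\alpha\}$: writing $\frac{\partial^2}{\partial y^2}+\frac{2}{y}\frac{\partial}{\partial y}=\frac{1}{y^2}\frac{\partial}{\partial y}\bigl(y^2\frac{\partial}{\partial y}\bigr)$ and using $\bigl(y^2\partial_y\mathcal{R}\bigr)\big|_{y=\sqrt3/2}=0$ from Lemma \ref{4lematem}, it suffices to prove the uniformly positive lower bound
\begin{equation*}
\Bigl(\frac{\partial^2}{\partial y^2}+\frac{2}{y}\frac{\partial}{\partial y}\Bigr)\mathcal{R}\bigl(\alpha;\tfrac12+iy\bigr)\geq \frac{77}{50y^4}e^{-\pi\alpha/y}>0
\end{equation*}
(Lemma \ref{4lema13}) and integrate $y^2\partial_y\mathcal{R}$ up from the critical point. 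This quantity, unlike $\partial_y\mathcal{R}$ itself, does not vanish at $y=\tfrac{\sqrt3}{2}$, so the principal-part/error-part scheme you describe becomes applicable to it. Without this (or an equivalent device exploiting that the hexagonal point is a critical point), your argument cannot close in any neighborhood of $y=\tfrac{\sqrt3}{2}$, which is precisely where the conclusion is needed for Theorem \ref{4Th1}.
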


To prove Proposition \ref{restart}, we need Lemma \ref{4lematem} deduced by Proposition 3.4 of B\'etermin \cite{Bet2018}.

\begin{lemma}[B\'etermin \cite{Bet2018}]\label{4lematem}
Assume that $\alpha \geq \frac{3}{2}$, it holds that $\frac{\partial}{\partial y}\mathcal{R}(\alpha;\frac{1}{2}+i \frac{\sqrt{3}}{2})=0$.
\end{lemma}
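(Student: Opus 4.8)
The plan is to exploit the exact invariance of $\mathcal{R}(\alpha;\cdot)$ under the group $\mathcal{G}$ (Lemma \ref{INR}) at the hexagonal point, where one of the symmetries generated by \eqref{GroupG1} has an \emph{elliptic fixed point} of order three. Write $z_{0}=\frac{1}{2}+i\frac{\sqrt{3}}{2}=e^{i\pi/3}$ and set $U:=T\circ S$, where $S(\tau)=-\frac{1}{\tau}$ and $T(\tau)=\tau+1$ are the holomorphic generators of $\mathcal{G}$, so that $U(\tau)=1-\frac{1}{\tau}$. First I would check directly that $U$ fixes the hexagonal point: since $|z_{0}|=1$ one has $\frac{1}{z_{0}}=\overline{z_{0}}=\frac{1}{2}-i\frac{\sqrt{3}}{2}$, hence $U(z_{0})=1-\overline{z_{0}}=z_{0}$. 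Because $U\in\mathcal{G}$, Lemma \ref{INR} gives the identity $\mathcal{R}(\alpha;U(z))=\mathcal{R}(\alpha;z)$ for all $z\in\mathbb{H}$.

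Next I would linearize this invariance at the fixed point. As a M\"obius map $U$ is holomorphic with $U'(\tau)=\tau^{-2}$, so $U'(z_{0})=z_{0}^{-2}=e^{-2\pi i/3}$. Viewing $\mathbb{H}\subset\mathbb{C}\cong\mathbb{R}^{2}$, the real Jacobian $DU(z_{0})$ is multiplication by the complex number $e^{-2\pi i/3}$, i.e. the rotation of $\mathbb{R}^{2}$ by the angle $-\frac{2\pi}{3}$. Differentiating $\mathcal{R}(\alpha;U(z))=\mathcal{R}(\alpha;z)$ in $z$ at $z=z_{0}$ via the chain rule, and using $U(z_{0})=z_{0}$, yields
\[
DU(z_{0})^{T}\,\nabla_{z}\mathcal{R}(\alpha;z_{0})=\nabla_{z}\mathcal{R}(\alpha;z_{0}),
\]
where $\nabla_{z}\mathcal{R}=(\partial_{x}\mathcal{R},\partial_{y}\mathcal{R})$. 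Since $DU(z_{0})^{T}$ is the rotation by $+\frac{2\pi}{3}$, a nontrivial rotation whose only fixed vector is $0$, this forces $\nabla_{z}\mathcal{R}(\alpha;z_{0})=0$; in particular $\frac{\partial}{\partial y}\mathcal{R}(\alpha;\frac{1}{2}+i\frac{\sqrt{3}}{2})=0$, which is the claim. This argument needs nothing beyond the group invariance and in fact works for every $\alpha>0$, not only $\alpha\geq\frac{3}{2}$.

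The only point requiring care is the analytic justification for differentiating the lattice sum: one must verify that the series \eqref{RR} for $\mathcal{R}(\alpha;z)$ converges locally uniformly together with its first $z$-derivatives on $\mathbb{H}$, so that $\mathcal{R}(\alpha;\cdot)$ is $C^{1}$ (indeed real-analytic) and the chain rule applies; this follows from the Gaussian decay of the summands and is routine. I regard identifying the order-three elliptic symmetry $U$, and recognizing that its linearization is a nontrivial rotation, as the conceptual crux — once that is in place the vanishing of the full gradient, hence of $\partial_{y}\mathcal{R}$, is immediate. As an alternative self-contained route one may instead invoke Montgomery's Theorem A: for every $\alpha>0$ the point $z_{0}$ is the global minimizer of $\theta(\alpha;\cdot)$, so the map $\alpha\mapsto\partial_{y}\theta(\alpha;z_{0})$ vanishes identically; differentiating it twice in $\alpha$ and using $\mathcal{R}=\frac{1}{\pi^{2}}\partial_{\alpha}^{2}\theta$ (Lemma \ref{respect}) gives $\partial_{y}\mathcal{R}(\alpha;z_{0})=\frac{1}{\pi^{2}}\partial_{\alpha}^{2}\big(\partial_{y}\theta(\alpha;z_{0})\big)=0$ after the (legitimate, everywhere-convergent) interchange of the $\alpha$- and $y$-derivatives.
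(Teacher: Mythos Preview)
Your proof is correct. The paper does not give its own proof of this lemma but simply attributes it to Proposition~3.4 of B\'etermin \cite{Bet2018}; the content of that result is precisely that the hexagonal point is a critical point for any lattice energy invariant under the modular group, and the standard argument is exactly the elliptic-fixed-point reasoning you wrote out: the order-three symmetry $U(\tau)=1-1/\tau$ fixes $z_0=e^{i\pi/3}$ with linearization a nontrivial rotation, forcing $\nabla_z\mathcal{R}(\alpha;z_0)=0$. Your alternative route via Montgomery's Theorem~A and Lemma~\ref{respect} is also correct and is arguably even closer in spirit to how the present paper organizes things, since it leverages the already-stated relation $\mathcal{R}=\pi^{-2}\partial_\alpha^2\theta$; both proofs in fact give the conclusion for every $\alpha>0$, not just $\alpha\ge\tfrac{3}{2}$.
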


  By Lemma \ref{4lematem}, we know that $\frac{\partial}{\partial{y}}\mathcal{R}(\alpha;\frac{1}{2}+iy)$ approaches to $0$ as $y$ approaches to $\frac{\sqrt{3}}{2}$. Therefore, we divide the proof of Proposition \ref{restart} into two cases, one where $y$ is away from $\frac{\sqrt{3}}{2}$, and the other more complicated case where $y$ is close to $\frac{\sqrt{3}}{2}$. To make it clear, we denote that
\begin{equation}\aligned\label{area2}
\Omega_{1}&:=\left\{(\alpha,y) |\:\:\alpha\geq\frac{3}{2},y\geq \frac{4}{5}\alpha\right\},\\
\Omega_{2}&:=\left\{(\alpha,y) |\:\:\alpha\geq\frac{3}{2},y\in[\frac{\sqrt{3}}{2},\frac{4}{5}\alpha]\right\}.\\
\endaligned\end{equation}
Then
\begin{equation}\aligned\label{con}
\left\{(\alpha,y) |\alpha \geq \frac{3}{2},y\geq \frac{\sqrt{3}}{2}\right\}=\Omega_{1}\cup \Omega_{2}.
\endaligned\end{equation}

In $\Omega_{1}$ and $\Omega_{2}$, we take advantage of different methods. In $\Omega_{1}$, we directly estimate $\frac{\partial}{\partial{y}}\mathcal{R}(\alpha;\frac{1}{2}+iy)$ by exponential expansion. In $\Omega_{2}$, combining the equality $\frac{\partial^2}{\partial y^2}+\frac{2}{y}\frac{\partial}{\partial y}=\frac{1}{y^2}\frac{\partial}{\partial y}\big(y^2 \frac{\partial}{\partial y} \big)$  with $\big(y^2 \frac{\partial}{\partial y} \big)\mathcal{R}(\alpha;\frac{1}{2}+iy)|_{y=\frac{\sqrt{3}}{2}}=0$ derived from Lemma \ref{4lematem}, in fact, we only need to estimate that $(\frac{\partial^2}{\partial y^2}+\frac{2}{y}\frac{\partial}{\partial y})\mathcal{R}(\alpha;\frac{1}{2}+iy)$ is nonnegative, which yields that $\frac{\partial}{\partial{y}}\mathcal{R}(\alpha;\frac{1}{2}+iy)$ is nonnegative.

\subsection{Region $\Omega_{1}$: estimate of $\frac{\partial}{\partial y}\mathcal{R}(\alpha;\frac{1}{2}+iy)$}

To better elucidate the estimate of $\frac{\partial}{\partial y}\mathcal{R}(\alpha;\frac{1}{2}+iy)$, we denote that
\begin{equation}\aligned\label{meani}
I_{a,1}(\alpha;y)&:=\underset{\left|n\right|\leq 1}{\sum}(-{\pi}^3{\alpha}^5{y}^3{n}^6
+\frac{3}{2}{\pi}^2{\alpha}^4{y}^2{n}^4+\frac{3}{4}\pi{\alpha}^3y{n}^2+\frac{3}{8}{\alpha}^2)e^{-{\pi}{\alpha}y{n}^2 }\vartheta(\frac{y}{\alpha};\frac{n}{2}),\\
I_{a,2}(\alpha;y)&:=\underset{\left|n\right|\leq 1}{\sum}(-{\pi}^2{\alpha}^3{y}^3{n}^4
+3\pi{\alpha}^2{y}^2{n}^2+\frac{21}{4}{\alpha}y)e^{-{\pi}{\alpha}y{n}^2 }\vartheta_{X}(\frac{y}{\alpha};\frac{n}{2}),\\
I_{a,3}(\alpha;y)&:=\underset{\left|n\right|\leq 1}{\sum}({\pi}{\alpha}{y}^3{n}^2
+\frac{11}{2}{y}^2)e^{-{\pi}{\alpha}y{n}^2 }\vartheta_{XX}(\frac{y}{\alpha};\frac{n}{2}),\\
I_{a,4}(\alpha;y)&:=\underset{\left|n\right|\leq 1}{\sum}{\alpha}^{-1}{y}^3e^{-{\pi}{\alpha}y{n}^2 }\vartheta_{XXX}(\frac{y}{\alpha};\frac{n}{2}).
\endaligned\end{equation}
and
\begin{equation}\aligned\label{mean00}
I_{b,1}(\alpha;y)&:=\underset{\left|n\right|\geq 2}{\sum}(-{\pi}^3{\alpha}^5{y}^3{n}^6
+\frac{3}{2}{\pi}^2{\alpha}^4{y}^2{n}^4+\frac{3}{4}\pi{\alpha}^3y{n}^2+\frac{3}{8}{\alpha}^2)e^{-{\pi}{\alpha}y{n}^2 }\vartheta(\frac{y}{\alpha};\frac{n}{2}),\\
I_{b,2}(\alpha;y)&:=\underset{\left|n\right|\geq 2}{\sum}(-{\pi}^2{\alpha}^3{y}^3{n}^4
+3\pi{\alpha}^2{y}^2{n}^2+\frac{21}{4}{\alpha}y)e^{-{\pi}{\alpha}y{n}^2 }\vartheta_{X}(\frac{y}{\alpha};\frac{n}{2}),\\
I_{b,3}(\alpha;y)&:=\underset{\left|n\right|\geq 2}{\sum}({\pi}{\alpha}{y}^3{n}^2
+\frac{11}{2}{y}^2)e^{-{\pi}{\alpha}y{n}^2 }\vartheta_{XX}(\frac{y}{\alpha};\frac{n}{2}),\\
I_{b,4}(\alpha;y)&:=\underset{\left|n\right|\geq 2}{\sum}{\alpha}^{-1}{y}^3e^{-{\pi}{\alpha}y{n}^2 }\vartheta_{XXX}(\frac{y}{\alpha};\frac{n}{2}).
\endaligned\end{equation}

Based on Lemmas \ref{respect} and \ref{Lemmatree}, we first establish the following expression by \eqref{meani}-\eqref{mean00}. The computation is straightforward, so we omit it.

\begin{lemma}\label{estab}$(\mathrm{An\: identity \:for}\:\frac{\partial}{\partial{y}}\mathcal{R}(\alpha;\frac{1}{2}+iy))$\: For $\alpha,\:y>0$,\:it holds that
  \begin{equation}\aligned\label{regroup}
\frac{\partial}{\partial{y}}\mathcal{R}(\alpha;\frac{1}{2}+iy)=\frac{1}{{\pi}^2}{\alpha}^{-\frac{9}{2}}{y}^{-\frac{1}{2}}\big(I_{a}(\alpha;y)+I_{b}(\alpha;y)\big),
\endaligned\end{equation}
where
\begin{equation}\aligned\label{mean0}
I_{a}(\alpha;y):=\sum_{k=1}^{4}I_{a,k}(\alpha;y),\:\:\:\:I_{b}(\alpha;y):=\sum_{k=1}^{4}I_{b,k}(\alpha;y).
\endaligned\end{equation}
\end{lemma}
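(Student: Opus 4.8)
The plan is to read off the identity \eqref{regroup} from the exponential expansion in Lemma \ref{Lemmatree} combined with the relation $\mathcal{R}=\tfrac1{\pi^{2}}\partial_{\alpha}^{2}\theta$ of Lemma \ref{respect}, specialized to $x=\tfrac12$, and then differentiated once more in $y$.

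\textbf{Set-up.} First I would use Lemma \ref{respect} to write $\mathcal{R}(\alpha;\tfrac12+iy)=\tfrac1{\pi^{2}}\partial_{\alpha}^{2}\theta(\alpha;\tfrac12+iy)$; since the defining series of $\theta$ converges locally uniformly in $(\alpha,z)$ together with all its $\alpha$- and $y$-derivatives, $\partial_{\alpha}^{2}$ and $\partial_{y}$ commute, so that $\tfrac{\partial}{\partial y}\mathcal{R}(\alpha;\tfrac12+iy)=\tfrac1{\pi^{2}}\partial_{\alpha}^{2}\partial_{y}\theta(\alpha;\tfrac12+iy)$. Next I would specialize \eqref{PPP3} to $x=\tfrac12$, where the second argument of $\vartheta$ becomes $\tfrac n2$: since $\vartheta(X;Y)$ is real, $1$-periodic and even in $Y$ (see \eqref{TXY}, \eqref{Product}) and $e^{-\pi\alpha yn^{2}}$ is even in $n$, the $n\ge1$ sum (with its factor $2$) and the $n=0$ term of \eqref{PPP3} merge into the single symmetric sum
\[
\theta\!\left(\alpha;\tfrac12+iy\right)=\sqrt{\tfrac y\alpha}\sum_{n\in\mathbb{Z}}e^{-\pi\alpha yn^{2}}\,\vartheta\!\left(\tfrac y\alpha;\tfrac n2\right).
\]

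\textbf{Term-by-term differentiation.} Then I would write the $n$-th summand as $p_{n}v_{n}$ with $p_{n}=(y/\alpha)^{1/2}e^{-\pi\alpha yn^{2}}$ and $v_{n}=\vartheta(y/\alpha;n/2)$, and record the elementary identities $\partial_{\alpha}p_{n}=-\bigl(\tfrac1{2\alpha}+\pi yn^{2}\bigr)p_{n}$, $\partial_{y}p_{n}=\bigl(\tfrac1{2y}-\pi\alpha n^{2}\bigr)p_{n}$, together with the chain-rule formulas $\partial_{\alpha}v_{n}=-\tfrac y{\alpha^{2}}\vartheta_{X}$, $\partial_{y}v_{n}=\tfrac1\alpha\vartheta_{X}$ and their iterates, in particular $\partial_{\alpha}^{2}\partial_{y}v_{n}=\tfrac2{\alpha^{3}}\vartheta_{X}+\tfrac{4y}{\alpha^{4}}\vartheta_{XX}+\tfrac{y^{2}}{\alpha^{5}}\vartheta_{XXX}$ (all $\vartheta$-derivatives evaluated at $(y/\alpha;n/2)$). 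Since the differentiated series are dominated by $e^{-\pi\alpha yn^{2}}$ times a polynomial in $n$, term-by-term differentiation is legitimate. Expanding $\partial_{\alpha}^{2}\partial_{y}(p_{n}v_{n})$ by the Leibniz rule and grouping the finitely many resulting terms according to the order $j\in\{0,1,2,3\}$ of the $\vartheta$-derivative they carry, I expect that, after pulling out the common factor $\tfrac1{\pi^{2}}\alpha^{-9/2}y^{-1/2}$, the coefficient of $\vartheta^{(j)}(y/\alpha;n/2)$ in the $n$-th summand is exactly the polynomial in $(\alpha,y,n)$ multiplying $\vartheta^{(j)}$ in the summand of $I_{\bullet,\,j+1}$ of \eqref{meani}, \eqref{mean00}. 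Splitting $\sum_{n\in\mathbb{Z}}$ into the ranges $|n|\le1$ and $|n|\ge2$ then separates $I_{a}(\alpha;y)$ from $I_{b}(\alpha;y)$, yielding \eqref{regroup} (with $I_{a},I_{b}$ as in \eqref{mean0}).

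\textbf{Main obstacle.} The only real difficulty is the bookkeeping in the last step: enumerating and collecting the cross-terms of the triple Leibniz expansion and matching each prefactor -- elementary but lengthy, which is why the text omits it. Two inexpensive checks keep it honest. The term in which all three derivatives hit $\vartheta$ contributes, via $\partial_{\alpha}^{2}\partial_{y}v_{n}$, a $\vartheta_{XXX}$-part with prefactor $p_{n}\cdot\tfrac{y^{2}}{\alpha^{5}}=\alpha^{-11/2}y^{5/2}e^{-\pi\alpha yn^{2}}=\alpha^{-9/2}y^{-1/2}\cdot\alpha^{-1}y^{3}e^{-\pi\alpha yn^{2}}$, which is precisely $\alpha^{-9/2}y^{-1/2}$ times the coefficient of $\vartheta_{XXX}$ in $I_{a,4}$ (resp.\ $I_{b,4}$). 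The term in which no derivative hits $\vartheta$ contributes $\partial_{\alpha}^{2}\partial_{y}p_{n}=p_{n}\bigl(-\pi^{3}\alpha y^{2}n^{6}+\tfrac32\pi^{2}yn^{4}+\tfrac{3\pi n^{2}}{4\alpha}+\tfrac3{8\alpha^{2}y}\bigr)$, and multiplying the bracket here by $\alpha^{4}y$ reproduces $-\pi^{3}\alpha^{5}y^{3}n^{6}+\tfrac32\pi^{2}\alpha^{4}y^{2}n^{4}+\tfrac34\pi\alpha^{3}yn^{2}+\tfrac38\alpha^{2}$, the coefficient of $\vartheta$ in $I_{a,1}$ and $I_{b,1}$. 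Once these two extreme cases match, the intermediate $\vartheta_{X}$- and $\vartheta_{XX}$-coefficients (e.g.\ the $\tfrac{21}{4}\alpha y$ in $I_{a,2}$, which arises from summing the five $\vartheta_{X}$-contributions of the Leibniz expansion) are fixed by the same mechanical computation.
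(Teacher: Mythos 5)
Your proposal is correct and is exactly the computation the paper has in mind: it cites Lemmas \ref{respect} and \ref{Lemmatree}, specializes the expansion \eqref{PPP3} to $x=\tfrac12$, and omits the "straightforward" Leibniz bookkeeping that you carry out. Your coefficient checks are accurate (e.g.\ $\partial_{\alpha}^{2}\partial_{y}p_{n}=p_{n}\bigl(-\pi^{3}\alpha y^{2}n^{6}+\tfrac32\pi^{2}yn^{4}+\tfrac{3\pi n^{2}}{4\alpha}+\tfrac{3}{8\alpha^{2}y}\bigr)$ and the five $\vartheta_{X}$-contributions summing to $\tfrac{21}{4\alpha^{3}}+\tfrac{3\pi yn^{2}}{\alpha^{2}}-\tfrac{\pi^{2}y^{2}n^{4}}{\alpha}$ both match \eqref{meani}--\eqref{mean00} after multiplying by $p_{n}$ and factoring out $\alpha^{-9/2}y^{-1/2}$).
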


In this Subsection, we aim to prove that

\begin{lemma}\label{mean5} Assume that $(\alpha,y)\in \Omega_{1}$, then
  \begin{equation}\aligned\nonumber
\frac{\partial}{\partial{y}}\mathcal{R}(\alpha;\frac{1}{2}+iy) >0.
\endaligned\end{equation}
\end{lemma}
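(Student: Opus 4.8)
The plan is to prove Lemma \ref{mean5} by showing that the main term $I_a(\alpha;y)$ is positive and dominates the remainder $I_b(\alpha;y)$, exploiting the identity \eqref{regroup} together with the fact that the prefactor $\frac{1}{\pi^2}\alpha^{-9/2}y^{-1/2}$ is positive. So it suffices to prove $I_a(\alpha;y)+I_b(\alpha;y)>0$ on $\Omega_1=\{\alpha\ge\frac32,\ y\ge\frac45\alpha\}$. On this region $X:=\frac{y}{\alpha}\ge\frac45>\frac15$, which places us in the ``large $X$'' regime where the estimates of Lemmas \ref{Lemmatime}, \ref{Lemmavar2}, \ref{Lemmamoist}, \ref{Lemmamoistt}, \ref{Lemmavar1} apply with the bounds governed by $\mu,\hat\mu,\nu,\hat\nu,\omega,\hat\omega$ evaluated at arguments $\ge\frac45$, where these tail series are extremely small.

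First I would analyze $I_a(\alpha;y)$, the $|n|\le1$ part. By \eqref{meani} this is built from $\vartheta(X;\pm\frac12)$, $\vartheta(X;0)$ and their $X$-derivatives up to third order; using the symmetry $\vartheta(X;-\frac12)=\vartheta(X;\frac12)$ the sum over $|n|\le1$ collapses to an $n=0$ contribution (the bracket $\frac38\alpha^2\cdot\vartheta(X;0)$ term from $I_{a,1}$, the $\frac{21}{4}\alpha y\cdot\vartheta_X(X;0)$ term from $I_{a,2}$, etc.) plus an $n=\pm1$ contribution carrying the factor $e^{-\pi\alpha y}$ and the values $\vartheta(X;\tfrac12)$, $\vartheta_X(X;\tfrac12)$, $\vartheta_{XX}(X;\tfrac12)$, $\vartheta_{XXX}(X;\tfrac12)$. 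I would factor out a positive quantity (something like $-\vartheta_Y$ or $\vartheta(X;0)>0$) and reduce the positivity of $I_a$ to an inequality among explicit elementary functions of $\alpha$ and $y$ times controlled theta-quotients; the quotients $\vartheta_X/\vartheta$, $\vartheta_{XX}/\vartheta$, $\vartheta_{XXX}/\vartheta$ at $Y=0$ and $Y=\frac12$ are bounded via the series in \eqref{duct} exactly as in Lemmas \ref{Lemmatime} and \ref{Lemmavar2}. After these substitutions $I_a$ becomes (a positive factor times) a function $D(\alpha;y)$ that is a polynomial in $\alpha,y$ corrected by exponentially small terms; monotonicity of $D$ in $\alpha$ and in $y$ on $\Omega_1$ (checked by differentiating, as in Lemmas \ref{l33+1}--\ref{133+4}) then reduces everything to evaluation at the corner, roughly $(\alpha,y)=(\frac32,\frac65)$ on the boundary $y=\frac45\alpha$, giving a concrete positive lower bound for $I_a$.

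Next I would bound $|I_b(\alpha;y)|$, the $|n|\ge2$ tail. Each $I_{b,k}$ is a sum over $|n|\ge2$ of a polynomial weight in $(\alpha,y,n)$ times $e^{-\pi\alpha y n^2}$ times a theta value $\vartheta^{(j)}(X;\frac n2)$; I would pull out $e^{-\pi\alpha y}$ to write the exponent as $e^{-\pi\alpha y(n^2-1)}$, bound the theta-derivative quotients $\big|\vartheta^{(j)}(X;\frac n2)/\vartheta^{(j)}(X;0)\big|$ or $\big|\vartheta^{(j)}(X;\frac n2)\big|$ by the estimates in Lemmas \ref{Lemmamoist}, \ref{Lemmamoistt}, \ref{Lemmavar1} (valid since $X\ge\frac45\ge\frac15$), and then sum the resulting geometric-type series, which converges very fast because $\alpha y\ge\frac45\cdot\frac{3}{2}\cdot\frac{6}{5}$ is bounded below on $\Omega_1$. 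This produces an explicit majorant $\widetilde D(\alpha;y)$ for $|I_b|$ that, after checking monotonicity, is maximized at the same corner and is smaller than the lower bound for $I_a$ by a comfortable margin, exactly in the spirit of Lemmas \ref{splendid} and \ref{good}. Combining the two estimates gives $I_a+I_b\ge I_a-|I_b|>0$, hence $\frac{\partial}{\partial y}\mathcal{R}(\alpha;\frac12+iy)>0$ on $\Omega_1$.

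The main obstacle I expect is the bookkeeping in $I_a$: unlike the analogous ``major part'' lemmas in Section 3, the weights here carry a cubic $X$-derivative $\vartheta_{XXX}$ and sign-changing polynomial coefficients in $n$ (e.g. the $-\pi^3\alpha^5 y^3 n^6$ term), so one must be careful that the $n=0$ piece really is positive and large while the $n=\pm1$ piece (with its $e^{-\pi\alpha y}$ damping, tiny on $\Omega_1$) cannot spoil it; controlling $\vartheta_{XXX}(X;0)/\vartheta(X;0)$ and $\vartheta_{XXX}(X;\frac12)/\vartheta(X;\frac12)$ sharply enough may require a one-more-derivative analogue of Lemma \ref{Lemmavar2}, proved by the same L'Hospital-plus-series technique. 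Everything else is routine: convergent tail sums, polynomial-in-$(\alpha,y)$ monotonicity checks, and a final numerical evaluation at the corner of $\Omega_1$.
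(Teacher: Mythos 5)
Your proposal follows the paper's proof in all structural respects: the same identity from Lemma \ref{estab}, the same splitting into the $|n|\le 1$ principal part $I_a$ and the $|n|\ge 2$ tail $I_b$, the same evaluation of $I_a$ through $\vartheta(X;0)$, $\vartheta(X;\tfrac12)$ and their $X$-derivatives with exponentially small series corrections (the paper's $\overline{\epsilon}_0,\dots,\overline{\epsilon}_4$ in Lemma \ref{4add1}), and the same tail bounds for $I_b$ evaluated at the corner of $\Omega_1$ (Lemmas \ref{4add5} and \ref{mean4}). The one step that would fail as you describe it is the claim that monotonicity of the lower bound for $I_a$ in both $\alpha$ and $y$ reduces everything to evaluation at the corner $(\tfrac32,\tfrac65)$. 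In the paper's computation (Lemmas \ref{maea3} and \ref{addadd}) one sets $r=\alpha$, $t=y/\alpha$ and finds that the bound $\tfrac38\alpha^2\,\mathcal{P}$ is monotone only in $r$; after fixing $r=\tfrac32$ the remaining one-variable function $g(t)$ on $t\ge\tfrac45$ is \emph{not} monotone --- it dips to an interior minimum at $t_0\approx 1.78$ with $g(t_0)\approx 0.214$, well below its value at the endpoint $t=\tfrac45$. A pure corner evaluation would therefore not produce a valid lower bound for $I_a$; one must locate and evaluate at this interior critical point. This is a repairable computational detail rather than a flaw in the strategy, and with it your argument coincides with the paper's.
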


To prove Lemma \ref{mean5}, by Lemma \ref{estab}, we aim to demonstrate that $I_{a}(\alpha;y)$ is a positive principle part in Lemma \ref{maea3} and $I_{b}(\alpha;y)$ is an error part which is very small relative to the principle part in Lemma \ref{mean4}. In other words, the sigh of $\frac{\partial}{\partial{y}}\mathcal{R}(\alpha;\frac{1}{2}+iy)$ is determined by $I_{a}(\alpha;y)$ as $(\alpha,y)\in \Omega_{1}$.

\begin{lemma}[A lower bound estimate of $I_{a}(\alpha;y)$ in \eqref{mean0} and \eqref{meani}]\label{maea3} Assume that $(\alpha,y)\in \Omega_{1}$, then
  \begin{equation}\aligned\nonumber
I_{a}(\alpha;y)\geq\frac{3}{40}{\alpha}^2.
\endaligned\end{equation}
\end{lemma}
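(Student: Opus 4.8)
\textbf{Proof proposal for Lemma \ref{maea3}.}

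The plan is to exploit the fact that the index set $\{|n|\le 1\}$ consists only of $n=-1,0,1$, so that $I_{a}(\alpha;y)$ is a \emph{finite} explicit expression in $\alpha$, $y$, and the one–dimensional theta values $\vartheta(\tfrac{y}{\alpha};0)$, $\vartheta(\tfrac{y}{\alpha};\tfrac12)$ together with their $X$–derivatives evaluated at $Y=0$ and $Y=\tfrac12$. First I would collect terms: by parity of $\vartheta$ and $\vartheta_X,\vartheta_{XX},\vartheta_{XXX}$ in the second slot, the contributions of $n=1$ and $n=-1$ coincide, so each $I_{a,k}$ becomes (value at $n=0$) $+\,2\times$(value at $n=1$), and the $n=0$ term in $I_{a,1}$ contributes exactly $\tfrac38\alpha^2\vartheta(\tfrac{y}{\alpha};0)$, while the $n=0$ terms in $I_{a,2},I_{a,3},I_{a,4}$ contribute the corresponding $\vartheta_X,\vartheta_{XX},\vartheta_{XXX}$ at $Y=0$. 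The $n=\pm1$ terms all carry the Gaussian prefactor $e^{-\pi\alpha y}$ and the theta values at $Y=\tfrac12$.

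Next I would split $I_{a}$ into a "main block" coming from $n=0$ and a "boundary block" coming from $n=\pm1$. For the $n=0$ block, since $\vartheta(\tfrac{y}{\alpha};0)\ge 1$ and one can bound $\vartheta_X,\vartheta_{XX},\vartheta_{XXX}$ at $Y=0$ in terms of the quantities $\mu,\nu,\omega$ from \eqref{duct} (these are exactly the ratios controlled in Lemmas \ref{Lemmatime} and \ref{Lemmavar2} after multiplying back by $\vartheta_Y$; more directly, $\vartheta_X(X;0)=-\pi\sum n^2 e^{-\pi n^2 X}$, etc.), the leading term $\tfrac38\alpha^2\vartheta(\tfrac{y}{\alpha};0)$ dominates because on $\Omega_1$ one has $\tfrac{y}{\alpha}\ge\tfrac45$, so $\mu(\tfrac{y}{\alpha}),\nu(\tfrac{y}{\alpha}),\omega(\tfrac{y}{\alpha})$ are tiny (of order $e^{-3\pi\cdot 4/5}$ and smaller), and the polynomial coefficients multiplying them are controlled against $\alpha^2$ using $y\ge\tfrac45\alpha$. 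For the $n=\pm1$ boundary block, I would use $|\vartheta(X;\tfrac12)|\le\vartheta(X;0)$ and the analogous bounds for its $X$–derivatives, together with the decisive extra factor $e^{-\pi\alpha y}\le e^{-\pi\cdot\frac32\cdot\frac{\sqrt3}{2}}$, which is small enough to absorb the polynomial growth in $\alpha,y$ (again using $y\ge\tfrac45\alpha$ to express everything homogeneously). Assembling, $I_{a}(\alpha;y)\ge \tfrac38\alpha^2 - (\text{small positive multiples of }\alpha^2) \ge \tfrac{3}{40}\alpha^2$, where the final numerical inequality is checked by reducing to a function of the single ratio $t=\tfrac{y}{\alpha}\ge\tfrac45$ (homogenizing the polynomial parts and noting the exponentials only help as $\alpha,y$ grow), and then verifying the one–variable inequality on $t\in[\tfrac45,\infty)$.

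The main obstacle I anticipate is the bookkeeping in the boundary block: the highest–order term $-\pi^3\alpha^5 y^3 e^{-\pi\alpha y}\vartheta(\tfrac{y}{\alpha};\tfrac12)$ is negative and has a large polynomial prefactor, so one must be careful that $e^{-\pi\alpha y}$ (with $\alpha y\ge\tfrac34\cdot\tfrac{\sqrt3}{2}\cdot\alpha^{?}$—actually $\alpha y \ge \tfrac45\alpha^2 \ge \tfrac95$) genuinely dominates $\alpha^5 y^3$ uniformly; this forces one either to keep the exponential decay in $\alpha y$ rather than merely evaluating it at the corner point, or to first restrict to a compact range of $(\alpha,y)$ and handle large $\alpha y$ separately by the crude bound $P(\alpha,y)e^{-\pi\alpha y}\to 0$. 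A secondary nuisance is that $\vartheta_X,\vartheta_{XX},\vartheta_{XXX}$ at $Y=\tfrac12$ are not sign–definite, so only absolute–value bounds are available there; but since these only enter the error block this costs nothing essential. With these two points handled, the remaining estimates are routine and the displayed lower bound $\tfrac{3}{40}\alpha^2$ follows.
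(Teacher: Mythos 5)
Your overall architecture is the same as the paper's: fold $n=-1$ into $n=1$ by parity, split $I_a$ into the $n=0$ block and the $n=\pm1$ block, evaluate $\vartheta,\vartheta_X,\vartheta_{XX},\vartheta_{XXX}$ at $Y=0$ and $Y=\tfrac12$ with the tails controlled by sums of the type \eqref{duct}, homogenize via $t=\tfrac{y}{\alpha}\ge\tfrac45$, and finish with a one-variable verification. This is precisely what Lemmas \ref{4add1} and \ref{addadd} do, yielding $I_a\ge\tfrac38\alpha^2\,\mathcal{P}(\alpha;y)$ with $\mathcal{P}\ge g(t)\ge g(t_0)\ge\tfrac15$, after showing the $\alpha$-dependent remainder $h(r;t)$ is increasing in $r$ so it can be frozen at $r=\tfrac32$.

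There is, however, a genuine gap in your justification that the "leading term dominates." The tails $\mu,\nu,\omega$ are indeed tiny on $t\ge\tfrac45$, but the $n=0$ derivative terms are not: the leading parts $\vartheta_X(t;0)\approx-2\pi e^{-\pi t}$, $\vartheta_{XX}(t;0)\approx2\pi^2e^{-\pi t}$, $\vartheta_{XXX}(t;0)\approx-2\pi^3e^{-\pi t}$, multiplied by $\tfrac{21}{4}\alpha y$, $\tfrac{11}{2}y^2$, $\alpha^{-1}y^3$, contribute $\tfrac38\alpha^2\cdot2e^{-\pi t}\bigl(-14\pi t+\tfrac{44}{3}\pi^2t^2-\tfrac83\pi^3t^3\bigr)$, whose three pieces at $t=\tfrac45$ are roughly $-5.7$, $+15.0$ and $-6.9$ times the main term $\tfrac38\alpha^2$ — not "small positive multiples." The bound survives only by cancellation: the resulting one-variable function $g(t)$ dips to about $0.21$ at an \emph{interior} minimum $t_0\approx1.78$ (which is exactly why the final constant is $\tfrac{3}{40}=\tfrac38\cdot\tfrac15$ rather than anything close to $\tfrac38$), so one cannot reduce to the corner $t=\tfrac45$ and a term-by-term "error absorption" argument fails. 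Conversely, the block you single out as the main danger, $-\pi^3\alpha^5y^3e^{-\pi\alpha y}\vartheta(t;\tfrac12)$, needs care only near $t=\tfrac45$ (the paper disposes of it via monotonicity of $h(r;t)$ in $r$) and is utterly negligible at the actual minimizer $t_0$, where $\alpha y=\alpha^2t\ge 4$. Your closing step — "verify the one-variable inequality on $[\tfrac45,\infty)$" — is where essentially the whole proof lives, and it must be carried out as a genuine global minimization of $g$, not as a perturbation off the value $1$.
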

\begin{proof}
For convenience, we denote that
\begin{equation}\aligned\label{wen}
 \mathcal{P}(\alpha;y):=&1+2 e^{-\pi\frac{y}{\alpha}}\big( 1+\frac{44}{3}{\pi}^2{\alpha}^{-2}{y}^{2} -14\pi {\alpha}^{-1}y\cdot(1+\overline{\epsilon}_{2})-\frac{8}{3}{\pi}^3{\alpha}^{-3}y^3\cdot(1+ \overline{\epsilon}_{3})\big)\\
+&\frac{8}{3}e^{-\pi{y}(\alpha+\frac{1}{\alpha})}\cdot\big(  4{\pi}^3{\alpha}^{-3}{y}^{3}\cdot(1- \overline{\epsilon}_{4})
+12{\pi}^2{y}^2+21\pi{\alpha}^{-1}{y}\cdot(1- \overline{\epsilon}_{1})
-4{\pi}^3{\alpha}{y}^3\\
-&4{\pi}^3{\alpha}^{-1}{y}^3-22{\pi}^2{\alpha}^{-2}{y}^2\big)+\frac{8}{3}e^{-\pi\alpha{y}}\big( (3{\pi}^2{\alpha}^2{y}^2+\frac{3}{2}\pi\alpha{y}+\frac{3}{4}) \cdot(1- \overline{\epsilon}_{0})-2{\pi}^3{\alpha}^3{y}^3\big).
\endaligned\end{equation}
Based on the estimates of the four components in $I_{a}(\alpha;y)$ in Lemma \ref{4add1}, one has
\begin{equation}\aligned\label{beuu}
I_{a}(\alpha;y)=\sum_{k=1}^{4}I_{a,k}(\alpha;y)\geq\frac{3}{8}{\alpha}^2 \cdot \mathcal{P}(\alpha;y).
\endaligned\end{equation}
Then,\:the result follows by \eqref{beuu} and Lemma \ref{addadd}.
\end{proof}

\begin{lemma}\label{addadd}For $(\alpha,y)\in \Omega_{1}=\left\{(\alpha,y) |\:\alpha\geq\frac{3}{2},y\geq \frac{4}{5}\alpha\right\}$, it holds that
  \begin{equation}\aligned\nonumber
\mathcal{P}(\alpha;y)\geq \frac{1}{5}.
\endaligned\end{equation}
\end{lemma}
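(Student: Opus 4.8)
The plan is to reduce the two–variable estimate $\mathcal{P}(\alpha;y)\geq\frac15$ on $\Omega_{1}$ to a one–variable inequality, using a change of variables adapted to the three exponential weights appearing in \eqref{wen}. Put $s:=\frac{y}{\alpha}$ and $w:=\alpha y$; on $\Omega_{1}=\{\alpha\geq\frac32,\ y\geq\frac45\alpha\}$ one has $s\geq\frac45$, $w\geq\frac95$, and $w=\alpha^{2}s\geq\frac94 s$. Rewriting each monomial $\alpha^{a}y^{b}$ in \eqref{wen} through $s$ and $w$, the term carrying the weight $e^{-\pi y/\alpha}=e^{-\pi s}$ becomes a function of $s$ alone,
\[
F(s):=2e^{-\pi s}\big(1+\tfrac{44}{3}\pi^{2}s^{2}-14\pi s(1+\overline{\epsilon}_{2})-\tfrac83\pi^{3}s^{3}(1+\overline{\epsilon}_{3})\big),
\]
and the term carrying $e^{-\pi\alpha y}=e^{-\pi w}$ becomes a function of $w$ alone, $H(w):=\frac83 e^{-\pi w}\big((3\pi^{2}w^{2}+\frac32\pi w+\frac34)(1-\overline{\epsilon}_{0})-2\pi^{3}w^{3}\big)$, while the remaining ``middle'' group $G$ carries the much smaller weight $e^{-\pi y(\alpha+1/\alpha)}=e^{-\pi(s+w)}$. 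Thus $\mathcal{P}(\alpha;y)=1+F(s)+G+H(w)$ with $s=\frac{y}{\alpha}$, $w=\alpha^{2}s$; the error terms $\overline{\epsilon}_{i}$ are those controlled by Lemma \ref{4add1}.

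The first step is a monotonicity reduction in $\alpha$ at fixed $s$. Since $F$ does not involve $\alpha$, it suffices to show that $\alpha\mapsto G+H(\alpha^{2}s)$ is nondecreasing on $[\frac32,\infty)$. Writing $H(w)=\frac83 e^{-\pi w}B(w)$ with $B(w)=(3\pi^{2}w^{2}+\frac32\pi w+\frac34)(1-\overline{\epsilon}_{0})-2\pi^{3}w^{3}$, one computes $H'(w)=\frac83 e^{-\pi w}\big(B'(w)-\pi B(w)\big)$ and $B'(w)-\pi B(w)=2\pi^{4}w^{3}-6\pi^{3}w^{2}+(1-\overline{\epsilon}_{0})\big(-3\pi^{3}w^{2}+\frac92\pi^{2}w+\frac34\pi\big)$, which is positive for all $w\geq\frac95$ since the cubic term dominates there; hence $\partial_{\alpha}H(\alpha^{2}s)=2\alpha s\,H'(\alpha^{2}s)>0$. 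The term $\partial_{\alpha}G$ is negligible against this, because after differentiating $G=\frac83 e^{-\pi(1+\alpha^{2})s}Q(\alpha,s)$ its polynomial factor has lower degree in $\alpha$ than that of $\partial_{\alpha}H$ and it carries the extra damping $e^{-\pi s}\leq e^{-4\pi/5}$; so $\partial_{\alpha}(G+H)>0$ throughout $\alpha\geq\frac32$. Consequently $G+H(\alpha^{2}s)\geq G\big|_{\alpha=3/2}+H(\frac94 s)$, and therefore
\[
\mathcal{P}(\alpha;y)\ \geq\ \Psi(s)\ :=\ 1+F(s)+G\big|_{\alpha=3/2}+H\big(\tfrac94 s\big),\qquad s\geq\tfrac45 .
\]

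It remains to prove the one–variable inequality $\Psi(s)\geq\frac15$ for $s\geq\frac45$. Now $\Psi$ is an explicit finite combination of functions $s^{k}e^{-cs}$ (with $k\leq 3$ and $c\in\{\pi,\frac94\pi,\frac{13}{4}\pi\}$) with bounded coefficients, and $\Psi(s)\to 1$ as $s\to\infty$. One verifies $\Psi\geq\frac15$ on a bounded interval $[\frac45,M]$ by direct interval estimates, and for $s\geq M$ by the elementary bound $s^{k}e^{-\pi s}\leq(k/\pi)^{k}e^{-k}$ (valid for $s\geq k/\pi$), which makes every negative contribution uniformly small. The minimum of $\Psi$ on $[\frac45,\infty)$ is attained around $s\approx 2$ and stays comfortably above $\frac15$; at the corner $(\alpha,y)=(\frac32,\frac65)$ of $\Omega_{1}$, i.e. $s=\frac45$, the value is considerably larger.

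The main obstacle is the numerical balance this forces. The negative cubic $-2\pi^{3}w^{3}e^{-\pi w}$ in $H$ is of size roughly $2.4$ already at $w=\frac95$, and is compensated only by $F(s)$ (which is about $2.5$ at $s=\frac45$), so one cannot afford to bound $H$ from below by first discarding its positive quadratic part: the passage to $\alpha=\frac32$, equivalently $w=\frac94 s$, must be carried out \emph{before} any such bounding, and it is precisely the monotonicity $H'(w)>0$ on $[\frac95,\infty)$ that legitimises this reduction. The only additional care is in tracking the error terms $\overline{\epsilon}_{i}$, for which the bounds of Lemma \ref{4add1} are used.
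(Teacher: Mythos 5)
Your proposal follows essentially the same route as the paper: after the substitution $t=s=y/\alpha$ (so that $w=\alpha y=\alpha^{2}s$), you isolate the $e^{-\pi s}$ block as a function of $s$ alone, show the remaining block (the paper's $h(r;t)=G+H(\alpha^{2}s)$) is nondecreasing in $\alpha$ so that it can be evaluated at $\alpha=\tfrac32$, and then minimize the resulting one-variable function $\Psi=g$ over $s\geq\tfrac45$, exactly as in the paper. Your explicit computation that $B'(w)-\pi B(w)>0$ for $w\geq\tfrac95$ is in fact a more analytic justification than the paper's figure-based check of $\partial_r h\geq 0$, though like the paper you still lean on a numerical/heuristic step for the $G$-contribution and for locating the minimum of $\Psi$.
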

\begin{proof}
For convenience, we denote that
\begin{equation}\aligned\nonumber
g(t):=1+2 e^{-\pi {t}}  \big( 1+\frac{44}{3}{\pi}^2{t}^2 -14\pi t\cdot(1+\overline{\epsilon}_{2})-\frac{8}{3}{\pi}^3{t}^3\cdot(1+ \overline{\epsilon}_{3})\big)+h(\frac{3}{2};t).
\endaligned\end{equation}
Let $r=\alpha $ and $t=\frac{y}{\alpha}$ , then $ r\geq \frac{3}{2},\: t\geq \frac{4}{5}$. Recalling \eqref{wen}, one has
\begin{equation}\aligned\label{hopefuld}
\mathcal{P}(\alpha;y)&=\mathcal{P}(r;rt)=1+2 e^{-\pi {t}}  \big( 1+\frac{44}{3}{\pi}^2{t}^2 -14\pi t\cdot(1+\overline{\epsilon}_{2})-\frac{8}{3}{\pi}^3{t}^3\cdot(1+ \overline{\epsilon}_{3})\big)+h(r;t),
\endaligned\end{equation}
where
\begin{equation}\aligned\label{hopeful}
h(r;t):=&\frac{8}{3}e^{-\pi {t}(r^2+1)}\big(  4{\pi}^3{t}^3\cdot(1- \overline{\epsilon}_{4})+12{\pi}^2{r}^2{t}^2+21\pi{t}(1- \overline{\epsilon}_{1})
-4{\pi}^3{r}^4{t}^3-4{\pi}^3{r}^{2}{t}^3\\
&-22{\pi}^2{t}^2\big)+\frac{8}{3}e^{-{\pi}t{r}^2}\big( (3{\pi}^2{r}^4{t}^2+\frac{3}{2}r^2t+\frac{3}{4}) \cdot(1- \overline{\epsilon}_{0})-2{\pi}^3{r}^6{t}^3\big).
\endaligned\end{equation}
Here, based on Lemma \ref{4add1} (in the following), one has $\overline{\epsilon}_{0}\leq \frac{9}{50},\:\overline{\epsilon}_{1}\leq \frac{1}{470},\:\overline{\epsilon}_{2}\leq \frac{1}{470},\:\overline{\epsilon}_{3}\leq \frac{1}{29},\:\overline{\epsilon}_{4}\leq\frac{1}{29}.$\\
A direct checking (see Figure \ref{hg}) shows that, as $r\geq \frac{3}{2},\: t\geq \frac{4}{5} $,\:$\frac{\partial}{\partial{r}}h(r;t)\geq0$. Then one has
\begin{equation}\aligned\label{cheng}
h(r;t)\geq h(\frac{3}{2};t).
\endaligned\end{equation}
By \eqref{hopefuld}-\eqref{cheng}, one gets
\begin{equation}\aligned\nonumber
\mathcal{P}(\alpha;y)&=\mathcal{P}(r;rt)\geq g(t).
\endaligned\end{equation}
As $t\geq \frac{4}{5}$, the equation $g'(t)=0$ that satisfy the condition $g''(t)>0$ has only one solution $t_{0}=1.781450608\cdots$(see Figure\:\ref{hg}). Thus we can get $ g(t)\geq g(t_{0})\geq0.2141862029\geq \frac{1}{5}$.
\end{proof}

\begin{figure}
\centering
 \includegraphics[scale=0.5]{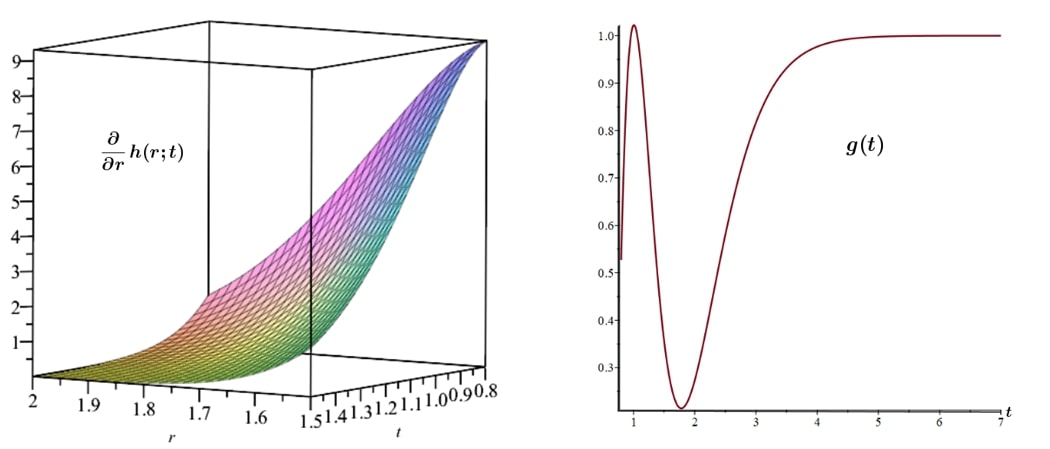}
 \caption{The images of $\frac{\partial}{\partial{r}}h(r;t)$ and\:$g(t)$.}
 \label{hg}
\end{figure}

To better illustrate Lemma \ref{mean4} which states that $I_{b}(\alpha;y)$ is much smaller than $I_{a}(\alpha;y)$, we give the following notations
\begin{equation}\aligned\label{search}
J_{k}(\alpha;y):={\alpha}^{-2}I_{b,k}(\alpha;y),\:\:k=1,2,3,4.
\endaligned\end{equation}

\begin{lemma}\label{mean4} Assume that $(\alpha,y)\in \Omega_{1}$, then
  \begin{equation}\aligned\nonumber
\left| \frac{I_{b}(\alpha;y)}{I_{a}(\alpha;y)} \right|\leq 10^{-4}.
\endaligned\end{equation}
\end{lemma}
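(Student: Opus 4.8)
The plan is to reduce the ratio bound to four one-term estimates and then to a single numerical check at a corner of $\Omega_{1}$. Since Lemma~\ref{maea3} gives $I_{a}(\alpha;y)\geq\frac{3}{40}\alpha^{2}>0$ on $\Omega_{1}$ and $I_{b}(\alpha;y)=\sum_{k=1}^{4}I_{b,k}(\alpha;y)$, in the notation $J_{k}(\alpha;y)=\alpha^{-2}I_{b,k}(\alpha;y)$ of \eqref{search} it suffices to prove
\[
\sum_{k=1}^{4}\bigl|J_{k}(\alpha;y)\bigr|\ \leq\ \frac{3}{40}\cdot 10^{-4},
\]
since then $\bigl|I_{b}(\alpha;y)/I_{a}(\alpha;y)\bigr|\leq\frac{40}{3}\sum_{k=1}^{4}|J_{k}(\alpha;y)|\leq 10^{-4}$.

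For each $k$ I would first bound, uniformly in $n$, the Jacobi theta factors $\vartheta$, $\vartheta_{X}$, $\vartheta_{XX}$, $\vartheta_{XXX}$ evaluated at $(\tfrac{y}{\alpha};\tfrac{n}{2})$ that appear in \eqref{mean00}. On $\Omega_{1}$ one has $X:=\frac{y}{\alpha}\geq\frac{4}{5}$, and termwise differentiation of \eqref{TXY} gives
\[
\bigl|\vartheta(X;\tfrac{n}{2})\bigr|\leq\sum_{m\in\mathbb{Z}}e^{-\pi m^{2}X},\qquad
\bigl|\vartheta_{X}(X;\tfrac{n}{2})\bigr|\leq\pi\sum_{m\in\mathbb{Z}}m^{2}e^{-\pi m^{2}X},
\]
and likewise $|\vartheta_{XX}|\leq\pi^{2}\sum_{m}m^{4}e^{-\pi m^{2}X}$ and $|\vartheta_{XXX}|\leq\pi^{3}\sum_{m}m^{6}e^{-\pi m^{2}X}$; each right-hand side decreases in $X$, hence is bounded by its value at $X=\frac{4}{5}$, an explicit $O(1)$ constant free of $n$. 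Pulling these constants out, $|J_{k}(\alpha;y)|$ is at most a fixed constant times $\alpha^{-2}\sum_{|n|\geq2}P_{k}(\alpha,y,n)e^{-\pi\alpha y n^{2}}$, where $P_{k}$ is the polynomial weight displayed in \eqref{mean00}.

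The remaining step is to majorize these sums uniformly over the unbounded region $\Omega_{1}$, and this is where the constraints $\alpha\geq\frac{3}{2}$ and $y\geq\frac{4}{5}\alpha$ enter. Writing $u=\alpha y$, every monomial of $\alpha^{-2}P_{1}$ is a constant times $u^{\ell}n^{j}e^{-\pi u n^{2}}$ with $\ell\leq3$ (e.g.\ the dominant one $\alpha^{-2}\cdot\pi^{3}\alpha^{5}y^{3}n^{6}=\pi^{3}u^{3}n^{6}$), while every monomial of $\alpha^{-2}P_{k}$ for $k=2,3,4$, after writing $y=c\alpha$ with $c\geq\frac{4}{5}$, is a constant times $c^{b}\alpha^{a}n^{j}e^{-\pi c\alpha^{2}n^{2}}$ with $a\leq4$ and $b\leq3$. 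On $\Omega_{1}$ one has $u\geq\frac{4}{5}\alpha^{2}\geq\frac{9}{5}$, and since $|n|\geq2$ the critical point of $u\mapsto u^{\ell}e^{-\pi u n^{2}}$ is $\frac{\ell}{\pi n^{2}}\leq\frac{3}{4\pi}<\frac{9}{5}$; similarly the critical point of $\alpha\mapsto\alpha^{a}e^{-\pi c n^{2}\alpha^{2}}$ lies below $\frac{3}{2}$ and that of $c\mapsto c^{b}e^{-\frac{9}{4}\pi n^{2}c}$ below $\frac{4}{5}$. Hence each term is decreasing on the admissible range and attains its maximum at the corner $(\alpha,y)=(\frac{3}{2},\frac{6}{5})$, so $\sum_{k}|J_{k}(\alpha;y)|\leq\sum_{k}|J_{k}(\frac{3}{2},\frac{6}{5})|$; the latter is dominated by the $n=\pm2$ term of $J_{1}$ and evaluates to a number of order $10^{-6}$, which is below $\frac{3}{40}\cdot10^{-4}=7.5\cdot10^{-6}$.

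The proof is not conceptually hard; the main obstacle is bookkeeping. One must track the $\alpha$- and $y$-degrees of all monomials across $I_{b,1},\dots,I_{b,4}$, confirm case by case that the Gaussian weight $e^{-\pi\alpha y n^{2}}$ together with $y\geq\frac{4}{5}\alpha$ beats the polynomial growth, and then carry out the corner evaluation carefully — keeping the first couple of terms of each theta series at $X=\frac{4}{5}$ explicitly and bounding the tails — because the final numerical margin, though genuine, is only about a factor of two.
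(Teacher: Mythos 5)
Your proposal is correct and follows essentially the same route as the paper: reduce via Lemma \ref{maea3} to bounding $\frac{40}{3}\sum_{k=1}^{4}|J_{k}|$, bound the theta factors uniformly in $n$ using $X=\frac{y}{\alpha}\geq\frac{4}{5}$, use monotonicity of the polynomial-times-Gaussian weights to push everything to the corner $(\alpha,y)=(\frac{3}{2},\frac{6}{5})$, and finish with a numerical evaluation dominated by the $n=\pm 2$ term of $J_{1}$ (the paper isolates this as Lemma \ref{4add5}). If anything your version is slightly more careful: you correctly target $\sum_{k}|J_{k}|\leq\frac{3}{40}\cdot 10^{-4}=7.5\times 10^{-6}$ and note the roughly factor-two margin, whereas the paper's rounded intermediate bounds ($|J_{1}|\leq 10^{-5}$, etc.) would not literally suffice without using the sharper corner value $\approx 4\times 10^{-6}$.
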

\begin{proof}
By Lemma \ref{maea3}, $I_{a}(\alpha;y)\geq\frac{3}{40}{\alpha}^2$, so it suffices to prove $ \left| \frac{I_{b}(\alpha;y)}{\frac{3}{40}{\alpha}^2} \right|\leq10^{-4} $.\\
Based on the relationship between \eqref{mean0} and \eqref{search}, one has
\begin{equation}\aligned\label{eng}
\left|\frac{I_{b}(\alpha;y)}{\frac{3}{40}{\alpha}^2}\right| &=\left|\frac{40}{3}\underset{k=1}{\overset{4}{\sum}}J_{k}(\alpha;y)\right|
\leq \frac{40}{3}\underset{k=1}{\overset{4}{\sum}}\left|J_{k}(\alpha;y)\right|.
\endaligned\end{equation}
Therefore, the result follows by \eqref{eng} and Lemma \ref{4add5}.
\end{proof}

With Lemmas \ref{maea3} and \ref{mean4}, we are ready to prove Lemma \ref{mean5}, which is the central task in this Subsection.
 \begin{proof} \textbf{Proof of Lemma \ref{mean5}.} By Lemmas \ref{maea3} and \ref{mean4}, one has
\begin{equation}\aligned\nonumber
\frac{\partial}{\partial{y}}\mathcal{R}(\alpha;\frac{1}{2}+iy) &=\frac{1}{{\pi}^2}{\alpha}^{-\frac{9}{2}}{y}^{-\frac{1}{2}}(I_{a}(\alpha;y)+I_{b}(\alpha;y))\geq\frac{1}{{\pi}^2}{\alpha}^{-\frac{9}{2}}{y}^{-\frac{1}{2}}I_{a}(\alpha;y)\cdot \Big(1-\left|\frac{I_{b}(\alpha;y)}{I_{a}(\alpha;y)}\right|\Big)\\
&\geq\frac{3}{40{\pi}^2} {\alpha}^{-\frac{5}{2}}{y}^{-\frac{1}{2}}\cdot (1-10^{-4})>0.
\endaligned\end{equation}
\end{proof}

In the rest of this Subsection, to estimate four parts in \eqref{beuu}: ${I}_{a,m}(\alpha;y),\:m=1,2,3,4$ and four functions in \eqref{eng} that are $J_{k}(\alpha;y),k=1,2,3,4$, we give the following two auxiliary Lemmas. To make it clear to state the estimates, we denote that
\begin{equation}\aligned\label{lem4.7eq0}
\overline{\epsilon}_{0}&:=2 \sum_{n=1}^{\infty}e^{-\pi n^2\frac{y}{\alpha}}(-1)^{n+1},\:\:
\overline{\epsilon}_{1}:=\sum_{n=2}^{\infty}{n}^2e^{-\pi({n}^2-1)\frac{y}{\alpha}}(-1)^{n},\:\:
\overline{\epsilon}_{2}:=\sum_{n=2}^{\infty}n^2e^{-\pi (n^2-1)\frac{y}{\alpha}},\\
\overline{\epsilon}_{3}&:=\sum_{n=2}^{\infty}n^6e^{-\pi (n^2-1)\frac{y}{\alpha}},\:\:\:\:\:\:\:\:\:
\overline{\epsilon}_{4}:=\sum_{n=2}^{\infty}n^6e^{-\pi (n^2-1)\frac{y}{\alpha}}(-1)^{n}.
\endaligned\end{equation}

\begin{lemma}[The estimate of $I_{a,m}(\alpha;y),m=1,2,3,4$]\label{4add1}
For $(\alpha,y)\in \Omega_{1}$ $=\left\{(\alpha,y) |\:\alpha\geq\frac{3}{2},y\geq \frac{4}{5}\alpha\right\}$, it holds that
\begin{equation}\aligned\nonumber
(1)\:\: I_{a,1}(\alpha;y)\geq &\frac{3}{8}{\alpha}^2+\frac{3}{4}{\alpha}^2e^{-\pi\frac{y}{\alpha}}+\big((3{\pi}^2{\alpha}^4{y}^2+\frac{3\pi}{2}{\alpha}^3y+\frac{3}{4}{\alpha}^2    )\cdot(1- \overline{\epsilon}_{0})-2{\pi}^3{\alpha}^5{y}^3\big)e^{-\pi\alpha y};\\
(2)\:\:I_{a,2}(\alpha;y)\geq &-\frac{21}{2}\pi\alpha{y}e^{-\pi\frac{y}{\alpha}}\cdot(1+\overline{\epsilon}_{2})+\big((12{\pi}^2{\alpha}^2{y}^2+21\pi\alpha{}y)
\cdot (1-\overline{\epsilon}_{1})-4{\pi}^3{\alpha}^3{y}^3\big)e^{-\pi{y}(\alpha+\frac{1}{\alpha})};\\
(3)\:\:I_{a,3}(\alpha;y)\geq &11{\pi}^2{y}^2e^{-\pi\frac{y}{\alpha}}-(4{\pi}^3\alpha{y}^3+22{\pi}^2{y}^2)e^{-\pi{y}(\alpha+\frac{1}{\alpha})};\\
(4)\:\:I_{a,4}(\alpha;y)\geq & -2{\pi}^3{\alpha}^{-1}{y}^3e^{-\pi\frac{y}{\alpha}}\cdot(1+\overline{\epsilon}_{3})+4{\pi}^3{\alpha}^{-1}{y}^3e^{-\pi{y}(\alpha+\frac{1}{\alpha})}\cdot(1-\overline{\epsilon}_{4}),
\endaligned\end{equation}
where $I_{a,m}(\alpha;y)\:(m=1,2,3,4)$ are defined in \eqref{meani} and $\overline{\epsilon}_{i}\:(i=0,1,2,3,4)$ are defined in \eqref{lem4.7eq0}. Here numerically, $\overline{\epsilon}_{0}\leq \frac{9}{50},\:\:\:\:\overline{\epsilon}_{1}\leq \frac{1}{470},\:\:\:\:\overline{\epsilon}_{2}\leq \frac{1}{470},\:\:\:\:
\overline{\epsilon}_{3}\leq \frac{1}{29},\:\:\:\:\overline{\epsilon}_{4}\leq\frac{1}{29}.$
\end{lemma}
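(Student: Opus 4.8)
The plan is a direct, somewhat lengthy computation; no new idea is needed beyond carefully isolating signs. In each $I_{a,m}$ of \eqref{meani} the index $n$ runs only over $\{-1,0,1\}$, so the first step is to separate the $n=0$ term from the $n=\pm1$ terms; since $\vartheta(X;Y)$ and all its $X$-derivatives are cosine series in $Y$ (hence even in $Y$), while the polynomial prefactor times $e^{-\pi\alpha y n^{2}}$ is even in $n$, the two terms $n=\pm1$ coincide and together contribute twice the value at $Y=\tfrac12$. For example
\[
I_{a,1}(\alpha;y)=\tfrac{3}{8}\alpha^{2}\,\vartheta\big(\tfrac{y}{\alpha};0\big)+2\Big(-\pi^{3}\alpha^{5}y^{3}+\tfrac{3}{2}\pi^{2}\alpha^{4}y^{2}+\tfrac{3}{4}\pi\alpha^{3}y+\tfrac{3}{8}\alpha^{2}\Big)e^{-\pi\alpha y}\,\vartheta\big(\tfrac{y}{\alpha};\tfrac12\big),
\]
and analogously for $m=2,3,4$ with $\vartheta$ replaced by $\vartheta_{X},\vartheta_{XX},\vartheta_{XXX}$. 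Writing each one-dimensional theta value as its series in $q:=e^{-\pi y/\alpha}$ and pulling out the leading term --- $1$ or $2q$ at $Y=0,\tfrac12$ for $\vartheta$, and $-2\pi q,\,2\pi^{2}q,\,-2\pi^{3}q$ for the successive $X$-derivatives --- reproduces exactly the tails $\overline{\epsilon}_{0},\dots,\overline{\epsilon}_{4}$ of \eqref{lem4.7eq0}: $\vartheta(\tfrac{y}{\alpha};\tfrac12)=1-\overline{\epsilon}_{0}$, $\vartheta_{X}(\tfrac{y}{\alpha};0)=-2\pi q(1+\overline{\epsilon}_{2})$, $\vartheta_{X}(\tfrac{y}{\alpha};\tfrac12)=2\pi q(1-\overline{\epsilon}_{1})$, $\vartheta_{XXX}(\tfrac{y}{\alpha};0)=-2\pi^{3}q(1+\overline{\epsilon}_{3})$, $\vartheta_{XXX}(\tfrac{y}{\alpha};\tfrac12)=2\pi^{3}q(1-\overline{\epsilon}_{4})$; in particular item $(4)$ already comes out as an exact identity.

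For the remaining three inequalities I would use only elementary sign information. The $Y=0$ contributions are bounded using $\vartheta(X;0)\ge 1+2e^{-\pi X}$ and $\vartheta_{XX}(X;0)\ge 2\pi^{2}e^{-\pi X}$ (every term of the series is positive), which produce the $\tfrac38\alpha^{2}+\tfrac34\alpha^{2}e^{-\pi y/\alpha}$ in $(1)$ and the $11\pi^{2}y^{2}e^{-\pi y/\alpha}$ in $(3)$. For the $Y=\tfrac12$ part of $I_{a,3}$ one needs $\vartheta_{XX}(X;\tfrac12)\ge -2\pi^{2}e^{-\pi X}$: the series $\vartheta_{XX}(X;\tfrac12)=2\pi^{2}\big(-e^{-\pi X}+16e^{-4\pi X}-81e^{-9\pi X}+\cdots\big)$ is alternating with strictly decreasing magnitudes once $X=y/\alpha\ge\tfrac45$ (the defining inequality of $\Omega_{1}$), so its value lies in $(-2\pi^{2}e^{-\pi X},0)$; multiplying by the positive prefactor $2(\pi\alpha y^{3}+\tfrac{11}{2}y^{2})e^{-\pi\alpha y}$ gives the stated bound. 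The one genuinely delicate point is that the $n=\pm1$ prefactors in $(1)$ and $(2)$ are not sign-definite; the way around it is to split off the negative leading monomial, writing $2\big(-\pi^{3}\alpha^{5}y^{3}+\cdots\big)=-2\pi^{3}\alpha^{5}y^{3}+D_{1}$ with $D_{1}=3\pi^{2}\alpha^{4}y^{2}+\tfrac32\pi\alpha^{3}y+\tfrac34\alpha^{2}>0$ (and $4\pi\big(-\pi^{2}\alpha^{3}y^{3}+\cdots\big)=-4\pi^{3}\alpha^{3}y^{3}+D_{2}$, $D_{2}=12\pi^{2}\alpha^{2}y^{2}+21\pi\alpha y>0$ for $(2)$), and then using $0\le\overline{\epsilon}_{0}<1$,
\[
\big(-2\pi^{3}\alpha^{5}y^{3}+D_{1}\big)(1-\overline{\epsilon}_{0})=-2\pi^{3}\alpha^{5}y^{3}(1-\overline{\epsilon}_{0})+D_{1}(1-\overline{\epsilon}_{0})\ \ge\ -2\pi^{3}\alpha^{5}y^{3}+D_{1}(1-\overline{\epsilon}_{0}),
\]
which after multiplication by $e^{-\pi\alpha y}>0$ is exactly the right-hand side of $(1)$; $(2)$ is identical with $D_{1},\overline{\epsilon}_{0}$ replaced by $D_{2},\overline{\epsilon}_{1}$ and one extra factor $q$ from $\vartheta_{X}(\tfrac{y}{\alpha};\tfrac12)$.

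Finally, the numerical tail bounds $\overline{\epsilon}_{0}\le\tfrac{9}{50}$, $\overline{\epsilon}_{1},\overline{\epsilon}_{2}\le\tfrac1{470}$, $\overline{\epsilon}_{3},\overline{\epsilon}_{4}\le\tfrac1{29}$ on $\Omega_{1}$: each $\overline{\epsilon}_{i}$ is, up to signs, a series in $e^{-\pi(n^{2}-1)y/\alpha}$ or $e^{-\pi n^{2}y/\alpha}$, hence decreasing in $y/\alpha$; since $y\ge\tfrac45\alpha$ on $\Omega_{1}$ one sets $y/\alpha=\tfrac45$ and bounds the rapidly convergent series by its $n=2$ term plus a crude geometric tail. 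Positivity of $\overline{\epsilon}_{0},\overline{\epsilon}_{1},\overline{\epsilon}_{2},\overline{\epsilon}_{3}$ comes from the dominant $n=2$ term, and $|\overline{\epsilon}_{1}|\le\overline{\epsilon}_{2}$, $|\overline{\epsilon}_{4}|\le\overline{\epsilon}_{3}$ by the triangle inequality --- these are exactly the facts used above. I expect the main obstacle to be bookkeeping rather than anything conceptual: four items, each with a $Y=0$ and a $Y=\tfrac12$ part, and one must keep $0\le\overline{\epsilon}_{i}<1$ and $D_{1},D_{2}>0$ throughout $\Omega_{1}$ so that the sign-splitting is legitimate, and note that the estimate $\vartheta_{XX}(X;\tfrac12)\ge-2\pi^{2}e^{-\pi X}$ needs precisely the restriction $y/\alpha\ge\tfrac45$ encoded in $\Omega_{1}$; everything else is routine algebra.
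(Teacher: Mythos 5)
Your proposal is correct and follows essentially the same route as the paper: split each $I_{a,m}$ into the $n=0$ term and the (doubled, by evenness in $Y$) $n=\pm1$ term, identify $\vartheta(\tfrac{y}{\alpha};\tfrac12)=1-\overline{\epsilon}_0$, $\vartheta_X(\tfrac{y}{\alpha};0)=-2\pi e^{-\pi y/\alpha}(1+\overline{\epsilon}_2)$, $\vartheta_X(\tfrac{y}{\alpha};\tfrac12)=2\pi e^{-\pi y/\alpha}(1-\overline{\epsilon}_1)$, bound $\vartheta(\tfrac{y}{\alpha};0)\ge 1+2e^{-\pi y/\alpha}$ and $\pm\vartheta_{XX}(\tfrac{y}{\alpha};\cdot)\ge \mp 2\pi^2e^{-\pi y/\alpha}$ termwise, and use $0\le\overline{\epsilon}_i<1$ to drop the factor $(1-\overline{\epsilon}_i)$ on the negative leading monomials only; the numerical tail bounds are obtained exactly as you describe by evaluating at $y/\alpha=\tfrac45$. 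Your additional observation that item (4) is in fact an identity is consistent with the paper, which omits that case as analogous.
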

\begin{proof}
We will prove the first three items. While for the fourth item, whose estimate is very similar to the first three items, we will omit its details.
By the definition of the classical one-dimensional theta function given by \eqref{TXY}, one has $\vartheta(X;Y),\vartheta_{X}(X;Y),\vartheta_{XX}(X;Y)$ are even functions with respect to $Y$. Furthermore, by \eqref{TXY} and \eqref{lem4.7eq0}, one has
\begin{equation}\aligned\label{summer1}
\vartheta(\frac{y}{\alpha};0)=&1+2\underset{n=1}{\overset{\infty}{\sum}}e^{-{\pi}{n}^2\frac{y}{\alpha}}\geq 1+2e^{-{\pi}\frac{y}{\alpha}},\:\:\vartheta(\frac{y}{\alpha};\frac{1}{2})=
1-\overline{\epsilon}_{0},\\
{\vartheta}_{X}(\frac{y}{\alpha};0)=&-2{\pi}e^{-\pi\frac{y}{\alpha}}\cdot(1+\overline{\epsilon}_{2}),\:\:{\vartheta}_{X}(\frac{y}{\alpha};\frac{1}{2})=
2{\pi}e^{-\pi\frac{y}{\alpha}}\cdot( 1 -\overline{\epsilon}_{1} ),
\endaligned\end{equation}
and
\begin{equation}\aligned\label{check3}
{\vartheta}_{XX}(\frac{y}{\alpha};0)=&2{\pi}^{2}\sum_{n=1}^{\infty}{n}^4 e^{-\pi{n}^2\frac{y}{\alpha}}\geq 2{\pi}^{2}e^{-\pi\frac{y}{\alpha}},\\
{\vartheta}_{XX}(\frac{y}{\alpha};\frac{1}{2})=&2{\pi}^{2}\sum_{n=1}^{\infty}{n}^4 e^{-\pi{n}^2\frac{y}{\alpha}}(-1)^{n}\geq -2{\pi}^{2}e^{-\pi\frac{y}{\alpha}}.\:\:\:\:\:\:\:\:\:\:\:\:\:\:\:\:\:\:\:\:\:\:\:\:\:\:\:\:\:\:\:\:\:\\
\endaligned\end{equation}
Here, since $y\geq\frac{4}{5}\alpha$, by \eqref{lem4.7eq0}, one has $\overline{\epsilon}_{0}=2\underset{n=1}{\overset{\infty}{\sum}}e^{-{\pi}{n}^2\frac{y}{\alpha}}(-1)^{n+1}\leq 2e^{-{\pi}\frac{y}{\alpha}}\leq 2e^{-\frac{4}{5}{\pi}}<\frac{9}{50}$, and
\begin{equation}\aligned\label{check2}
0<\overline{\epsilon}_{1}<\overline{\epsilon}_{2}&=\sum_{n=2}^{\infty} {n}^2e^{-\pi({n}^2-1)\frac{y}{\alpha}}\leq\sum_{n=2}^{\infty} {n}^2e^{-\frac{4}{5}\pi({n}^2-1)}\leq \frac{1}{470}.
\endaligned\end{equation}
(1) We first estimate $I_{a,1}(\alpha;y)$. By \eqref{meani} and \eqref{summer1},
 \begin{equation}\aligned\nonumber
I_{a,1}(\alpha;y)
=& \frac{3}{8}{\alpha}^{2}\vartheta(\frac{y}{\alpha};0)+\big(  -2{\pi}^3{\alpha}^5{y}^3
+3{\pi}^2{\alpha}^4{y}^2+\frac{3}{2}\pi{\alpha}^3y+\frac{3}{4}{\alpha}^2 \big)e^{-\pi\alpha{y}}\vartheta(\frac{y}{\alpha};\frac{1}{2})\\
\geq& \frac{3}{8}{\alpha}^{2}+\frac{3}{4}{\alpha}^2e^{-\pi\frac{y}{\alpha}}-2{\pi}^3{\alpha}^5{y}^3e^{-\pi\alpha{y}}
+\big(3{\pi}^2{\alpha}^4{y}^2+\frac{3}{2}\pi{\alpha}^3y+\frac{3}{4}{\alpha}^2 \big)\cdot (1-\overline{\epsilon}_{0})e^{-\pi\alpha{y}}.\\
\endaligned\end{equation}
(2) For $I_{a,2}(\alpha;y)$, by \eqref{meani}, \eqref{summer1} and \eqref{check2},
\begin{equation}\aligned\nonumber
I_{a,2}(\alpha;y)
&=\frac{21}{4}{\alpha}{y} \: {\vartheta}_{X}(\frac{y}{\alpha};0)+\big(-2{\pi}^2{\alpha}^{3}{y}^{3}+6\pi{\alpha}^2{y}^2+\frac{21}{2}\alpha{y}    \big)e^{-\pi\alpha{y}}{\vartheta}_{X}(\frac{y}{\alpha};\frac{1}{2})\\
&\geq -\frac{21}{2}\pi\alpha{y}(1+\overline{\epsilon}_{2})e^{-\pi\frac{y}{\alpha}}
+\big((12{\pi}^{2}{\alpha}^{2}{y}^{2}+21\pi\alpha{y})(1-\overline{\epsilon}_{1})-4{\pi}^{3}{\alpha}^{3}{y}^{3}\big)e^{-\pi{y}(\alpha+\frac{1}{\alpha})}.
\endaligned\end{equation}
(3) For $I_{a,3}(\alpha;y)$, by \eqref{meani} and \eqref{check3}, one has
\begin{equation}\aligned\nonumber
I_{a,3}(\alpha;y)
&=\frac{11}{2}{y}^{2}\vartheta_{XX}(\frac{y}{\alpha};0)+(2\pi\alpha{y}^{3}+11{y}^2)e^{-\pi\alpha{y}}\vartheta_{XX}(\frac{y}{\alpha};\frac{1}{2})\\
&\geq 11{\pi}^2{y}^2e^{-\pi\frac{y}{\alpha}}-(4{\pi}^3\alpha{y}^3+22{\pi}^2{y}^2)e^{-\pi{y}(\alpha+\frac{1}{\alpha})}.
\endaligned\end{equation}
\end{proof}

\begin{lemma}[The estimate of $J_{k}(\alpha;y),k=1,2,3,4$]\label{4add5}
 For $(\alpha,y)\in \Omega_{1}=\left\{(\alpha,y) |\alpha\geq\frac{3}{2},y\geq \frac{4}{5}\alpha\right\}$,\:it holds that
 \begin{itemize}
  \item [(1)] $\left|J_{1}(\alpha;y)\right|=\left|\underset{\left|n \right|\geq 2}{\sum}(-{\pi}^3{\alpha}^3{y}^3{n}^6
+\frac{3}{2}{\pi}^2{\alpha}^2{y}^2{n}^4+\frac{3}{4}\pi{\alpha}y{n}^2+\frac{3}{8})e^{-{\pi}{\alpha}y{n}^2 }\vartheta(\frac{y}{\alpha};\frac{n}{2})\right|
\leq  10^{-5};$
      \item [(2)] $\left|J_{2}(\alpha;y)\right|=\left|\underset{\left|n \right|\geq 2}{\sum}(-{\pi}^2{\alpha}{y}^3{n}^4
+3\pi{y}^2{n}^2+\frac{21}{4}{\alpha}^{-1}y)e^{-{\pi}{\alpha}y{n}^2 }\vartheta_{X}(\frac{y}{\alpha};\frac{n}{2})\right|\leq  10^{-7};$
\item [(3)] $\left|J_{3}(\alpha;y)\right|=\left|\underset{\left|n \right|\geq 2}{\sum}(\pi{\alpha}^{-1}{y}^3{n}^2
+\frac{11}{2}{\alpha}^{-2}{y}^2)e^{-{\pi}{\alpha}y{n}^2 }\vartheta_{XX}(\frac{y}{\alpha};\frac{n}{2})\right|\leq 10^{-8};$
\item [(4)] $\left|J_{4}(\alpha;y)\right|=\left|\underset{\left|n \right|\geq 2}{\sum}{\alpha}^{-3}{y}^3e^{-{\pi}{\alpha}y{n}^2 }\vartheta_{XXX}(\frac{y}{\alpha};\frac{n}{2})\right|\leq 10^{-8}.$
  \end{itemize}
  \end{lemma}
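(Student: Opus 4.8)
The plan is to bound each $|J_k(\alpha;y)|$ by a direct estimate. Recall from \eqref{search} that $J_k=\alpha^{-2}I_{b,k}$ and from \eqref{mean00} that each $I_{b,k}$ is a sum over $|n|\ge2$ of a polynomial prefactor times $e^{-\pi\alpha yn^2}$ times a derivative $\partial_X^{k-1}\vartheta(\tfrac y\alpha;\tfrac n2)$. On $\Omega_1$ one has $\tfrac y\alpha\ge\tfrac45$ and $\alpha y\ge\tfrac45\alpha^2\ge\tfrac95$, so the theta factors are uniformly bounded while $e^{-\pi\alpha yn^2}$ for $|n|\ge2$ is of size at most $e^{-4\pi\alpha y}\le e^{-36\pi/5}\approx1.5\times10^{-10}$, which dwarfs every polynomial prefactor. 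Everything will therefore come down to the triangle inequality (the terms at $n$ and $-n$ have equal modulus, since the prefactors are polynomials in $n^2$ and the theta factors are even in $Y=n/2$) plus summing a rapidly convergent Gaussian series in $n$, with generous constants.

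First I would record uniform bounds on the $X$-derivatives of $\vartheta$. Since $z=\tfrac12+iy$ forces $x=\tfrac12$ in \eqref{PPP3}, the quantities in \eqref{mean00} are $\partial_X^j\vartheta$ evaluated at $X=\tfrac y\alpha,\ Y=\tfrac n2$; differentiating the series \eqref{TXY} gives $\partial_X^j\vartheta(X;Y)=(-\pi)^j\sum_{m\in\mathbb Z}m^{2j}e^{-\pi m^2X}e^{2\pi imY}$, so for all $Y\in\mathbb R$ and $X\ge\tfrac45$,
\[
\bigl|\partial_X^j\vartheta(X;Y)\bigr|\ \le\ \pi^j\sum_{m\in\mathbb Z}m^{2j}e^{-\pi m^2X}\ \le\ \pi^j\sum_{m\in\mathbb Z}m^{2j}e^{-\frac{4\pi}{5}m^2}\ =:\ M_j,\qquad j=0,1,2,3,
\]
with $M_0\le\vartheta_3(\tfrac45)$ and $M_1,M_2,M_3$ explicit constants (numerically $\approx0.5,\,1.6,\,5$). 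In particular $|\vartheta(\tfrac y\alpha;\tfrac n2)|\le M_0$, $|\partial_X\vartheta(\tfrac y\alpha;\tfrac n2)|\le M_1$, and so on, uniformly in $n$ on $\Omega_1$.

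For $J_1$, writing $t:=\alpha y$ and using $n^2\ge4$ for $|n|\ge2$, the triangle inequality and the $M_0$-bound give
\[
|J_1(\alpha;y)|\ \le\ 2M_0\,e^{-4\pi t}\sum_{n\ge2}\Bigl(\pi^3t^3n^6+\tfrac32\pi^2t^2n^4+\tfrac34\pi t\,n^2+\tfrac38\Bigr)e^{-\pi t(n^2-4)} .
\]
Since $t\ge\tfrac95$ on $\Omega_1$, and $\sum_{n\ge2}n^{2j}e^{-\pi t(n^2-4)}$ is decreasing in $t$ with value $2^{2j}+O(e^{-9\pi})$ at $t=\tfrac95$, the bracketed sum is $\le c\,t^3$ for an explicit $c$; as $t\mapsto t^3e^{-4\pi t}$ decreases for $t\ge\tfrac3{4\pi}$, evaluating at $t=\tfrac95$ gives $|J_1|\le 2M_0\,c\,(\tfrac95)^3e^{-36\pi/5}<10^{-5}$. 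The cases $k=2,3,4$ run identically, the only difference being that the prefactors in \eqref{mean00} are not pure powers of $t=\alpha y$: $J_2$ carries $\alpha y^3,\ y^2,\ \alpha^{-1}y$, $J_3$ carries $\alpha^{-1}y^3,\ \alpha^{-2}y^2$, and $J_4$ carries $\alpha^{-3}y^3$. On $\Omega_1$, $\alpha\le\tfrac54 y$ and $\alpha\ge\tfrac32$ imply every such monomial is $\le(\text{const})\cdot y^4$ and $e^{-\pi\alpha yn^2}\le e^{-6\pi y}e^{-\pi\alpha y(n^2-4)}$ for $|n|\ge2$; the same mechanism yields $|J_k(\alpha;y)|\le(\text{const})\cdot M_{k-1}\,y^4e^{-6\pi y}$, which decreases on $y\ge\tfrac65$ and at $y=\tfrac65$ is below $10^{-7},10^{-8},10^{-8}$ respectively. (For $J_4$ it is cleaner to keep the variable $s:=y/\alpha$, since $\alpha^{-3}y^3e^{-\pi\alpha yn^2}\le s^3e^{-9\pi s}$ for $|n|\ge2$, decreasing in $s\ge\tfrac45$.)

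The only genuine obstacle is bookkeeping for the unbounded region: on $\Omega_1$ both $\alpha$ and $y$ run to $\infty$, so in each $J_k$ one must split the Gaussian $e^{-\pi\alpha yn^2}$ carefully — spending part of it to dominate the polynomial growth (in $t=\alpha y$ for $J_1$, in $y$ for $J_2,J_3$, in $s=y/\alpha$ for $J_4$) and keeping part of it to sum over $n$ — and the right split differs slightly among the four terms. Because the surviving factor $e^{-36\pi/5}\approx1.5\times10^{-10}$ sits far below all four thresholds $10^{-5}$–$10^{-8}$, none of the constants needs to be optimized; the monotonicity statements used ("decreasing for $t\ge\tfrac3{4\pi}$", "decreasing on $y\ge\tfrac65$", etc.) are routine one-variable calculus, and the final inequalities are verified by substituting the corner values $(\alpha,y)=(\tfrac32,\tfrac65)$.
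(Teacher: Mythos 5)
Your proposal is correct and follows essentially the same route as the paper: convert the sum over $|n|\ge 2$ to twice the sum over $n\ge 2$ by evenness, dominate the polynomial prefactor, extract the $n=2$ Gaussian factor $e^{-4\pi\alpha y}$, bound the theta factors (the paper via the ratio $\vartheta_{\bullet}(\tfrac{y}{\alpha};\tfrac n2)/\vartheta_{\bullet}(\tfrac{y}{\alpha};1)\le 1$, you via uniform constants $M_j$ at $X\ge\tfrac45$), and evaluate the resulting monotone majorant at the corner $(\alpha,y)=(\tfrac32,\tfrac65)$, i.e.\ $(r,t)=(\tfrac32,\tfrac45)$. The paper likewise only writes out items (1) and (2) and declares (3), (4) analogous, so your sketch matches it in both substance and level of detail.
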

\begin{proof}
The estimates of the third and fourth items are similar to those of the first and second items, hence we only give the proof of the first and second items.
By \eqref{TXY}, one has $\vartheta(X;Y),\vartheta_{X}(X;Y)$ are even functions with respect to $Y$, and since $\frac{y}{\alpha}\geq\frac{4}{5}$, one has
\begin{equation}\aligned\label{check6}
\vartheta(\frac{y}{\alpha};\frac{n}{2})&= 1+2 \sum_{m=1}^{\infty} e^{-\pi {m}^2\frac{y}{\alpha}}(-1)^{mn}\geq1-2 \sum_{m=1}^{\infty} e^{-\pi {m}^2\frac{y}{\alpha}}\geq1-2 \sum_{m=1}^{\infty} e^{-\pi {m}^2\frac{4}{5}} >0,\:\:\forall n\in \mathbb{Z}.\\
\vartheta(\frac{y}{\alpha};1)&=1+2 \sum_{m=1}^{\infty} e^{-\pi {m}^2\frac{y}{\alpha}}\leq1+2 \sum_{m=1}^{\infty} e^{-\pi {m}^2\frac{4}{5}}\leq \frac{6}{5},  \:\:\:\: \:\:\:\:\frac{\vartheta(\frac{y}{\alpha};\frac{n}{2})}{\vartheta(\frac{y}{\alpha};1)}\leq1.
\endaligned\end{equation}
Similar to the analysis of \eqref{check6}, by \eqref{TXY}, as $t\geq\frac{4}{5}$, one gets
\begin{equation}\aligned\label{atime1}
-{\vartheta}_{X}(t;1)&=2\pi \sum_{m=1}^{\infty}m^2 e^{-\pi {m}^2{t}}\leq2\pi \sum_{m=1}^{\infty}m^2 e^{-\frac{4}{5}\pi {m}^2}\leq \frac{3}{5},\\
{\vartheta}_{X}(t;\frac{n}{2})&= -2\pi \sum_{m=1}^{\infty}m^2 e^{-\pi {m}^2{t}}(-1)^{mn},\:\:\:\:\:\frac{\vartheta_{X}(t;\frac{n}{2})}{\vartheta_{X}(t;1)}\leq 1.
\endaligned\end{equation}
For simplicity, we denote that
\begin{equation}\aligned\nonumber
E_{1}(\alpha;y):=&128{\pi}^3{\alpha}^3{y}^3e^{-4{\pi}{\alpha}y }\cdot \vartheta(\frac{y}{\alpha};1) \cdot\Big(1+\sum_{n=3}^{\infty}\frac{{n}^6}{2^6}e^{-{\pi}{\alpha}y({n}^2 -4) }\Big).\\
E_{2}(r;t):=&32{\pi}^2{r}^4{t}^3 e^{-4 {\pi}{r}^2t  }\cdot(-{\vartheta}_{X}(t;1) \cdot \Big(1+\sum_{n=3}^{\infty}\frac{{n}^4}{2^4}e^{-{\pi}{r}^2t ({n}^2-4)}\Big).
\endaligned\end{equation}
(1) For $J_{1}(\alpha;y)$, since $\alpha\geq\frac{3}{2},y\geq\frac{4}{5}\alpha\geq\frac{6}{5}$, and by \eqref{check6}, one has
\begin{equation}\aligned\label{check7}
\left|J_{1}(\alpha;y)\right|&=\left|\underset{\left|n \right|\geq 2}{\sum}(-{\pi}^3{\alpha}^3{y}^3{n}^6
+\frac{3}{2}{\pi}^2{\alpha}^2{y}^2{n}^4+\frac{3}{4}\pi{\alpha}y{n}^2+\frac{3}{8})e^{-{\pi}{\alpha}y{n}^2 }\vartheta(\frac{y}{\alpha};\frac{n}{2})\right|\\
&\leq 2\underset{n\geq 2}{\sum}{\pi}^3{\alpha}^3{y}^3{n}^6e^{-{\pi}{\alpha}y{n}^2 }\vartheta(\frac{y}{\alpha};\frac{n}{2})\\
&\leq128{\pi}^3{\alpha}^3{y}^3e^{-4{\pi}{\alpha}y }\vartheta(\frac{y}{\alpha};1)\cdot\Big(1+\sum_{n=3}^{\infty}\frac{{n}^6}{2^6}e^{-{\pi}{\alpha}y({n}^2 -4) }\frac{\vartheta(\frac{y}{\alpha};\frac{n}{2})}{\vartheta(\frac{y}{\alpha};1)}\Big).
\endaligned\end{equation}
Combining \eqref{check6}, \eqref{check7} with $\alpha\geq\frac{3}{2},y\geq\frac{6}{5}$, one has
$$\left|J_{1}(\alpha;y)\right|\leq E_{1}(\alpha;y)\leq E_{1}(\frac{3}{2};\frac{6}{5})\leq 10^{-5}.$$
(2) For $J_{2}(\alpha;y)$, let $r=\alpha,\:t=\frac{y}{\alpha}$, then $r \geq \frac{3}{2},\:t\geq \frac{4}{5}$. By \eqref{TXY} and \eqref{atime1}, one has
 \begin{equation}\aligned\label{atime2}
 \left|J_{2}(\alpha;y)\right|&=\left|J_{2}(r;rt)\right|=2 \left|\underset{n\geq 2}{\sum}(-{\pi}^2{r}^4{t}^3{n}^4
+3\pi{r}^2{t}^2{n}^2+\frac{21}{4}t)e^{-{\pi}{r}^2t {n}^2 }\vartheta_{X}(t;\frac{n}{2})\right|\\
&\leq 2 \left|\underset{n\geq 2}{\sum}-{\pi}^2{r}^4{t}^3{n}^4e^{-{\pi}{r}^2t {n}^2 }\vartheta_{X}(t;\frac{n}{2})\right|\\
&\leq 32{\pi}^2{r}^4{t}^3 e^{-4 {\pi}{r}^2t  }\big(-\vartheta_{X}(t;1)\big)\cdot \Big(1+\sum_{n=3}^{\infty}\frac{{n}^4}{2^4}e^{-{\pi}{r}^2t ({n}^2-4) }\frac{\vartheta_{X}(t;\frac{n}{2})}{\vartheta_{X}(t;1)}\Big).\\
\endaligned\end{equation}
Combining \eqref{atime1}, \eqref{atime2} with $r \geq \frac{3}{2},\:t\geq \frac{4}{5}$, one gets
$$\left|J_{2}(\alpha;y)\right|\leq E_{2}(r;t)\leq E_{2}(\frac{3}{2};\frac{4}{5})\leq 10^{-7}.$$
\end{proof}

\subsection{Region $\Omega_{2}$: estimate of $\Big(\frac{{\partial}^2}{\partial{y^2}}+\frac{2}{y}\frac{\partial}{\partial{y}}\Big)\mathcal{R}(\alpha;\frac{1}{2}+iy)$}

In this Subsection,\:we aim to prove that
\begin{lemma}\label{4lema13} Assume that $(\alpha;y)\in \Omega_{2}=\left\{(\alpha,y) |\:\:\alpha\geq\frac{3}{2},y\in[\frac{\sqrt{3}}{2},\frac{4}{5}\alpha]\right\}$,\:it holds that
\begin{equation}\aligned\nonumber
\Big(\frac{{\partial}^2}{\partial{y^2}}+\frac{2}{y}\frac{\partial}{\partial{y}}\Big)\mathcal{R}(\alpha;\frac{1}{2}+iy)\geq \frac{77}{50{y}^4}e^{-\pi\frac{\alpha}{y}}>0.
\endaligned\end{equation}
\end{lemma}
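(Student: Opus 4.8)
The starting point is the operator factorization $\frac{\partial^2}{\partial y^2}+\frac{2}{y}\frac{\partial}{\partial y}=\frac{1}{y^2}\frac{\partial}{\partial y}\big(y^2\frac{\partial}{\partial y}\big)=:\mathcal{L}$ together with a layerwise decomposition of $\mathcal{R}$. By Lemma \ref{respect} and the partial Poisson summation of Lemma \ref{Lemmatree}, grouping its terms according to the layer index $n=0$, $n=\pm1$, $|n|\ge 2$ and using $\sqrt{y/\alpha}\,\vartheta(\tfrac y\alpha;0)=\vartheta_3(\tfrac\alpha y)$ and $\sqrt{y/\alpha}\,\vartheta(\tfrac y\alpha;\tfrac12)=\vartheta_2(\tfrac\alpha y)$, one obtains on the line $x=\tfrac12$
\[
\mathcal{R}\big(\alpha;\tfrac12+iy\big)=R_0+R_1+R_{\ge2},\qquad R_0=\frac{2}{y^2}\sum_{m\ge1}m^4e^{-\pi\alpha m^2/y},\qquad R_1=2\sum_{m\in\mathbb Z}q_m^2e^{-\pi\alpha q_m},
\]
with $q_m:=y+\tfrac{(m-1/2)^2}{y}$, and $R_{\ge2}$ the sum of the layers $|n|\ge2$, each term of which carries a factor $e^{-\pi\alpha n^2 y}$ with $n^2\ge4$. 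The crucial structural point, which is exactly what makes the hexagonal point a critical point (Lemma \ref{4lematem}) and the sign of $\mathcal{L}\mathcal{R}$ delicate on $\Omega_2$, is that the $m=0$ layer $\vartheta_3(\tfrac\alpha y)\to1$ is annihilated by $\partial_\alpha^2$, so $R_0$ is already only of size $O(e^{-\pi\alpha/y})$: the naive ``leading'' contributions cancel. (Alternatively, one may simply differentiate once more in $y$ the expansion of $\tfrac{\partial}{\partial y}\mathcal{R}$ from Lemma \ref{estab}.) Since $\mathcal{L}$ acts term by term, the plan is: (i) extract a clean positive lower bound of order $y^{-4}e^{-\pi\alpha/y}$ from $\mathcal{L}R_0$; (ii) control $\mathcal{L}R_1$ against it; (iii) discard the exponentially tiny $\mathcal{L}R_{\ge2}$.

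For (i), a direct computation gives $\mathcal{L}\big[y^{-2}e^{-c/y}\big]=y^{-6}(c^2-4cy+2y^2)e^{-c/y}$, hence $\mathcal{L}R_0=2\sum_{m\ge1}m^4e^{-\pi\alpha m^2/y}\,y^{-6}\big(\pi^2\alpha^2m^4-4\pi\alpha m^2y+2y^2\big)$. On $\Omega_2$ we have $\alpha\ge\tfrac54 y$, and the quadratic $\pi^2\alpha^2-4\pi\alpha y+2y^2$ in $\alpha$ is positive and increasing there, its larger root $\tfrac{(2+\sqrt2)y}{\pi}$ being $<\tfrac54 y$; likewise every $m\ge2$ summand is nonnegative, so keeping only $m=1$ already yields $\mathcal{L}R_0\ge 2y^{-6}\big(\pi^2\alpha^2-4\pi\alpha y+2y^2\big)e^{-\pi\alpha/y}>0$, of the required size. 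For (ii), using $\mathcal{L}q_m=\tfrac2y$ one finds $\mathcal{L}\big[q_m^2e^{-\pi\alpha q_m}\big]=e^{-\pi\alpha q_m}\big[(\pi^2\alpha^2q_m^2-4\pi\alpha q_m+2)(q_m')^2+q_m(2-\pi\alpha q_m)\tfrac2y\big]$, whose dominant $m=0,1$ terms carry the exponential $e^{-\pi\alpha(y+1/(4y))}$. Here the definition of $\Omega_2$ is used decisively: $y\ge\tfrac{\sqrt3}{2}$ is \emph{equivalent} to $y+\tfrac1{4y}\ge\tfrac1y$, with equality exactly at the hexagonal value $y=\tfrac{\sqrt3}{2}$; thus $e^{-\pi\alpha(y+1/(4y))}\le e^{-\pi\alpha/y}$ on $\Omega_2$. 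After dividing by $e^{-\pi\alpha/y}$ the quantity $\mathcal{L}R_0+\mathcal{L}R_1$ therefore becomes a polynomial of degree two in $\alpha$ (from the $m=1$ part of $R_0$) plus a degree-two correction damped by $e^{-\pi\alpha(y-3/(4y))}\le1$ (from $R_1$) plus a nonnegative remainder; the inequality $\mathcal{L}R_0+\mathcal{L}R_1\ge\tfrac{77}{50}y^{-4}e^{-\pi\alpha/y}$ then reduces to a two–variable estimate on $\{r:=\alpha\ge\tfrac32,\ t:=y/\alpha\le\tfrac45,\ rt\ge\tfrac{\sqrt3}{2}\}$, which is verified by monotonicity in $r$ for fixed $t$ and reduction to the edge $\alpha=\tfrac32$, followed by a one–variable check in $y\in[\tfrac{\sqrt3}{2},\tfrac65]$ (near the corner $(\tfrac32,\tfrac{\sqrt3}{2})$ one has $\mathcal{L}R_0\approx35\,e^{-\pi\alpha/y}$ while $\mathcal{L}R_1\approx-19\,e^{-\pi\alpha/y}$, leaving ample room).

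Finally, for (iii), every term of $R_{\ge2}$ carries $e^{-\pi\alpha n^2y}$ with $n^2\ge4$, and $y\ge\tfrac{\sqrt3}{2}$ gives $4y^2\ge3$, i.e.\ $e^{-4\pi\alpha y}\le e^{-3\pi\alpha/y}$; since $\partial_\alpha^2$ and $\mathcal{L}$ only produce polynomial prefactors in $(\alpha,y)$, and $\alpha\ge\tfrac32$ with $\alpha\ge\tfrac54 y$, these are overwhelmed by the gain of $e^{-3\pi\alpha/y}$ over $e^{-\pi\alpha/y}$, so $|\mathcal{L}R_{\ge2}|\le\varepsilon\,y^{-4}e^{-\pi\alpha/y}$ with $\varepsilon$ tiny (bounded at $\alpha=\tfrac32,\ y=\tfrac{\sqrt3}{2}$); the needed ratios of $\vartheta$-derivatives are handled exactly as in the proofs of Lemmas \ref{splendid} and \ref{good}, via Lemmas \ref{Lemmamoist}, \ref{Lemmamoistt}, \ref{Lemmatime}, \ref{Lemmavar1} and \ref{Lemmavar2}. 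Adding (i)--(iii) gives $\big(\tfrac{\partial^2}{\partial y^2}+\tfrac2y\tfrac{\partial}{\partial y}\big)\mathcal{R}(\alpha;\tfrac12+iy)=\mathcal{L}R_0+\mathcal{L}R_1+\mathcal{L}R_{\ge2}\ge\tfrac{77}{50y^4}e^{-\pi\alpha/y}>0$. The one genuine obstacle is step (ii): near $y=\tfrac{\sqrt3}{2}$ the two principal exponentials coincide and $\mathcal{L}R_0$, $\mathcal{L}R_1$ both grow like $\alpha^2e^{-\pi\alpha/y}$, so one must follow the partial cancellation between them carefully — retaining enough subleading Poisson terms — in order to extract the stated explicit positive constant.
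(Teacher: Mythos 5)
Your proposal is correct and follows essentially the same route as the paper: the paper's identity in Lemma \ref{exprr} is exactly the termwise application of $\frac{1}{y^2}\frac{\partial}{\partial y}\big(y^2\frac{\partial}{\partial y}\big)$ to each lattice term $Q^2e^{-\pi\alpha Q}$ (using $\mathcal{L}Q=\frac{2n^2}{y}$), its dominant contributions come from the same lattice points $(0,\pm1)$ and the four nearest points on the rows $n=\pm1$, and its lower bound function $\mathcal{Y}(\alpha;y)$ in \eqref{even} begins with precisely your quadratic $\frac{2}{y^6}\big(\pi^2\alpha^2-4\pi\alpha y(1+\epsilon_a)+2y^2\big)$ made positive by $\alpha\geq\frac{5}{4}y$, together with the comparison $e^{-\pi\alpha(y+\frac{1}{4y})}\leq e^{-\pi\alpha/y}$ from $y\geq\frac{\sqrt{3}}{2}$. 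The only difference is bookkeeping (you group by lattice rows, the paper by the five weighted sums $\mathcal{W}_a,\dots,\mathcal{W}_e$), and both arguments defer the final constant $\frac{77}{50}$ to a two-variable monotonicity-plus-boundary check of comparable rigor.
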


To prove Lemma \ref{4lema13}, based on the expression of $\mathcal{R}(\alpha;z)$ given by \eqref{RR}, by a direct computation, we first provide the expression for $\Big(\frac{{\partial}^2}{\partial{y^2}}+\frac{2}{y}\frac{\partial}{\partial{y}}\Big)\mathcal{R}(\alpha;\frac{1}{2}+iy)$ as follows.

\begin{lemma}[An identity for $\big(\frac{{\partial}^2}{\partial{y^2}}+\frac{2}{y}\frac{\partial}{\partial{y}}\big)\mathcal{R}(\alpha;\frac{1}{2}+iy)$]
\label{exprr}
For $\alpha,\:y>0$,\:it holds that
  \begin{equation}\aligned\nonumber
\Big(\frac{{\partial}^2}{\partial{y^2}}+\frac{2}{y}\frac{\partial}{\partial{y}}\Big)\mathcal{R}(\alpha;\frac{1}{2}+iy)
=&{\pi}^2{\alpha}^2  \mathcal{W}_{a}(\alpha;y)
+2 \mathcal{W}_{b}(\alpha;y)+\frac{4}{y}\mathcal{W}_{c}(\alpha;y)\\
&-4\pi\alpha \mathcal{W}_{d}(\alpha;y)-\frac{2\pi\alpha}{y}\mathcal{W}_{e}(\alpha;y),\\
\endaligned\end{equation}
where
\begin{equation}\aligned\label{4lem3}
\mathcal{W}_{a}(\alpha;y)&:=\underset{n,m}{\sum}(n^2-\frac{(m+\frac{n}{2})^2}{y^2})^2(yn^2+\frac{(m+\frac{n}{2})^2}{y})^2
e^{-\pi\alpha(yn^2+\frac{(m+\frac{n}{2})^2}{y})},\\
\mathcal{W}_{b}(\alpha;y)&:= \underset{n,m}{\sum}(n^2-\frac{(m+\frac{n}{2})^2}{y^2})^2e^{-\pi\alpha(yn^2+\frac{(m+\frac{n}{2})^2}{y})},\\
\mathcal{W}_{c}(\alpha;y)&:=\underset{n,m}{\sum}{n}^2(yn^2+\frac{(m+\frac{n}{2})^2}{y})e^{-\pi\alpha(yn^2+\frac{(m+\frac{n}{2})^2}{y})},\\
\mathcal{W}_{d}(\alpha;y)&:=\underset{n,m}{\sum}(n^2-\frac{(m+\frac{n}{2})^2}{y^2})^2(yn^2+\frac{(m+\frac{n}{2})^2}{y})
e^{-\pi\alpha(yn^2+\frac{(m+\frac{n}{2})^2}{y})},\\
\mathcal{W}_{e}(\alpha;y)&:=\underset{n,m}{\sum}{n}^2(yn^2+\frac{(m+\frac{n}{2})^2}{y})^2e^{-\pi\alpha(yn^2+\frac{(m+\frac{n}{2})^2}{y})}.\\
\endaligned\end{equation}
\end{lemma}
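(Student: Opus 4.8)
The plan is to prove the identity by a direct termwise differentiation of the lattice series, so that the whole statement collapses to a one-line chain-rule computation for a single exponential profile. First I would take the expression \eqref{RR} for $\mathcal{R}(\alpha;z)$, set $z=\tfrac12+iy$, and relabel the two summation indices so that the one entering as $n^2y$ matches the convention used in \eqref{4lem3}. Writing $Q=Q(n,m;y):=n^2y+\frac{(m+\tfrac n2)^2}{y}$ for the (normalized) norm of the lattice point $n(\tfrac12+iy)+m$, this yields
\[
\mathcal{R}(\alpha;\tfrac12+iy)=\sum_{n,m\in\mathbb{Z}}Q^2e^{-\pi\alpha Q},
\]
and it remains to apply the operator $L:=\dfrac{\partial^2}{\partial y^2}+\dfrac 2y\dfrac{\partial}{\partial y}$ to this sum summand by summand.

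Second, I would record two elementary facts about $Q=\tfrac Ay+By$ with $A:=(m+\tfrac n2)^2$ and $B:=n^2$: one has $Q_y=B-\tfrac A{y^2}=n^2-\frac{(m+n/2)^2}{y^2}$, and, crucially, the cancellation $Q_{yy}+\tfrac2yQ_y=\tfrac{2B}{y}=\tfrac{2n^2}{y}$ (the two $A/y^3$ terms cancel). This is exactly the algebraic reason $L$ is the natural operator here, and why it combines so cleanly with Lemma \ref{4lematem}. With $f(Q):=Q^2e^{-\pi\alpha Q}$, so that $f'(Q)=(2Q-\pi\alpha Q^2)e^{-\pi\alpha Q}$ and $f''(Q)=(2-4\pi\alpha Q+\pi^2\alpha^2Q^2)e^{-\pi\alpha Q}$, the chain rule gives
\[
Lf(Q)=f''(Q)\,Q_y^2+f'(Q)\Big(Q_{yy}+\tfrac2yQ_y\Big)=f''(Q)\,Q_y^2+\tfrac{2n^2}{y}\,f'(Q).
\]

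Third, I would expand $Lf(Q)$ into its five monomial pieces --- the $\pi^2\alpha^2Q_y^2Q^2$, the $2Q_y^2$, the $\tfrac4yn^2Q$, the $-4\pi\alpha Q_y^2Q$, and the $-\tfrac{2\pi\alpha}{y}n^2Q^2$ term, each multiplied by $e^{-\pi\alpha Q}$ --- sum over $(n,m)$, and match against the definitions in \eqref{4lem3}: with $Q_y=n^2-\frac{(m+n/2)^2}{y^2}$ and $Q=n^2y+\frac{(m+n/2)^2}{y}$ these five sums are precisely $\mathcal{W}_a,\mathcal{W}_b,\mathcal{W}_c,\mathcal{W}_d,\mathcal{W}_e$, with coefficients $\pi^2\alpha^2$, $2$, $\tfrac4y$, $-4\pi\alpha$, $-\tfrac{2\pi\alpha}{y}$ respectively, which is the asserted identity. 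The one point needing a word of justification is the legitimacy of differentiating the series term by term; this follows from the absolute and locally uniform convergence of the differentiated series, since for $y$ in any compact subset of $(0,\infty)$ the quantity $Q(n,m;y)$ is bounded below by a positive multiple of $n^2+m^2$, so the Gaussian factor $e^{-\pi\alpha Q}$ dominates every polynomial prefactor.

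I do not expect a genuine obstacle: the argument is essentially bookkeeping. The only step that must be carried out with care is the cancellation $Q_{yy}+\tfrac2yQ_y=\tfrac{2n^2}{y}$ --- an error there would leave spurious $y^{-3}$ terms and spoil the clean five-term structure --- after which the remaining work is routine algebra together with the standard convergence argument for differentiating theta-type lattice sums.
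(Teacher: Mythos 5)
Your proposal is correct and is essentially the paper's own argument: the paper presents Lemma \ref{exprr} as following ``by a direct computation'' from \eqref{RR} and omits the details, and your computation is exactly that direct computation, correctly organized around the key cancellation $Q_{yy}+\tfrac{2}{y}Q_y=\tfrac{2n^2}{y}$ and the matching of the five resulting sums with $\mathcal{W}_a,\dots,\mathcal{W}_e$. The termwise-differentiation justification via the lower bound $Q\geq c(n^2+m^2)$ on compact $y$-sets is also sound.
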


There are five types of double sums appeared in Lemma \ref{exprr}. By the lower bound estimates of $\mathcal{W}_{a},\:J_{b},\:\mathcal{W}_{c}$ and the upper bound estimates of $\mathcal{W}_{d},\:\mathcal{W}_{e}$ that will be placed at the end of this Subsection, we can derive a lower bound for $\big(\frac{\partial^2}{\partial y^2}+\frac{2}{y}\frac{\partial}{\partial y}\big)\mathcal{R}(\alpha;\frac{1}{2}+iy)$ in Lemma \ref{4lemm10} which will facilitate the proof of Lemma \ref{4lema13}.

\begin{lemma}[The lower bound of $\big(\frac{\partial^2}{\partial y^2}+\frac{2}{y}\frac{\partial}{\partial y}\big)\mathcal{R}(\alpha;\frac{1}{2}+iy)$]

  \label{4lemm10}
  Assume that $(\alpha;y)\in \Omega_{2}$,\:it holds that
  \begin{equation}\aligned\nonumber
\big(\frac{\partial^2}{\partial y^2}+\frac{2}{y}\frac{\partial}{\partial y}\big)\mathcal{R}(\alpha;\frac{1}{2}+iy)
\geq\frac{4}{y^4}e^{-\pi\frac{\alpha}{y}}\cdot
\mathcal{Y}(\alpha;y),
\endaligned\end{equation}
where
\begin{equation}\aligned\label{even}
\mathcal{Y}(\alpha;y):=1&+\frac{{\pi}^2{\alpha}^2}{2{y}^2}-\frac{2\pi\alpha}{y}(1+{\epsilon}_{a})+y^4e^{-\pi\alpha(y-\frac{3}{4y})}\cdot\Big( {\pi}^2{\alpha}^2{y}^2 (1-\frac{1}{16{y}^4})^2 +2(1-\frac{1}{4{y}^2})^2 \\
&+4+\frac{\pi\alpha}{y}+\frac{1}{{y}^2}+\frac{\pi\alpha}{4{y}^3}-4\pi\alpha{y}\cdot(1+{\epsilon}_{b})-2\pi\alpha y\cdot(1+\epsilon_{c})\Big).
\endaligned\end{equation}
Here ${\epsilon}_a$ is small and consists of four smaller parts
$$
{\epsilon}_a:=\epsilon_{a,1}+\epsilon_{a,2}+\epsilon_{a,3}+\epsilon_{a,4},
$$
and each $\epsilon_{a,j}(j=1,2,3,4)$ can be expressed by
\begin{equation}\aligned\nonumber
\epsilon_{a,1}:&=2\underset{n=1}{\overset{\infty}{\sum}}e^{-4{\pi}{\alpha}y{n}^2},\:\:\:\:\:
\epsilon_{a,2}:=\underset{n=2}{\overset{\infty}{\sum}}n^6e^{-\pi\alpha\frac{n^2-1}{y}},\:\:\:\:
\epsilon_{a,3}:=\epsilon_{a,1}\cdot\epsilon_{a,2},\\
\epsilon_{a,4}:&=64\:{y}^6e^{-{\pi}{\alpha}(4y-\frac{1}{y})}(1+\underset{n=2}{\overset{\infty}{\sum}}e^{-4{\pi}{\alpha}y(n^2-1)})\cdot\vartheta_3(\frac{\alpha}{y}).\\
\endaligned\end{equation}
Here ${\epsilon}_b$ is small and consists of four smaller parts
\begin{equation}\aligned\nonumber
{\epsilon}_b:=\epsilon_{b,1}+\epsilon_{b,2}+\epsilon_{b,3}+\epsilon_{b,4},
\endaligned\end{equation}
and
\begin{equation}\aligned\nonumber
{\epsilon}_{b,1}:&=\underset{n=2}{\overset{\infty}{\sum}}(2n-1)^6e^{-4{\pi}{\alpha}y(n-1)\cdot n},\:\:\:\:
{\epsilon}_{b,2}:=\underset{n=2}{\overset{\infty}{\sum}}e^{-{\pi}{\alpha}\frac{(n-1)\cdot n}{y}},\:\:\:
{\epsilon}_{b,3}:={\epsilon}_{b,1}\cdot {\epsilon}_{b,2},\\
{\epsilon}_{b,4}:&=\frac{1}{64{y}^6}(1+\underset{n=2}{\overset{\infty}{\sum}}e^{-4{\pi}{\alpha}y(n-1)\cdot n})(1+\underset{n=2}{\overset{\infty}{\sum}}(2n-1)^{6}e^{-{\pi}{\alpha}\frac{(n-1)\cdot n}{y}}).\\
\endaligned\end{equation}
Here ${\epsilon}_c$ is small and consists of eight smaller parts
$$
{\epsilon}_{c}:=\underset{j=1}{\overset{3}{\sum}}\epsilon_{b,j}+\underset{j=1}{\overset{5}{\sum}}\epsilon_{c,j},
$$
and each $\epsilon_{c,j}(j=1,2,3,4,5)$ can be expressed by
\begin{equation}\aligned\nonumber
&\epsilon_{c,1}:=32e^{-\pi\alpha(3y-\frac{1}{4y})}\cdot(1+\underset{n=2 }{\overset{\infty}{\sum}}n^{6}e^{-4\pi\alpha{y}({n}^2-1)})\cdot\vartheta_3(\frac{\alpha}{y}),\\
&\epsilon_{c,2}:=\frac{4}{y^4}e^{-3\pi\alpha(y+\frac{1}{4y})}(1+\underset{n=2}{\overset{\infty}{\sum}}n^{2}e^{-4{\pi}{\alpha}y(n^2-1)})
(1+\underset{m=2}{\overset{\infty}{\sum}}m^{4}e^{-\pi\alpha\frac{m^2-1}{y}}),\\
&\epsilon_{c,3}:=\frac{1}{16y^4}\big(1+\underset{n=2}{\overset{\infty}{\sum}}(2n-1)^2e^{-4{\pi}{\alpha}y(n-1)\cdot n} \big)\cdot\big(1+\underset{m=2}{\overset{\infty}{\sum}}(2m-1)^4e^{-\pi\alpha\frac{(m-1)\cdot m}{y}} \big),\\
&\epsilon_{c,4}:=\frac{32}{y^2}e^{-3\pi\alpha(y+\frac{1}{4y})}(1+\underset{n=2}{\overset{\infty}{\sum}}n^{4}e^{-4{\pi}{\alpha}y(n^2-1)})
(1+\underset{m=2}{\overset{\infty}{\sum}}m^{2}e^{-\pi\alpha\frac{m^2-1}{y}}),\\
&\epsilon_{c,5}:=\frac{1}{2y^2}\big(1+\underset{n=2}{\overset{\infty}{\sum}}(2n-1)^4e^{-4{\pi}{\alpha}y(n-1)\cdot n} \big)\cdot\big(1+\underset{m=2}{\overset{\infty}{\sum}}(2m-1)^2e^{-\pi\alpha\frac{(m-1)\cdot m}{y}} \big).\\
\endaligned\end{equation}
\end{lemma}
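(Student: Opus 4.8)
The plan is to start from the identity of Lemma~\ref{exprr}, which expresses $\big(\frac{\partial^2}{\partial y^2}+\frac2y\frac{\partial}{\partial y}\big)\mathcal{R}(\alpha;\frac12+iy)$ as the linear combination ${\pi}^2{\alpha}^2\mathcal{W}_a+2\mathcal{W}_b+\frac4y\mathcal{W}_c-4\pi\alpha\mathcal{W}_d-\frac{2\pi\alpha}{y}\mathcal{W}_e$, and to estimate the five double sums $\mathcal{W}_a,\dots,\mathcal{W}_e$ separately on $\Omega_2$. Each $\mathcal{W}_\bullet$ is a sum over $(n,m)\in\mathbb{Z}^2$ of a polynomial in $n,\,y,\,(m+\tfrac n2)$ times $\exp\!\big(-\pi\alpha(yn^2+\tfrac{(m+n/2)^2}{y})\big)$. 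On $\Omega_2$ one has $\alpha y\ge\frac{3\sqrt3}{4}$ and $\frac\alpha y\ge\frac54$, so the exponent $yn^2+\tfrac{(m+n/2)^2}{y}$ is minimized at $(n,m)=(0,\pm1)$ with value $\tfrac1y$, with the next-smallest value $y+\tfrac1{4y}$ attained at $(n,m)\in\{(\pm1,0),(\pm1,\mp1)\}$; every other lattice point gives a strictly larger exponent. The strategy is therefore to peel off these finitely many ``main terms'' and show the remaining tails are dominated by geometrically decaying series, repackaged as the error quantities $\epsilon_{a,j},\epsilon_{b,j},\epsilon_{c,j}$ built from $\vartheta_3(\tfrac\alpha y)$ and from clean geometric sums in $e^{-\pi\alpha y}$. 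A useful device here is the inequality $e^{-4\pi\alpha y}\le e^{-3\pi\alpha(y+\frac1{4y})}$, valid precisely because $y\ge\frac{\sqrt3}{2}$ on $\Omega_2$, which lets the $|n|\ge2$ contributions be rewritten with the common factor $e^{-\pi\alpha(y+\frac1{4y})}$.

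Next I would record the needed one-sided bounds for each sum as auxiliary lemmas (placed at the end of the subsection, as the statement anticipates). For $\mathcal{W}_a,\mathcal{W}_b,\mathcal{W}_c$, which enter with positive coefficients, only a lower bound is required: since every summand is nonnegative, it suffices to retain the main terms — the $(0,\pm1)$ and $(\pm1,*)$ contributions for $\mathcal{W}_a$ and $\mathcal{W}_b$, and, since the factor $n^2$ annihilates the $n=0$ slice, the four $(\pm1,*)$ contributions for $\mathcal{W}_c$. For $\mathcal{W}_d,\mathcal{W}_e$, which enter with negative coefficients, an upper bound is required: retain the same main terms and bound the remaining tail. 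For the tail one factors $\exp\!\big(-\pi\alpha(yn^2+\tfrac{(m+n/2)^2}{y})\big)=e^{-\pi\alpha y n^2}e^{-\pi\alpha(m+n/2)^2/y}$, dominates the polynomial prefactors $n^2,n^4,n^6$ and $(m+\tfrac n2)^{0,2,4}$ by their values times monotone exponential envelopes, and sums the resulting products of one-variable series: the outer sums over $n$ are geometric-type series in $e^{-\pi\alpha y}$, and the inner sums over $m$ (for $n$ fixed) are $\vartheta_3(\tfrac\alpha y)$-type for $n$ even and shifted half-integer theta sums for $n$ odd — this parity split is exactly what produces the two ``flavors'' of error terms, $e^{-4\pi\alpha y(n^2-1)}$ versus $e^{-4\pi\alpha y(n-1)n}$ and $e^{-\pi\alpha(m^2-1)/y}$ versus $e^{-\pi\alpha(m-1)m/y}$.

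Finally I would substitute these bounds into the identity of Lemma~\ref{exprr} and factor out the common quantity $\frac4{y^4}e^{-\pi\alpha/y}$. The $(0,\pm1)$ main terms, collected from $\mathcal{W}_a,\mathcal{W}_b,\mathcal{W}_d$, produce precisely $1+\frac{\pi^2\alpha^2}{2y^2}-\frac{2\pi\alpha}{y}(1+\epsilon_a)$; the $(\pm1,*)$ main terms carry the common factor $e^{-\pi\alpha(y+\frac1{4y})}=e^{-\pi\alpha/y}e^{-\pi\alpha(y-\frac3{4y})}$, so relative to $\frac4{y^4}e^{-\pi\alpha/y}$ they contribute $y^4e^{-\pi\alpha(y-\frac3{4y})}$ times the bracketed polynomial combination appearing in $\mathcal{Y}(\alpha;y)$, after elementary simplifications such as $(1-\tfrac1{4y^2})(y+\tfrac1{4y})=y(1-\tfrac1{16y^4})$ and $(y+\tfrac1{4y})^2=y^2(1+\tfrac1{2y^2}+\tfrac1{16y^4})$. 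Adding the two pieces yields $\frac4{y^4}e^{-\pi\alpha/y}\,\mathcal{Y}(\alpha;y)$, which is the asserted lower bound.

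The step I expect to be the main obstacle is the uniform control of the tails of the five double sums: one must carefully identify the ``main'' lattice points for each $\mathcal{W}_\bullet$, track how the polynomial prefactors interact with the two-parameter exponential, and verify that every resulting one-dimensional series (geometric in $e^{-\pi\alpha y}$, and $\vartheta_3$ or half-integer theta in the $m$-variable) is summable on all of $\Omega_2$ with the explicit small-or-controlled bounds encoded by the $\epsilon$'s. The parity split in the inner $m$-sums and the need to keep the $(\pm1,*)$ terms honest — rather than discarding them, since $e^{-\pi\alpha y}$ need not be tiny when $y\approx\frac{\sqrt3}{2}$ — are the two features that make the bookkeeping delicate; the final algebraic collapse to $\mathcal{Y}$ is routine once the bounds are in place.
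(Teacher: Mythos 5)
Your proposal is correct and follows essentially the same route as the paper: it starts from the identity of Lemma \ref{exprr}, lower-bounds $\mathcal{W}_a,\mathcal{W}_b,\mathcal{W}_c$ by retaining the $(0,\pm1)$ and $(\pm1,\ast)$ main terms, upper-bounds $\mathcal{W}_d,\mathcal{W}_e$ via the parity split $p\equiv q\pmod 2$ with the tails evaluated as products of one-dimensional (integer and half-integer) theta series, and then factors out $\frac{4}{y^4}e^{-\pi\alpha/y}$ to assemble $\mathcal{Y}(\alpha;y)$. This matches the paper's Lemmas \ref{4lema4}--\ref{4lema}, including the algebraic identifications you note for the final collapse.
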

The subsequent Lemma \ref{4lema11} will provide the numerical upper bound estimates, further elucidating that ${\epsilon}_{a},\:{\epsilon}_{b},\:{\epsilon}_{c}$ are small in Lemma \ref{4lemm10}, thereby laying down the foundation for proving Lemma \ref{4lema12}.
\begin{lemma}\label{4lema11}Assume that $(\alpha,y)\in \Omega_{2}=\left\{(\alpha,y) |\:\:\alpha\geq\frac{3}{2},y\in[\frac{\sqrt{3}}{2},\frac{4}{5}\alpha]\right\}$, it holds that
\begin{equation}\aligned\nonumber
{\epsilon}_{a}\leq \frac{1}{980},\:\:\:\:
{\epsilon}_{b}\leq \frac{101}{2100},\:\:\:\:
\epsilon_{c}\leq \frac{4}{5}.
\endaligned\end{equation}
\end{lemma}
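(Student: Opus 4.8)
The plan is to estimate each of the three quantities $\epsilon_a$, $\epsilon_b$, $\epsilon_c$ by reducing to a small number of explicit geometric-type tail series, evaluated at the worst-case corner of the region $\Omega_2$. The key observation is that in $\Omega_2$ we have $\alpha \ge \frac{3}{2}$ and $\frac{\sqrt3}{2} \le y \le \frac45\alpha$, so in particular $\alpha y \ge \frac{\sqrt3}{2}\cdot\frac32 \cdot \frac{5}{4y}\cdot y$... more usefully, $\alpha y \ge \frac{3\sqrt3}{4}$ and $\frac{\alpha}{y} \ge \frac54$. Thus every exponent of the form $e^{-\pi\alpha y(n^2-1)}$, $e^{-\pi\frac{\alpha}{y}(n^2-1)}$, $e^{-4\pi\alpha y(n-1)n}$, etc., decays at least as fast as its value when $\alpha y$ is replaced by $\frac{3\sqrt3}{4}$ and $\frac{\alpha}{y}$ by $\frac54$. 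The polynomial prefactors (powers of $n$, powers of $\frac{\alpha}{y}$, powers of $y$) are controlled by noting that on $\Omega_2$ one has $\frac{\alpha}{y}\le\frac{5}{4}\cdot\frac{\alpha}{y}$ trivially and the combinations $y^k e^{-\pi\alpha y(\cdots)}$ that appear are all decreasing in $y$ in the relevant range after pulling out the right exponential, so they too are maximized at $y=\frac{\sqrt3}{2}$, $\alpha=\frac32$; the factor $\vartheta_3(\frac{\alpha}{y})$ is bounded by $\vartheta_3(\frac54) < 1.01$ using \eqref{JacobiJJJ}.

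First I would handle $\epsilon_a = \epsilon_{a,1}+\epsilon_{a,2}+\epsilon_{a,3}+\epsilon_{a,4}$. Each $\epsilon_{a,j}$ is a convergent series whose $n$-th term is a product of a polynomial in $n$ (and in $\frac{\alpha}{y}$, $y$) with an exponential that is super-geometrically small; bounding the polynomial factor by its value at the dominant term and the exponential tail by a geometric series with ratio at most $e^{-3\pi\alpha y}$ or $e^{-3\pi\frac{\alpha}{y}}$ gives, after substituting the corner values $\alpha y = \frac{3\sqrt3}{4}$, $\frac{\alpha}{y}=\frac54$, $y=\frac{\sqrt3}{2}$, an explicit numerical bound; summing the four contributions yields $\epsilon_a \le \frac{1}{980}$. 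The same mechanism applies verbatim to $\epsilon_b = \sum_{j=1}^4 \epsilon_{b,j}$: here the governing exponents are $e^{-4\pi\alpha y(n-1)n}$ and $e^{-\pi\frac{\alpha}{y}(n-1)n}$, both with first nontrivial term at $n=2$ giving $e^{-8\pi\alpha y}$ respectively $e^{-2\pi\frac{\alpha}{y}}$; the slightly larger final constant $\frac{101}{2100}$ comes from $\epsilon_{b,4}$, which carries the prefactor $\frac{1}{64 y^6}$ — since $y\ge\frac{\sqrt3}{2}$ this is at most $\frac{1}{64}\cdot(\frac{2}{\sqrt3})^6 = \frac{1}{64}\cdot\frac{64}{27} = \frac{1}{27}$, and one checks $\frac{1}{27}$ plus the three small tails stays below $\frac{101}{2100}$.

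For $\epsilon_c = \sum_{j=1}^3 \epsilon_{b,j} + \sum_{j=1}^5 \epsilon_{c,j}$, the first group is already controlled by the $\epsilon_b$ analysis, so it remains to bound $\epsilon_{c,1},\dots,\epsilon_{c,5}$. These have prefactors $\frac{4}{y^4}$, $\frac{1}{16 y^4}$, $\frac{32}{y^2}$, $\frac{1}{2 y^2}$ and a bare $\vartheta_3$; the potentially large ones are $\epsilon_{c,3}$ and $\epsilon_{c,5}$, whose $\frac{1}{16 y^4}$ and $\frac{1}{2 y^2}$ factors at $y=\frac{\sqrt3}{2}$ equal $\frac{1}{16}\cdot\frac{16}{9}=\frac19$ and $\frac12\cdot\frac43=\frac23$ respectively — so $\epsilon_{c,5}$ alone can be of order $\frac23$, which is why the target constant is only $\frac45$ rather than something tiny. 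The point is that the series factors multiplying these prefactors are of the form $(1+\text{super-geometric tail})\cdot(1+\text{super-geometric tail})$, each factor at most, say, $1.02$, so $\epsilon_{c,5}\le \frac23\cdot(1.02)^2 < 0.694$, $\epsilon_{c,3} < 0.116$, and the remaining exponentially-suppressed pieces ($\epsilon_{c,1},\epsilon_{c,2},\epsilon_{c,4}$ each carry a factor $e^{-3\pi\alpha(y+\frac{1}{4y})}$ or $e^{-\pi\alpha(3y-\frac{1}{4y})}$ which at the corner is $\lesssim 10^{-5}$) together with $\sum_{j=1}^3\epsilon_{b,j} \le \frac{101}{2100}$ push the total just under $\frac45$. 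The main obstacle — and the only genuinely delicate point — is the bookkeeping for $\epsilon_{c,5}$ and $\epsilon_{c,3}$: one must verify that the polynomial-times-exponential series $\sum_{n\ge2}(2n-1)^k e^{-\pi\alpha\frac{(n-1)n}{y}}$ and its $4\pi\alpha y$ analogue are genuinely small at the corner (they are, since $(n-1)n\ge2$ forces $e^{-2\pi\cdot5/4}$-type decay and the polynomial growth is swamped), so that the product of the two "$1+\text{tail}$" factors really does stay below roughly $1.04$; everything else is a routine monotonicity check (each term decreasing in $\alpha$ and, after extracting the stated exponential, in $y$ on $[\frac{\sqrt3}{2},\frac45\alpha]$) followed by numerical evaluation at $\alpha=\frac32$, $y=\frac{\sqrt3}{2}$.
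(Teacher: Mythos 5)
Your overall strategy --- bounding each $\epsilon_{\cdot,j}$ by its leading term, using $\alpha y\ge\frac{3\sqrt3}{4}$ and $\frac{\alpha}{y}\ge\frac54$ to control the exponentials, and maximizing the prefactors at $y=\frac{\sqrt3}{2}$, $\alpha=\frac32$ --- is exactly what the paper intends; its own proof is only the two-sentence remark that the terms decay exponentially, so your write-up is in fact more explicit than the original. All three bounds are true. However, two of your numerical steps do not close as stated, and since all three constants are nearly sharp this matters.

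For $\epsilon_b$: $\epsilon_{b,4}$ is not ``$\frac1{27}$ plus small tails'' but $\frac{1}{64y^6}$ \emph{times} the product of the two bracketed factors, and the second bracket contains $(2\cdot2-1)^6e^{-2\pi\alpha(2\cdot1)/y}=729\,e^{-2\pi\alpha/y}$, which under the decoupled bound $\frac{\alpha}{y}\ge\frac54$ is about $0.283$; so $\epsilon_{b,4}\le\frac1{27}\cdot1.284\approx0.0476$, which clears $\frac{101}{2100}\approx0.0481$ only barely (using that $y=\frac{\sqrt3}{2}$ forces $\frac{\alpha}{y}\ge\sqrt3$ improves this to $\approx0.038$). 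More seriously, for $\epsilon_c$ your stated intermediate bounds sum to \emph{more} than $\frac45$: you take $\epsilon_{c,5}\le\frac23(1.02)^2<0.694$, $\epsilon_{c,3}<0.116$ and $\sum_{j\le3}\epsilon_{b,j}\le\frac{101}{2100}\approx0.048$, and $0.694+0.116+0.048>0.85$. The argument closes only with sharper constants: the second bracket of $\epsilon_{c,5}$ is $1+9e^{-2\pi\alpha/y}+\cdots\le1.004$ (not $1.02$), giving $\epsilon_{c,5}<0.670$; $\epsilon_{c,3}\le\frac19\bigl(1+81e^{-5\pi/2}\bigr)<0.115$; and $\sum_{j\le3}\epsilon_{b,j}\le e^{-5\pi/2}+\cdots<10^{-3}$, since its dominant piece $\epsilon_{b,4}$ is \emph{not} part of $\epsilon_c$. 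Then $0.670+0.115+10^{-3}+\epsilon_{c,1}+\epsilon_{c,2}+\epsilon_{c,4}<0.79<\frac45$. A final caution: the corner values $\alpha y=\frac{3\sqrt3}{4}$, $\frac{\alpha}{y}=\frac54$, $y=\frac{\sqrt3}{2}$ are not simultaneously attainable in $\Omega_2$ (the first and third force $\frac{\alpha}{y}\ge\sqrt3$), so you must either bound each positive factor separately by its own supremum (which your decoupled estimates implicitly do, and is legitimate) or state which constraint is used where; for $\epsilon_a$ this is not cosmetic, since $\epsilon_{a,2}+\epsilon_{a,4}\approx1.02\times10^{-3}$ sits essentially at the threshold $\frac1{980}$.
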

\begin{proof}
One needs to use the fact that $\alpha\geq \frac{3}{2},\:y\geq \frac{\sqrt{3}}{2}\:and \:\frac{\alpha}{y}\geq \frac{5}{4}$. The terms of$\:{\epsilon}_{a},\:{\epsilon}_{b},\:{\epsilon}_{c}$ are exponentially decaying and the summation can be controlled effectively thereby.
\end{proof}

\begin{lemma}[The positiveness of lower bound function in Lemma \ref{4lemm10}]\label{4lema12}
Assume that $(\alpha,y)\in \Omega_{2}=\left\{(\alpha,y) |\alpha\geq\frac{3}{2},y\in[\frac{\sqrt{3}}{2},\frac{4}{5}\alpha]\right\}$, then
\begin{equation}\aligned\nonumber
\mathcal{Y}(\alpha;y)\geq \frac{77}{200}>0.
\endaligned\end{equation}
\end{lemma}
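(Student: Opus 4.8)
The plan is to reduce the claim to an explicit elementary inequality. Each of $\epsilon_a,\epsilon_b,\epsilon_c$ enters $\mathcal{Y}(\alpha;y)$ in \eqref{even} with a nonpositive coefficient — through $-\tfrac{2\pi\alpha}{y}(1+\epsilon_a)$, $-4\pi\alpha y(1+\epsilon_b)$ and $-2\pi\alpha y(1+\epsilon_c)$, the latter two multiplied by the nonnegative factor $y^4e^{-\pi\alpha(y-\frac{3}{4y})}$ — so replacing them by the numerical bounds of Lemma \ref{4lema11}, namely $\tfrac{1}{980},\tfrac{101}{2100},\tfrac{4}{5}$, yields a function $\widetilde{\mathcal{Y}}(\alpha;y)\le\mathcal{Y}(\alpha;y)$ with no hidden quantities, and it suffices to show $\widetilde{\mathcal{Y}}\ge\frac{77}{200}$ on $\Omega_2$. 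Set $s:=\tfrac{\pi\alpha}{y}$; since $y\le\tfrac{4}{5}\alpha$ on $\Omega_2$ we have $s\ge\tfrac{5\pi}{4}$, and the first three terms of $\widetilde{\mathcal{Y}}$ are the quadratic $q(s):=1+\tfrac{s^2}{2}-2\left(1+\tfrac{1}{980}\right)s$, which is increasing for $s\ge\tfrac{5\pi}{4}$; hence $q(s)\ge q(\tfrac{5\pi}{4})>\tfrac{77}{100}$, which leaves a margin above $\tfrac{77}{200}$ to absorb the remaining term.

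The remaining term is $y^4e^{-\pi\alpha(y-\frac{3}{4y})}\,\mathcal{B}(\alpha;y)$, where $\mathcal{B}$ denotes the bracket in \eqref{even} after the substitution of the $\epsilon$'s. For $y\ge\tfrac{\sqrt3}{2}$ one has $y-\tfrac{3}{4y}\ge 0$, so the prefactor lies in $(0,y^4]$; and discarding the manifestly nonnegative summands of $\mathcal{B}$ and using $\left(1-\tfrac{1}{16y^4}\right)^2\ge\left(\tfrac{8}{9}\right)^2$ gives
\[
\mathcal{B}(\alpha;y)\ \ge\ \tfrac{64}{81}\pi^2\alpha^2y^2-\left(4\cdot\tfrac{2201}{2100}+2\cdot\tfrac{9}{5}\right)\pi\alpha y ,
\]
whose right-hand side is nonnegative once $\pi\alpha y\ge c_0:=\tfrac{81}{64}\left(4\cdot\tfrac{2201}{2100}+2\cdot\tfrac{9}{5}\right)$ (numerically $c_0\approx 9.86$). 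The proof then splits. \emph{Case 1: $\pi\alpha y\ge c_0$.} Then $\mathcal{B}\ge 0$, the remaining term is nonnegative, and $\widetilde{\mathcal{Y}}\ge q(s)>\tfrac{77}{200}$. \emph{Case 2: $\pi\alpha y<c_0$.} Combining this with $y\ge\tfrac{\sqrt3}{2}$ and $y\le\tfrac{4}{5}\alpha$ confines $(\alpha,y)$ to an explicit compact set $K\subset\{\tfrac{3}{2}\le\alpha\le C_1,\ \tfrac{\sqrt3}{2}\le y\le C_2\}$ with $C_1,C_2$ absolute constants ($C_1=\tfrac{2c_0}{\sqrt3\,\pi}$, $C_2=\tfrac{2c_0}{3\pi}$ suffice), on which $\widetilde{\mathcal{Y}}$ is an explicit continuous function.

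On $K$ one verifies $\widetilde{\mathcal{Y}}\ge\frac{77}{200}$ directly, and this is the main obstacle, because the inequality is essentially sharp: the minimum of $\widetilde{\mathcal{Y}}$ over $\Omega_2$ is attained near the corner $(\alpha,y)=(\tfrac{3}{2},\tfrac{6}{5})$ of $\partial\Omega_2$, where its value only slightly exceeds $\tfrac{77}{200}$, so the crude estimate $|\mathcal{B}|\le(\text{const})$ combined with $e^{-\pi\alpha(y-\frac{3}{4y})}\le 1$ is far too wasteful. Instead one must keep the genuine exponential decay factor together with the exact quadratic-in-$\alpha$ shape of $\mathcal{B}$, reducing the claim on $K$ (in the subregion where $\mathcal{B}<0$) to $y^4e^{-\pi\alpha(y-\frac{3}{4y})}\,|\mathcal{B}(\alpha;y)|\le q(s)-\tfrac{77}{200}$. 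Since $K$ is compact and all functions are explicit, this is a finite verification; in the paper's style it can be carried out by fixing $y$ and analysing the one-variable function of $\alpha$ on the short allowed interval — the prefactor is decreasing in $\alpha$ and $\mathcal{B}$ is convex in $\alpha$, so one estimates on a few subintervals — and then handling the resulting one-variable bound in $y\in[\tfrac{\sqrt3}{2},C_2]$, aided by the corresponding plot.
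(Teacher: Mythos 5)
Your proposal is correct in structure but takes a genuinely different route from the paper. The paper's proof first claims (with a figure as justification) that $\mathcal{Y}(\alpha;y)$ is decreasing in $y$ on $\Omega_2$, reduces to the one-variable function $\alpha\mapsto\mathcal{Y}(\alpha;\tfrac{4}{5}\alpha)$ on the boundary curve, and then argues (again via a figure) that this function first increases and then decreases to a limit, so that its minimum is $\min\{\mathcal{Y}(\tfrac{3}{2};\tfrac{6}{5}),\lim_{\alpha\to\infty}\mathcal{Y}(\alpha;\tfrac{4}{5}\alpha)\}=\mathcal{Y}(\tfrac{3}{2};\tfrac{6}{5})\geq\tfrac{77}{200}$. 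You instead substitute the numerical bounds on $\epsilon_a,\epsilon_b,\epsilon_c$ (correctly noting their coefficients are nonpositive), observe that the non-exponential part is the quadratic $q(s)$ in $s=\pi\alpha/y\geq\tfrac{5\pi}{4}$ with $q(\tfrac{5\pi}{4})\approx 0.85$, and split according to the sign of the bracket: when $\pi\alpha y\geq c_0$ the exponential term is nonnegative and you are done analytically, and otherwise $(\alpha,y)$ is confined to an explicit compact set. All of these reductions check out (e.g.\ $(1-\tfrac{1}{16y^4})^2\geq\tfrac{64}{81}$ at $y=\tfrac{\sqrt3}{2}$, and $y-\tfrac{3}{4y}\geq 0$ there), and your diagnosis that the minimum sits near $(\tfrac{3}{2},\tfrac{6}{5})$ with value roughly $0.39$ against the target $0.385$ is accurate, which is why no crude bound can close the gap. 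What your approach buys is a clean separation between the part of $\Omega_2$ handled by elementary inequalities and an explicit bounded rectangle where numerics are unavoidable; what it does not buy is an escape from that numerical step, which you leave as ``a finite verification aided by a plot'' --- exactly the same epistemic status as the paper's two figure-based monotonicity claims. Neither argument is fully rigorous at that final stage, but yours localizes the unverified computation more explicitly, at the cost of a slightly longer setup.
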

\begin{proof}
Notice that $\mathcal{Y}(\alpha;y)$ given by \eqref{even} is decreasing with respect to $y$ as $(\alpha;y)\in \Omega_{2}$ (see Figure \ref{y}). Then one gets
\begin{equation}\aligned\nonumber
\mathcal{Y}(\alpha;y)\geq \mathcal{Y}(\alpha;\frac{4}{5}\alpha).
\endaligned\end{equation}
As $\alpha \geq \frac{3}{2}$, $\mathcal{Y}(\alpha;\frac{4}{5}\alpha)$ starts in an increasing state and then decreases rapidly to a constant (see Figure \ref{y}), one has
\begin{equation}\aligned\nonumber
\mathcal{Y}(\alpha;y)&\geq \mathcal{Y}(\alpha;\frac{4}{5}\alpha)\geq {\min}\{\mathcal{Y}(\frac{3}{2};\frac{6}{5}),\underset{\alpha\rightarrow \infty} {\lim}\mathcal{Y}(\alpha;\frac{4}{5}\alpha)\}\geq\mathcal{Y}(\frac{3}{2};\frac{6}{5})\geq\frac{77}{200}>0.
\endaligned\end{equation}
\end{proof}

\begin{figure}
\centering
 \includegraphics[scale=0.45]{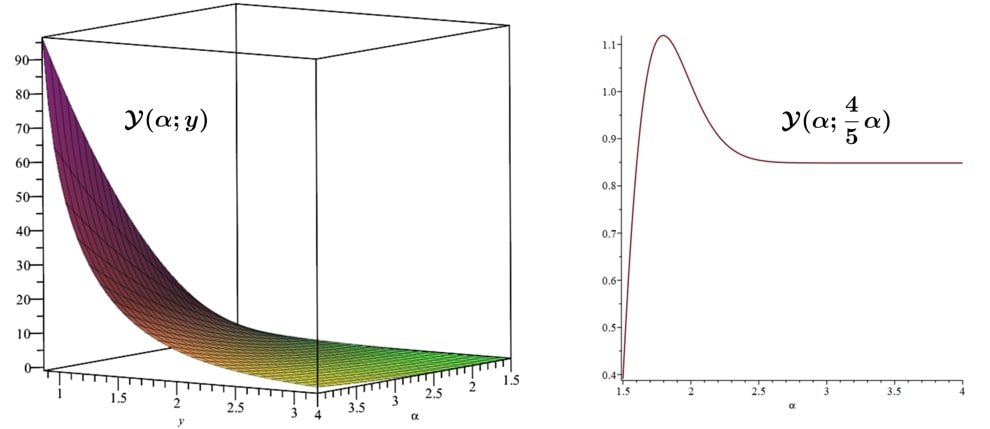}
 \caption{The images of $\mathcal{Y}(\alpha;y)$ and\:$\mathcal{Y}(\alpha;\frac{4}{5}\alpha)$.}
 \label{y}
\end{figure}

Lemmas \ref{4lemm10} and \ref{4lema12} complete the proof of Lemma \ref{4lema13}, which is the main task in this Subsection. In the rest of this Subsection, we shall estimate the four double sums in \eqref{4lem3} respectively to accomplish the proof of Lemma \ref{4lemm10}.

\begin{lemma}[A lower bound estimate of $\mathcal{W}_{a}(\alpha;y)$]\label{4lema4}
 For $ \alpha ,\:y>0$, it holds that
\begin{equation}\aligned\nonumber
\mathcal{W}_{a}(\alpha;y)&=\underset{n,m}{\sum}(n^2-\frac{(m+\frac{n}{2})^2}{y^2})^2(yn^2+\frac{(m+\frac{n}{2})^2}{y})^2
e^{-\pi\alpha(yn^2+\frac{(m+\frac{n}{2})^2}{y})}\\
&\geq \frac{2}{{y}^6}e^{-\frac{\pi\alpha}{y}}+4{y}^2(1-\frac{1}{16y^4})^2e^{-\pi\alpha(y+\frac{1}{4y})}.
\endaligned\end{equation}
\end{lemma}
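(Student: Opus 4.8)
The plan is to exploit that every summand of $\mathcal{W}_{a}(\alpha;y)$ is manifestly nonnegative: each one is the product of the two squares $\bigl(n^2-\tfrac{(m+n/2)^2}{y^2}\bigr)^2$ and $\bigl(yn^2+\tfrac{(m+n/2)^2}{y}\bigr)^2$ with the positive factor $e^{-\pi\alpha(yn^2+(m+n/2)^2/y)}$. Consequently $\mathcal{W}_{a}(\alpha;y)$ dominates the partial sum taken over any chosen subset of the index set $\mathbb{Z}^2$, and it suffices to single out the two groups of lattice points $(n,m)$ whose quadratic weight $yn^2+\tfrac{(m+n/2)^2}{y}$ matches the two exponents $\tfrac1y$ and $y+\tfrac1{4y}$ appearing on the right-hand side, compute their exact contributions, and discard everything else.

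First I would extract the pair $(n,m)=(0,\pm 1)$. For these one has $n^2-\tfrac{(m+n/2)^2}{y^2}=-\tfrac1{y^2}$ and $yn^2+\tfrac{(m+n/2)^2}{y}=\tfrac1y$, so each contributes $\tfrac1{y^6}e^{-\pi\alpha/y}$ and together they give $\tfrac2{y^6}e^{-\pi\alpha/y}$. Next I would extract the four points $(n,m)\in\{(1,0),(-1,0),(1,-1),(-1,1)\}$, which are pairwise distinct and disjoint from the previous pair and all satisfy $n^2=1$ and $(m+\tfrac n2)^2=\tfrac14$. Hence for each of them $n^2-\tfrac{(m+n/2)^2}{y^2}=1-\tfrac1{4y^2}$ and $yn^2+\tfrac{(m+n/2)^2}{y}=y+\tfrac1{4y}$, and the identity $\bigl(1-\tfrac1{4y^2}\bigr)\bigl(y+\tfrac1{4y}\bigr)=y-\tfrac1{16y^3}=y\bigl(1-\tfrac1{16y^4}\bigr)$ shows each summand equals $y^2\bigl(1-\tfrac1{16y^4}\bigr)^2e^{-\pi\alpha(y+1/(4y))}$; the four of them sum to $4y^2\bigl(1-\tfrac1{16y^4}\bigr)^2e^{-\pi\alpha(y+1/(4y))}$.

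Adding these two groups and dropping all remaining nonnegative summands yields exactly the asserted lower bound, valid for every $\alpha,y>0$ without any restriction on the region. There is no genuine analytic obstacle here; the only point requiring care is the bookkeeping --- checking that the six selected index pairs are indeed distinct, that they realize the smallest values of $yn^2+\tfrac{(m+n/2)^2}{y}$ that carry a nonzero prefactor (the term at $(0,0)$ vanishes identically because its first square is zero), and that the contraction $\bigl(1-\tfrac1{4y^2}\bigr)\bigl(y+\tfrac1{4y}\bigr)=y\bigl(1-\tfrac1{16y^4}\bigr)$ is performed correctly so that the prefactor in the second term comes out as $4y^2\bigl(1-\tfrac1{16y^4}\bigr)^2$ rather than, say, $4y^2\bigl(1-\tfrac1{4y^2}\bigr)^2\bigl(1+\tfrac1{4y^2}\bigr)^2$.
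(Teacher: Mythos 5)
Your proposal is correct and is essentially identical to the paper's proof: both observe that all summands are nonnegative, extract the contributions of $(n,m)\in\{(0,\pm 1)\}$ and $(n,m)\in\{(1,0),(1,-1),(-1,0),(-1,1)\}$, and drop the rest. The algebraic simplification $\bigl(1-\tfrac{1}{4y^2}\bigr)\bigl(y+\tfrac{1}{4y}\bigr)=y\bigl(1-\tfrac{1}{16y^4}\bigr)$ is carried out correctly, so the stated lower bound follows.
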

\begin{proof}
Notice that all terms of the double sum $\mathcal{W}_{a}(\alpha;y)$ are non-negative. Then the term
\begin{equation}\aligned\nonumber
(n,m)\in \{(0,-1),(0,1)  \}\:\:\mathrm{contributes\:an\:amount}\:\frac{1}{{y}^6}e^{-\frac{\pi\alpha}{y}}\:\mathrm{respectively},
\endaligned\end{equation}
and the  term $(n,m)\in \{(1,0),(1,-1),(-1,0),(-1,1) \}$  contributes the following  amount respectively
\begin{equation}\aligned\nonumber
 {y}^2(1-\frac{1}{16y^4})^2e^{-\pi\alpha(y+\frac{1}{4y})}.
\endaligned\end{equation}
\end{proof}

\begin{lemma}[A lower bound estimate of $\mathcal{W}_{b}(\alpha;y)$]\label{4lema5} For $ \alpha ,\:y>0$, it holds that
\begin{equation}\aligned\nonumber
\mathcal{W}_{b}(\alpha;y)&=\underset{n,m}{\sum}(n^2-\frac{(m+\frac{n}{2})^2}{y^2})^2e^{-\pi\alpha(yn^2+\frac{(m+\frac{n}{2})^2}{y})}
\geq \frac{2}{{y}^4}e^{-\frac{\pi\alpha}{y}}+4(1-\frac{1}{4y^2})^2e^{-\pi\alpha(y+\frac{1}{4y})}.
\endaligned\end{equation}
\end{lemma}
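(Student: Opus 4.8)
The plan is to exploit, exactly as in the proof of Lemma \ref{4lema4}, the fact that every term of the double sum defining $\mathcal{W}_{b}(\alpha;y)$ is non-negative: for all $(n,m)\in\mathbb{Z}^2$ one has $\big(n^2-\tfrac{(m+n/2)^2}{y^2}\big)^2\geq 0$ and $e^{-\pi\alpha(yn^2+(m+n/2)^2/y)}>0$. Hence discarding all but finitely many lattice points yields a lower bound, and it suffices to identify the points that produce the two exponential scales $e^{-\pi\alpha/y}$ and $e^{-\pi\alpha(y+1/(4y))}$ appearing on the right-hand side.

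First I would treat the points with $n=0$ and $m=\pm1$. There $m+\tfrac{n}{2}=\pm1$, so the weight is $\big(0-\tfrac{1}{y^2}\big)^2=\tfrac{1}{y^4}$ and the exponent equals $\tfrac{1}{y}$; thus $(0,1)$ and $(0,-1)$ together contribute $\tfrac{2}{y^4}e^{-\pi\alpha/y}$. Next I would treat the four points $(n,m)\in\{(1,0),(1,-1),(-1,0),(-1,1)\}$. In each of these cases $n^2=1$ and $\big|m+\tfrac{n}{2}\big|=\tfrac12$, so the weight is $\big(1-\tfrac{1}{4y^2}\big)^2$ and the exponent equals $y+\tfrac{1}{4y}$; these four points contribute $4\big(1-\tfrac{1}{4y^2}\big)^2 e^{-\pi\alpha(y+1/(4y))}$ in total. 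Since the first two points have $n=0$ while the latter four have $n=\pm1$, all six are distinct and no double counting occurs.

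Summing these six contributions and bounding the remaining (non-negative) terms of the series below by $0$ gives the claimed inequality. There is no real obstacle here: the argument is a verbatim analogue of the proof of Lemma \ref{4lema4}, the only bookkeeping being the elementary evaluation of $\big(n^2-\tfrac{(m+n/2)^2}{y^2}\big)^2$ and of the exponent $yn^2+\tfrac{(m+n/2)^2}{y}$ at the six selected lattice points.
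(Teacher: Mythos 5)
Your proposal is correct and is essentially identical to the paper's own proof: both drop all but the six lattice points $(0,\pm1)$, $(1,0)$, $(1,-1)$, $(-1,0)$, $(-1,1)$ by non-negativity of every summand and evaluate their contributions, which give exactly $\frac{2}{y^4}e^{-\pi\alpha/y}$ and $4(1-\frac{1}{4y^2})^2e^{-\pi\alpha(y+\frac{1}{4y})}$. Nothing to add.
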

\begin{proof}
Notice that all terms of the double sum $\mathcal{W}_{b}(\alpha;y)$ are non-negative. Then the term
\begin{equation}\aligned\nonumber
(n,m)\in \{(0,-1),(0,1)  \}\:\:\mathrm{contributes\:\: an \:\:amount}\:\:\frac{1}{{y}^4}e^{-\frac{\pi\alpha}{y}}\:\:\mathrm{respectively},
\endaligned\end{equation}
and the term $(n,m)\in \{(1,0),(1,-1),(-1,0),(-1,1) \}$  contributes the following  amount respectively
\begin{equation}\aligned\nonumber
(1-\frac{1}{4y^2})^2e^{-\pi\alpha(y+\frac{1}{4y})}.
\endaligned\end{equation}
\end{proof}

\begin{lemma}[A lower bound estimate of $\mathcal{W}_{c}(\alpha;y)$]\label{4lema6} For $ \alpha,\:y>0$, it holds that
\begin{equation}\aligned\nonumber
\mathcal{W}_{c}(\alpha;y)&=\underset{n,m}{\sum}{n}^2(yn^2+\frac{(m+\frac{n}{2})^2}{y})e^{-\pi\alpha(yn^2+\frac{(m+\frac{n}{2})^2}{y})}
\geq (4y+\frac{1}{y})e^{-\pi\alpha(y+\frac{1}{4y})}.
\endaligned\end{equation}
\end{lemma}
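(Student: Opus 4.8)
The plan is to follow the same elementary strategy that worked for Lemmas \ref{4lema4} and \ref{4lema5}: since $y>0$, every summand of $\mathcal{W}_{c}(\alpha;y)$ is non-negative (the prefactor $n^2$ is non-negative, the factor $yn^2+\frac{(m+\frac{n}{2})^2}{y}$ is non-negative, and the exponential is positive), so a valid lower bound is obtained by simply discarding all but finitely many carefully chosen lattice points $(n,m)\in\mathbb{Z}^2$ and keeping only those realizing the smallest exponent.

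First I would note that, in contrast to $\mathcal{W}_{a}$ and $\mathcal{W}_{b}$, the prefactor $n^2$ annihilates every term with $n=0$; hence the points $(0,\pm1)$ do not contribute, and the minimal value of the exponent $yn^2+\frac{(m+\frac{n}{2})^2}{y}$ over the surviving indices is attained at $n=\pm1$. For $n=\pm1$ the term $\frac{(m+\frac{n}{2})^2}{y}$ is minimized precisely when $\left|m+\frac{n}{2}\right|=\tfrac12$, that is, at the four lattice points
$$(n,m)\in\{(1,0),\,(1,-1),\,(-1,0),\,(-1,1)\},$$
for each of which $n^2=1$ and $yn^2+\frac{(m+\frac{n}{2})^2}{y}=y+\frac{1}{4y}$. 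These four points form a single orbit under the sign symmetry $(n,m)\mapsto(-n,-m)$ of the summand together with the reflection $m\mapsto-n-m$ (which fixes $n$ and flips the sign of $m+\frac{n}{2}$), so there is no risk of double counting or of having missed one.

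Next I would evaluate the contribution of these four points: each of them contributes exactly $1\cdot\bigl(y+\frac{1}{4y}\bigr)e^{-\pi\alpha(y+\frac{1}{4y})}$, and therefore, dropping all remaining non-negative summands,
$$\mathcal{W}_{c}(\alpha;y)\;\geq\;4\Bigl(y+\frac{1}{4y}\Bigr)e^{-\pi\alpha(y+\frac{1}{4y})}=\Bigl(4y+\frac{1}{y}\Bigr)e^{-\pi\alpha(y+\frac{1}{4y})},$$
which is exactly the claimed inequality.

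There is essentially no analytic obstacle in this lemma; the only point requiring care is the combinatorial bookkeeping, namely correctly identifying all lattice points that realize the minimal exponent $y+\frac{1}{4y}$ subject to $n\neq0$, and checking that the arithmetic of summing their coefficients gives $4y+\frac1y$. Because only a lower bound is needed and every discarded term is non-negative, no estimate of the remaining tail of the double series is required.
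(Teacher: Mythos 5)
Your proof is correct and takes essentially the same approach as the paper: observe that all summands are non-negative and retain only the four terms $(n,m)\in\{(1,0),(1,-1),(-1,0),(-1,1)\}$, each contributing $\bigl(y+\frac{1}{4y}\bigr)e^{-\pi\alpha(y+\frac{1}{4y})}$. Your additional remarks on why these points realize the minimal exponent are extra justification beyond what the paper records, but the argument is the same.
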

\begin{proof}
Notice that all terms of the double sum $\mathcal{W}_{c}(\alpha;y)$ are non-negative. Then the term
\begin{equation}\aligned\nonumber
(n,m)\in \{(1,0),(1,-1),(-1,0),(-1,1) \}\:\:\mathrm{contributes\:\: an \:\:amount}\:(y+\frac{1}{4y})e^{-\pi\alpha(y+\frac{1}{4y})}\:\:\mathrm{respectively}.
\endaligned\end{equation}
\end{proof}

To better estimate an upper bound of $\mathcal{W}_{d}(\alpha;y)$, we give the following notations.
\begin{equation}\aligned\label{staro}
\mathcal{W}_{d,1}:=&\underset{p\equiv q\equiv0(mod2) }{\sum} y {p}^6 e^{-\pi\alpha(y{p}^2+\frac{q^2}{4y})},\:\:\:\:\:
\mathcal{W}_{d,2}:=\underset{p\equiv q\equiv1(mod2) }{\sum}  y{p}^6 e^{-\pi\alpha(y{p}^2+\frac{q^2}{4y})},\\
\mathcal{W}_{d,3}:=&\underset{p\equiv q\equiv0(mod2) }{\sum}\frac{q^6}{64{y}^5} e^{-\pi\alpha(y{p}^2+\frac{q^2}{4y})},\:\:
\mathcal{W}_{d,4}:=\underset{p\equiv q\equiv1(mod2) }{\sum}\frac{q^6}{64{y}^5} e^{-\pi\alpha(y{p}^2+\frac{q^2}{4y})},\\
\mathcal{W}_{d,5}:=&\underset{p\equiv q\equiv1(mod2) }{\sum}\frac{{p}^4{q}^2}{4{y}}e^{-\pi\alpha(y{p}^2+\frac{q^2}{4y})},\:\:
\mathcal{W}_{d,6}:=\underset{p\equiv q\equiv1(mod2) }{\sum} \frac{{p}^2{q}^4}{16{y}^3} e^{-\pi\alpha(y{p}^2+\frac{q^2}{4y})}.\\
\endaligned\end{equation}

\begin{lemma}[An upper bound estimate of $\mathcal{W}_{d}(\alpha;y)$] \label{4lema8} For $ \alpha,\:y>0$, it holds that
  \begin{equation}\aligned\nonumber
\mathcal{W}_{d}(\alpha;y)&= \underset{n,m}{\sum}(n^2-\frac{(m+\frac{n}{2})^2}{y^2})^2(yn^2+\frac{(m+\frac{n}{2})^2}{y})
e^{-\pi\alpha(yn^2+\frac{(m+\frac{n}{2})^2}{y})}\\
&\leq\frac{2}{{y}^5}e^{-\frac{\pi\alpha}{y}}\cdot(1+{\epsilon}_{a})+4ye^{-\pi\alpha(y+\frac{1}{4y})}\cdot(1-\frac{1}{4y^2}-\frac{1}{16y^4}+
{\epsilon}_{b}),
\endaligned\end{equation}
where
\begin{equation}\aligned\label{4lab1}\nonumber
{\epsilon}_{a}:=\sum_{j=1}^{4}\epsilon_{a,j}, \:\:\:\:{\epsilon}_{b}:=\sum_{j=1}^{4}\epsilon_{b,j} .\\
\endaligned\end{equation}
Here each $\epsilon_{a,j}(j=1,2,3,4)$ is small and expressed by
\begin{equation}\aligned\label{eqa}
\epsilon_{a,1}:&=2\underset{n=1}{\overset{\infty}{\sum}}e^{-4{\pi}{\alpha}y{n}^2},\:\:
\epsilon_{a,2}:=\underset{n=2}{\overset{\infty}{\sum}}n^6e^{-\pi\alpha\frac{n^2-1}{y}},\:\:
\epsilon_{a,3}:=\epsilon_{a,1}\cdot\epsilon_{a,2},\\
\epsilon_{a,4}:&=64{y}^6e^{-{\pi}{\alpha}(4y-\frac{1}{y})}(1+\underset{n=2}{\overset{\infty}{\sum}}n^{6}e^{-4{\pi}{\alpha}y(n^2-1)})\cdot\vartheta_3(\frac{\alpha}{y}),\\
\endaligned\end{equation}
where $\vartheta_{3}(X)$ is defined in \eqref{JacobiJJJ} and each $\epsilon_{b,j}(j=1,2,3,4)$ is small and expressed by
\begin{equation}\aligned\label{eqb}
{\epsilon}_{b,1}:&=\underset{n=2}{\overset{\infty}{\sum}}(2n-1)^{6}e^{-4{\pi}{\alpha}y(n-1)\cdot n},\:\:\:\:
{\epsilon}_{b,2}:=\underset{n=2}{\overset{\infty}{\sum}}e^{-{\pi}{\alpha}\frac{(n-1)\cdot n}{y}},\:\:\:\:
{\epsilon}_{b,3}:={\epsilon}_{b,1}\cdot {\epsilon}_{b,2},\\
{\epsilon}_{b,4}:&=\frac{1}{64{y}^6}(1+\underset{n=2}{\overset{\infty}{\sum}}e^{-4{\pi}{\alpha}y(n-1)\cdot n})(1+\underset{n=2}{\overset{\infty}{\sum}}(2n-1)^{6}e^{-{\pi}{\alpha}\frac{(n-1)\cdot n}{y}}).\\
\endaligned\end{equation}
\end{lemma}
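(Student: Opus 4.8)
The plan is to reorganize $\mathcal{W}_{d}(\alpha;y)$ by the change of variables $p=n$, $q=2m+n$. Since $m+\tfrac{n}{2}=\tfrac{q}{2}$, since $q-p=2m$ is always even, and since $yn^2+\tfrac{(m+n/2)^2}{y}=yp^2+\tfrac{q^2}{4y}$, this rewrites the lattice sum as a sum over all integer pairs $(p,q)$ with $p\equiv q\pmod 2$. Expanding the polynomial weight gives the algebraic identity
\[
\Big(p^2-\frac{q^2}{4y^2}\Big)^2\Big(yp^2+\frac{q^2}{4y}\Big)=yp^6+\frac{q^6}{64y^5}-\frac{p^4q^2}{4y}-\frac{p^2q^4}{16y^3}.
\]
Splitting these pairs into the two disjoint families $p\equiv q\equiv 0\pmod 2$ and $p\equiv q\equiv 1\pmod 2$, the sums $\mathcal{W}_{d,1},\dots,\mathcal{W}_{d,6}$ of \eqref{staro} are precisely the pieces produced by the four monomials over the two parities; the monomials $\tfrac{p^4q^2}{4y}$ and $\tfrac{p^2q^4}{16y^3}$ over the even--even indices contribute a nonnegative amount which, being subtracted, I will simply discard.

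For the even--even part I would drop those two negative contributions, leaving the upper bound $\mathcal{W}_{d,1}+\mathcal{W}_{d,3}$. Writing $p=2a$, $q=2b$ and factoring the resulting double exponential as a product $\big(\sum_a\cdots\big)\big(\sum_b\cdots\big)$, the pair $(p,q)=(0,\pm 2)$ is the leading one and contributes $\tfrac{2}{y^5}e^{-\pi\alpha/y}$ to $\mathcal{W}_{d,3}$. Recognizing $\sum_b e^{-\pi\alpha b^2/y}=\vartheta_3(\alpha/y)$ from \eqref{JacobiJJJ} and peeling the leading term off each factor yields $\mathcal{W}_{d,3}=\tfrac{2}{y^5}e^{-\pi\alpha/y}(1+\epsilon_{a,1})(1+\epsilon_{a,2})$ and $\mathcal{W}_{d,1}=\tfrac{2}{y^5}e^{-\pi\alpha/y}\,\epsilon_{a,4}$, hence the bound $\tfrac{2}{y^5}e^{-\pi\alpha/y}(1+\epsilon_a)$ with $\epsilon_a=\sum_{j=1}^4\epsilon_{a,j}$.

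For the odd--odd part I would keep all four sums. Factoring $\mathcal{W}_{d,2}$ and $\mathcal{W}_{d,4}$ into products of sums over odd $p$ and odd $q$, using the identity $(2n-1)^2-1=4n(n-1)$ to identify the tails, and peeling off the corner $(p,q)=(\pm 1,\pm 1)$ which contributes $4ye^{-\pi\alpha(y+1/(4y))}$, gives $\mathcal{W}_{d,2}=4ye^{-\pi\alpha(y+1/(4y))}(1+\epsilon_{b,1})(1+\epsilon_{b,2})$ and $\mathcal{W}_{d,4}=4ye^{-\pi\alpha(y+1/(4y))}\,\epsilon_{b,4}$. For the two sums $\mathcal{W}_{d,5},\mathcal{W}_{d,6}$ appearing with a minus sign I would retain only their $(\pm 1,\pm 1)$ contributions, i.e.\ the sharp lower bounds $\mathcal{W}_{d,5}\ge\tfrac1y e^{-\pi\alpha(y+1/(4y))}$ and $\mathcal{W}_{d,6}\ge\tfrac{1}{4y^3}e^{-\pi\alpha(y+1/(4y))}$; subtracting these produces $4ye^{-\pi\alpha(y+1/(4y))}\big(1-\tfrac{1}{4y^2}-\tfrac{1}{16y^4}+\epsilon_b\big)$ with $\epsilon_b=\sum_{j=1}^4\epsilon_{b,j}$. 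Adding the even--even and odd--odd estimates gives the stated inequality.

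The main obstacle is bookkeeping: one must verify that every tail term produced by factoring the double sums and reindexing ($(2n-1)^2-1=4n(n-1)$, $\sum_n e^{-\pi n^2X}=\vartheta_3(X)$, and so on) matches exactly one of the prescribed quantities $\epsilon_{a,j}$ or $\epsilon_{b,j}$, so that nothing spurious is left over, and that the two negative monomials may safely be dropped on the even--even part but must be kept (through the tight corner lower bounds) on the odd--odd part --- it is precisely these retained negative terms that furnish the factor $1-\tfrac{1}{4y^2}-\tfrac{1}{16y^4}$ that is needed in the subsequent application, Lemma \ref{4lemm10}.
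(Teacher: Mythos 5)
Your proposal is correct and follows essentially the same route as the paper: the substitution $p=n$, $q=2m+n$, the expansion $yp^6+\frac{q^6}{64y^5}-\frac{p^4q^2}{4y}-\frac{p^2q^4}{16y^3}$, discarding the two negative monomials over the even--even indices, factoring $\mathcal{W}_{d,1},\mathcal{W}_{d,2},\mathcal{W}_{d,3},\mathcal{W}_{d,4}$ into one-dimensional sums to produce exactly the $\epsilon_{a,j}$ and $\epsilon_{b,j}$, and keeping only the $(\pm1,\pm1)$ corner terms of $\mathcal{W}_{d,5}$ and $\mathcal{W}_{d,6}$ are precisely the steps in the paper (including its auxiliary Lemmas \ref{4lemaadd1}--\ref{4lemaadd2}). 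No gaps.
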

\begin{proof}
Firstly, by \eqref{4lem3}, $\mathcal{W}_{d} $ can be rewritten as
\begin{equation}\aligned\label{star1}
\mathcal{W}_{d}(\alpha;y)
&=\underset{p\equiv q(mod2) }{\sum} y(p^4-\frac{q^4}{16{y}^4})(p^2-\frac{q^2}{4y^2})e^{-\pi\alpha(y{p}^2+\frac{q^2}{4y})}\\
&\leq \mathcal{W}_{d,1}+\mathcal{W}_{d,2}+\mathcal{W}_{d,3}+\mathcal{W}_{d,4}-\mathcal{W}_{d,5}-\mathcal{W}_{d,6}.
\endaligned\end{equation}
Next, we shall calculate $\mathcal{W}_{d,1},\:\mathcal{W}_{d,2},\:\mathcal{W}_{d,3},\:\mathcal{W}_{d,4}$ sequentially. By \eqref{staro}, one has
\begin{equation}\aligned\label{star2}
\mathcal{W}_{d,1}&=\underset{p=2n,\:q=2m }{\sum} y{p}^{6}e^{-\pi\alpha(y{p}^2+\frac{q^2}{4y})}
=128 y\underset{n=1 }{\overset{\infty}{\sum}} n^{6}e^{-4\pi\alpha{y}{n}^2}\cdot\underset{m }{\sum}e^{-\pi\alpha\frac{m^2}{y}}\:\:\:\:\:\:\:\:\:\:\:\:\:\:\:\:\:\:\:\:\:\:\:\:\:\:\:\:\:\:\:\:\:\:\:\\
&=128 y e^{-4\pi\alpha{y}}\cdot(1+\underset{n=2 }{\overset{\infty}{\sum}}n^{6}e^{-4\pi\alpha{y}({n}^2-1)})\cdot\vartheta_3(\frac{\alpha}{y})=\frac{2}{{y}^5}e^{-\frac{\pi\alpha}{y}}\cdot\epsilon_{a,4}.
\endaligned\end{equation}
\begin{equation}\aligned\label{star3}
\mathcal{W}_{d,2}&=\underset{p=2n-1,\:q=2m-1 }{\sum} y{p}^{6}e^{-\pi\alpha(y{p}^2+\frac{q^2}{4y})}=4 y\:\underset{n=1 }{\overset{\infty}{\sum}} (2n-1)^{6}e^{-\pi\alpha{y}(2n-1)^2}\cdot \underset{m=1 }{\overset{\infty}{\sum}} e^{-\pi\alpha\frac{(2m-1)^2}{4y}}\\
&=4 y e^{-\pi\alpha(y+\frac{1}{4y})}\cdot(1+\underset{n=2 }{\overset{\infty}{\sum}} (2n-1)^{6}e^{-4\pi\alpha{y}(n-1)n})\cdot(1+\underset{m=2 }{\overset{\infty}{\sum}} e^{-\pi\alpha\frac{(m-1)m}{y}})\\
&=4 y e^{-\pi\alpha(y+\frac{1}{4y})}\cdot(1+{\epsilon}_{b,1}+{\epsilon}_{b,2}+{\epsilon}_{b,3}).\\
\endaligned\end{equation}
\begin{equation}\aligned\label{star4}
\mathcal{W}_{d,3}&=\underset{p=2n,\:q=2m }{\sum} \frac{q^6}{64{y}^5}e^{-\pi\alpha(y{p}^2+\frac{q^2}{4y})}=\frac{1}{y^5}\underset{n }{\sum} e^{-4\pi\alpha{y}{n}^2}\cdot \underset{m }{\sum}m^6 e^{-\pi\alpha\frac{m^2}{y}}\\
&=\frac{2}{y^5}e^{-\pi\frac{\alpha}{y}}\cdot(1+2\underset{n=1}{\overset{\infty}{\sum}}e^{-4{\pi}{\alpha}y{n}^2})
\cdot(1+\underset{m=2}{\overset{\infty}{\sum}}m^6e^{-\pi\alpha\frac{m^2-1}{y}})\:\:\:\:\:\:\:\:\:\:\:\:\:\:\:\:\:\:\:\:\:\:\:\:\:\:\:\:\:\:\:\:\:\:\:\:\:\:\:\:\:\:\:\:\:\:\:\:\:\:\:\:\\
&=\frac{2}{y^5}e^{-\pi\frac{\alpha}{y}}\cdot(1+\epsilon_{a,1})\cdot(1+\epsilon_{a,2})=\frac{2}{y^5}e^{-\pi\frac{\alpha}{y}}\cdot(1+\epsilon_{a,1}+\epsilon_{a,2}+\epsilon_{a,3}).
\endaligned\end{equation}
Similar to the calculation in \eqref{star3}, by \eqref{staro}, one gets $\mathcal{W}_{d,4}$ as follows,
\begin{equation}\aligned\label{star5}
\mathcal{W}_{d,4}=4ye^{-\pi\alpha(y+\frac{1}{4y})}\cdot {\epsilon}_{b,4}.
\endaligned\end{equation}
Thus, the desired result follows by \eqref{star1}-\eqref{star5} and Lemmas \ref{4lemaadd1}-\ref{4lemaadd2}.
\end{proof}

\begin{lemma}[A lower bound estimate of $\mathcal{W}_{d,5}$ in \eqref{staro}]\label{4lemaadd1} For $ \alpha,\:y>0$, it holds that
\begin{equation}\aligned\nonumber
\mathcal{W}_{d,5}\geq \frac{1}{y}e^{-\pi\alpha(y+\frac{1}{4y})}.
\endaligned\end{equation}
\end{lemma}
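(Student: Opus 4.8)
The plan is to exploit the fact that, for $\alpha,y>0$, every summand of $\mathcal{W}_{d,5}$ is non-negative, so that it suffices to bound $\mathcal{W}_{d,5}$ from below by the contribution of a single orbit of small indices --- exactly the strategy already used in the proofs of Lemmas \ref{4lema4}--\ref{4lema6}. Recalling from \eqref{staro} that
\[
\mathcal{W}_{d,5}=\underset{p\equiv q\equiv1(\mathrm{mod}\,2)}{\sum}\frac{p^{4}q^{2}}{4y}\,e^{-\pi\alpha\left(yp^{2}+\frac{q^{2}}{4y}\right)},
\]
and noting that $p,q$ run over odd integers while the coefficient $\frac{p^{4}q^{2}}{4y}$ and the Gaussian factor are strictly positive, each term of this double sum is $\ge 0$.

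First I would single out the four indices of smallest modulus, namely $(p,q)\in\{(1,1),(1,-1),(-1,1),(-1,-1)\}$, which are precisely the admissible pairs with $|p|=|q|=1$. For each of them $p^{4}=q^{2}=1$, hence the corresponding summand equals $\frac{1}{4y}e^{-\pi\alpha\left(y+\frac{1}{4y}\right)}$, and the four of them together contribute $\frac{1}{y}e^{-\pi\alpha\left(y+\frac{1}{4y}\right)}$. Discarding the remaining non-negative terms of the sum immediately yields
\[
\mathcal{W}_{d,5}\ \ge\ \frac{1}{y}\,e^{-\pi\alpha\left(y+\frac{1}{4y}\right)},
\]
which is the assertion.

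There is no genuine obstacle in this argument; the only point requiring a line of care is to confirm that these four pairs actually occur in the sum, i.e.\ that they arise from legitimate values of the original summation variables $(n,m)$ used to define $\mathcal{W}_{d}$ in \eqref{4lem3}. This is immediate: under the reparametrization $(n,m)\mapsto(p,q)=(n,\,2m+n)$ employed in the proof of Lemma \ref{4lema8}, the pairs $(n,m)\in\{(1,0),(1,-1),(-1,0),(-1,1)\}$ map bijectively onto $\{(\pm1,\pm1)\}$, so the claimed contribution is indeed present. This is the same set of indices that supplied the leading terms in Lemmas \ref{4lema4}--\ref{4lema6}, so the bookkeeping is entirely parallel to those proofs.
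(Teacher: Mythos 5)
Your proof is correct and follows essentially the same route as the paper: both arguments observe that every summand of $\mathcal{W}_{d,5}$ is non-negative and then retain only the four terms with $|p|=|q|=1$ (equivalently $(n,m)\in\{(0,0),(0,1),(1,0),(1,1)\}$ in the paper's parametrization $p=2n-1$, $q=2m-1$), each contributing $\frac{1}{4y}e^{-\pi\alpha(y+\frac{1}{4y})}$. Your extra check that these index pairs genuinely occur in the original sum is a harmless, correct addition.
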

\begin{proof}
By \eqref{staro}, one has
\begin{equation}\aligned\label{star6}\nonumber
\mathcal{W}_{d,5}=\underset{p=2n-1,q=2m-1 }{\sum} \frac{{p}^4{q}^2}{4{y}}e^{-\pi\alpha(y{p}^2+\frac{q^2}{4y})}=\frac{1}{4{y}}\underset{n,m}{\sum}(2n-1)^4(2m-1)^2e^{-\pi\alpha[y(2n-1)^2+\frac{(2m-1)^2}{4y}]}.
\endaligned\end{equation}
Notice that all terms of the double sum $\mathcal{W}_{d,5}$ are non-negative. Then the term
\begin{equation}\aligned\nonumber
(n,m)\in \{(0,0),(0,1),(1,0),(1,1) \}\:\:\mathrm{contributes\:\: an \:\:amount}\:\frac{1}{4y}e^{-\pi\alpha(y+\frac{1}{4y})}\:\:\mathrm{respectively}.
\endaligned\end{equation}
\end{proof}

\begin{lemma}[A lower bound estimate of $\mathcal{W}_{d,6}$ in \eqref{staro}]\label{4lemaadd2} For $ \alpha,\:y>0$, it holds that
\begin{equation}\aligned\nonumber
\mathcal{W}_{d,6}\geq \frac{1}{4{y}^3}e^{-\pi\alpha(y+\frac{1}{4y})}.
\endaligned\end{equation}
\end{lemma}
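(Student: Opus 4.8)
The plan is to argue exactly as in Lemma~\ref{4lemaadd1}: the double sum defining $\mathcal{W}_{d,6}$ ranges over all pairs $(p,q)$ with $p\equiv q\equiv 1\pmod 2$, and every summand $\frac{p^2q^4}{16y^3}e^{-\pi\alpha(yp^2+\frac{q^2}{4y})}$ is manifestly non‑negative for $\alpha,y>0$. Hence it suffices to retain a convenient finite subcollection of terms and discard the rest, which can only decrease the value of the sum.

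First I would reparametrize $p=2n-1$, $q=2m-1$ with $n,m\in\mathbb{Z}$, and single out the four index pairs $(n,m)\in\{(0,0),(0,1),(1,0),(1,1)\}$; these correspond to $(p,q)\in\{(-1,-1),(-1,1),(1,-1),(1,1)\}$, i.e.\ exactly the pairs with $|p|=|q|=1$. For each of them one has $p^2=q^4=1$ and the exponent satisfies $yp^2+\frac{q^2}{4y}=y+\frac{1}{4y}$, so each such term contributes precisely
\[
\frac{p^2q^4}{16y^3}e^{-\pi\alpha(yp^2+\frac{q^2}{4y})}=\frac{1}{16y^3}e^{-\pi\alpha(y+\frac{1}{4y})}.
\]

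Summing these four contributions gives $\frac{4}{16y^3}e^{-\pi\alpha(y+\frac{1}{4y})}=\frac{1}{4y^3}e^{-\pi\alpha(y+\frac{1}{4y})}$, and since all remaining terms of the double sum are non‑negative, the claimed lower bound
$
\mathcal{W}_{d,6}\geq \frac{1}{4y^3}e^{-\pi\alpha(y+\frac{1}{4y})}
$
follows immediately. There is no substantive obstacle here; the only point requiring a little care is the bookkeeping of which $(n,m)$ produce $|p|=|q|=1$, together with the observation that these are genuinely the smallest‑exponent terms, so that the retained contribution is exactly of the order $e^{-\pi\alpha(y+\frac{1}{4y})}$ that is needed when this estimate is fed into Lemma~\ref{4lema8}.
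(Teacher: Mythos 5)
Your argument is correct and is exactly the paper's proof: reparametrize by $p=2n-1$, $q=2m-1$, keep the four non-negative terms with $|p|=|q|=1$ (i.e.\ $(n,m)\in\{(0,0),(0,1),(1,0),(1,1)\}$), each contributing $\frac{1}{16y^3}e^{-\pi\alpha(y+\frac{1}{4y})}$, and discard the rest. Nothing further is needed.
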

\begin{proof}
By \eqref{staro}, one has
\begin{equation}\aligned\label{star7}\nonumber
\mathcal{W}_{d,6}=\underset{p=2n-1,q=2m-1 }{\sum} \frac{{p}^2{q}^4}{16{y}^3} e^{-\pi\alpha(y{p}^2+\frac{q^2}{4y})}=\frac{1}{16{y}^3}\underset{n,m}{\sum}(2n-1)^2(2m-1)^4e^{-\pi\alpha[y(2n-1)^2+\frac{(2m-1)^2}{4y}]}.
\endaligned\end{equation}
Notice that all terms of the double sum $\mathcal{W}_{d,6}$ are non-negative. Then the term
\begin{equation}\aligned\nonumber
(n,m)\in \{(0,0),(0,1),(1,0),(1,1) \}\:\:\mathrm{contributes\:\: an \:\:amount}\:\frac{1}{16{y}^3}e^{-\pi\alpha(y+\frac{1}{4y})}\:\:\mathrm{respectively}.
\endaligned\end{equation}
\end{proof}

To estimate $\mathcal{W}_{e}(\alpha;y)$, we denote that
\begin{equation}\aligned\label{deno}
\mathcal{W}_{e,1}:=&\underset{p\equiv q\equiv0(mod2) }{\sum} y^2 {p}^6 e^{-\pi\alpha(y{p}^2+\frac{q^2}{4y})},\:\:\:\:\:
\mathcal{W}_{e,2}:=\underset{p\equiv q\equiv1(mod2) }{\sum}  y^2{p}^6 e^{-\pi\alpha(y{p}^2+\frac{q^2}{4y})},\\
\mathcal{W}_{e,3}:=&\underset{p\equiv q\equiv0(mod2) }{\sum}\frac{p^2q^4}{16{y}^2} e^{-\pi\alpha(y{p}^2+\frac{q^2}{4y})},\:\:
\mathcal{W}_{e,4}:=\underset{p\equiv q\equiv1(mod2) }{\sum}\frac{p^2q^4}{16{y}^2} e^{-\pi\alpha(y{p}^2+\frac{q^2}{4y})},\\
\mathcal{W}_{e,5}:=&\underset{p\equiv q\equiv0(mod2) }{\sum}\frac{{p}^4{q}^2}{2}e^{-\pi\alpha(y{p}^2+\frac{q^2}{4y})},\:\:
\mathcal{W}_{e,6}:=\underset{p\equiv q\equiv1(mod2) }{\sum} \frac{{p}^4{q}^2}{2} e^{-\pi\alpha(y{p}^2+\frac{q^2}{4y})}.\\
\endaligned\end{equation}

\begin{lemma}[An upper bound estimate of $\mathcal{W}_{e}(\alpha;y)$] \label{4lema} For $ \alpha,\:y>0$, it holds that
  \begin{equation}\aligned\nonumber
\mathcal{W}_{e}(\alpha;y)=\underset{n,m}{\sum}{n}^2(yn^2+\frac{(m+\frac{n}{2})^2}{y})^2e^{-\pi\alpha(yn^2+\frac{(m+\frac{n}{2})^2}{y})}
\leq 4y^2e^{-\pi\alpha(y+\frac{1}{4y})}\cdot(1+{\epsilon}_{c}),
\endaligned\end{equation}
where ${\epsilon}_{c}:=\underset{j=1}{\overset{3}{\sum}}\epsilon_{b,j}+\underset{j=1}{\overset{5}{\sum}}\epsilon_{c,j}.$
Here $\epsilon_{b,1},\epsilon_{b,2},\epsilon_{b,3}$ are defined in \eqref{eqb} and each $\epsilon_{c,j}$ is small \\
$(j=1,2,3,4,5)$ and expressed by
\begin{equation}\aligned\nonumber
&\epsilon_{c,1}:=32e^{-\pi\alpha(3y-\frac{1}{4y})}\cdot(1+\underset{n=2 }{\overset{\infty}{\sum}}n^{6}e^{-4\pi\alpha{y}({n}^2-1)})\cdot\vartheta_3(\frac{\alpha}{y}),\\
&\epsilon_{c,2}:=\frac{4}{y^4}e^{-3\pi\alpha(y+\frac{1}{4y})}(1+\underset{n=2}{\overset{\infty}{\sum}}n^{2}e^{-4{\pi}{\alpha}y(n^2-1)})
(1+\underset{m=2}{\overset{\infty}{\sum}}m^{4}e^{-\pi\alpha\frac{m^2-1}{y}}),\\
&\epsilon_{c,3}:=\frac{1}{16y^4}\big(1+\underset{n=2}{\overset{\infty}{\sum}}(2n-1)^2e^{-4{\pi}{\alpha}y(n-1)\cdot n} \big)\cdot\big(1+\underset{m=2}{\overset{\infty}{\sum}}(2m-1)^4e^{-\pi\alpha\frac{(m-1)\cdot m}{y}} \big),\\
&\epsilon_{c,4}:=\frac{32}{y^2}e^{-3\pi\alpha(y+\frac{1}{4y})}(1+\underset{n=2}{\overset{\infty}{\sum}}n^{4}e^{-4{\pi}{\alpha}y(n^2-1)})
(1+\underset{m=2}{\overset{\infty}{\sum}}m^{2}e^{-\pi\alpha\frac{m^2-1}{y}}),\\
&\epsilon_{c,5}:=\frac{1}{2y^2}\big(1+\underset{n=2}{\overset{\infty}{\sum}}(2n-1)^4e^{-4{\pi}{\alpha}y(n-1)\cdot n} \big)\cdot\big(1+\underset{m=2}{\overset{\infty}{\sum}}(2m-1)^2e^{-\pi\alpha\frac{(m-1)\cdot m}{y}} \big).\\
\endaligned\end{equation}
\end{lemma}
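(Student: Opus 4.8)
The plan is to follow the scheme used in the proof of Lemma \ref{4lema8}: reindex the double sum so that the summand separates into a product of a function of one variable and a function of another, split according to parity, and match the resulting pieces term‑by‑term with the quantities $\epsilon_{b,j}$ and $\epsilon_{c,j}$ in the statement. No analytic estimate beyond the convergence of positive geometric‑type tails is needed; the genuinely quantitative work is the separate Lemma \ref{4lema11}, which shows $\epsilon_c$ is small.

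First I would perform the change of summation variables $p=n$, $q=2m+n$. The map $(n,m)\mapsto(p,q)$ is a bijection of $\mathbb{Z}^2$ onto $\{(p,q)\in\mathbb{Z}^2:\ p\equiv q\ (\mathrm{mod}\ 2)\}$, and since $m+\tfrac n2=\tfrac q2$ one has $yn^2+\tfrac{(m+n/2)^2}{y}=yp^2+\tfrac{q^2}{4y}$ and $n^2=p^2$, so that \eqref{4lem3} turns into
$$\mathcal{W}_{e}(\alpha;y)=\sum_{p\equiv q\,(\mathrm{mod}\,2)} p^2\Big(yp^2+\frac{q^2}{4y}\Big)^2 e^{-\pi\alpha\left(yp^2+\frac{q^2}{4y}\right)}.$$
Expanding gives $p^2\big(yp^2+\tfrac{q^2}{4y}\big)^2=y^2p^6+\tfrac12 p^4q^2+\tfrac{1}{16y^2}p^2q^4$, and because $e^{-\pi\alpha(yp^2+q^2/(4y))}=e^{-\pi\alpha yp^2}\,e^{-\pi\alpha q^2/(4y)}$ each of the three monomials in the summand is a product of a function of $p$ and a function of $q$. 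Splitting the condition $p\equiv q\ (\mathrm{mod}\ 2)$ into the two cases $p\equiv q\equiv 0$ and $p\equiv q\equiv 1\ (\mathrm{mod}\ 2)$ produces exactly the six double sums $\mathcal{W}_{e,1},\dots,\mathcal{W}_{e,6}$ of \eqref{deno}; since all of their terms are non‑negative one gets the exact identity $\mathcal{W}_e=\sum_{j=1}^{6}\mathcal{W}_{e,j}$ (so the final bound is in fact an equality, stated as an inequality for later convenience).

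Next I would evaluate each $\mathcal{W}_{e,j}$ as a product of two one‑variable series. For the even–even pieces one writes $\sum_{p\ \mathrm{even}}p^{2k}e^{-\pi\alpha yp^2}=2\sum_{n\ge1}(2n)^{2k}e^{-4\pi\alpha yn^2}$ and $\sum_{q\ \mathrm{even}}q^{2k}e^{-\pi\alpha q^2/(4y)}=2\sum_{m\ge1}(2m)^{2k}e^{-\pi\alpha m^2/y}$, with the $q=0$ contribution reinstated as $\vartheta_3(\tfrac\alpha y)$ when $k=0$ (this is the source of $\vartheta_3$ in $\epsilon_{c,1}$). For the odd–odd pieces one uses $(2n-1)^2-1=4n(n-1)$ and $\tfrac14\big[(2m-1)^2-1\big]=m(m-1)$ to peel off the leading term. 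In this way $\mathcal{W}_{e,2}$ — the odd–odd, $y^2p^6$ piece — equals $4y^2e^{-\pi\alpha(y+1/(4y))}(1+\epsilon_{b,1})(1+\epsilon_{b,2})=4y^2e^{-\pi\alpha(y+1/(4y))}\big(1+\sum_{j=1}^{3}\epsilon_{b,j}\big)$ with $\epsilon_{b,j}$ as in \eqref{eqb}, while dividing each of $\mathcal{W}_{e,1},\mathcal{W}_{e,3},\mathcal{W}_{e,4},\mathcal{W}_{e,5},\mathcal{W}_{e,6}$ by $4y^2e^{-\pi\alpha(y+1/(4y))}$ returns precisely $\epsilon_{c,1},\dots,\epsilon_{c,5}$ as listed. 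Summing the six pieces yields $\mathcal{W}_e=4y^2e^{-\pi\alpha(y+1/(4y))}(1+\epsilon_c)$, which is the assertion.

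The content is elementary, so the only real difficulty is bookkeeping. One must choose the reindexing so that every one of the three monomials attains its minimal exponent at the four lattice points $(p,q)\in\{(\pm1,\pm1)\}$ — this is what isolates the main term $4y^2e^{-\pi\alpha(y+1/(4y))}$ out of $\mathcal{W}_{e,2}$ — and one must be scrupulous about the numerical prefactors ($128$, $32$, $8$, $2$, $\dots$ arising from the powers of $2$ in $p=2n$, $q=2m$ and $p=2n-1$, $q=2m-1$) so that the defining formulas for $\epsilon_{c,1},\dots,\epsilon_{c,5}$ align term‑by‑term with $\mathcal{W}_{e,1},\mathcal{W}_{e,3},\mathcal{W}_{e,4},\mathcal{W}_{e,5},\mathcal{W}_{e,6}$ respectively. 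Beyond verifying that every omitted tail is a convergent positive series, no estimate is required here.
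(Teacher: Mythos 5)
Your proposal is correct and follows essentially the same route as the paper: the identity $\mathcal{W}_e=\sum_{k=1}^{6}\mathcal{W}_{e,k}$ via the reindexing $p=n$, $q=2m+n$, the expansion $p^2\big(yp^2+\tfrac{q^2}{4y}\big)^2=y^2p^6+\tfrac12p^4q^2+\tfrac{1}{16y^2}p^2q^4$ split by parity, and the term-by-term identification $\mathcal{W}_{e,2}=4y^2e^{-\pi\alpha(y+\frac{1}{4y})}(1+\epsilon_{b,1}+\epsilon_{b,2}+\epsilon_{b,3})$ with the remaining five pieces giving $\epsilon_{c,1},\dots,\epsilon_{c,5}$. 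The only cosmetic difference is that the paper recycles $\mathcal{W}_{e,1}=y\mathcal{W}_{d,1}$ and $\mathcal{W}_{e,2}=y\mathcal{W}_{d,2}$ from the proof of the preceding lemma rather than recomputing them.
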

\begin{proof}
By \eqref{4lem3}, one has
\begin{equation}\aligned\nonumber
\mathcal{W}_{e}(\alpha;y)=\underset{p\equiv q(mod2) }{\sum} p^2(y p^2+\frac{q^2}{4y})^2e^{-\pi\alpha(y{p}^2+\frac{q^2}{4y})}=\underset{k=1}{\overset{6}{\sum}}\mathcal{W}_{e,k}(\alpha;y).
\endaligned\end{equation}
By \eqref{staro}, \eqref{star2} and \eqref{deno}, one gets
\begin{equation}\aligned\nonumber
&\mathcal{W}_{e,1}=y\mathcal{W}_{d,1}=4 y^2 e^{-\pi\alpha(y+\frac{1}{4y})}\epsilon_{c,1},\\
&\mathcal{W}_{e,2}=y\mathcal{W}_{d,2}=4 y^2 e^{-\pi\alpha(y+\frac{1}{4y})}\cdot(1+{\epsilon}_{b,1}+{\epsilon}_{b,2}+{\epsilon}_{b,3}).
\endaligned\end{equation}
Next, by \eqref{deno}, we shall calculate $\mathcal{W}_{e,3},\:\mathcal{W}_{e,4}$ sequentially.
\begin{equation}\aligned\label{eqe3}
\mathcal{W}_{e,3}&=\underset{p=2n,q=2m }{\sum}\frac{p^2q^4}{16{y}^2} e^{-\pi\alpha(y{p}^2+\frac{q^2}{4y})}=\frac{4}{y^2}\underset{n}{\sum}n^2e^{-4\pi\alpha yn^2}\underset{m}{\sum}m^4e^{-\pi\alpha\frac{m^2}{y}}\:\:\:\:\:\:\:\:\:\:\:\:\:\:\:\:\:\:\:\:\:\:\:\:\:\:\:\:\:\:\:\:\:\:\:\:\:\:\:\:\\
&=\frac{16}{y^2}e^{-4\pi\alpha(y+\frac{1}{4y})}(1+\underset{n=2}{\overset{\infty}{\sum}}n^{2}e^{-4{\pi}{\alpha}y(n^2-1)})
(1+\underset{m=2}{\overset{\infty}{\sum}}m^{4}e^{-\pi\alpha\frac{m^2-1}{y}})\\
&=4 y^2 e^{-\pi\alpha(y+\frac{1}{4y})}\cdot\epsilon_{c,2}.
\endaligned\end{equation}
\begin{equation}\aligned\label{eqe4}
\mathcal{W}_{e,4}&=\underset{p=2n-1,q=2m-1 }{\sum}\frac{p^2q^4}{16{y}^2} e^{-\pi\alpha(y{p}^2+\frac{q^2}{4y})}\\
&=\frac{1}{16y^2}\underset{n}{\sum}(2n-1)^2e^{-\pi\alpha y(2n-1)^2}\underset{m}{\sum}(2m-1)^4e^{-\pi\alpha\frac{(2m-1)^2}{4y}}\\
&=\frac{1}{4y^2}\underset{n=1}{\overset{\infty}{\sum}}(2n-1)^2e^{-\pi\alpha y(2n-1)^2}\underset{m=1}{\overset{\infty}{\sum}}(2m-1)^4e^{-\pi\alpha\frac{(2m-1)^2}{4y}}\\
&=\frac{1}{4y^2}e^{-\pi\alpha(y+\frac{1}{4y})}\big(1+\underset{n=2}{\overset{\infty}{\sum}}(2n-1)^2e^{-4{\pi}{\alpha}y(n-1)\cdot n} \big)\cdot\big(1+\underset{m=2}{\overset{\infty}{\sum}}(2m-1)^4e^{-\pi\alpha\frac{(m-1)\cdot m}{y}} \big)\\
&=4 y^2 e^{-\pi\alpha(y+\frac{1}{4y})}\cdot\epsilon_{c,3}.
\endaligned\end{equation}
Similar to the calculations in \eqref{eqe3}-\eqref{eqe4}, one has
\begin{equation}\aligned\nonumber
\mathcal{W}_{e,5}=4 y^2 e^{-\pi\alpha(y+\frac{1}{4y})}\cdot\epsilon_{c,4},\:\:\mathcal{W}_{e,6}=4 y^2 e^{-\pi\alpha(y+\frac{1}{4y})}\cdot\epsilon_{c,5}.
\endaligned\end{equation}
\end{proof}

\section{Proof of Theorem \ref{Th2} and Corollary \ref{cor1}}
Proof of Theorem \ref{Th2}. Theorems \ref{2Th1} and \ref{4Th1} proves Theorem \ref{Th2}.

Proof of Corollary \ref{cor1}. Applying the fundamental Theorem of calculus on the parameters $\alpha, \beta$, we have

\begin{equation}\aligned\nonumber
\sum_{\mathbb{P}\in \Lambda} {\left| \mathbb{P} \right|}^{2}\Big(e^{-\pi\alpha {\left| \mathbb{P} \right|}^{2}}  -e^{-\pi\beta {\left| \mathbb{P} \right|}^{2}} \Big)
=\pi\int_\alpha^\beta\sum_{\mathbb{P}\in \Lambda} {\left| \mathbb{P} \right|}^{4}e^{-\pi\gamma {\left| \mathbb{P} \right|}^{2}}d\gamma.
\endaligned\end{equation}
Then Corollary \ref{cor1} follows by Theorem \ref{Th2}.

%\noindent
\bigskip
\noindent
{\bf Acknowledgements.}
 The research of S. Luo is partially supported by NSFC(12001253), double thousands plan of Jiangxi(jxsq2019101048) and Jiangxi Jieqing fund(20242BAB23001).
\vskip0.1in
{\bf Statements and Declarations: there is no conflict of interest.}

{\bf Data availability: the manuscript has no associated data.}
\bigskip

%%%%%%%%%%%%%%%%%%%%%%%%%%%%%%%%%%%%%%%%%%%%%%%%%%%%%%%%%%%%%%%%%%%%%%%%%%%%%%%%%%%%%%%%%%%%%%%%%%%%%%%%%%%%%%%%%%%%%%%%%%%%%%%%%%%%%%

\end{document}